\documentclass[11pt]{article}

\usepackage[slantedGreek]{mathpazo}
\usepackage[pdftex]{graphicx}
\usepackage{amsfonts}
\usepackage{setspace}
\usepackage{amssymb}
\usepackage{amsthm}
\usepackage{graphicx}
\usepackage{amsmath}
\usepackage{mathtools}
\usepackage{lscape}
\usepackage{subcaption}
\setcounter{MaxMatrixCols}{30}
\usepackage{suffix}
\usepackage{color}
\usepackage{bm}
\usepackage{verbatim}
\usepackage{rotating}
\usepackage{tabularx}
\usepackage{lscape}
\usepackage{enumitem}
\usepackage{afterpage}
\usepackage{pdflscape}
\usepackage{extarrows}

\DeclarePairedDelimiter\floor{\lfloor}{\rfloor}

\definecolor{darkblue}{rgb}{0.0,0.0,0.55}
\RequirePackage[colorlinks,citecolor=darkblue,urlcolor=darkblue,linkcolor=darkblue]{hyperref} 
\usepackage[normalem]{ulem}

\setlength{\textheight}{9in}
\setlength{\textwidth}{6.5in}
\setlength{\topmargin}{-36pt}
\setlength{\oddsidemargin}{0pt}
\setlength{\evensidemargin}{0pt}

\newtheorem{theorem}{\textsc{{Theorem}}}
\newtheorem{corollary}{\textsc{{Corollary}}}
\newtheorem{lemma}{\textsc{{Lemma}}}
\newtheorem{example}{\textsc{{Example}}}

\newtheorem{proposition}{\textsc{{Proposition}}}

\theoremstyle{remark}
\newtheorem{remark}{{\textsc{Remark}}}

\usepackage{booktabs}
\usepackage{array}
\usepackage{multirow}
\captionsetup[table]{skip=4 pt}
\captionsetup[table]{belowskip=-5pt}


\newcommand{\bfa}{\mathbf{a}}

\newcommand{\bfh}{\mathbf{h}}

\newcommand{\bfr}{\mathbf{r}}
\newcommand{\bfs}{\mathbf{s}}
\newcommand{\bft}{\mathbf{t}}
\newcommand{\bfu}{\mathbf{u}}
\newcommand{\bfv}{\mathbf{v}}
\newcommand{\bfw}{\mathbf{w}}
\newcommand{\bfx}{\mathbf{x}}
\newcommand{\bfy}{\mathbf{y}}

\newcommand{\bfA}{\mathbf{A}}
\newcommand{\bfB}{\mathbf{B}}

\newcommand{\bfD}{\mathbf{D}}

\newcommand{\bfI}{\mathbf{I}}

\newcommand{\bfO}{\mathbf{O}}

\newcommand{\bfR}{\mathbf{R}}
\newcommand{\bfS}{\mathbf{S}}

\newcommand{\bfU}{\mathbf{U}}

\newcommand{\bfY}{\mathbf{Y}}
\newcommand{\bfZ}{\mathbf{Z}}

\newcommand{\bftheta}{\boldsymbol \theta}

\newcommand{\bfomega}{\boldsymbol \omega}

\newcommand{\bfLambda}{\boldsymbol \Lambda}

\newcommand{\bfzero}{\mathbf{0}}

\newcommand{\diag}{\text{diag}}

\newcommand{\IG}{{\cal IG}}

\newcommand{\calK}{{\cal K}}
\newcommand{\calJ}{{\cal J}}

\captionsetup[subfigure]{aboveskip=0pt}
\captionsetup[figure]{aboveskip=-0.005cm}

\captionsetup{labelsep=period, labelfont=bf}

\usepackage[sort,comma]{natbib}

\setcitestyle{citesep={;}}


\renewcommand{\baselinestretch}{1.5}

\allowdisplaybreaks

\def\spacingset#1{\renewcommand{\baselinestretch}%
{#1}\small\normalsize} \spacingset{1}




\title{\vspace{-1cm} \baselineskip=20pt  \bf Beyond Mat\'ern: On A Class of Interpretable Confluent Hypergeometric Covariance Functions}

\date{}
\singlespacing
\begin{document}
\maketitle
\baselineskip=15pt
\vspace{-1.75cm}
\begin{center}
Pulong Ma \\
  School of Mathematical and Statistical Sciences, Clemson University \\
  220 Parkway Dr., Clemson, SC 29634
  \\
  plma@clemson.edu\\
  and\\
 Anindya Bhadra\\ 
  Department of Statistics, Purdue University\\
  {250 N. University St., West Lafayette, IN 47907, USA} \\
  bhadra@purdue.edu\\
  \hskip 5mm\\
     \end{center}
\begin{abstract}

\noindent The Mat\'ern covariance function is a popular choice for prediction in spatial statistics and uncertainty quantification literature.  A key benefit of the Mat\'ern class is that it is possible to get precise control over the degree of mean-square differentiability of the random process. However, the Mat\'ern class possesses exponentially decaying tails, and thus may not be suitable for modeling polynomially decaying dependence. This problem can be remedied using polynomial covariances; however one loses control over the degree of mean-square differentiability of corresponding processes, in that  random processes with existing polynomial covariances are either infinitely mean-square differentiable or nowhere mean-square differentiable at all. We construct a new family of covariance functions called the \emph{Confluent Hypergeometric} (CH) class using a scale mixture representation of the Mat\'ern class where one obtains the benefits of both Mat\'ern and polynomial covariances. The resultant covariance contains two parameters: one controls the degree of mean-square differentiability near the origin and the other controls the tail heaviness, independently of each other. Using a spectral representation, we derive theoretical properties of this new covariance including equivalent measures and asymptotic behavior of the maximum likelihood estimators under infill asymptotics. The improved theoretical properties of the CH class are verified via extensive simulations. Application using NASA's Orbiting Carbon Observatory-2 satellite data confirms the advantage of the CH class over the Mat\'ern class, especially in extrapolative settings. \\
\\
{\bf Keywords:} Equivalent measures; Gaussian process; Gaussian scale mixture;  Polynomial covariance; XCO2. 
\end{abstract}

\spacingset{1.25} 
\section{Introduction} \label{sec:intro}

Kriging, also known as spatial best linear unbiased prediction, is a term coined by \cite{Matheron1963} in honor of the South African mining engineer D. G. Krige   \citep{Cressie1990}. With origins in geostatistics, applications of kriging have permeated fields as diverse as spatial statistics \cite[e.g.,][]{Berger2001, Matern1960, journel1978, Cressie1993, Stein1999, Banerjee2014}, uncertainty quantification or UQ \citep[e.g.,][]{Santner2018, Sacks1989, Berger2019UQ} and machine learning \citep{Williams2006}. Suppose that $\{Z(\bfs)\in \mathbb{R}: \bfs \in \mathcal{D} \subset \mathbb{R}^d \}$ is a stochastic process with a covariance function $\text{cov}(Z(\bfs), Z(\bfs+\bfh)) = C(\bfh)$ that is solely a function of the increment $\bfh$. Then $C(\cdot)$ is said to be second-order stationary (or weakly stationary). Further, if $C(\cdot)$ is a function of $|\bfh|$ with $| \cdot|$ denoting the Euclidean norm, then $C(\cdot)$ is called isotropic. If the process $Z(\cdot)$ possesses a constant mean function and a weakly stationary (resp.~isotropic) covariance function, the process $Z(\cdot)$ is called weakly stationary (resp.~isotropic).  Further, $Z(\cdot)$ is a Gaussian process (GP) if every finite-dimensional realization $Z(\bfs_1),\ldots, Z(\bfs_n)$ jointly follows a multivariate normal distribution for $\bfs_i\in \mathcal{D}$ and every $n$. 

The Mat\'ern covariance function \citep{Matern1960} has been widely used in spatial statistics due to its flexible local behavior and nice theoretical properties \citep{Stein1999} with increasing popularity in the UQ and machine learning literature \citep{Guttorp2006, Gu2018}. The Mat\'ern covariance function is of the form:
\begin{align} \label{eqn: Matern}
\mathcal{M}(h; \nu, \phi, \sigma^2) = \sigma^2\frac{2^{1-\nu}}{\Gamma(\nu)} \left(\frac{\sqrt{2\nu}}{\phi}h\right)^\nu \mathcal{K}_\nu \left(\frac{\sqrt{2\nu}}{\phi}h\right),
\end{align}
where $\sigma^2>0$ is the variance parameter, $\phi>0$ is the range parameter, and $\nu>0$ is the smoothness parameter that controls the mean-square differentiability of associated random processes. We denote by $\mathcal{K}_\nu(\cdot)$ the modified Bessel function of the second kind with the asymptotic expansion $\mathcal{K}_\nu(h) \asymp (\pi/(2h))^{1/2} \exp(-h)$ as $h\to\infty$, where $f(x) \asymp g(x)$ denotes $\lim_{x\to \infty} f(x) / g(x) = c \in (0,\infty)$. Further, we use the notation $f(x) \sim g(x)$ if $c=1$.  Thus, using this asymptotic expression of $\mathcal{K}_\nu(h)$ for large $h$ from Section 6 of \cite{barndorff1982normal}, the tail behavior of the Mat\'ern covariance function is given by: 
$$
\mathcal{M}(h; \nu, \phi, \sigma^2) \asymp h^{\nu -1/2} \exp\left(-\frac{\sqrt{2\nu}}{\phi} h\right), \quad h\to \infty.
$$
Eventually, the $\exp(-\sqrt{2\nu}h/\phi)$ term dominates, and the covariance decays exponentially for large $h$. This exponential decay may make it unsuitable for capturing polynomially decaying dependence. This problem with the Mat\'ern covariance can be remedied by using covariance functions that decay polynomially, such as the generalized Wendland \citep{Gneiting2002} and generalized Cauchy covariance functions \citep{Gneiting2000}, but in using these polynomial covariance functions one loses a key benefit of the Mat\'ern class: that of the degree of mean-square differentiability of the process. Random processes with a Mat\'ern covariance function are exactly $\floor{\nu}$ times mean-square differentiable, whereas the random processes with a generalized Cauchy covariance function are either non-differentiable (very rough) or infinitely differentiable (very smooth) in the mean-square sense, without any middle ground \citep{Stein2005}. The generalized Wendland covariance family also has limited flexibility near the origin compared to the Mat\'ern class and has compact support \citep{Gneiting2002}.

Stochastic processes with polynomial covariances are ubiquitous in many scientific disciplines including geophysics, meteorology, hydrology, astronomy, agriculture and engineering; see \citet{Beran1992} for a survey. In UQ, Gaussian stochastic processes have been often used for computer model emulation and calibration \citep{Santner2018}. In some applications, certain inputs may have little impact on output from a computer model, and these inputs are called \emph{inert inputs}; see Chapter 7 of \cite{Santner2018} for detailed discussion. Power-law covariance functions can allow for large correlations among distant observations and hence are more suitable for modeling these inert inputs. Most often, computer model outputs can have different smoothness properties due to the behavior of the physical process to be modeled. Thus, power-law covariances with the possibility of controlling the mean-square differentiability of stochastic processes are very desirable for modeling such output.  

In spatial statistics, polynomial covariances have been studied in a limited number of works \citep[e.g.,][]{Haslett1989, Gay1990, Gneiting2000}. In the rest of the paper, we focus on investigation of polynomial covariances in spatial settings. For spatial modeling, polynomial covariance can improve prediction accuracy over large missing regions. A covariance function with polynomially decaying tail can be useful to model highly correlated observations. As a motivating example, Figure~\ref{fig: Global OCO-2 data} shows a 16-day repeat cycle of NASA's Level 3 data product of the column-averaged carbon dioxide dry air mole fraction (XCO2) at $0.25^\circ$ and $0.25^\circ$ collected from the Orbiting Carbon Observatory-2 (OCO-2) satellite. The XCO2 data are collected over longitude bands and have large missing gaps between them. Predicting the true process over these large missing gaps based on a spatial process model is challenging. If the covariance function only allows exponentially decaying dependence, the predicted true process will be dominated by the mean function in the spatial process model with the covariance function having negligible impact over these large missing gaps. However, if the covariance function can model polynomially decaying dependence, the predicted true process over these missing gaps will carry more information from distant locations where observations are available, presumably resulting in better prediction. Thus, it is of fundamental and practical interest to develop a covariance function with polynomially decaying tails, without sacrificing the control over the smoothness behavior of the process realizations.

\begin{figure}[htbp] 
\makebox[\textwidth][c]{ \includegraphics[width=.8\linewidth, height=0.25\textheight]{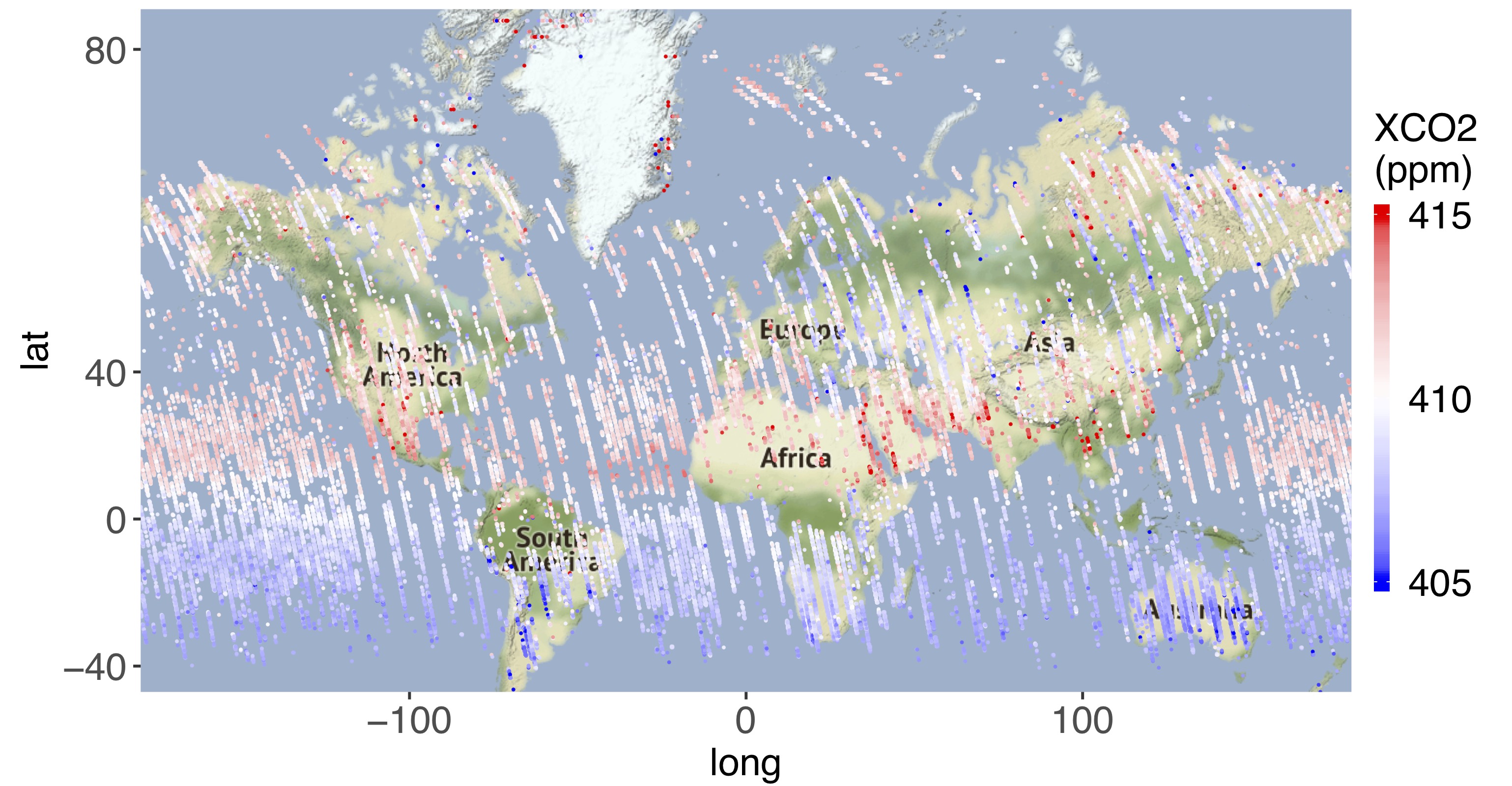}}
\caption{XCO2 data from June 1 to June 16, 2019. The units are parts per millions (ppm).}
\label{fig: Global OCO-2 data}
\end{figure}

In this paper we propose a new family of \emph{interpretable} covariance functions called the \emph{Confluent Hypergeometric} (CH) class that bridges this gap between the Mat\'ern covariance and polynomial covariances. The proposed covariance class is obtained by mixing the Mat\'ern covariance over its range parameter $\phi$. This is done by recognizing the Bessel function in the Mat\'ern  covariance function as proportional to the normalizing constant of the generalized inverse Gaussian distribution \citep[e.g.,][]{barn77}, which then allows analytically tractable calculations with respect to a range of choices for mixing densities, resulting in valid covariance functions with varied features. Apart from this technical innovation, the key benefit is that this mixing does not affect the origin behavior and thus allows one to retain the precise control over the smoothness of process realizations as in Mat\'ern. However, the tail is inflated due to mixing, and, in fact, the mixing distribution can be chosen in a way so that the tail of the resultant covariance function displays regular variation, with precise control over the tail decay parameter $\alpha$. A function $f(\cdot)$ is said to have a regularly decaying right tail with index $\alpha$ if it satisfies $f(x) \asymp x^{-\alpha} L(x)$ as $x\to\infty$ for some $\alpha >0$ where $L(\cdot)$ is a slowly varying function at infinity with the property $\lim_{x\to \infty} L(tx)/L(x)=1$ for all $t\in (0,\infty)$ \citep{bingham1989regular}. Unlike a generalized Cauchy covariance function, this CH class is obtained without sacrificing the control over the degree of mean-square differentiability of the process, which is still controlled solely by $\nu$, and the resulting process is still exactly $\floor{\nu}$ times mean-square differentiable, independent of $\alpha$. Moreover, regular variation is preserved under several commonly used transformations, such as sums or products. Thus, it is possible to exploit these properties of regular variation to derive CH covariances with similar features from the original covariance function that is obtained via a mixture of the Mat\'ern class.

The rest of the paper is organized as follows. Section~\ref{sec:classes} begins with the construction of the proposed covariance function as a mixture of the Mat\'ern  covariance function over its range parameter. We verify that such construction indeed results in a valid covariance function. Moreover, we demonstrate that the behaviors of this covariance function near the origin and in the tails are characterized by two distinct parameters, which in turn control the smoothness and the degree of polynomially decaying dependence, respectively. Section~\ref{sec:theory} presents the main theoretical results for the CH class. We first derive the spectral representation of this CH class and characterize its high-frequency behavior, and then show theoretical properties concerning equivalence classes under Gaussian measures and asymptotic properties related to parameter estimation and prediction. The resultant theory is extensively verified via simulations in Section~\ref{sec:numerical}. In Section~\ref{sec:real}, we use this CH covariance to analyze NASA's OCO-2 data, and demonstrate better prediction results over the Mat\'ern covariance. Section~\ref{sec:conclusions} concludes with some discussions for future investigations. All the technical proofs can be found in the Supplementary Material.  

\section{The CH Class as a Mixture of the Mat\'ern Class} \label{sec:classes}
Our starting point in mixing over the range parameter $\phi$ in the Mat\'ern covariance function is the correspondence between the form of the Mat\'ern covariance function and the normalizing constant of the generalized inverse Gaussian distribution \citep[e.g.,][]{barn77}. The generalized inverse Gaussian distribution has density on $(0,\infty)$ given by: 
$$
\pi_{GIG}(x) ={\frac {(a/b)^{p/2}}{2\mathcal{K}_{p}({\sqrt {ab}})}}x^{(p-1)}\exp\{-(ax+b/x)/2\} ; \;\; a,b>0, p\in \mathbb{R}.
$$
Thus,
$$
\calK_{p}({\sqrt {ab}}) = \frac{1}{2} (a/b)^{p/2} \int_{0}^{\infty} x^{(p-1)}\exp\{-(ax+b/x)/2\} dx.
$$
Take $a=\phi^{-2}, b=2\nu h^2$ and $p=\nu$. Then we have the following representation of the Mat\'ern covariance function with range parameter $\phi$ and smoothness parameter $\nu$: 
\begin{align*}  
\mathcal{M}(h; \nu, \phi, \sigma^2) &=\sigma^2 \frac{2^{1-\nu}}{\Gamma(\nu)} \left(\frac{\sqrt{2\nu}}{\phi}h \right)^\nu \mathcal{K}_\nu \left(\frac{\sqrt{2\nu}}{\phi}h\right)\\
&= \sigma^2 \frac{2^{1-\nu}}{\Gamma(\nu)}  \left(\frac{\sqrt{2\nu}h}{\phi} \right)^\nu  \frac{1}{2} \left(\frac{1} {\sqrt{2\nu} h\phi}\right)^\nu \int_{0}^{\infty} x^{(\nu-1)}\exp\{-(x/\phi^2+2\nu h^2/x)/2\}  dx\\
&= \frac{\sigma^2}{2^{\nu}\phi^{2\nu}\Gamma(\nu)} \int_{0}^{\infty} x^{(\nu-1)}\exp\{-(x/\phi^2+ 2\nu h^2/x)/2\}  dx. 
\end{align*} 
Thus, the mixture over $\phi^2$ with respect to a mixing measure $G(\phi^2)$ on $(0, \infty)$ can be written as \vspace{-10pt}
\begin{equation} \vspace{-4pt}
\begin{split}
C(h):&= \int_{0}^{\infty} \mathcal{M}(h; \nu, \phi, \sigma^2)  dG(\phi^2) \\
&= \int_{0}^{\infty} \left[\frac{\sigma^2}{2^{\nu}\phi^{2\nu}\Gamma(\nu)} \int_{0}^{\infty} x^{(\nu-1)}  \exp\{-(x/\phi^2+ 2\nu h^2/x)/2\}  dx\right] dG(\phi^2)  \\
& = \frac{\sigma^2}{2^{\nu}\Gamma(\nu)}  \int_{0}^{\infty} x^{(\nu-1)}\left[ \int_{0}^{\infty} \phi^{-2\nu} \exp\{-x/(2\phi^2)\}  dG(\phi^2) \right] \exp{(- \nu h^2/x)}  dx. \label{eq:mm}
\end{split}
\end{equation}
The resultant covariance via this mixture is quite general with different choices for the mixing measure $G(\phi^2)$. When the mixing measure $G(\phi^2)$ admits a probability density function, say $\pi(\phi^2)$, the inner integral may be recognized as a mixture of gamma integrals (by change of variable $u=\phi^{-2}$), which is analytically tractable for many choices of $\pi(\phi^2)$; see for example the chapter on gamma integrals in \citet{Abramowitz1965}. More importantly, as we show below, the mixing density $\pi(\phi^2)$ can be chosen to achieve precise control over certain features of the resulting covariance function.  

\begin{theorem} \label{lem: matern positive function}
Let $X \sim \IG(a,b)$ denote an inverse gamma random variable using the shape--scale parameterization with density $\pi_{IG}(x) = \{b^{a}/\Gamma(a)\} x^{-a-1} \exp(-b/x);\; a,b>0$. Assume that $\phi^2 \sim \IG(\alpha, \beta^2/2)$ and that $\mathcal{M}(h; \nu, \phi, \sigma^2)$ is the Mat\'ern covariance function in Equation~\eqref{eqn: Matern}. Then $C(h; \nu, \alpha, \beta, \sigma^2) := \int_{0}^{\infty} \mathcal{M}(h; \nu, \phi, \sigma^2) \pi(\phi^2; \alpha, \beta) d\phi^2$ is a positive-definite covariance function on $\mathbb{R}^d$ with the following form:
\begin{equation} \label{eqn: new kernel}
C(h;  \nu, \alpha, \beta, \sigma^2)
=  \frac{\sigma^2\beta^{2\alpha}\Gamma(\nu+\alpha)}{\Gamma(\nu)\Gamma(\alpha)}  \int_{0}^{\infty} x^{(\nu-1)} (x+\beta^2)^{-(\nu+\alpha)} \exp{(- \nu h^2/x)}  dx, 
\end{equation}
where $\sigma^2>0$ is the variance parameter, $\alpha>0$ is called the tail decay parameter, $\beta>0$ is called the scale parameter, and $\nu>0$ is called the smoothness parameter.   
\end{theorem}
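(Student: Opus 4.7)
The plan is to start from the general mixture representation in Equation~\eqref{eq:mm} and simply compute the inner integral explicitly under the choice $\phi^2 \sim \IG(\alpha,\beta^2/2)$, then invoke a standard preservation-of-positive-definiteness argument for the second claim.

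First I would substitute the inverse gamma density $\pi_{IG}(\phi^2;\alpha,\beta^2/2) = \{(\beta^2/2)^\alpha/\Gamma(\alpha)\} (\phi^2)^{-\alpha-1}\exp\{-\beta^2/(2\phi^2)\}$ into the bracketed term in \eqref{eq:mm}. The inner integrand collects into $(\phi^2)^{-\nu-\alpha-1}\exp\{-(x+\beta^2)/(2\phi^2)\}$, so with the change of variable $u=(x+\beta^2)/(2\phi^2)$ it reduces to a standard gamma integral:
\begin{equation*}
\int_{0}^{\infty} (\phi^2)^{-\nu-\alpha-1} \exp\!\left\{-\frac{x+\beta^2}{2\phi^2}\right\} d\phi^2 = \left(\frac{x+\beta^2}{2}\right)^{-(\nu+\alpha)} \Gamma(\nu+\alpha).
\end{equation*}
Multiplying by the density constants yields the bracket in \eqref{eq:mm} equal to $\beta^{2\alpha} 2^{\nu}\Gamma(\nu+\alpha) (x+\beta^2)^{-(\nu+\alpha)}/\Gamma(\alpha)$. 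Pulling this out of the outer integral and cancelling the prefactor $2^{\nu}$ from \eqref{eq:mm} gives exactly the claimed expression \eqref{eqn: new kernel}. This step is largely bookkeeping; the only care required is that Fubini applies, which follows because all integrands are non-negative.

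For positive-definiteness, I would argue that for each fixed $\phi>0$ the function $\bfh \mapsto \mathcal{M}(|\bfh|;\nu,\phi,\sigma^2)$ is positive-definite on $\mathbb{R}^d$ (the classical Mat\'ern property, provable via Bochner's theorem using its known spectral density $\propto (1+\phi^2\|\bfomega\|^2/(2\nu))^{-(\nu+d/2)}$). Since $\pi(\phi^2;\alpha,\beta)$ is a non-negative integrable mixing density, the mixture $C(h) = \int_0^\infty \mathcal{M}(h;\nu,\phi,\sigma^2) \pi(\phi^2;\alpha,\beta)\, d\phi^2$ inherits positive-definiteness: for any finite set of locations $\bfs_1,\dots,\bfs_n$ and real weights $c_1,\dots,c_n$, the quadratic form $\sum_{i,j} c_i c_j C(|\bfs_i-\bfs_j|)$ is the integral against a non-negative measure of the non-negative quantities $\sum_{i,j} c_i c_j \mathcal{M}(|\bfs_i-\bfs_j|;\nu,\phi,\sigma^2)$, hence is itself non-negative.

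The main (and really only) obstacle is verifying that the $\phi^2$-integral is finite so that the mixture is well defined and Fubini applies; this is transparent here because the inverse-gamma density provides integrable tails at both $0$ and $\infty$ for any $\alpha>0$, and the Mat\'ern kernel is uniformly bounded by $\sigma^2$. Everything else amounts to the gamma-integral identity above, so the proof reduces to a short calculation plus the standard mixture-preserves-positive-definiteness remark.
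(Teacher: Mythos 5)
Your proposal is correct and follows essentially the same route as the paper's own proof: the closed form is obtained by substituting the inverse gamma density into the mixture representation \eqref{eq:mm} and evaluating the inner integral as a gamma integral (your constants check out), and positive definiteness is established by the same observation that the quadratic form is a nonnegative mixture of Mat\'ern quadratic forms. Your added remarks on Fubini and the justification of the Mat\'ern positive definiteness via Bochner's theorem are harmless elaborations of steps the paper leaves implicit.
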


\begin{remark}
The Mat\'ern covariance is sometimes parameterized differently. The mixing density can be chosen accordingly to arrive at results identical to ours. For instance, with parameterization of the Mat\'ern class given in \cite{Stein1999}, a gamma mixing density with shape parameter $\alpha$ and rate parameter $\beta^2/2$ would lead to an alternative route to the same representation of the CH class. The limiting case of the Mat\'ern class is the squared exponential (or Gaussian) covariance when its smoothness parameter $\nu$ goes to $\infty$. In this case, mixing over the inverse gamma distribution in Theorem~\ref{lem: matern positive function} yields the Cauchy covariance. 
\end{remark}

\begin{remark}The Mat\'ern covariance arises as a limiting case of the proposed covariance in Theorem~\ref{lem: matern positive function} when the mixing distribution on $\phi^2$ is a point mass. Indeed, standard calculations show that the mode of the inverse gamma distribution $\mathcal{IG}(\phi^2 \mid \alpha, \beta^2/2)$ is $\beta^2/\{2(\alpha+1)\}$ and its variance is $\beta^4/(\{4(\alpha-1)^2(\alpha-2)\}$. Thus, if one takes $\beta^2=2(\alpha+1) \gamma^2$ for fixed $\gamma>0$ and allows $\alpha$ to be large, the entire mass of the distribution $\mathcal{IG}(\alpha, (\alpha+1)\gamma^2)$ is concentrated at the fixed quantity $\gamma^2$ as $\alpha\to \infty$, which gives the Mat\'ern covariance $ \mathcal{M}(h; \nu, \gamma, \sigma^2)$  as the limiting case of the covariance function $C(h; \nu, \alpha, \sqrt{2(\alpha+1)}\gamma, \sigma^2)$ as $\alpha\to \infty$. 

\end{remark}

Having established  in Theorem~\ref{lem: matern positive function} the resultant mixture as a valid covariance function, one may take a closer look at its properties. To begin, although the final form of the CH class involves an integral, and thus may not appear to be in closed form at a first glance, the situation is indeed not too different from that of Mat\'ern, where the associated Bessel function is available in an algebraically closed form only for certain special cases; otherwise it is available as an integral. In addition, this representation of CH class is sufficient for numerically evaluating the covariance function as a function of $h$ via either quadrature or Monte Carlo methods. Additionally, with a certain change of variable, the above integral can be identified as belonging to a certain class of special functions that can be computed efficiently.  More precisely, we have the following elegant representation of the CH class, justifying its name. 

\begin{corollary}\label{lem: conhyper} The proposed covariance function in Equation~\eqref{eqn: new kernel} can also be represented in terms of the confluent hypergeometric function of the second kind: \vspace{-4pt}
\begin{equation} \label{eqn: hyperGeo representation}
C(h; \nu, \alpha, \beta, \sigma^2) = 
\frac{\sigma^2 \Gamma(\nu+\alpha)}{\Gamma(\nu)} \mathcal{U}\left(\alpha, 1-\nu, \nu \left(\frac{h}{\beta}\right)^2\right), \vspace{-4pt}
\end{equation}
where $\sigma^2>0, \alpha>0, \beta>0$, and $\nu>0$. We name the proposed covariance class as the {Confluent Hypergeometric} (CH) class after the confluent hypergeometric function.
\end{corollary}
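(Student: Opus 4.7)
The plan is to start from the integral in Equation~\eqref{eqn: new kernel} and match it to the standard Laplace-type integral representation of the Tricomi confluent hypergeometric function of the second kind,
\begin{equation*}
\mathcal{U}(a,b,z) = \frac{1}{\Gamma(a)} \int_0^\infty e^{-zt}\, t^{a-1} (1+t)^{b-a-1}\, dt, \qquad \mathrm{Re}(a)>0,\ \mathrm{Re}(z)>0,
\end{equation*}
which is the form given e.g.\ in Abramowitz and Stegun. The hypothesis $\alpha>0$ and the fact that $h>0$ in any nontrivial use of the covariance function ensure the applicability of this formula.

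The key step is a single change of variables $t = \beta^2/x$ (so $dx = -\beta^2 t^{-2}\, dt$, and the limits swap). Under this substitution I expect
\begin{equation*}
x^{\nu-1}(x+\beta^2)^{-(\nu+\alpha)} \exp\!\left(-\nu h^2/x\right) dx \;=\; \beta^{-2\alpha}\, t^{\alpha-1} (1+t)^{-(\nu+\alpha)} \exp\!\left(-\nu h^2 t/\beta^2\right) dt,
\end{equation*}
where the three powers of $\beta$ ($\beta^{2(\nu-1)}$, $\beta^{-2(\nu+\alpha)}$, $\beta^2$) collapse to $\beta^{-2\alpha}$ and the powers of $t$ ($-(\nu-1)$, $\nu+\alpha$, $-2$) collapse to $\alpha-1$. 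This is the only step with any calculation; the main thing to verify carefully is bookkeeping of exponents.

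Once this substitution is made, reading off $a = \alpha$, $b-a-1 = -(\nu+\alpha)$ so that $b = 1-\nu$, and $z = \nu h^2/\beta^2$ lets me identify the transformed integral as $\Gamma(\alpha)\, \mathcal{U}\bigl(\alpha,\, 1-\nu,\, \nu(h/\beta)^2\bigr)$. Substituting back into the prefactor $\sigma^2\beta^{2\alpha}\Gamma(\nu+\alpha)/\{\Gamma(\nu)\Gamma(\alpha)\}$, the $\beta^{2\alpha}$ cancels against $\beta^{-2\alpha}$ and the $\Gamma(\alpha)$ factors cancel, delivering exactly the expression \eqref{eqn: hyperGeo representation}.

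There is no real obstacle here: the proof is essentially an identification of a known integral representation after one change of variables. The only care needed is ensuring that $\alpha>0$ (for convergence at $t=0$) and that the integral makes sense for $h>0$; the $h=0$ case can be recovered by continuity, using the known value $\mathcal{U}(\alpha,1-\nu,0) = \Gamma(\nu)/\Gamma(\nu+\alpha)$ to check consistency with $C(0; \nu,\alpha,\beta,\sigma^2) = \sigma^2$.
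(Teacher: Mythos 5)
Your proposal is correct and is essentially identical to the paper's proof: the paper also makes the single substitution $x=\beta^2/t$ to arrive at $\frac{\sigma^2\Gamma(\nu+\alpha)}{\Gamma(\nu)\Gamma(\alpha)}\int_0^{\infty} t^{\alpha-1}(t+1)^{-(\nu+\alpha)}\exp(-\nu h^2 t/\beta^2)\,dt$ and then recognizes the Tricomi integral representation from Chapter 13.2 of Abramowitz and Stegun. Your exponent bookkeeping checks out, and the added $h=0$ consistency check via $\mathcal{U}(\alpha,1-\nu,0)=\Gamma(\nu)/\Gamma(\nu+\alpha)$ is a harmless bonus not present in the paper.
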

\begin{proof}
By making the change of variable $x=\beta^2/t$, standard calculation yields that \vspace{-2pt}
\begin{equation*}
C(h; \nu, \alpha, \beta, \sigma^2)  = \frac{\sigma^2\Gamma(\nu+\alpha)}{\Gamma(\nu)\Gamma(\alpha)}  \int_0^{\infty} t^{\alpha-1} (t+1)^{-(\nu+\alpha)} \exp(-\nu h^2 t/\beta^2  ) dt.
\vspace{-2pt}
\end{equation*}
Thus, the conclusion follows by recognizing the form of the confluent hypergeometric function of the second kind $\mathcal{U}(a,b,c)$ from Chapter 13.2 of \citet{Abramowitz1965}.
\end{proof}

Equation~\eqref{eqn: hyperGeo representation} provides a convenient way to evaluate the CH covariance function, since efficient numerical calculation of the  confluent hypergeometric function is implemented in various libraries such as the GNU scientific library \citep{GSL}  and softwares including \textsf{R} and \textsc{MATLAB}, facilitating its practical deployment; see Section~\ref{sec: timing} of the Supplementary Material for an illustration of computing times for Bessel function and confluent hypergeometric function. For certain special parameter values, the evaluation of the confluent hypergeometric covariance function can be as easy as the Mat\'ern covariance function; see Chapter 13.6 of \citet{Abramowitz1965}. Besides the computational convenience, the CH covariance function in Equation~\eqref{eqn: new kernel} also allows us to make precise statements concerning the origin and tail behaviors of the resultant mixture. The next theorem makes the origin and tail behaviors explicit.

\begin{theorem} \label{thm: new horseshoe class}
The CH class has the following two properties:
\begin{itemize}[itemsep=-2pt, topsep=0pt, partopsep=0pt]
    \item[(a)] \textbf{Origin behavior}: The differentiability of the CH class is solely controlled by $\nu$ in the same way as the Mat\'ern class given in Equation~\eqref{eqn: Matern}.
    \item[(b)] \textbf{Tail behavior}: $C(h; \nu, \alpha, \beta, \sigma^2) \sim  \frac{\sigma^2 \beta^{2\alpha}  \Gamma(\nu+\alpha) }{\nu^{\alpha}\Gamma(\nu)}    |h|^{-2\alpha} L(h^2) $ as $h\to \infty$, where $L(x)$ is a slowly varying function at $\infty$ of the form $L(x)=\{x/(x+\beta^2/(2\nu)) \}^{\nu+\alpha}$. 
\end{itemize}
\end{theorem}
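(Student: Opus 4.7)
The plan is to handle parts (a) and (b) by two separate asymptotic analyses of the representations in Equation~\eqref{eqn: new kernel} and Corollary~\ref{lem: conhyper}. For part (a), I would use the confluent hypergeometric form $C(h;\nu,\alpha,\beta,\sigma^2) = \{\sigma^2\Gamma(\nu+\alpha)/\Gamma(\nu)\}\,\mathcal{U}(\alpha, 1-\nu, \nu h^2/\beta^2)$ and invoke the standard Frobenius-type expansion of $\mathcal{U}(a,b,z)$ about $z=0$. For non-integer $\nu$ (so that $1-\nu\notin\mathbb{Z}$), this expansion reads
$$\mathcal{U}(\alpha, 1-\nu, z) = \frac{\Gamma(\nu)}{\Gamma(\nu+\alpha)}\,M(\alpha, 1-\nu, z) + \frac{\Gamma(-\nu)}{\Gamma(\alpha)}\,z^{\nu} M(\nu+\alpha, 1+\nu, z),$$
where $M(\cdot,\cdot,\cdot)$ denotes Kummer's entire confluent hypergeometric function of the first kind. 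Substituting $z=\nu h^2/\beta^2$, the first summand is an even analytic function of $h$, while the second contributes a singular piece of the form $c_\nu\,h^{2\nu}(1+O(h^2))$ with $c_\nu\neq 0$. This is precisely the singular origin structure of the Mat\'ern covariance $\mathcal{M}(h;\nu,\phi,\sigma^2)$ in Equation~\eqref{eqn: Matern}; the two origin expansions share the same critical power of $h$, and since $h^{2\nu}$ is exactly $\lfloor\nu\rfloor$ times differentiable at the origin, the CH class inherits exactly $\lfloor\nu\rfloor$ mean-square derivatives, controlled solely by $\nu$.

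For part (b), I would work directly with the integral in Equation~\eqref{eqn: new kernel} and substitute $u=\nu h^2/x$. A short calculation turns it into
$$C(h;\nu,\alpha,\beta,\sigma^2) = \frac{\sigma^2\beta^{2\alpha}\Gamma(\nu+\alpha)}{\Gamma(\nu)\Gamma(\alpha)}(\nu h^2)^{-\alpha}\int_0^\infty u^{\alpha-1}\Bigl(1+\frac{\beta^2 u}{\nu h^2}\Bigr)^{-(\nu+\alpha)} e^{-u}\,du.$$
For each fixed $u>0$ the factor $(1+\beta^2 u/(\nu h^2))^{-(\nu+\alpha)}$ increases to $1$ as $h\to\infty$ and is bounded above by $1$, so the integrand is dominated by the integrable $u^{\alpha-1}e^{-u}$. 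Dominated convergence then yields limit $\Gamma(\alpha)$ for the integral, giving $C(h;\nu,\alpha,\beta,\sigma^2) \sim \{\sigma^2\beta^{2\alpha}\Gamma(\nu+\alpha)/(\nu^\alpha\Gamma(\nu))\}\,|h|^{-2\alpha}$ as $h\to\infty$. Presenting the result in the slowly varying form with $L(h^2)=\{h^2/(h^2+\beta^2/(2\nu))\}^{\nu+\alpha}$ stated in the theorem is then a cosmetic step, since $L(h^2)\to 1$ and the factor $L(h^2)$ is merely a convenient repackaging that exposes the regular variation structure used downstream.

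The main obstacle I expect is in part (a) rather than part (b): one has to treat the integer-$\nu$ case, where the $\mathcal{U}$ expansion above is no longer directly applicable and must be replaced by the companion expansion containing $z^{\nu}\log z$ terms. This still mirrors the integer-$\nu$ Mat\'ern expansion and therefore yields the same $\lfloor\nu\rfloor$ mean-square derivatives, but the case distinction is a minor nuisance. A cleaner alternative that avoids any case split is to work from the scale mixture representation in Equation~\eqref{eq:mm}: Taylor-expand $\mathcal{M}(h;\nu,\phi,\sigma^2)$ in $h$ around zero using the series for $z^{\nu}\mathcal{K}_\nu(z)$, integrate term by term against the inverse gamma density $\pi(\phi^2;\alpha,\beta)$, and verify that the analytic moments $E(\phi^{-2k})$ and the critical moment $E(\phi^{-2\nu})$ under $\mathcal{IG}(\alpha,\beta^2/2)$ are all finite. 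The interchange of integration and Taylor expansion is justified by dominated convergence, and the resulting origin expansion has the same functional form as Mat\'ern with reweighted coefficients, confirming the claim for both integer and non-integer $\nu$.
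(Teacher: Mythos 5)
Your proposal is correct; both parts go through, but your route differs from the paper's in ways worth recording. For part (b) the paper does not argue by substitution and dominated convergence: it recasts the integral in Equation~\eqref{eqn: new kernel} as a Gaussian scale mixture in a new variable $t=x/(2\nu)$ and then invokes Theorem 6.1 of \cite{barndorff1982normal} on the tails of normal variance mixtures, which is also where the explicit slowly varying factor $L(h^2)$ enters the statement. Your substitution $u=\nu h^2/x$, which isolates the prefactor $(\nu h^2)^{-\alpha}$ and sends the remaining integral monotonically to $\Gamma(\alpha)$, is more elementary and self-contained, produces the same leading constant $\sigma^2\beta^{2\alpha}\Gamma(\nu+\alpha)/\{\nu^{\alpha}\Gamma(\nu)\}$, and you are right that appending $L(h^2)\to 1$ is cosmetic for a statement phrased with $\sim$. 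For part (a), your primary route via the Kummer-type expansion of $\mathcal{U}(\alpha,1-\nu,z)$ at $z=0$ is a valid alternative that works directly from the closed form in Corollary~\ref{lem: conhyper}, provided you supply the logarithmic companion expansion for integer $\nu$ as you note; your ``cleaner alternative'' is in fact essentially the paper's actual proof: expand the Mat\'ern covariance at the origin into an analytic part plus the principal irregular term $a_4(\phi,\nu,\sigma^2)|h|^{2\nu}$ (or $a_2(\phi,\nu,\sigma^2)|h|^{2\nu}\log|h|$ for integer $\nu$), integrate the coefficients against the $\mathcal{IG}(\alpha,\beta^2/2)$ density, and check that the mixed coefficients $\tilde{a}_2,\tilde{a}_4$ are finite and nonzero, so the principal irregular term --- and hence the degree of mean-square differentiability --- is unchanged by the mixing. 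The paper's route handles integer and non-integer $\nu$ uniformly through the known Mat\'ern expansion; your $\mathcal{U}$-expansion route avoids re-deriving the mixture but pays for it with the case split you already flagged.
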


Theorem~\ref{thm: new horseshoe class} indicates that random processes with the CH covariance function are $\lfloor \nu \rfloor$ times mean-square differentiable. The local behavior of this CH covariance function is very flexible in the sense that the parameter $\nu$ can allow for any degree of mean-square differentiability of a weakly stationary process in the same way as the Mat\'ern class. However, its tail behavior is quite different from that of the Mat\'ern class, since the CH covariance has a polynomial tail that decays slower than the exponential tail in the Mat\'ern covariance. This is natural since mixture inflates the tails in general, and in our particular case, changes the exponential decay to a polynomial one. The rate of tail decay is controlled by the parameter $\alpha$. Thus, the CH class is more suitable for modeling polynomially decaying dependence which any exponentially decaying covariance function fails to capture. Moreover, the control over the degree of smoothness of process realizations is not lost.  Theorem~\ref{thm: new horseshoe class} also establishes a very desirable property that the degrees of differentiability near origin and the rate of decay of the tail for the CH covariance are controlled by two different parameters, $\nu$ and $\alpha$, independently of each other. Each of these parameters can allow any degrees of flexibility.

\begin{remark}
{\citet{Porcu2012} point out that it is possible to obtain a covariance function with flexible origin behavior and polynomial tails by simply taking a sum of a Mat\'ern and a Cauchy covariance, which is again a valid covariance function. There are three major difficulties in this approach compared to ours: (a) the individual covariances in such a finite sum are not identifiable and hence practical interpretation becomes difficult, although prediction may still be feasible, (b) this summed covariance has five parameters, hindering its practical use in both frequentist and Bayesian settings, since numerical optimization of the likelihood function is costly and judicious prior elicitation is likely to be difficult. In contrast, our covariance has four parameters, each of which has a well-defined role. Finally, (c) the microergodic parameter under such a summed covariance is not likely available in closed form, in contrast to ours, as derived later in Section~\ref{sec:theory}.}
\end{remark}

\begin{remark}
{Our approach to constructing the CH covariance by mixing over $\phi^2$ leads to a well-defined covariance class that can be used for Gaussian process modeling. The resulting covariance has four parameters and inference can be performed either via maximum likelihood or Bayesian approaches, although we solely focus on the former in the current work.} This construction should not be confused with Bayesian spatial modeling where a standard practice is to put a prior on the spatial range parameter in the Mat\'ern covariance. More importantly, the likelihood under the CH covariance is fundamentally different from the posterior that is proportional to the product of the likelihood under the Mat\'ern covariance and the prior on the spatial range parameter, where the prior could either be discrete or inverse gamma. 
\end{remark}

\begin{remark}
It is also worthing noting that our construction yields a covariance that is fundamentally different from a finite sum of Mat\'ern covariances where the range parameter is assigned a discrete prior, since the latter does not possess polynomially decaying dependence and is undesirable for modeling spatial data in practice due to costly computation and lack of practical motivation. Moreover, individual covariances in the finite sum are not identifiable. 
\end{remark}

\begin{example}
This example visualizes the difference between the CH class and the Mat\'ern class. We fix the \emph{effective range} (ER) at $200$ and $500$, where ER is defined as the distance at which a correlation function has value approximately $0.05$. For the CH class, we find the corresponding scale parameter $\beta$ such that the ER corresponds to $200$ and $500$ under different smoothness parameters $\nu \in \{0.5, 2.5\}$ and different tail decay parameters $\alpha\in \{0.3, 0.5, 1\}$. For the Mat\'ern class, we find the corresponding range parameter $\phi$ such that the ER corresponds to $200$ and $500$ under smoothness parameters $\nu\in \{0.5, 2.5\}$. These correlation functions are visualized in Figure~\ref{fig: correlations with different ERs}. As the CH correlation has a polynomial tail, it drops much faster than the Mat\'ern correlation in order to reach the same correlation 0.05 at the same ER. As $\alpha$ becomes smaller, the new correlation has a heavier tail, and hence it drops more quickly to reach the correlation 0.05 at the same effective range. After the ER, the CH correlation with a smaller $\alpha$ decays slower than those with larger $\alpha$. The faster decay of the tail in the Mat\'ern class is indicated by the behavior after the ER. Corresponding 1-dimensional process realizations can be found in Section~\ref{app: 1D Realization} of the Supplementary Material. 
\end{example}

\captionsetup[figure]{belowskip=12pt}
\begin{figure}[!t]  
\captionsetup[subfigure]{aboveskip=5pt}

\begin{subfigure}{.25\textwidth}
\makebox[\textwidth][c]{ \includegraphics[width=1.0\linewidth, height=0.18\textheight]{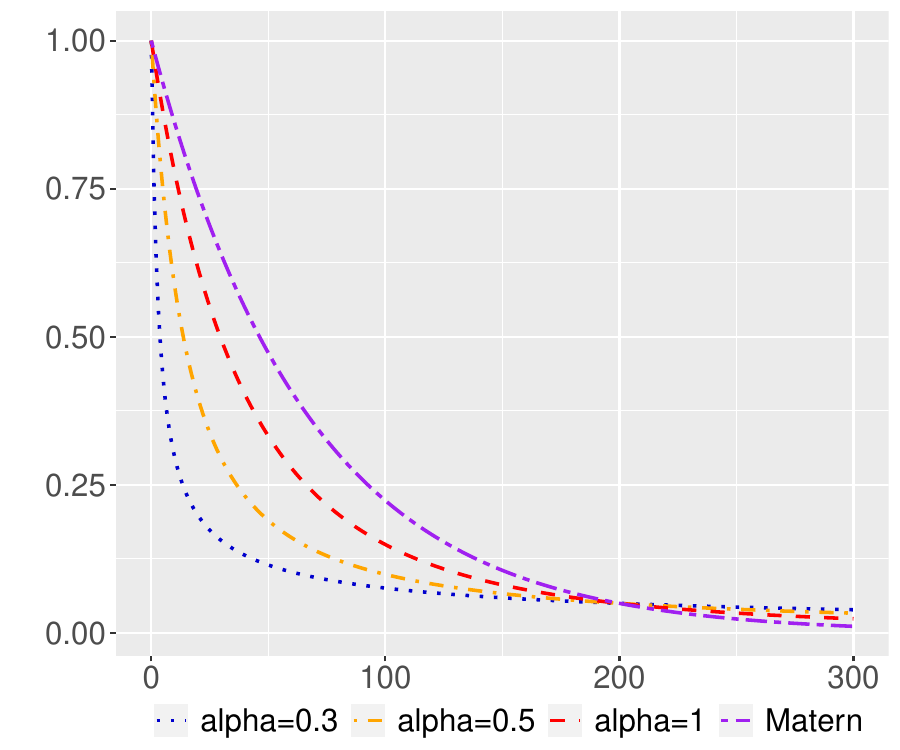}}
\caption{$\nu=0.5, ER=200$}
\end{subfigure}%
\begin{subfigure}{.25\textwidth}
\makebox[\textwidth][c]{ \includegraphics[width=1.0\linewidth, height=0.18\textheight]{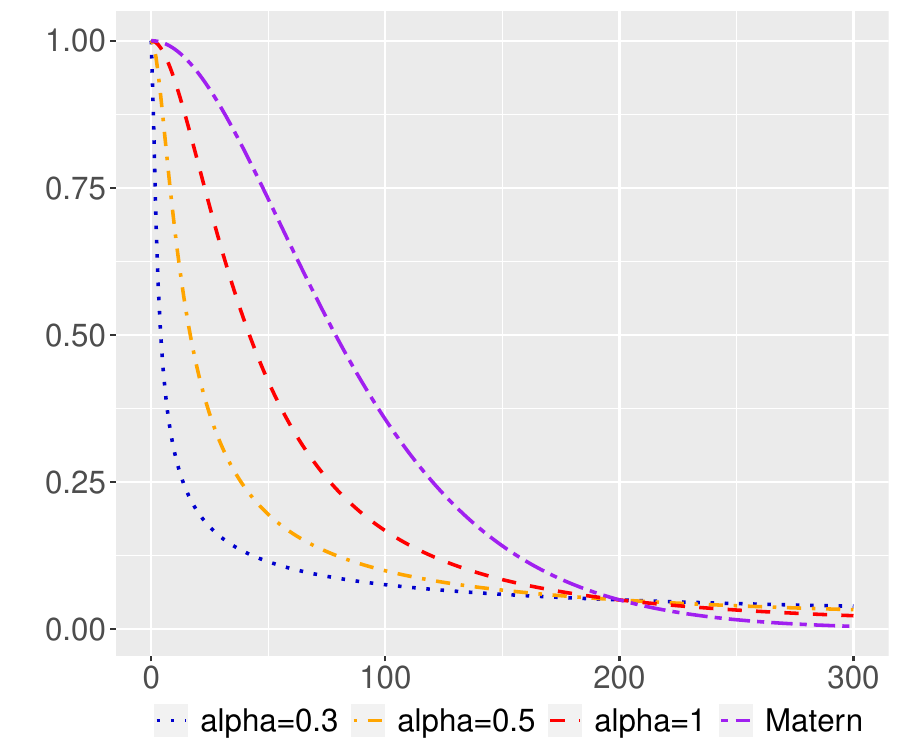}}
\caption{$\nu=2.5, ER=200$}
\end{subfigure}%
\begin{subfigure}{.25\textwidth}
\makebox[\textwidth][c]{ \includegraphics[width=1.0\linewidth, height=0.18\textheight]{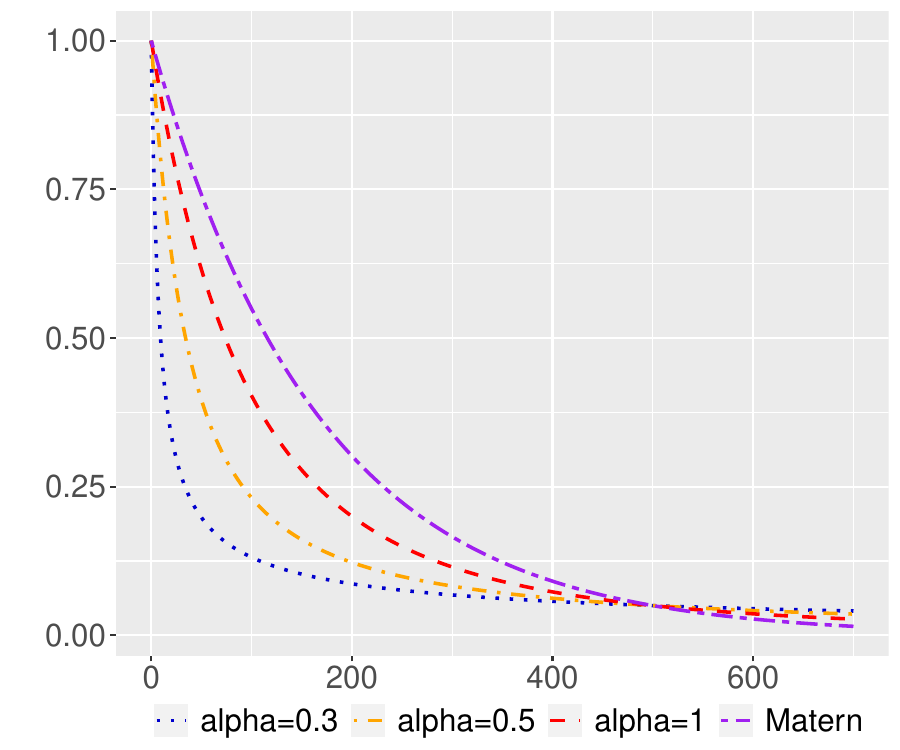}}
\caption{$\nu=0.5, ER=500$}
\end{subfigure}%
\begin{subfigure}{.25\textwidth}
\makebox[\textwidth][c]{ \includegraphics[width=1.0\linewidth, height=0.18\textheight]{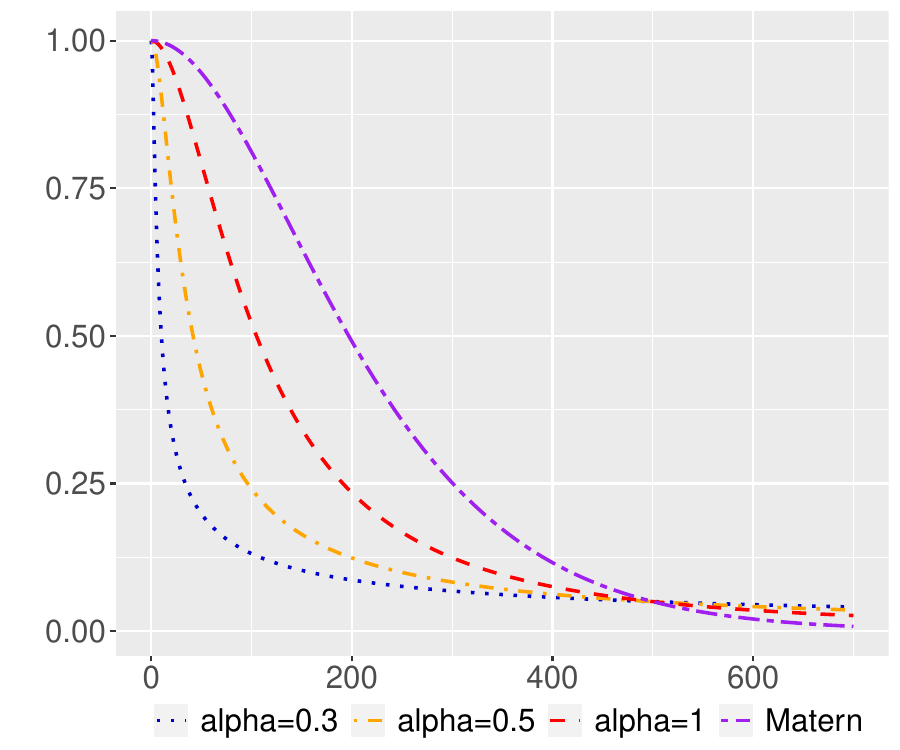}}
\caption{$\nu=2.5,ER=500$}
\end{subfigure}
\caption{Correlation functions for the CH class and the Mat\'ern class. The panels (a) and (b) show the correlation functions with the effective range (ER) at $200$. The panels (c) and (d) show the correlation functions with the effective range (ER) at $500$. ER is defined as the distance at which correlation is approximately $0.05$.}
\label{fig: correlations with different ERs}
\end{figure}
\captionsetup[figure]{belowskip=0pt}

\section{Theoretical Properties of the CH Class} \label{sec:theory}
For an isotropic random field, the properties of a covariance function can be characterized by its spectral density. The tail behavior of the spectral density can be used to derive properties of the theoretical results shown in later sections. The following proposition characterizes the tail behavior of the spectral density for the CH covariance function in Equation~\eqref{eqn: new kernel}.

\begin{proposition}[\textbf{Tail behavior of the spectral density}] \label{thm: spectral density}
The spectral density of the CH covariance function in Equation~\eqref{eqn: new kernel} admits the following tail behavior:
\vspace{-4pt}
\begin{equation*}
f(\omega) \sim \frac{\sigma^2 2^{2\nu} \nu^{\nu}  \Gamma(\nu+\alpha)}{\pi^{d/2}\beta^{2\nu}\Gamma(\alpha)} \omega^{-(2\nu+d)} L(\omega^2), \quad  \omega \to \infty, \vspace{-6pt}
\end{equation*}
where $L(x) = \{ {x}/(x + \beta^2/(2\nu))\}^{\nu+d/2}$ is a slowly varying function at $\infty$.
\end{proposition}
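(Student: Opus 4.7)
The plan is to derive the CH spectral density by exploiting its inverse gamma mixture representation from Theorem~\ref{lem: matern positive function}. The starting point is that the spectral density of the isotropic Mat\'ern covariance in $\mathbb{R}^d$ is the familiar rational function $f_M(\omega;\nu,\phi,\sigma^2) \propto \phi^{-2\nu}\bigl(2\nu/\phi^2 + |\omega|^2\bigr)^{-(\nu+d/2)}$. Since the CH covariance is obtained by integrating the Mat\'ern covariance against an $\IG(\alpha,\beta^2/2)$ density in $\phi^2$, Fubini's theorem (applied to the non-negative Mat\'ern spectral density and the $\IG$ density) permits the same integration at the level of spectral densities, so
\begin{equation*}
f(\omega) \;=\; \int_0^\infty f_M(\omega;\nu,\phi,\sigma^2)\,\pi_{IG}(\phi^2;\alpha,\beta^2/2)\,d\phi^2.
\end{equation*}

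I would then substitute $u = 1/\phi^2$ to turn the inverse gamma density into a gamma-type kernel in $u$, reducing the mixture to a one-dimensional Laplace-type integral
\begin{equation*}
I(\omega) \;=\; \int_0^\infty u^{\nu+\alpha-1}\,\bigl(2\nu u + |\omega|^2\bigr)^{-(\nu+d/2)}\,e^{-\beta^2 u/2}\,du,
\end{equation*}
with all parameter-dependent prefactors collected outside. Rescaling $u = |\omega|^2 t/(2\nu)$ pulls out the leading $|\omega|^{-(2\nu+d)}$ factor and converts the remaining integral to $\int_0^\infty t^{\nu+\alpha-1}(1+t)^{-(\nu+d/2)}\exp\{-\beta^2|\omega|^2 t/(4\nu)\}\,dt$. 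As $|\omega|\to\infty$ the exponential forces the mass of the integrand to concentrate at $t=0$, so Watson's lemma applies: $(1+t)^{-(\nu+d/2)}$ may be replaced by its value $1$ at the origin, and the residual is a standard gamma integral equal to $\Gamma(\nu+\alpha)\{4\nu/(\beta^2|\omega|^2)\}^{\nu+\alpha}$.

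Assembling all of the collected constants then produces the leading coefficient stated in the proposition. The slowly varying factor $L(\omega^2)$ emerges naturally if one repackages the $|\omega|^{-(2\nu+d)}$ asymptote as $(|\omega|^2 + \beta^2/(2\nu))^{-(\nu+d/2)}$ times a pure power of $|\omega|^2$, since $\{|\omega|^2/(|\omega|^2+\beta^2/(2\nu))\}^{\nu+d/2}\to 1$ as $|\omega|\to\infty$ and satisfies $L(tx)/L(x)\to 1$ by inspection. The main obstacle is upgrading the weaker ``$\asymp$'' statement to the sharper asymptotic equivalence ``$\sim$'' claimed in the proposition, which requires a quantitative Watson-lemma remainder bound: expanding $(1+t)^{-(\nu+d/2)} = 1 + O(t)$ and verifying that the $O(t)$ contribution to the rescaled integral is of strictly lower order in $|\omega|$ than the leading term, so that its ratio to the leading term vanishes. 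A secondary technicality to settle early is justifying the interchange of the Fourier transform and the mixing integral, which follows by standard Fubini arguments since both the Mat\'ern spectral density and the $\IG$ density are non-negative and the iterated integral is finite.
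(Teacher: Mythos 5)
Your proposal is correct and reaches the stated constant, but it resolves the key asymptotic step differently from the paper. Both arguments begin identically: by Tonelli/Fubini the CH spectral density is the inverse-gamma mixture of Mat\'ern spectral densities, $f(\omega)=\int_0^\infty f_{\mathcal M}(\omega;\nu,\phi,\sigma^2)\,\pi_{IG}(\phi^2)\,d\phi^2$, with $f_{\mathcal M}(\omega)=\sigma^2(2\nu)^\nu\phi^{-2\nu}\pi^{-d/2}(2\nu\phi^{-2}+\omega^2)^{-(\nu+d/2)}$. From there the paper changes variables $\phi^2=\beta^2 t/\omega^2$ so as to recognize the resulting one-dimensional integral as a Gaussian scale mixture $\int_0^\infty (2\pi t)^{-1/2}\exp\{-\omega^2/(2t)\}\,\pi^*(t)\,dt$ whose mixing density is regularly varying, and then cites Theorem 6.1 of \citet{barndorff1982normal} to read off the tail --- the same device used for the covariance tail in Theorem~\ref{thm: new horseshoe class}(b), and the route by which the slowly varying factor $L(\omega^2)=\{\omega^2/(\omega^2+\beta^2/(2\nu))\}^{\nu+d/2}$ appears organically. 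You instead substitute $u=1/\phi^2$ and rescale $u=\omega^2 t/(2\nu)$ to reduce to the Laplace integral $\int_0^\infty t^{\nu+\alpha-1}(1+t)^{-(\nu+d/2)}e^{-\beta^2\omega^2 t/(4\nu)}\,dt\sim\Gamma(\nu+\alpha)\{4\nu/(\beta^2\omega^2)\}^{\nu+\alpha}$ via Watson's lemma; tracking the prefactors indeed yields $\sigma^2 2^{2\nu}\nu^\nu\Gamma(\nu+\alpha)/\{\pi^{d/2}\beta^{2\nu}\Gamma(\alpha)\}$. Your route is more elementary and self-contained (no appeal to the scale-mixture tail theorem) and comes with an explicit $O(\omega^{-2})$ relative remainder from the $(1+t)^{-(\nu+d/2)}=1+O(t)$ expansion, which is exactly the bound needed to upgrade $\asymp$ to $\sim$; the only thing it does not produce intrinsically is the factor $L(\omega^2)$, but since $L(\omega^2)\to 1$ appending it is a cosmetic repackaging and the two statements of the asymptotic equivalence coincide. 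Your closing remark on justifying the interchange is also the right one: nonnegativity of $f_{\mathcal M}$ and $\pi_{IG}$ plus $\int\int f_{\mathcal M}\,\pi_{IG}=\sigma^2<\infty$ settles it (note only that for $\alpha\le d/2$ the mixture density diverges at $\omega=0$, which is immaterial for the high-frequency tail).
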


Recall that the spectral density of the Mat\'ern class is proportional to $\omega^{-(2\nu+d)}$ for large $\omega$. By mixing over the range parameter with an inverse gamma mixing density,  the high-frequency behavior of the CH class differs from that of the Mat\'ern class by a slowly varying function $L(\omega^2)$ up to a constant that does not depend on any frequency.

\subsection{{Equivalence Results}}
Let $(\Omega, \mathcal{F})$ be a measurable space with sample space $\Omega$ and $\sigma$-algebra $\mathcal{F}$. Two probability measures $\mathcal{P}_1, \mathcal{P}_2$ defined on the same measurable space $(\Omega, \mathcal{F})$ are said to be \emph{equivalent} if $\mathcal{P}_1$ is absolutely continuous with respect to $\mathcal{P}_2$ and $\mathcal{P}_2$ is absolutely continuous with respect to $\mathcal{P}_1$. Suppose that the $\sigma$-algebra $\mathcal{F}$ is generated by a random process $\{Z(\bfs): \bfs \in \mathcal{D}\}$. If $\mathcal{P}_1$  is equivalent to $\mathcal{P}_2$ on the $\sigma$-algebra $\mathcal{F}$, the probability measures $\mathcal{P}_1$ and $\mathcal{P}_2$ are then said to be equivalent on the realizations of the random process $\{Z(\bfs): \bfs \in \mathcal{D}\}$. It follows immediately that the equivalence of two probability measures on the $\sigma$-algebra $\mathcal{F}$ implies that their equivalence on any $\sigma$-algebra $\mathcal{F}' \subset \mathcal{F}$. The equivalence of probability measures has important applications to statistical inferences on parameter estimation and prediction according to \cite{Zhang2004}. The equivalence between $\mathcal{P}_1$ and $\mathcal{P}_2$ implies that $\mathcal{P}_1$ cannot be correctly distinguished from $\mathcal{P}_2$ with probability 1 under measure $\mathcal{P}_1$ for any realizations. Let $\{\mathcal{P}_{\theta}: \theta \in \Theta\}$ be a collection of equivalent measures indexed by $\theta$ in the parameter space $\Theta$.  Let $\hat{\theta}_n$ be an estimator for $\theta$ based on $n$ observations. Then $\hat{\theta}_n$ cannot converge to $\theta$ in probability regardless of what is observed \citep{Zhang2004}. Otherwise, for any fixed $\theta \in \Theta$, there exists a subsequence $\{\hat{\theta}_{n_k}\}_{k\geq 1}$ such that $\hat{\theta}_{n_k}$ converges to $\theta$ with probability 1 under measure $\mathcal{P}_{\theta}$ \citep[see, e.g.,][p.~288]{Dudley2002}. For any $\theta'\in \Theta$ with $\theta'\neq \theta$, it follows from the property of equivalent measures that $\hat{\theta}_{n_k}$ also converges to $\theta$ with probability 1 under measure $\mathcal{P}_{\theta'}$. This further implies that there exists a subsubsequence $\{\hat{\theta}_{n_{k_r}}\}_{r\geq 1}$ that converges to $\theta'$ with probability 1 under measure $\mathcal{P}_{\theta'}$. Hence, the subsequence $\{\hat{\theta}_{n_k}\}_{k\geq 1}$ and its subsubsequence $\{\hat{\theta}_{n_{k_r}}\}_{r\geq 1}$ converge to two different values under the same measure $\mathcal{P}_{\theta'}$. By contradiction, $\hat{\theta}_n$ cannot converge to $\theta$ in probability. This implies that individual parameters cannot be estimated consistently under equivalent measures. The second application of equivalent measures concerns the asymptotic efficiency of predictors that is discussed in Section~\ref{sec: APE}.

The tail behavior of the spectral densities in Proposition~\ref{thm: spectral density} can be used to check the equivalence of probability measures generated by stationary Gaussian random fields. The details of equivalence of Gaussian measures and the condition for equivalence are given in Section~\ref{app: ancilary} of the Supplementary Material. Any zero-mean Gaussian process with a covariance function defines a corresponding Gaussian probability measure. In what follows, we say that the Gaussian probability measure defined under a covariance function implies that such Gaussian measure is defined through a Gaussian process on a bounded domain with mean zero and a covariance function. Our first result on equivalence of two Gaussian measures under the CH class is given in Theorem~\ref{thm: equivalence}.
 
\begin{theorem} \label{thm: equivalence}
Let $\mathcal{P}_i$ be the Gaussian probability measure corresponding to the covariance $C(h; \nu, \alpha_i,$ $\beta_i, \sigma^2_i)$ with $\alpha_i>d/2$ for $i=1,2$. Then $\mathcal{P}_1$ and $\mathcal{P}_2$ are equivalent on the realizations of $\{ Z(\bfs): \bfs \in \mathcal{D} \}$ for any fixed and bounded set $\mathcal{D} \subset \mathbb{R}^d$ with infinite locations in $\mathcal{D}$ and $d=1,2,3$ if and only if  \vspace{-6pt}
    \begin{equation} \label{eqn: consistency for new covariance} \vspace{-4pt}
    	\frac{\sigma^2_1\Gamma(\nu+\alpha_1)}{\beta_1^{2\nu} \Gamma(\alpha_1)} = \frac{\sigma^2_2 \Gamma(\nu+\alpha_2)}{\beta_2^{2\nu}\Gamma(\alpha_2)}. 
    \end{equation} 
\end{theorem}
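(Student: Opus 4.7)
The plan is to invoke the spectral equivalence criterion for mean-zero stationary Gaussian measures on bounded subsets of $\mathbb{R}^d$ with $d \leq 3$, as collected in Section~\ref{app: ancilary} of the Supplementary Material. Under this criterion the two measures are equivalent if and only if the high-frequency integral $\int_{|\omega|>M}\{(f_1(\omega)-f_2(\omega))/f_2(\omega)\}^2\, d\omega$ is finite for some $M>0$, where $f_i$ denotes the spectral density of the $i$-th CH covariance. The condition $\alpha_i > d/2$ guarantees that the CH covariance lies in $L^1(\mathbb{R}^d)$ (since its tail is $O(|h|^{-2\alpha})$ by Theorem~\ref{thm: new horseshoe class}), so each $f_i$ is a continuous, strictly positive function and the low-frequency contribution is automatically finite. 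The entire question therefore reduces to the tail behavior of $f_i$ supplied by Proposition~\ref{thm: spectral density}.

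For the sufficient direction, assume that \eqref{eqn: consistency for new covariance} holds. Writing
\[ A_i := \sigma_i^2 2^{2\nu}\nu^\nu \Gamma(\nu+\alpha_i)/\{\pi^{d/2}\beta_i^{2\nu}\Gamma(\alpha_i)\}, \]
the microergodic identity is exactly $A_1=A_2=:A$. A binomial expansion of the slowly varying factor gives
\[ L_i(\omega^2)=\{1+\beta_i^2/(2\nu\omega^2)\}^{-(\nu+d/2)}=1-\tfrac{(\nu+d/2)\beta_i^2}{2\nu\omega^2}+O(|\omega|^{-4}), \]
so substituting into Proposition~\ref{thm: spectral density} yields $f_1(\omega)-f_2(\omega)=A|\omega|^{-(2\nu+d)}\{L_1(\omega^2)-L_2(\omega^2)\}+O(|\omega|^{-(2\nu+d+4)})=O(|\omega|^{-(2\nu+d+2)})$, and hence $(f_1(\omega)-f_2(\omega))/f_2(\omega)=O(|\omega|^{-2})$ as $|\omega|\to\infty$. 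Passing to polar coordinates, the tail integrand decays like $|\omega|^{d-5}$, which is integrable at infinity precisely when $d\leq 3$, so the equivalence criterion is verified.

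For the necessary direction, suppose \eqref{eqn: consistency for new covariance} fails, so $A_1\neq A_2$. Since $L_i(\omega^2)\to 1$, Proposition~\ref{thm: spectral density} gives $f_1(\omega)/f_2(\omega)\to A_1/A_2\neq 1$, whence $\{(f_1-f_2)/f_2\}^2$ approaches the strictly positive constant $(A_1/A_2-1)^2$ at infinity. The equivalence integral then diverges in every dimension, so the criterion rules out equivalence and, by the Hajek--Feldman dichotomy for Gaussian measures, the two measures are mutually singular on any bounded domain containing infinitely many locations.

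The main obstacle I anticipate is upgrading the leading-order $\sim$ statement of Proposition~\ref{thm: spectral density} to a pointwise bound sharp enough to give the $O(|\omega|^{-2})$ control of $(f_1-f_2)/f_2$. When $A_1=A_2$, the leading term cancels exactly and only the next correction (coming from $L_1-L_2$) survives; justifying this cancellation requires a second-order matched expansion of each spectral density together with a uniform remainder estimate, rather than just the displayed equivalence. A minor secondary point is verifying that $\alpha_i>d/2$ is precisely the integrability threshold that makes $f_i$ a continuous positive density, so that the spectral equivalence criterion from the supplement applies verbatim.
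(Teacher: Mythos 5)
Your sufficiency argument is essentially the paper's: both verify the boundedness condition~\eqref{eqn: boundedness} using $\alpha_i>d/2$, expand the slowly varying factors to get $(f_1-f_2)/f_i=O(|\omega|^{-2})$, and conclude that the tail integral~\eqref{eqn: equivalence for SGRF} converges for $d\le 3$. The subtlety you flag --- that Proposition~\ref{thm: spectral density} only gives a leading-order $\sim$ statement, whereas the cancellation of the leading terms requires a second-order expansion with a controlled remainder --- is real, and the paper's own displayed computation also glosses over it by manipulating the asymptotic forms as if they were exact. So on this half you are on the same footing as the paper.

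The necessity direction, however, has a genuine gap. The spectral criterion collected in Section~\ref{app: ancilary} is stated (and is, in the form used here) only a \emph{sufficient} condition for equivalence: finiteness of $\int_{|\omega|>c}\{(f_1-f_2)/f_1\}^2\,d\omega$ implies equivalence, but its divergence does not imply singularity, and invoking the Feldman--H\'ajek dichotomy does not repair this --- the dichotomy tells you the measures are either equivalent or singular, not which of the two holds when a sufficient condition fails. Your claim that ``the equivalence integral then diverges \ldots so the criterion rules out equivalence'' is therefore a non sequitur. The paper closes this direction differently: assuming the microergodic parameters differ, it constructs an auxiliary variance $\sigma_0^2$ so that $C(h;\nu,\alpha_1,\beta_1,\sigma_0^2)$ has the same microergodic parameter as $C(h;\nu,\alpha_2,\beta_2,\sigma_2^2)$ and is hence equivalent to $\mathcal{P}_2$ by the sufficiency half; since $C(h;\nu,\alpha_1,\beta_1,\sigma_0^2)$ and $C(h;\nu,\alpha_1,\beta_1,\sigma_1^2)$ differ only by a variance rescaling with $\sigma_0^2\neq\sigma_1^2$, they are mutually singular (the normalized quadratic form $\bfZ_n^\top\bfR_n^{-1}(\bftheta_1)\bfZ_n/n$ converges almost surely to a different constant under each, as in the proof of Theorem 2 of Zhang, 2004), so $\mathcal{P}_1$ and $\mathcal{P}_2$ cannot be equivalent. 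You need some argument of this type --- an almost-surely consistent functional that separates the two measures --- to establish necessity; the tail integral alone cannot do it.
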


An immediate consequence of Theorem~\ref{thm: equivalence} is that for fixed $\nu$; the tail decay parameter $\alpha$, the scale parameter $\beta$ and the variance parameter $\sigma^2$ cannot be estimated consistently under the infill asymptotics. Instead, the quantity ${\sigma^2\beta^{-2\nu} \Gamma(\nu+\alpha)}/{\Gamma(\alpha)}$ is consistently estimable and has been referred to as the \emph{microergodic parameter}. We refer the readers to page 163 of \cite{Stein1999} for the definition of microergodicity. 

Theorem~\ref{thm: equivalence} gives the result on equivalent measures within the CH class. The CH class can allow the same smoothness behavior as the Mat\'ern class, but it has a polynomially decaying tail that is quite different from the Mat\'ern class. One may ask whether there is an analogous result on the Gaussian measures under the CH class and the Mat\'ern class.    Theorem~\ref{thm: equivalence with matern} provides an answer to this question.

\begin{theorem} \label{thm: equivalence with matern}
Let $\mathcal{P}_1$ be the Gaussian probability measure under the CH covariance $C(h; \nu, \alpha, \beta, \sigma^2_1)$ with $\alpha>d/2$ and  $\mathcal{P}_2$ be the Gaussian probability measure under the Mat\'ern covariance function $\mathcal{M}(h; \nu, \phi, \sigma^2_2)$. If \vspace{-2pt}
    \begin{equation} \label{eqn: equivalence with matern} \vspace{-2pt}
   \sigma^2_1 (\beta^2/2)^{-\nu} \Gamma(\nu+\alpha) / \Gamma(\alpha) = \sigma^2_2 \phi^{-2\nu}, 
    \end{equation} 
 then
    $\mathcal{P}_1$ and $\mathcal{P}_2$ are equivalent on the realizations of $\{ Z(\bfs): \bfs \in \mathcal{D} \}$ for any fixed and bounded set $\mathcal{D} \subset \mathbb{R}^d$ with $d=1,2,3$.
\end{theorem}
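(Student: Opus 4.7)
My plan is to reduce the claim to a verification of the spectral-density (Ibragimov--Rozanov / Skorokhod--Yadrenko) criterion for equivalence of zero-mean stationary Gaussian measures on a bounded subset of $\mathbb{R}^d$ with $d\le 3$, the same tool invoked for Theorem~\ref{thm: equivalence}. Precisely, if $f_1$ and $f_2$ are the two isotropic spectral densities, both decaying like $\omega^{-(2\nu+d)}$ at infinity, equivalence on any fixed bounded set follows once $\lim_{\omega\to\infty} f_1(\omega)/f_2(\omega) = 1$ and
\begin{equation*}
\int_{|\omega|>c} \left(\frac{f_1(\omega)-f_2(\omega)}{f_2(\omega)}\right)^{2} d\omega < \infty
\end{equation*}
for some $c>0$ (Theorem~4 in Chapter~6 of \cite{Stein1999}, together with the ancillary discussion in Section~\ref{app: ancilary} of the Supplementary Material).

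The first step is to write down the two spectral densities. Proposition~\ref{thm: spectral density} supplies the CH tail, $f_1(\omega) \sim A_1\,\omega^{-(2\nu+d)} L_1(\omega^2)$ with the slowly varying factor $L_1(x) = \{x/(x+\beta^2/(2\nu))\}^{\nu+d/2}$, and the Mat\'ern spectral density is known in closed form,
\begin{equation*}
f_2(\omega) = A_2\,\omega^{-(2\nu+d)} L_2(\omega^2), \qquad L_2(x) = \left(\frac{x}{x+2\nu/\phi^{2}}\right)^{\nu+d/2},
\end{equation*}
where $A_2 = \sigma_2^{2} \Gamma(\nu+d/2)(2\nu)^{\nu}/\{\pi^{d/2}\Gamma(\nu)\phi^{2\nu}\}$ and $A_1$ is the analogous CH leading constant read off from Proposition~\ref{thm: spectral density}. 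Each $L_i$ admits a two-term expansion $1 + O(1/x)$.

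The second step is to check that condition \eqref{eqn: equivalence with matern} is exactly the leading-coefficient identity $A_1 = A_2$. Once this holds, $(f_1-f_2)/f_2 = L_1(\omega^{2})/L_2(\omega^{2}) - 1 = O(\omega^{-2})$ uniformly for $|\omega|>c$. Squaring and switching to spherical coordinates produces $\int_{c}^{\infty} \omega^{-4}\omega^{d-1}\,d\omega$, which is finite precisely because $d\le 3$, so the Ibragimov--Rozanov integral converges and equivalence follows. Note that the argument is one-directional here (sufficiency), which matches the ``if'' formulation of the theorem, in contrast with the ``if and only if'' of Theorem~\ref{thm: equivalence}.

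The main obstacle I anticipate is the second-order tail expansion of the CH spectral density. Proposition~\ref{thm: spectral density} records only the leading order, and establishing $L_1(x)=1+O(1/x)$ with the correct remainder requires expanding $f_1$, written as an inverse-gamma mixture of Mat\'ern spectral densities over $\phi^{2}$, one further order in $1/\omega^{2}$. This amounts to a Watson-type expansion of a Laplace-style integral in $\phi^{2}$, carried out uniformly in $\omega$ on $\{|\omega|>c\}$ so that the pointwise $O(\omega^{-2})$ bound may be squared and integrated. Once that expansion is in hand, the remaining algebra---matching constants to recover \eqref{eqn: equivalence with matern} and bounding the squared integral---is essentially mechanical.
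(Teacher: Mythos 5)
Your proposal matches the paper's proof: both verify the Ibragimov--Rozanov spectral criterion by checking that condition \eqref{eqn: equivalence with matern} equates the leading spectral constants and that the relative error $(f_1-f_2)/f_2$ is $O(\omega^{-2})$, whose square integrated against $\omega^{d-1}$ is finite for $d\le 3$. The second-order expansion of the CH spectral density that you flag as the main obstacle is precisely the step the paper carries out (in fact somewhat cavalierly, by treating the asymptotic form $f_1(\omega)\sim k_1\{\omega^2+\beta^2/(2\nu)\}^{-(\nu+d/2)}$ from Proposition~\ref{thm: spectral density} as if it were exact), so your plan is on the same track.
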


Theorem~\ref{thm: equivalence with matern} gives the conditions under which the Gaussian measures under the CH class and the Mat\'ern class are equivalent. If the condition in Equation~\eqref{eqn: equivalence with matern} is satisfied, the Gaussian measure under the CH class cannot be distinguished from the Gaussian measure under the Mat\'ern class, regardless of what is observed. This shows the robustness property for statistical inference under the CH class when the underlying true covariance model is the Mat\'ern class.

In Section~\ref{subsec: asymptotic normality}, the microergodic parameter of the CH class can be shown to be consistently estimated under infill asymptotics for a Gaussian process under the CH model with fixed and known $\nu$. Moreover, one can show that the maximum likelihood estimator of this microergodic parameter converges to a normal distribution. 

\subsection{Asymptotic Normality} \label{subsec: asymptotic normality}
Let $\{ Z(\bfs): \bfs \in \mathcal{D} \}$ be a zero mean Gaussian process with the covariance function $C(h; \nu, \alpha, \beta,$ $\sigma^2)$, where $\mathcal{D} \subset \mathbb{R}^d$ is a bounded subset of $\mathbb{R}^d$ with $d=1,2,3$. Let $\bfZ_n :=(Z(\bfs_1), \ldots, Z(\bfs_n))^\top$ be a partially observed realization of the process $Z(\cdot)$ at $n$ distinct locations in $\mathcal{D}$, denoted by  $\mathcal{D}_n: = \{ \bfs_1, \ldots, \bfs_n\}$. Then the log-likelihood function is 
\vspace{-4pt}
\begin{equation} \label{eqn: loglikelihood}
\ell_n(\sigma^2, \bftheta) = - \frac{1}{2} \left\{ n \log(2\pi \sigma^2) + \log|\bfR_n(\bftheta)| + \frac{1}{\sigma^2} \bfZ_n^\top \bfR^{-1}_n(\bftheta) \bfZ_n  \right\},
\vspace{-4pt}
\end{equation}
where $\bftheta := \{ \alpha, \beta \}$ and $\bfR_n(\bftheta) = [R(|\bfs_i- \bfs_j|; \bftheta)]_{i,j=1, \ldots, n}$ is an $n\times n$ correlation matrix with  the correlation function $R(h) := C(h)/\sigma^2$.

In what follows, $\nu$ is assumed to be known and fixed. Let $\hat{\sigma}^2_n$ and $\hat{\bftheta}_n$ be the maximum likelihood estimators (MLE) for $\sigma^2$ and $\bftheta$ by maximizing the log-likelihood function in Equation~\eqref{eqn: loglikelihood}. To show the consistency and asymptotic normality results for the microergodic parameter, we first obtain an estimator for $\sigma^2$ when $\bftheta$ is fixed:
$ 
\hat{\sigma}^2_n  = \bfZ_n^\top \bfR_n^{-1}(\bftheta) \bfZ_n / n. 
$
Then, let  $\hat{c}_n(\bftheta)$ be the maximum likelihood estimator of $c(\bftheta):={\sigma^2\beta^{-2\nu} \Gamma(\nu+\alpha)}/{\Gamma(\alpha)}$, as a function of $\bftheta$, given by 
\begin{equation*} 
\hat{c}_n(\bftheta) = \hat{c}_n(\alpha, \beta) = \frac{\hat{\sigma}^2_n\Gamma(\nu+\alpha)}{\beta^{2\nu} \Gamma(\alpha)} =  \frac{\bfZ_n^\top \bfR_n^{-1}(\bftheta) \bfZ_n \Gamma(\nu+\alpha)}{n \beta^{2\nu}\Gamma(\alpha)}. 
\end{equation*}

For notational convenience, we use $c(\alpha, \beta)$ instead of $c(\bftheta)$ to denote the microergodic parameter in what follows. We discuss three situations. In the first situation, we consider joint estimation of $\beta$ and $\sigma^2$ 
for fixed $\alpha$. 
The MLE of $\beta$ will be denoted by $\hat{\beta}_n$, and the MLE of the microergodic parameter is $\hat{c}_n(\alpha, \hat{\beta}_n)$. In the second situation, we consider joint estimation of $\alpha$ and $\sigma^2$ 
for fixed $\beta$. The MLE of $\alpha$ will be denoted by $\hat{\alpha}_n$ and the MLE of the microergodic parameter is $\hat{c}_n(\hat{\alpha}_n, \beta)$. In the third situation, we consider joint estimation of all parameters $\alpha, \beta, \sigma^2$. 
The corresponding MLE for $c(\bftheta)$ is denoted by $\hat{c}_n(\hat{\bftheta}_n)$, where $\hat{\bftheta}_n:=\{\hat{\alpha}_n, \hat{\beta}_n\}$. Note that the MLEs of either $\alpha$ or $\beta$ (or both) are typically computed numerically, since there is no closed-form expression. We have the following results on the asymptotic properties of $\hat{c}_n(\bftheta)$ for various scenarios of $\alpha$ and $\beta$ under the infill asymptotics.

\begin{theorem}[\textbf{Asymptotics of the MLE}] \label{thm: ML consistency2}
Let $\mathcal{P}_0$ be the Gaussian measure defined under the covariance function $C(h; \nu, \alpha_0, \beta_0, \sigma^2_0)$ with $\sigma^2_0>0$, and let $\bftheta_0:=\{\alpha_0, \beta_0\}$.
Let $Z_n$ be the set of observations generated under $\mathcal{P}_0$. Then the following results can be established: 
\begin{enumerate}[itemsep=-8pt, topsep=0pt, partopsep=0pt]
\item[(a)] Suppose that $\alpha_0>d/2$ and $\beta_0 \in [\beta_L, \beta_U]$, where $\beta_L, \beta_U$ are fixed constants such that $0<\beta_L<\beta_U$. For any fixed $\alpha>d/2$, if $(\hat{\sigma}^2_n, \hat{\beta}_n)$ maximizes the log-likelihood function~\eqref{eqn: loglikelihood} over $(0, \infty) \times [\beta_L, \beta_U]$, then as $n\to \infty$, $\hat{c}_n(\alpha, \hat{\beta}_n) \stackrel{a.s.}{\longrightarrow} c(\bftheta_0)$ under $\mathcal{P}_0$ and  $\sqrt{n} \left  \{ \hat{c}_n(\alpha, \hat{\beta}_n)- c(\bftheta_0) \right\} \stackrel{\mathcal{L}}{\longrightarrow} \mathcal{N}\left (0, 2[c(\bftheta_0) ]^2   \right)$.
\item[(b)] Suppose that  $\alpha_0\in [\alpha_L, \alpha_U]$ and $\beta_0>0$, where $\alpha_L, \alpha_U$ are fixed constants such that $d/2<\alpha_L<\alpha_U$.  For any fixed $\beta>0$, if $(\hat{\sigma}^2_n, \hat{\alpha}_n)$ maximizes the log-likelihood function~\eqref{eqn: loglikelihood} over $(0, \infty) \times [\alpha_L, \alpha_U]$, then as $n\to \infty$,  $\hat{c}_n(\hat{\alpha}_n, {\beta}) \stackrel{a.s.}{\longrightarrow} c(\bftheta_0)$ under $\mathcal{P}_0$ and $\sqrt{n} \left  \{ \hat{c}_n(\hat{\alpha}_n, {\beta})- c(\bftheta_0) \right\} \stackrel{\mathcal{L}}{\longrightarrow} \mathcal{N}\left (0, 2[c(\bftheta_0) ]^2   \right)$.
\item[(c)] Suppose that $\alpha_0\in [\alpha_L, \alpha_U]$ and $\beta_0 \in [\beta_L, \beta_U]$ where $\alpha_L, \alpha_U, \beta_L, \beta_U$ are fixed constants such that $d/2<\alpha_L<\alpha_U$ and $0<\beta_L<\beta_U$. If $(\hat{\sigma}^2_n, \hat{\alpha}_n, \hat{\beta}_n)$ maximizes the log-likelihood function~\eqref{eqn: loglikelihood} over $(0, \infty) \times [\alpha_L, \alpha_U] \times [\beta_L, \beta_U]$, then as $n\to \infty$, $\hat{c}_n(\hat{\bftheta}_n) \stackrel{a.s.}{\longrightarrow} c(\bftheta_0)$ under $\mathcal{P}_0$ and $\sqrt{n} \left  \{ \hat{c}_n(\hat{\bftheta}_n)- c(\bftheta_0) \right\} \stackrel{\mathcal{L}}{\longrightarrow} \mathcal{N}\left (0, 2[c(\bftheta_0) ]^2   \right)$.
\end{enumerate}
\end{theorem}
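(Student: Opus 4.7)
My plan is to prove all three parts of the theorem in parallel, adapting the infill-asymptotic template developed by \citet{Zhang2004} for the Mat\'ern family, with the equivalence-of-measures result in Theorem~\ref{thm: equivalence} playing the analogous role for the CH class. The central device is the following: for any admissible $\bftheta = (\alpha,\beta)$ in the relevant compact parameter region, define
\begin{equation*}
\tilde{\sigma}^2(\bftheta) := c(\bftheta_0)\,\beta^{2\nu}\Gamma(\alpha)/\Gamma(\nu+\alpha),
\end{equation*}
which is precisely the unique value of $\sigma^2$ for which the CH covariance with parameters $(\nu,\alpha,\beta,\tilde{\sigma}^2(\bftheta))$ generates a Gaussian measure equivalent to $\mathcal{P}_0$, by Theorem~\ref{thm: equivalence}. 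Under that equivalent measure, the quantity $\bfZ_n^\top \bfR_n^{-1}(\bftheta)\bfZ_n/n$ is the profile MLE of $\sigma^2$ and converges almost surely to $\tilde{\sigma}^2(\bftheta)$ by standard Gaussian arguments; equivalence then lifts this a.s.\ limit to $\mathcal{P}_0$. Multiplying by $\Gamma(\nu+\alpha)/(\beta^{2\nu}\Gamma(\alpha))$ yields $\hat{c}_n(\bftheta)\to c(\bftheta_0)$ almost surely under $\mathcal{P}_0$, with the limit independent of $\bftheta$, which is the pointwise form of consistency.

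To convert the pointwise consistency into a statement at the MLE $\hat{\bftheta}_n$, I would upgrade to uniform convergence over the compact parameter region, exploiting continuity of $\bftheta\mapsto\bfR_n(\bftheta)$ together with uniform eigenvalue bounds on $\bfR_n^{-1}(\bftheta)$. Since $\hat{\bftheta}_n$ lies in the compact set by assumption, this yields $\hat{c}_n(\hat{\bftheta}_n)\stackrel{a.s.}{\to} c(\bftheta_0)$ in all three parts. For asymptotic normality, I would represent $\bfZ_n = \sigma_0\bfR_n(\bftheta_0)^{1/2}\bfW_n$ with $\bfW_n\sim\mathcal{N}(\bfzero,\bfI_n)$, so that for fixed $\bftheta$,
\begin{equation*}
\sqrt{n}\bigl\{\hat{c}_n(\bftheta)-c(\bftheta_0)\bigr\} = \frac{\sigma_0^2\,\Gamma(\nu+\alpha)}{\beta^{2\nu}\Gamma(\alpha)\sqrt{n}}\bigl\{\bfW_n^\top \bfA_n(\bftheta)\bfW_n - \operatorname{tr}\bfA_n(\bftheta)\bigr\} + o_p(1),
\end{equation*}
where $\bfA_n(\bftheta) := \bfR_n(\bftheta_0)^{1/2}\bfR_n^{-1}(\bftheta)\bfR_n(\bftheta_0)^{1/2}$. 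A standard CLT for quadratic forms in independent standard Gaussians then delivers a mean-zero normal limit with asymptotic variance proportional to $2\operatorname{tr}\{\bfA_n(\bftheta)^2\}/n$; the remaining task is to show this trace ratio yields the claimed variance $2[c(\bftheta_0)]^2$. This follows from the high-frequency spectral-density ratio between equivalent CH covariances tending to $1$, which I would establish from Proposition~\ref{thm: spectral density} via standard regular-variation arguments showing that the slowly varying factor $L(\omega^2)$ does not affect the leading asymptotic behavior. Finally, to replace the fixed $\bftheta$ by $\hat{\bftheta}_n$, a profile-likelihood argument suffices: the score in $\bftheta$ vanishes at the MLE, and because Fisher information along the non-microergodic directions is asymptotically degenerate (a direct consequence of Theorem~\ref{thm: equivalence}), the first-order correction is $o_p(1)$ and the limit law is preserved.

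The hardest step, I expect, will be the uniform-in-$\bftheta$ control of the trace ratios $\operatorname{tr}\{\bfA_n(\bftheta)^k\}/n$ for $k=1,2$ under infill asymptotics. The polynomial tail of the CH covariance, in contrast to the exponential tail of Mat\'ern, introduces a slowly varying factor $L(\omega^2)$ in the spectral density (Proposition~\ref{thm: spectral density}) whose negligible asymptotic contribution must be verified via Karamata-type arguments from the theory of regular variation, rather than by direct estimates available in the Mat\'ern setting. A secondary subtlety is that in parts (b) and (c) the parameter $\alpha$ appears in $c(\bftheta)$ nonlinearly through the ratio $\Gamma(\nu+\alpha)/\Gamma(\alpha)$, so the profile argument requires continuous differentiability and uniform bounds for this map on $[\alpha_L,\alpha_U]$ together with a careful verification that the $\sqrt{n}$-scale fluctuations of $\hat{\alpha}_n$ are absorbed at order $o_p(1)$ in the final Taylor expansion.
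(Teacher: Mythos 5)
Your treatment of the fixed-$\bftheta$ case is broadly aligned with the paper: the paper's Lemma~\ref{thm: ML consistency} establishes exactly the pointwise statement you describe (a.s.\ convergence via the Zhang-type equivalence argument built on Theorem~\ref{thm: equivalence}, and asymptotic normality via simultaneous diagonalization of $\sigma_0^2\bfR_n(\bftheta_0)$ and $\sigma^2\bfR_n(\bftheta)$, which is the same object as your $\bfA_n(\bftheta)$). However, the step where you pass from fixed $\bftheta$ to the maximizer $\hat{\bftheta}_n$ is where your proposal diverges from the paper and where it has genuine gaps. The paper does \emph{not} prove uniform convergence over the compact parameter region; instead it proves a monotonicity lemma (Lemma~\ref{lem: monotonicity}): $\hat{c}_n(\alpha,\beta)$ is nondecreasing in $\beta$ for fixed $\alpha$ and nonincreasing in $\alpha$ for fixed $\beta$, established by showing that a suitably scaled difference of CH correlation matrices is positive semidefinite (via positivity of the corresponding spectral-density difference, with a Gamma stochastic-ordering argument for the $\alpha$-direction). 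Since $\hat{\alpha}_n,\hat{\beta}_n$ lie in the assumed compact rectangles, $\hat{c}_n(\hat{\bftheta}_n)$ is squeezed between $\hat{c}_n$ evaluated at corner points of the rectangle, each of which obeys the fixed-$\bftheta$ limit theorem; both the a.s.\ limit and the limiting distribution then transfer by the sandwich. Your alternative route is not viable as sketched: under infill asymptotics there are no uniform-in-$n$ eigenvalue bounds on $\bfR_n^{-1}(\bftheta)$ (the correlation matrix becomes arbitrarily ill-conditioned as the domain fills in — only the \emph{relative} eigenvalues $\lambda_{k,n}$ of one covariance against the other are bounded), so "continuity plus uniform eigenvalue bounds" does not deliver uniform a.s.\ convergence; and the closing "degenerate Fisher information" profile-score expansion is a heuristic, not an argument.

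A second, quantitative gap concerns the CLT transfer at fixed $\bftheta$. You assert that the asymptotic variance follows because the spectral-density ratio of equivalent CH covariances tends to $1$, handled by Karamata-type arguments. That limit alone is not sufficient: what the proof actually requires is the rate bound $\sum_{k=1}^n|\lambda_{k,n}-1|=o(\sqrt{n})$ together with $\max_i\lambda_{i,n}^{-1}=O(1)$ (the paper's Lemma~\ref{lem: inequality}), so that $n^{-1/2}\{\sigma^{-2}\bfZ_n^\top\bfR_n^{-1}(\bftheta)\bfZ_n-\sigma_0^{-2}\bfZ_n^\top\bfR_n^{-1}(\bftheta_0)\bfZ_n\}\to 0$ in probability. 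Establishing that rate uses the square-integrability of $(f_1-f_2)/f_1$ from the equivalence condition~\eqref{eqn: equivalence for SGRF} and the Wang--Loh mollification/Bessel-inequality machinery, not merely the pointwise convergence of the ratio to one. If you want to complete your version of the argument, you would either need to reproduce that eigenvalue estimate or adopt the paper's monotonicity-plus-squeeze device, which sidesteps uniformity in $\bftheta$ entirely.
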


The first two results of Theorem~\ref{thm: ML consistency2} imply that the microergodic parameter can be estimated consistently by fixing $\alpha>d/2$ or $\beta>0$ in compact sets. In practice, fixing either $\alpha$ or $\beta$ may be too restrictive for modeling spatial processes. 
We would expect that the finite sample prediction performance can be improved by jointly estimating all covariance parameters for the CH class, which should be the preferred approach for practical purposes. 

The third result of Theorem~\ref{thm: ML consistency2} establishes that the microergodic parameter can be consistently estimated by jointly maximizing the log-likelihood~\eqref{eqn: loglikelihood} over $\alpha$ and $\beta$. However, the current result requires that $\alpha>d/2$. This means that the CH covariance cannot decay too slowly in its tail in order to establish the consistency result. Nevertheless, this result shows a significant improvement over existing asymptotic normality results for other types of polynomially decaying covariance functions. For instance, it was shown by \cite{Bevilacqua2019} that the microergodic parameter in the generalized Cauchy class can be estimated consistently under infill asymptotics. However, their results assume that the parameter that controls the tail behavior is fixed. This is similar to the first result of Theorem~\ref{thm: ML consistency2}.  Unlike their results, a theoretical improvement in Theorem~\ref{thm: ML consistency2} is that the asymptotic results for the microergodic parameter $c(\bftheta)$ can be obtained for the joint estimation of all three parameters, including the parameter that controls the decay of the tail. We provide extensive numerical evidence in support of Theorem~\ref{thm: ML consistency2} in Section~\ref{sec: mle} of the Supplementary Material.


\subsection{Asymptotic Prediction Efficiency} \label{sec: APE}
This section is focused on studying the prediction problem of Gaussian process at a new location $\mathbf{s}_0 \in \mathcal{D} \cap \mathcal{D}_n^c$. This problem has been studied extensively when an incorrect covariance model is used. Our focus here is to show the asymptotic efficiency and asymptotically correct estimation of prediction variance in the context of the CH class. \cite{Stein1988} shows that both of these two properties hold when the Gaussian measure under a misspecified covariance model is equivalent to the Gaussian measure under the true covariance model. In the case of the CH class, Theorem~\ref{thm: equivalence} gives the conditions for equivalence of two Gaussian measures in the light of the microergodic parameter $c(\bftheta)={\sigma^2\beta^{-2\nu} \Gamma(\nu+\alpha)}/{\Gamma(\alpha)}$.  As in Section~\ref{subsec: asymptotic normality}, $\nu$ will be assumed to be fixed. 

With observations generated under the CH model $C(h; \nu, \alpha, \beta, \sigma^2)$, we define the best linear unbiased predictor for $Z(\bfs_0)$ to be
\vspace{-12pt}
\begin{equation} \label{eqn: blup}
\hat{Z}_n(\bftheta) = \bfr_n^\top(\bftheta) \bfR^{-1}_n(\bftheta) \bfZ_n,
\vspace{-12pt}
\end{equation} 
where $\bfr_n(\bftheta) :=[R(|\bfs_0-\bfs_i|; \bftheta)]_{i=1,\ldots, n}$ is an $n$-dimensional vector. This predictor depends only on correlation parameters $\{\alpha, \beta\}$. If the true covariance is $C(h; \nu, \alpha_0, \beta_0, \sigma^2_0)$, the mean squared error of the predictor in Equation~\eqref{eqn: blup} is given by 
$
\text{Var}_{\nu, \bftheta_0, \sigma^2_0} \{ \hat{Z}_n(\bftheta) - Z(\bfs_0) \} =
\sigma^2_0\left \{ 1 -2 \bfr_n^\top( \bftheta) \bfR_n^{-1}(\bftheta) \bfr_n(\bftheta_0) \right. \\
\left. + \bfr_n^\top(\bftheta) \bfR_n^{-1}(\bftheta) \bfR_n(\bftheta_0) \bfR_n^{-1}(\bftheta) \bfr_n(\bftheta) 
\right\}. 
$
If $\bftheta=\bftheta_0$, i.e., $\alpha=\alpha_0$ and $\beta=\beta_0$, the above expression simplifies to 
\vspace{-10pt}
\begin{equation}\label{eqn: MSE}
\text{Var}_{\nu, \bftheta_0, \sigma^2_0} \{ \hat{Z}_n(\bftheta_0) - Z(\bfs_0) \} = \sigma^2_0\left\{1 -\bfr_n^\top(\bftheta_0) \bfR_n^{-1}(\bftheta_0) \bfr_n(\bftheta_0)   \right\}. \vspace{-10pt}
\end{equation}
If the true model is $\mathcal{M}(h; \nu, \phi, \sigma^2)$, analogous expressions can be derived for $\text{Var}_{\nu, \phi_0, \sigma^2_0} \{\hat{Z}_n(\bftheta) - Z(\bfs_0) \}$.

Let $\mathcal{P}_0$ be the Gaussian measure defined under the true covariance model and $\mathcal{P}_1$ be the Gaussian measure defined under the misspecified covariance model.  The following results concern the asymptotic equivalence between the best linear predictor (BLP) under a misspecified probability measure $\mathcal{P}_1$ and the BLP under the the true measure $\mathcal{P}_0$.

\begin{theorem}\label{thm: prediction efficiency with new covariance}
 Suppose that $\mathcal{P}_0, \mathcal{P}_1$ are two Gaussian probability measures defined by a zero mean Gaussian process with the CH class $C(h; \nu, \alpha_i, \beta_i, \sigma^2_i)$ for $i=1,2$ on $\mathcal{D}$. The following results hold true:
\begin{itemize}[itemsep=-2pt, topsep=0pt, partopsep=0pt]
\item[(a)] For any fixed $\bftheta_1$, under $\mathcal{P}_0$, as $n\to \infty$, 
$$
\frac{\text{Var}_{\nu, \bftheta_0, \sigma^2_0} \{ \hat{Z}_n(\bftheta_1) - Z(\bfs_0) \} }{\text{Var}_{\nu, \bftheta_0, \sigma^2_0} \{ \hat{Z}_n(\bftheta_0) - Z(\bfs_0) \} }  \to 1.
$$ 
\item[(b)] Moreover, if $\sigma^2_0\beta_0^{-2\nu} \Gamma(\nu+\alpha_0)/\Gamma(\alpha_0) = \sigma^2_1\beta_1^{-2\nu} \Gamma(\nu+\alpha_1)/\Gamma(\alpha_1)$, then under $\mathcal{P}_0$, as $n\to \infty$, \\
$$
\frac{\text{Var}_{\nu, \bftheta_1, \sigma^2_1} \{ \hat{Z}_n(\bftheta_1) - Z(\bfs_0) \} }{\text{Var}_{\nu, \bftheta_0, \sigma^2_0} \{ \hat{Z}_n(\bftheta_1) - Z(\bfs_0) \} }  \to 1.
$$
\item[(c)] Let $\hat{\sigma}^2_n = \bfZ_n^\top \bfR_n^{-1}(\bftheta_1)\bfZ_n/n$. It follows that almost surely under $\mathcal{P}_0$, as $n\to \infty$,
\vspace{-6pt}
$$
\frac{\text{Var}_{\nu, \bftheta_1, \hat{\sigma}^2_n} \{ \hat{Z}_n(\bftheta_1) - Z(\bfs_0) \} }{\text{Var}_{\nu, \bftheta_0, \sigma^2_0} \{ \hat{Z}_n(\bftheta_1) - Z(\bfs_0) \} }  \to 1. \vspace{-4pt}
$$
\end{itemize}
\end{theorem}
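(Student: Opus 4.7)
The plan is to reduce all three statements to consequences of the classical principle of \citet{Stein1988,Stein1999} that two zero-mean Gaussian measures, equivalent on the sample paths over a bounded domain, yield asymptotically equivalent best linear predictors together with asymptotically correct nominal prediction variances. The two ingredients at hand are Proposition~\ref{thm: spectral density}, which implies that the ratio $f_0(\omega)/f_1(\omega)$ of the CH spectral densities tends to the ratio $c(\bftheta_0,\sigma^2_0)/c(\bftheta_1,\sigma^2_1)$ of microergodic parameters as $|\omega|\to\infty$ (the slowly varying factors cancel in the limit), and Theorem~\ref{thm: equivalence}, which identifies equivalence of two CH Gaussian measures with equality of these microergodic parameters under $\alpha_i>d/2$.

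For part (a), the essential observation is that the BLP $\hat{Z}_n(\bftheta)=\bfr_n^\top(\bftheta)\bfR_n^{-1}(\bftheta)\bfZ_n$ depends only on the correlation parameters $\bftheta=(\alpha,\beta)$, not on $\sigma^2$. I therefore introduce an auxiliary variance $\tilde{\sigma}^2_1$ satisfying $\tilde{\sigma}^2_1\beta_1^{-2\nu}\Gamma(\nu+\alpha_1)/\Gamma(\alpha_1)=c(\bftheta_0,\sigma^2_0)$, so that by Theorem~\ref{thm: equivalence} the measure $\tilde{\mathcal{P}}_1$ generated by $C(h;\nu,\alpha_1,\beta_1,\tilde{\sigma}^2_1)$ is equivalent to $\mathcal{P}_0$. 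Stein's asymptotic efficiency theorem then yields the ratio limit in (a), since the BLP under $\tilde{\mathcal{P}}_1$ coincides with that under $\mathcal{P}_1$. Part (b) is then immediate: its hypothesis is precisely the condition of Theorem~\ref{thm: equivalence}, so $\mathcal{P}_0$ and $\mathcal{P}_1$ are themselves equivalent, and the asymptotic correctness of the nominal MSPE is the second conclusion of the same Stein theorem applied directly to $\mathcal{P}_0,\mathcal{P}_1$.

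For part (c), I would reduce to (b) by producing a deterministic $\tilde{\sigma}^2_1$ with matched microergodic parameter and showing $\hat{\sigma}^2_n/\tilde{\sigma}^2_1\to 1$ almost surely under $\mathcal{P}_0$. Since the nominal MSPE with either variance equals that variance times the common scalar $1-\bfr_n^\top(\bftheta_1)\bfR_n^{-1}(\bftheta_1)\bfr_n(\bftheta_1)$, the target ratio factors as $(\hat{\sigma}^2_n/\tilde{\sigma}^2_1)$ times the ratio appearing in (b), and the latter tends to $1$. The claim $\hat{\sigma}^2_n\to\tilde{\sigma}^2_1$ almost surely is precisely the assertion that the plug-in microergodic quantity $\hat{\sigma}^2_n\beta_1^{-2\nu}\Gamma(\nu+\alpha_1)/\Gamma(\alpha_1)$ converges almost surely to $c(\bftheta_0,\sigma^2_0)$ under $\mathcal{P}_0$; this is the fixed-$\bftheta$ case underlying the proof of Theorem~\ref{thm: ML consistency2}.

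The main obstacle is the careful invocation of Stein's asymptotic-efficiency theorems: they require a positive finite limit of the spectral-density ratio at infinity together with mild tail regularity, which Proposition~\ref{thm: spectral density} supplies, but translating the spectral input into each of the three ratio statements, and lifting from in-probability to almost-sure convergence in (c), requires some bookkeeping. A secondary subtlety is that the theorem as stated does not explicitly assume $\alpha_i>d/2$; this is essential for invoking Theorem~\ref{thm: equivalence} in the construction of $\tilde{\mathcal{P}}_1$ and should be understood as part of the hypothesis.
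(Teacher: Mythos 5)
Your proposal is correct and follows essentially the same route as the paper: parts (a) and (b) rest on the high-frequency spectral ratio of the two CH densities (which tends to a positive constant in general and to $1$ under the matched microergodic condition) combined with Stein's asymptotic-efficiency theorems, and part (c) uses exactly the paper's factorization through an auxiliary deterministic $\sigma^2_1$ with matched microergodic parameter together with the almost-sure convergence $\hat{\sigma}^2_n\to\sigma^2_1$ from the fixed-$\bftheta$ consistency lemma. The only cosmetic difference is that for part (a) the paper invokes Theorem 1 of Stein (1993) directly on the spectral ratio (which need only converge to a positive finite limit), so no auxiliary equivalent measure is needed there; your remark that $\alpha_i>d/2$ is implicitly required is a fair observation about the theorem's hypotheses.
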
 

Part (a) of Theorem~\ref{thm: prediction efficiency with new covariance} implies that if the smoothness parameter $\nu$ is correctly specified, any values for $\alpha$ and $\beta$ will result in asymptotically efficient predictors. The condition $\sigma^2_0\beta_0^{-2\nu} \Gamma(\nu+\alpha_0)/\Gamma(\alpha_0) = \sigma^2_1\beta_1^{-2\nu} \Gamma(\nu+\alpha_1)/\Gamma(\alpha_1)$ is not necessary for asymptotic efficiency, but it provides asymptotically correct estimate of the mean squared prediction error (MSPE). The quantity $\text{Var}_{\nu, \bftheta_1, \sigma^2_1}$ $\{ \hat{Z}_n(\bftheta_1)$ $- Z(\bfs_0) \}$ is the MSPE for $\hat{Z}_n(\bftheta_1)$ under the model $C(h; \nu, \alpha_1, \beta_1, \sigma^2_1)$, while the quantity $\text{Var}_{\nu, \bftheta_0, \sigma^2_0} \{ \hat{Z}_n(\bftheta_1) - Z(\bfs_0) \}$ is the true MSPE for $\hat{Z}_n(\bftheta_1)$ under the true model $C(h; \nu, \alpha_0, \beta_0, \sigma^2_0)$. In practice, it is common to estimate model parameters and then prediction is made by plugging these estimates into Equations \eqref{eqn: blup} and \eqref{eqn: MSE}. Part (c) shows the same convergence results when $\bftheta$ is fixed at $\bftheta_1$, but $\sigma^2$ is estimated via the maximum likelihood method. 


One can conjecture that the result in Part (c) of Theorem~\ref{thm: prediction efficiency with new covariance} still holds if $\bftheta_1$ is replaced by its maximum likelihood estimator, but its proof seems elusive. Theorem~\ref{thm: prediction efficiency with new covariance} demonstrates the asymptotic prediction efficiency for the CH class. The following results are established to show the asymptotic efficiency of the best linear predictor under the CH class when the true Gaussian measure is defined by a zero-mean Gaussian process under the Mat\'ern class.

\begin{theorem}\label{thm: PE with Matern covariance}
Let $\mathcal{P}_0$ be the Gaussian probability measure under the Mat\'ern covariance $\mathcal{M}(h; \nu, \phi, \sigma^2_0)$ and $\mathcal{P}_1$ be the Gaussian probability measure under the CH covariance $C(h; \nu, \alpha, \beta, \sigma^2_1)$ on $\mathcal{D}$. $\hat{Z}_n(\alpha, \beta)$ be the kriging predictor under $C(h; \nu, \alpha, \beta, \sigma^2_1)$ and $\hat{Z}_n(\phi)$ be the kriging predictor under $\mathcal{M}(h; \nu, \phi, \sigma^2_0)$.  If the condition in Equation~\eqref{eqn: equivalence with matern} is satisfied, then it follows that under the Gaussian measure $\mathcal{P}_0$,  as $n\to \infty$, 
\vspace{-1pt}
\begin{equation*}
\frac{\text{Var}_{\nu, \alpha, \beta, \sigma^2_1} \{ \hat{Z}_n(\alpha, \beta) - Z(\bfs_0) \} }{\text{Var}_{\nu, \phi, \sigma^2_0} \{ \hat{Z}_n(\phi) - Z(\bfs_0) \} }  \to 1,
\end{equation*} 
for any fixed $\alpha>0$ and  $\beta>0$.
\end{theorem}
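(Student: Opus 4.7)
The plan is to deduce this result by combining the measure-equivalence established in Theorem~\ref{thm: equivalence with matern} with the classical asymptotic prediction results for equivalent Gaussian measures due to \cite{Stein1988}, in essentially the same spirit as Theorem~\ref{thm: prediction efficiency with new covariance}. Specifically, under the condition in Equation~\eqref{eqn: equivalence with matern}, Theorem~\ref{thm: equivalence with matern} guarantees that the Mat\'ern measure $\mathcal{P}_0$ and the CH measure $\mathcal{P}_1$ are equivalent on the realizations of $\{Z(\bfs):\bfs\in\mathcal{D}\}$. Since the $\sigma$-algebra generated by $(\bfZ_n,Z(\bfs_0))$ is contained in the one generated by the full process, the joint laws of these finite-dimensional vectors under $\mathcal{P}_0$ and $\mathcal{P}_1$ are mutually absolutely continuous for each $n$, which is exactly the setting in which Stein's theorems apply.

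I would then invoke two standard consequences of Gaussian measure equivalence and multiply them. The first is asymptotic efficiency of the misspecified BLP: under $\mathcal{P}_0$, with $\hat{Z}_n(\alpha,\beta)$ computed from the CH correlation structure,
\begin{equation*}
\frac{\text{Var}_{\nu,\phi,\sigma^2_0}\{\hat{Z}_n(\alpha,\beta)-Z(\bfs_0)\}}{\text{Var}_{\nu,\phi,\sigma^2_0}\{\hat{Z}_n(\phi)-Z(\bfs_0)\}}\to 1.
\end{equation*}
The second is the asymptotically correct estimation of MSPE under the wrong measure: under $\mathcal{P}_0$,
\begin{equation*}
\frac{\text{Var}_{\nu,\alpha,\beta,\sigma^2_1}\{\hat{Z}_n(\alpha,\beta)-Z(\bfs_0)\}}{\text{Var}_{\nu,\phi,\sigma^2_0}\{\hat{Z}_n(\alpha,\beta)-Z(\bfs_0)\}}\to 1.
\end{equation*}
Taking the product of the two ratios and cancelling the common intermediate quantity $\text{Var}_{\nu,\phi,\sigma^2_0}\{\hat{Z}_n(\alpha,\beta)-Z(\bfs_0)\}$ delivers exactly the convergence claimed in the theorem.

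The main obstacle lies in verifying that Stein's theorems apply across distinct covariance families. Both of the limits invoked above require that the spectral densities of the CH and Mat\'ern models agree at high frequencies up to a constant factor, with the constants matched by Equation~\eqref{eqn: equivalence with matern}. By Proposition~\ref{thm: spectral density}, the CH spectral density behaves like $\omega^{-(2\nu+d)}L(\omega^2)$ with $L$ slowly varying and tending to $1$, whereas the Mat\'ern spectral density is asymptotically proportional to $\omega^{-(2\nu+d)}$ with a known leading constant; the ratio thus tends to a finite positive number, and the matching condition~\eqref{eqn: equivalence with matern} is precisely what equates those leading constants. Because this spectral matching is exactly what was already used to establish equivalence in Theorem~\ref{thm: equivalence with matern}, the heavy lifting has essentially been done there, and the application of Stein's results becomes routine; the only delicate point is confirming that his arguments, originally stated within a single covariance family, carry over verbatim once equivalence of the two Gaussian measures is in hand.
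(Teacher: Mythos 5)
Your proposal is correct and reaches the result by essentially the same mechanism as the paper, with one difference in routing. The paper's own proof does not pass through the equivalence of measures at all: it simply verifies the spectral condition directly, showing $\lim_{\omega\to\infty} f_1(\omega)/f_0(\omega) = k_1/k_0 \lim_{\omega\to\infty}(2\nu\phi^{-2}\omega^{-2}+1)^{\nu+d/2} = 1$ once the leading constants $k_0=\sigma^2_0\phi^{-2\nu}$ and $k_1=\sigma^2_1(\beta^2/2)^{-\nu}\Gamma(\nu+\alpha)/\Gamma(\alpha)$ are equated by Equation~\eqref{eqn: equivalence with matern}, and then invokes the Stein (1993) prediction theorems whose hypothesis is exactly this high-frequency ratio condition together with the boundedness condition on $f_0(\omega)|\omega|^{2\nu+d}$. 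You instead go through Theorem~\ref{thm: equivalence with matern} to obtain equivalence of $\mathcal{P}_0$ and $\mathcal{P}_1$ and then cite Stein (1988), whose hypothesis is measure equivalence; the telescoping product of the efficiency ratio and the MSPE-calibration ratio is the same decomposition the paper uses in its proof of Theorem~\ref{thm: prediction efficiency with new covariance}. Both routes rest on the identical computation (matching the $\omega^{-(2\nu+d)}$ leading constants of the two spectral densities, with the CH slowly varying factor tending to one), so nothing is missing; the spectral-ratio route is marginally more economical because it does not require the full equivalence machinery, only the tail behavior of $f_1/f_0$.
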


A key consequence of Theorem~\ref{thm: PE with Matern covariance}  is that when a true Gaussian process is generated by the Mat\'ern covariance model, the CH covariance model~\eqref{eqn: new kernel} can yield an asymptotically equivalent BLP. The practical implication is when the true model is generated from the Mat\'ern class, the predictive performance under the CH class is indistinguishable from that under the Mat\'ern class as the number of observations gets larger in a fixed domain. Both Theorem~\ref{thm: prediction efficiency with new covariance} and Theorem~\ref{thm: PE with Matern covariance} imply that the kriging predictor under the CH class can allow robust prediction property even if the underlying true covariance model is misspecified.

\section{Numerical Illustrations}\label{sec:numerical}
In this section, we use simulated examples to study the properties of the CH class and compare with alternative covariance models. In what follows, we compare the CH model with the other two covariance models:  the Mat\'ern class and the generalized Cauchy class. The predictive performance is evaluated based on root mean-squared prediction errors (RMSPE), coverage probability of the 95\% percentile confidence intervals (CVG), and the average length of the predictive confidence intervals (ALCI) at held-out locations.

The goal of this section is to study the finite sample predictive performance under the CH model in interpolative settings. Specifically, we consider three different cases, where the true covariance model is specified as the Mat\'ern covariance (Case 1), the CH covariance (Case 2) and the generalized Cauchy (GC) covariance (Case 3), respectively. The Mat\'ern class is very flexible near origin and has an exponentially decaying tail, the CH class is also very flexible near origin but has a polynomially decaying tail, and the GC class is either non-differentiable or infinitely differentiable and has a polynomially decaying tail. The GC covariance has the form $C(h)=\sigma^2\left\{ 1 + (h/\phi)^{\delta} \right\}^{-\lambda/\delta}$, where $\sigma^2>0$ is the variance parameter, $\phi>0$ is the range parameter, $\lambda \in (0, d]$ is the parameter controlling the degree of polynomial decay, and $\delta \in (0, 2]$ is the smoothness parameter. When $\delta\in (0, 2)$, the corresponding process is nowhere mean-square differentiable. When $\delta=2$, it corresponds to the Cauchy covariance, whose process is infinitely mean-square differentiable. For each case, predictive performance is compared at held-out locations with estimated covariance structures.

We simulate data in the square domain $\mathcal{D}=[0, 2000]\times [0, 2000]$ from mean zero Gaussian processes with three different covariance models: the Mat\'ern covariance (Case 1), the CH covariance (Case 2), and the GC covariance (Case 3) for a variety of settings. We simulate $n=2000$ data points via maximin Latin hypercube design \citep{Stein1987} for parameter estimation and evaluate predictive performance at 10-by-10 regular grid points in $\mathcal{D}$. 
We fix the variance parameter at 1 and consider moderate spatial dependence with effective range (ER) at 200 and 500 for the underlying true covariances. For each of these simulation settings, we use 30 different random number seeds to generate the realizations. We always choose the same smoothness parameter for the Mat\'ern class and the CH class. For the GC covariance, we fix its smoothness parameter to be $\delta=\min\{2\nu, d\}$, since the Gaussian measure with the Mat\'ern class could be equivalent to that with the GC class as pointed by \cite{Bevilacqua2019}. However, the smoothness parameter $\delta$ in the GC class cannot be greater than 2, otherwise the GC class is no longer a valid covariance function. 

\subsection{Case 1: Examples with the Mat\'ern Class as Truth} \label{sec: case1}

In Case 1, we simulate Gaussian process realizations from the Mat\'ern model with smoothness parameter $\nu$ fixed at 0.5 and 2.5 and effective range at 200 and 500.  The parameters in each covariance model are estimated based on profile likelihood as described in Section~\ref{subsec: asymptotic normality}. Figure~\ref{fig: simulation setting under case 1} shows the estimated covariance structures and summary of prediction results. Regardless of the smoothness behavior and strength of dependence in the underlying true process, there is no clear difference between the CH class and the Mat\'ern class in terms of estimated covariance structures and prediction performance. In contrast, the estimated GC covariance structure only performs as accurately as the Mat\'ern class when $\nu=0.5$. When the process is twice mean-square differentiable ($\nu=2.5$), as expected, the GC class cannot mimic such behavior, and hence, yields worse estimates of the covariance structures and prediction results compared to both the Mat\'ern class and the CH class. The CH covariance is able to capture the true covariance structure as implied by Theorem~\ref{thm: PE with Matern covariance}. In terms of RMSPE, there is no clear difference between the estimated CH covariance and the estimated Mat\'ern covariance. However, the CVG and ALCI based on the CH class are slightly larger than those based on the estimated Mat\'ern covariance.

\begin{figure}[!t] 
\begin{subfigure}{.35\textwidth}
  \centering
\makebox[\textwidth][c]{ \includegraphics[width=1.0\linewidth, height=0.12\textheight]{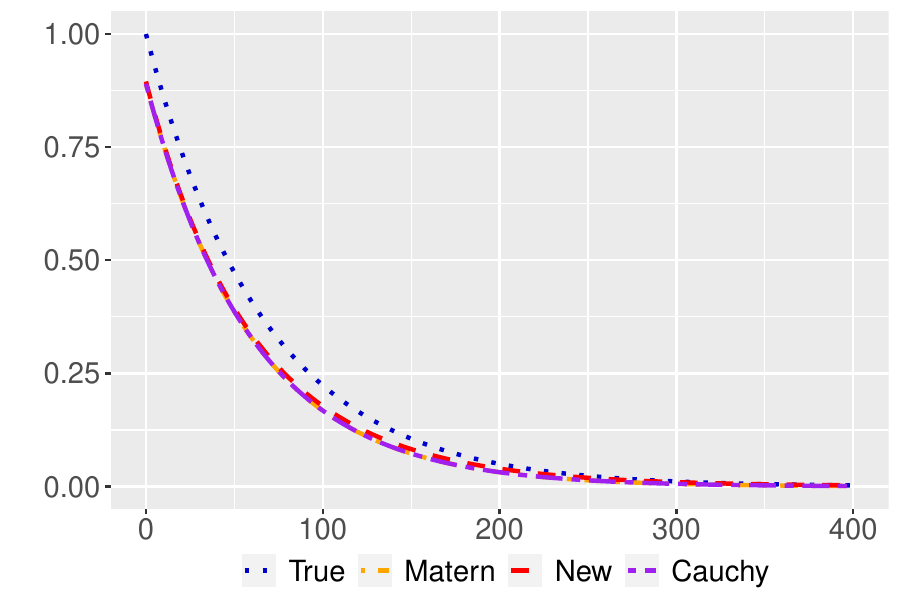}}
\end{subfigure}%
\begin{subfigure}{.65\textwidth}
  \centering
\makebox[\textwidth][c]{ \includegraphics[width=1.0\linewidth, height=0.12\textheight]{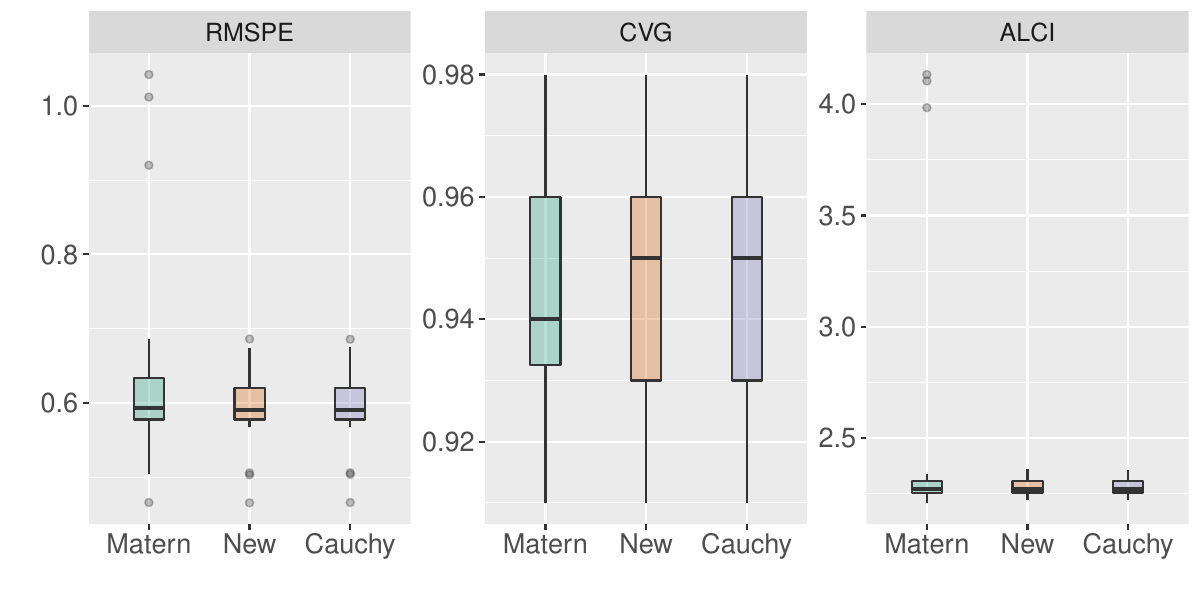}}
\end{subfigure}
\caption*{$\nu=0.5, ER=200$}

\begin{subfigure}{.35\textwidth}
  \centering
\makebox[\textwidth][c]{ \includegraphics[width=1.0\linewidth, height=0.12\textheight]{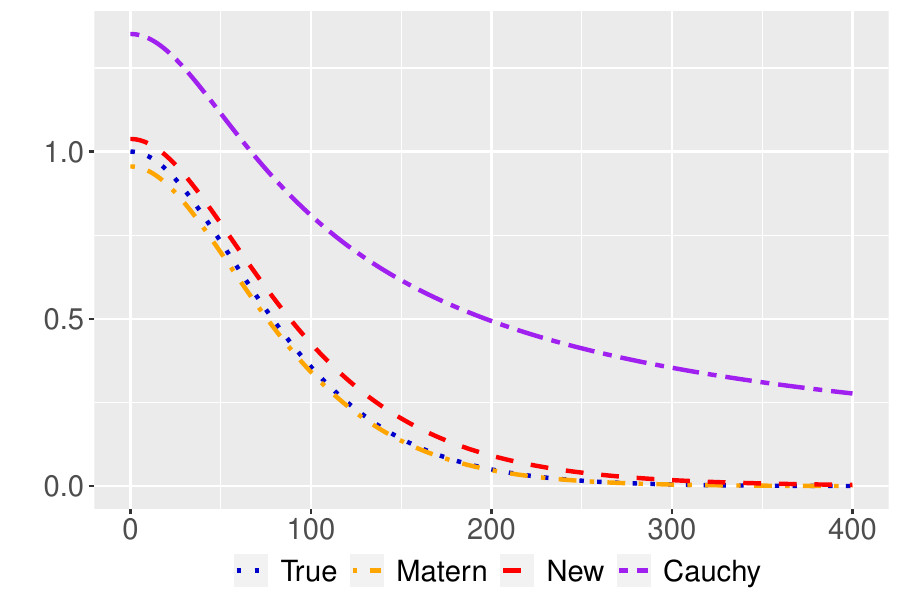}}
\end{subfigure}%
\begin{subfigure}{.65\textwidth}
  \centering
\makebox[\textwidth][c]{ \includegraphics[width=1.0\linewidth, height=0.12\textheight]{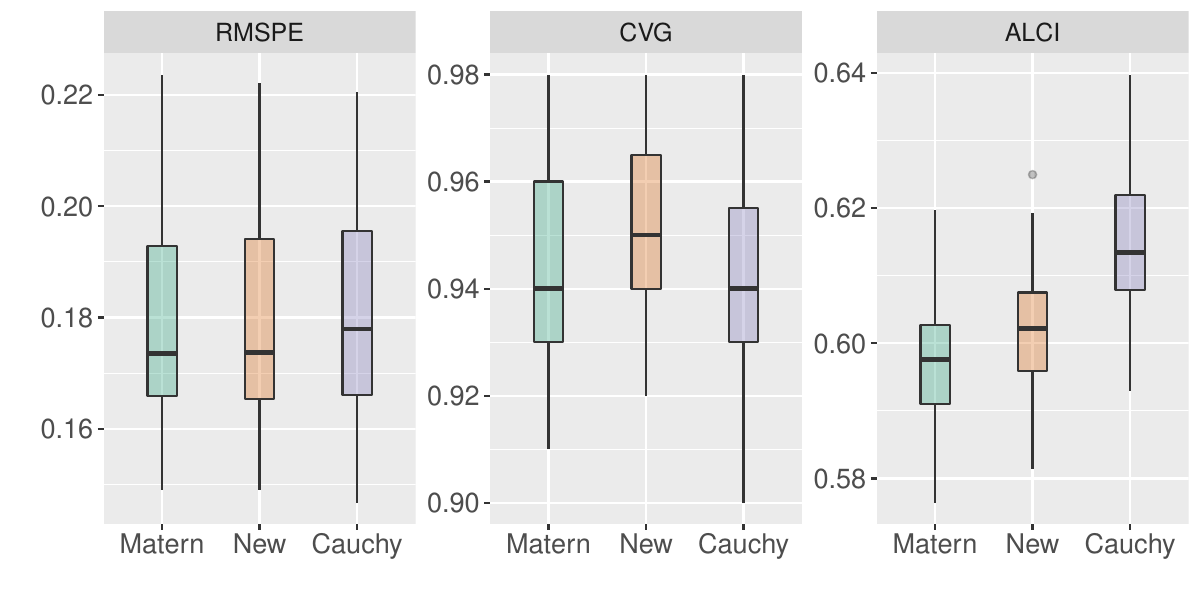}}
\end{subfigure}
\caption*{$\nu=2.5, ER=200$}

\begin{subfigure}{.35\textwidth}
  \centering
\makebox[\textwidth][c]{ \includegraphics[width=1.0\linewidth, height=0.12\textheight]{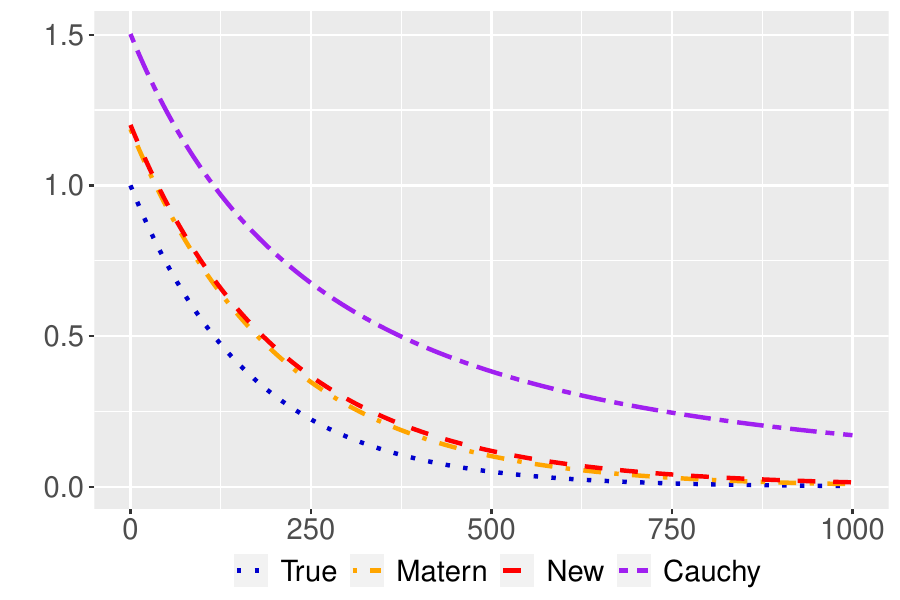}}
\end{subfigure}%
\begin{subfigure}{.65\textwidth}
  \centering
\makebox[\textwidth][c]{ \includegraphics[width=1.0\linewidth, height=0.12\textheight]{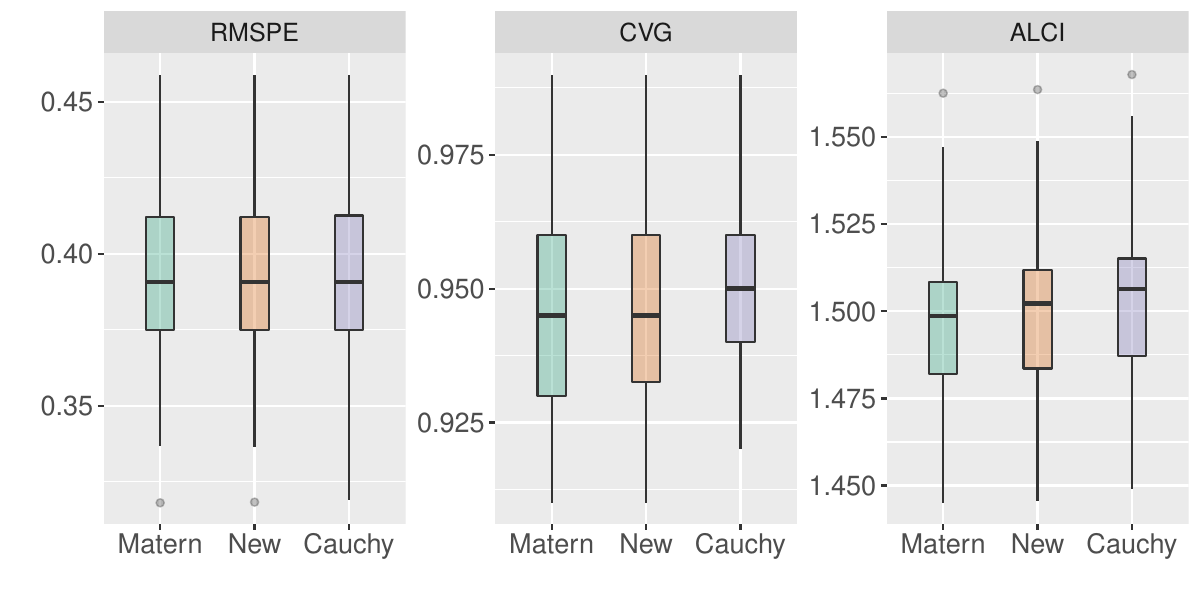}}
\end{subfigure}
\caption*{$\nu=0.5, ER=500$}

\begin{subfigure}{.35\textwidth}
  \centering
\makebox[\textwidth][c]{ \includegraphics[width=1.0\linewidth, height=0.15\textheight]{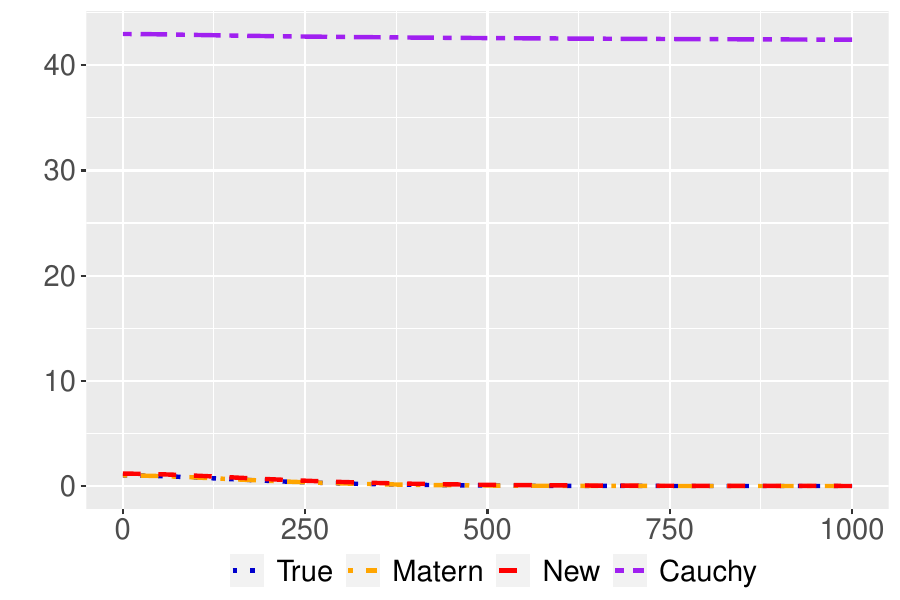}}
\end{subfigure}%
\begin{subfigure}{.65\textwidth}
  \centering
\makebox[\textwidth][c]{ \includegraphics[width=1.0\linewidth, height=0.15\textheight]{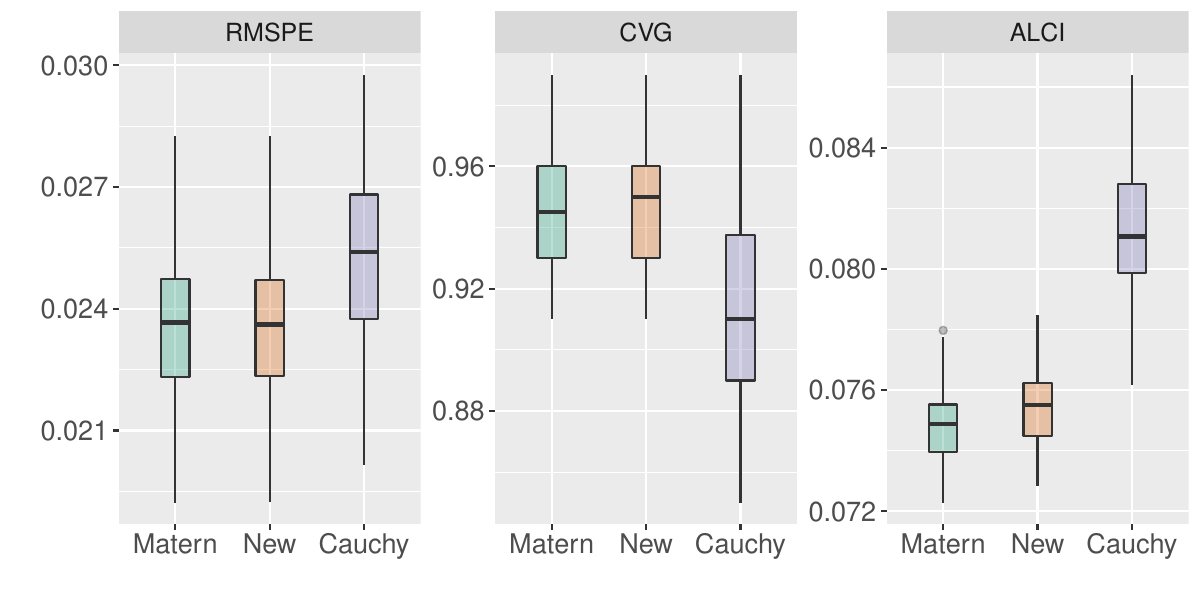}}
\end{subfigure}
\caption*{$\nu=2.5, ER=500$}

\caption{Case 1: Comparison of predictive performance and estimated covariance structures when the true covariance is the Mat\'ern class with 2000 observations. The predictive performance is evaluated at 10-by-10 regular grids in the square domain. These figures summarize the predictive measures based on RMSPE, CVG and ALCI under 30 simulated realizations.}
\label{fig: simulation setting under case 1}
\end{figure}

\subsection{Case 2: Examples with the CH Class as Truth} \label{sec: case 2}

In Case 2, we simulate Gaussian process realizations from the CH covariance model with smoothness parameter $\nu$ fixed at 0.5 and 2.5, tail decay parameter fixed at 0.5, and effective range fixed at 200 and 500. Figure~\ref{fig: simulation setting under case 2} shows the estimated covariance structures and summary of prediction results. As expected, when the underlying true process is simulated from a process with polynomially decaying dependence, the Mat\'ern class cannot be expected to capture such behavior. The prediction results also indicate that the Mat\'ern class performs much worse than the other two covariance models. When the underlying true process is not differentiable ($\nu=0.5$), there is no clear difference between the estimates under the GC covariance structure and the estimates under the CH covariance structure. However, when the underlying true process is twice differentiable ($\nu=2.5$), it is obvious that the estimated GC covariance structure is not as accurate as the estimated CH covariance structure. This makes sense because the GC class is either non-differentiable or infinitely differentiable. In terms of prediction performance, the CH covariance class performs better than the GC class in terms of coverage probability. 

\begin{figure}[!t] 
\begin{subfigure}{.35\textwidth}
  \centering
\makebox[\textwidth][c]{ \includegraphics[width=1.0\linewidth, height=0.12\textheight]{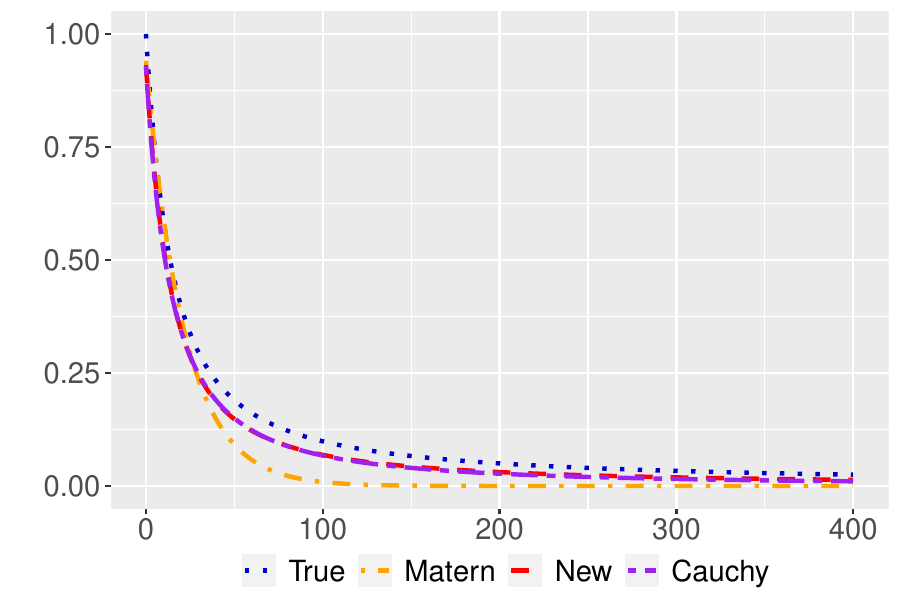}}
\end{subfigure}%
\begin{subfigure}{.65\textwidth}
  \centering
\makebox[\textwidth][c]{ \includegraphics[width=1.0\linewidth, height=0.12\textheight]{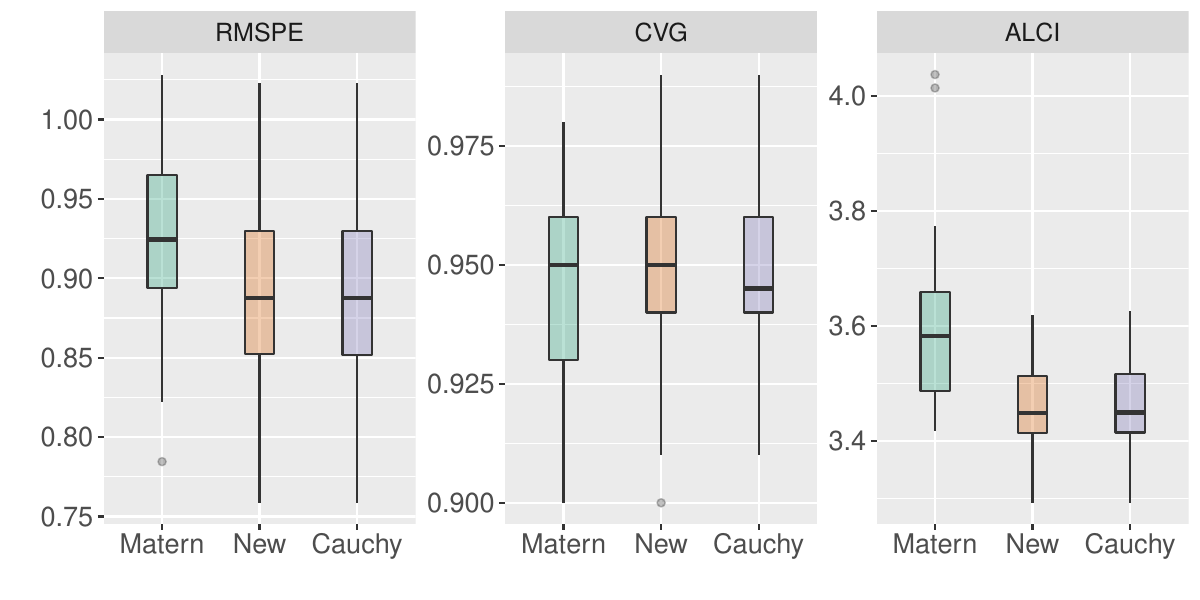}}
\end{subfigure}
\caption*{$\nu=0.5, ER=200$}

\begin{subfigure}{.35\textwidth}
  \centering
\makebox[\textwidth][c]{ \includegraphics[width=1.0\linewidth, height=0.12\textheight]{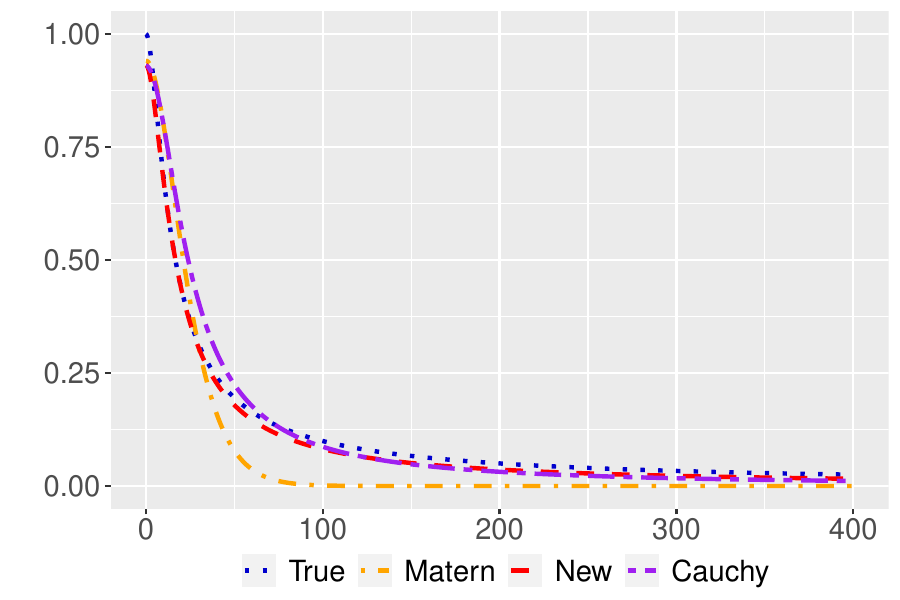}}
\end{subfigure}%
\begin{subfigure}{.65\textwidth}
  \centering
\makebox[\textwidth][c]{ \includegraphics[width=1.0\linewidth, height=0.12\textheight]{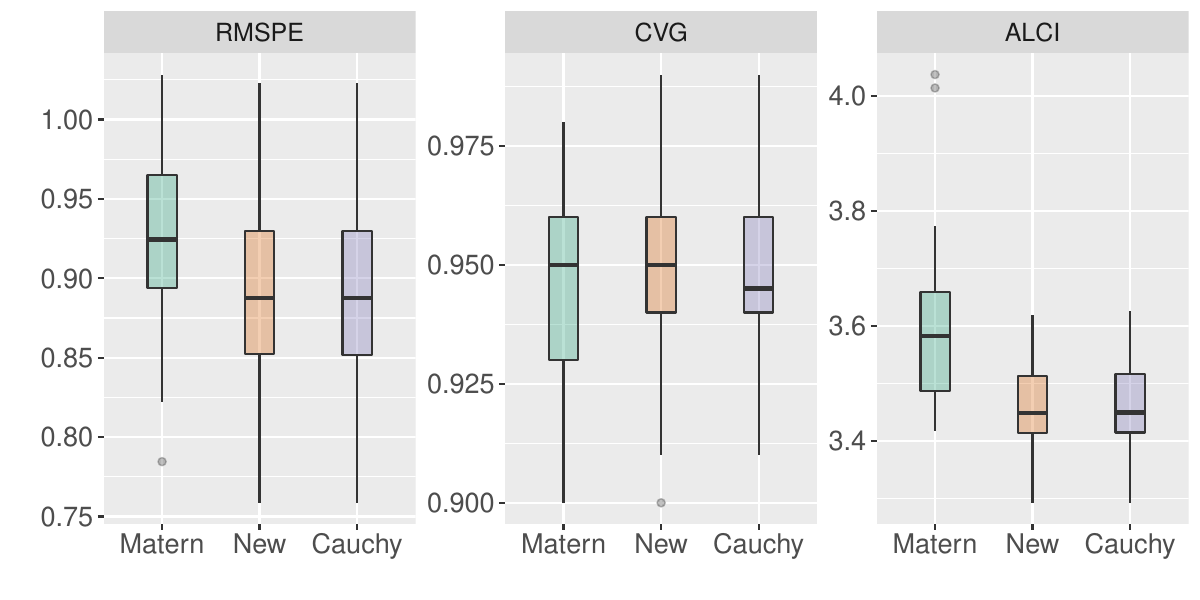}}
\end{subfigure}
\caption*{$\nu=2.5, ER=200$}

\begin{subfigure}{.35\textwidth}
  \centering
\makebox[\textwidth][c]{ \includegraphics[width=1.0\linewidth, height=0.12\textheight]{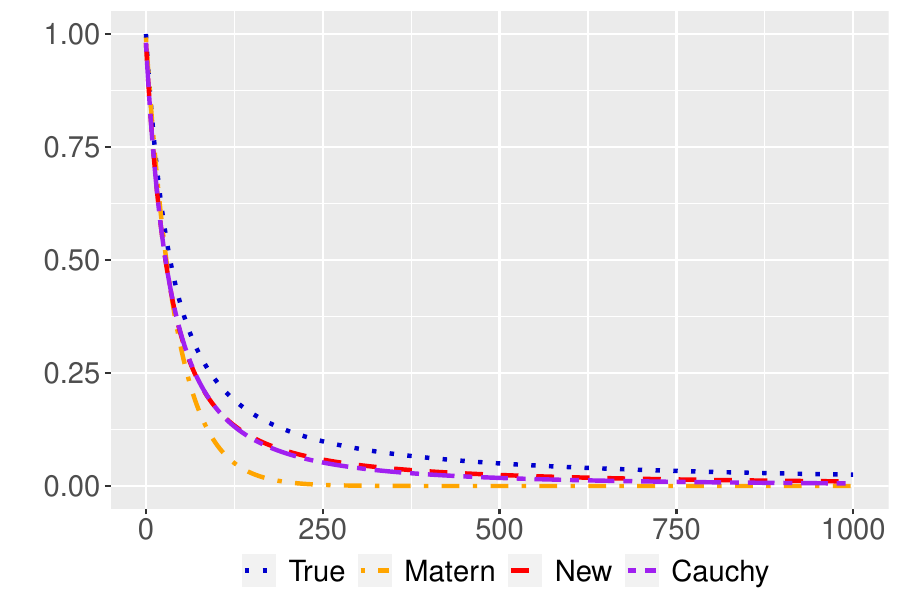}}
\end{subfigure}%
\begin{subfigure}{.65\textwidth}
  \centering
\makebox[\textwidth][c]{ \includegraphics[width=1.0\linewidth, height=0.12\textheight]{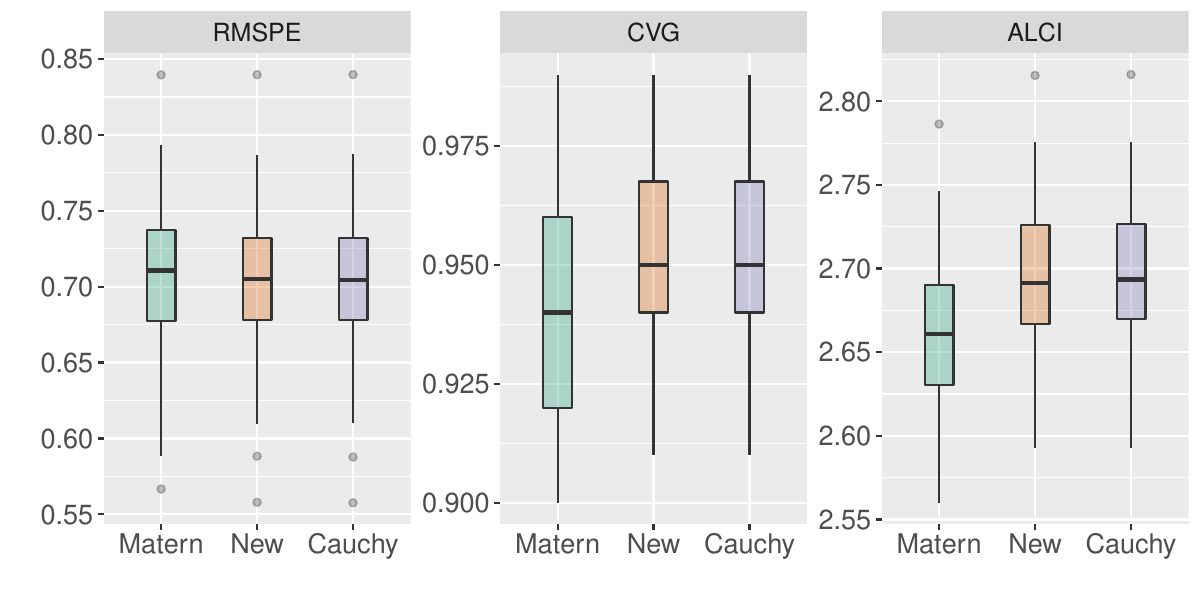}}
\end{subfigure}
\caption*{$\nu=0.5,  ER=500$}

\begin{subfigure}{.35\textwidth}
  \centering
\makebox[\textwidth][c]{ \includegraphics[width=1.0\linewidth, height=0.12\textheight]{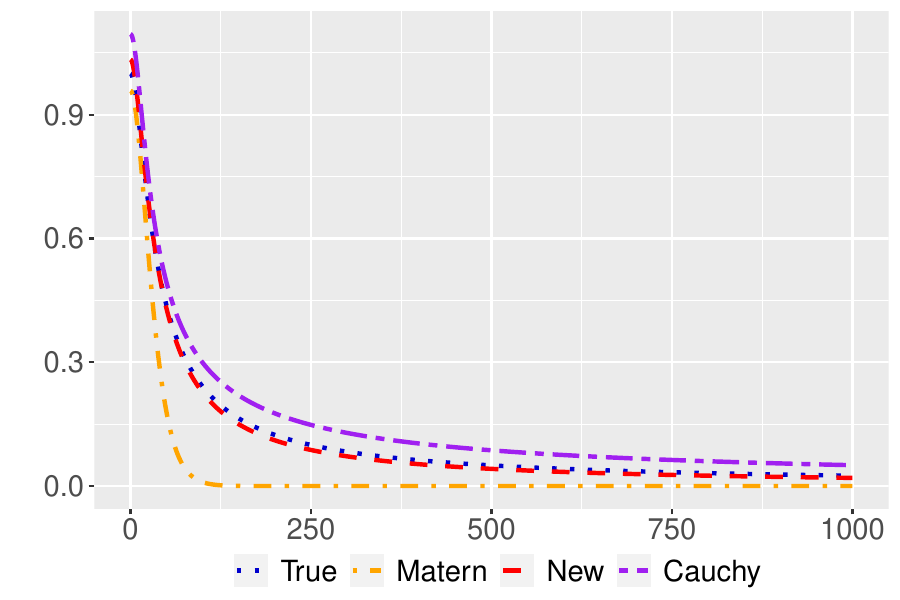}}
\end{subfigure}%
\begin{subfigure}{.65\textwidth}
  \centering
\makebox[\textwidth][c]{ \includegraphics[width=1.0\linewidth, height=0.12\textheight]{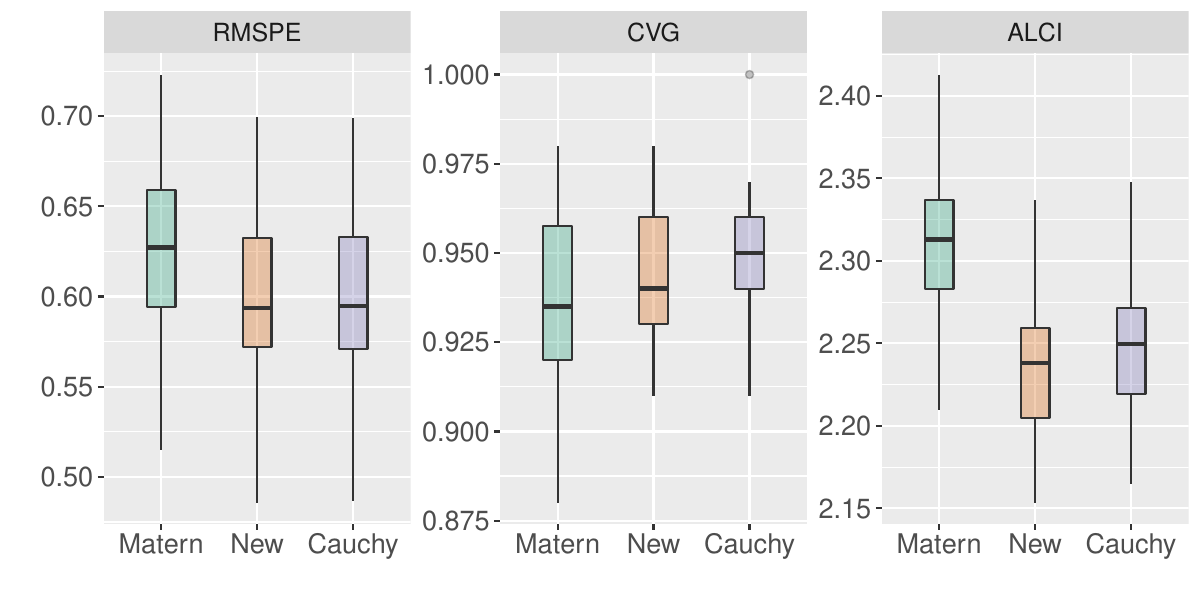}}
\end{subfigure}
\caption*{$\nu=2.5,  ER=500$}

\caption{Case 2: Comparison of predictive performance and estimated covariance structures when the true covariance is the CH class with 2000 observations. The predictive performance is evaluated at 10-by-10 regular grids in the square domain. These figures summarize the predictive measures based on RMSPE, CVG and ALCI under 30 simulated realizations.}
\label{fig: simulation setting under case 2}
\end{figure}

\subsection{Case 3: Examples with the GC Class as Truth} \label{sec: case3}

In Case 3, we simulate Gaussian process realizations from the GC class with the smoothness parameter $\delta=1$ and $\lambda=1$ under ER=200 and 500.  The corresponding process is non-differentiable and corresponds to the smoothness parameter $\nu=0.5$ in both the Mat\'ern class and the CH class. The parameter $\lambda$ in the GC class is fixed at 1 so that it corresponds to the tail parameter $\alpha=0.5$ in the CH class. We did not consider Gaussian processes that are infinitely differentiable, since such processes are unrealistic for environmental processes. Figure~\ref{fig: simulation setting under case 3} shows the estimated covariance structures and prediction results. As expected, the Mat\'ern class performs much worse than the CH class and the GC class for the same reason as in Case 2. Between the CH class and the GC class, no difference is seen in terms of estimated covariance structures and predictive performances. This is not surprising, since the CH class has a tail decay parameter $\alpha$ that is able to capture the tail behavior in the GC class. 

\begin{figure}[!t] 
\begin{subfigure}{0.35\textwidth}
\centering
\makebox[\textwidth][c]{ \includegraphics[width=\linewidth, height=0.12\textheight]{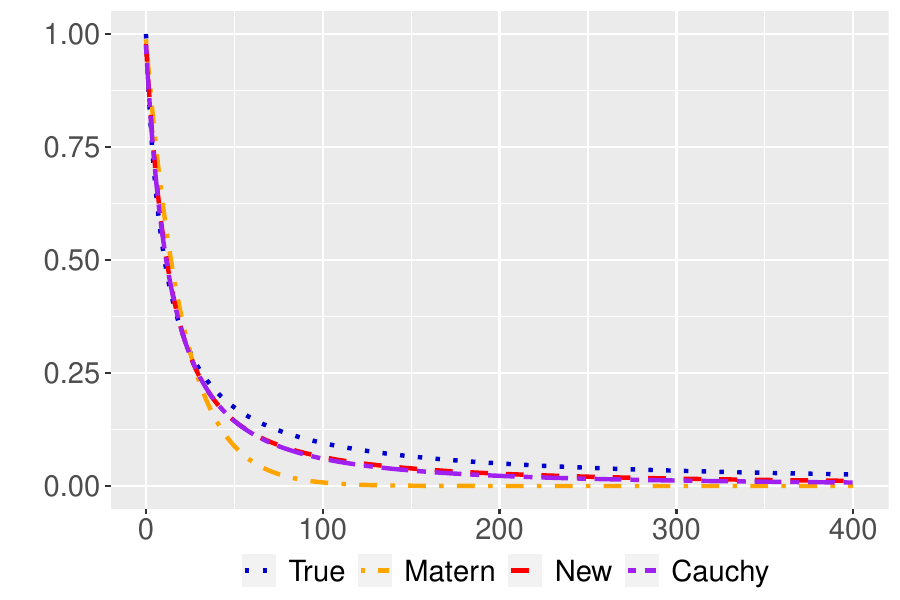}}
\end{subfigure}%
\begin{subfigure}{0.65\textwidth}
\makebox[\textwidth][c]{ \includegraphics[width=\linewidth, height=0.12\textheight]{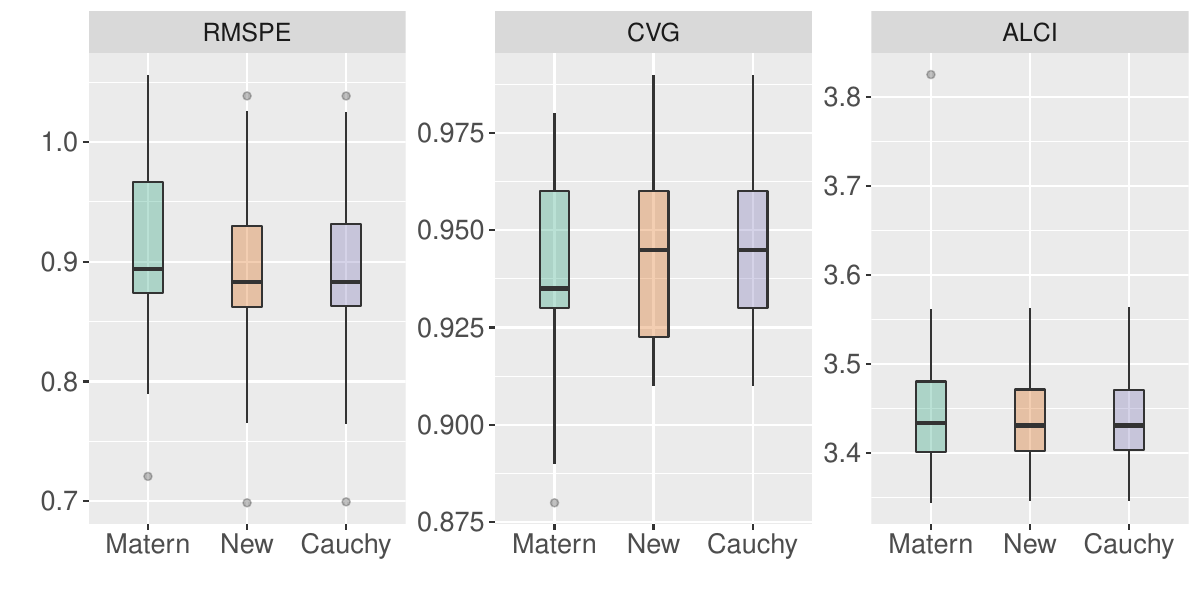}}
\end{subfigure}
\caption*{$\delta=1,  ER=200$}

\begin{subfigure}{0.35\textwidth}
  \centering
\makebox[\textwidth][c]{ \includegraphics[width=\linewidth, height=0.12\textheight]{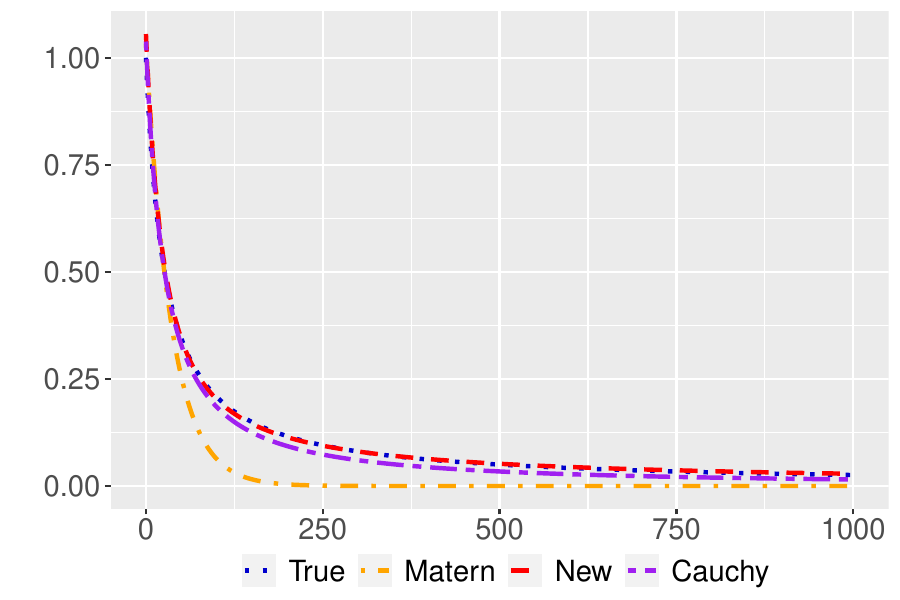}}
\end{subfigure}%
\begin{subfigure}{0.65\textwidth}
\makebox[\textwidth][c]{ \includegraphics[width=\linewidth, height=0.12\textheight]{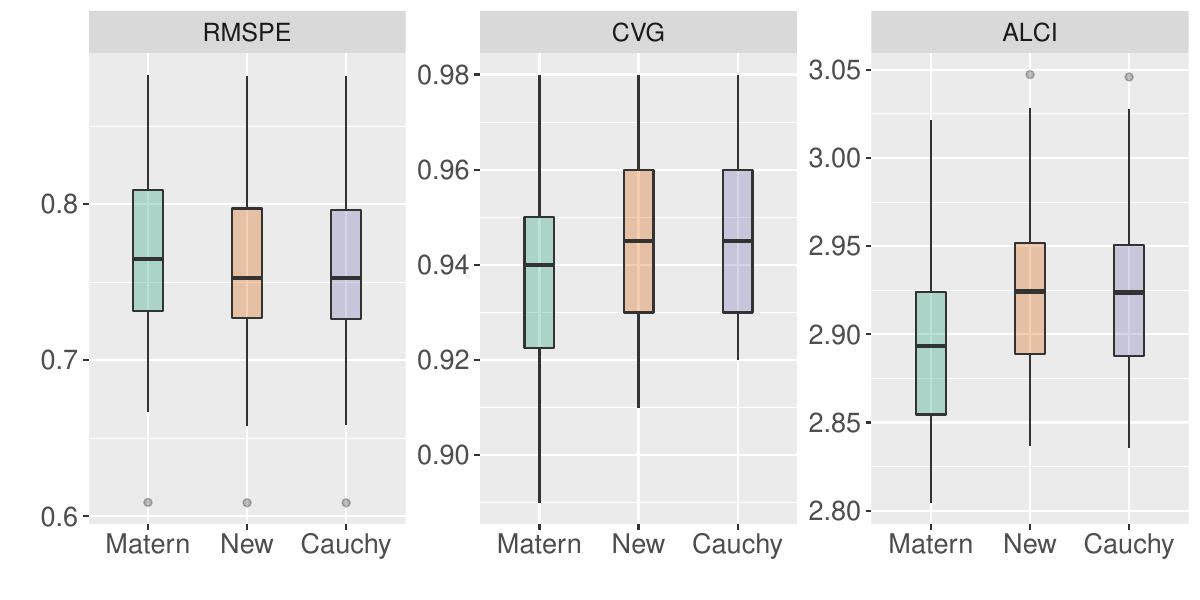}}
\end{subfigure}
\caption*{$\delta=1,  ER=500$}

\caption{Case 3: Comparison of predictive performance and estimated covariance structures when the true covariance is the GC class with 2000 observations. The predictive performance is evaluated at 10-by-10 regular grids in the square domain. These figures summarize the predictive measures based on RMSPE, CVG and ALCI under 30 simulated realizations.}
\label{fig: simulation setting under case 3}
\end{figure}

\section{Application to the OCO-2 Data}\label{sec:real}
In this section, the proposed CH class is used to model spatial data collected from NASA's Orbiting Carbon Observatory-2 (OCO-2) satellite and comparisons are made in kriging performances with alternative covariances. The OCO-2 satellite is NASA's first dedicated remote sensing earth satellite to study atmospheric carbon dioxide from space with the primary objective to estimate the global geographic distribution of CO$_2$ sources and sinks at Earth's surface; see \cite{Cressie2017, Wunch2011} for detailed discussions. The OCO-2 satellite carries three high-resolution grating spectrometers designed to measure the near-infrared absorption of reflected sunlight by carbon dioxide and molecular oxygen and orbits over a 16-day repeat cycle. In this application, we consider NASA's Level 3 data product of the XCO2 at $0.25^\circ \times 0.25^\circ$ spatial resolution over one repeat cycle from June 1 to June 16, 2019. These gridded data were processed based on Level 2 data product by the OCO-2 project at the Jet Propulsion Laboratory, California Technology, and obtained from the OCO-2 data archive maintained at the NASA Goddard Earth Science Data and Information Services Center. They can be downloaded at \url{https://co2.jpl.nasa.gov/#mission=OCO-2}. 

This Level 3 data product consists of 43,698 measurements. We focus on the study region that covers the entire United States with longitudes between 140W and 50W and latitudes between 15N and 60N. This region includes 3,682 measurements; see panel (a) of Figure~\ref{fig: OCO-2 data}. These data points are very sparse in space. As the OCO-2 satellite has swath width 10.6 kilometers, large missing gaps can be observed between swaths. Predicting the underlying geophysical process based on data with such patterns requires the statistical model not only to interpolate in space (prediction near observed locations) but also to extrapolate in space (prediction away from observed locations). 

\begin{figure}[!t] 

\begin{subfigure}{.5\textwidth}
  \centering
\makebox[\textwidth][c]{ \includegraphics[width=1.0\linewidth, height=0.15\textheight]{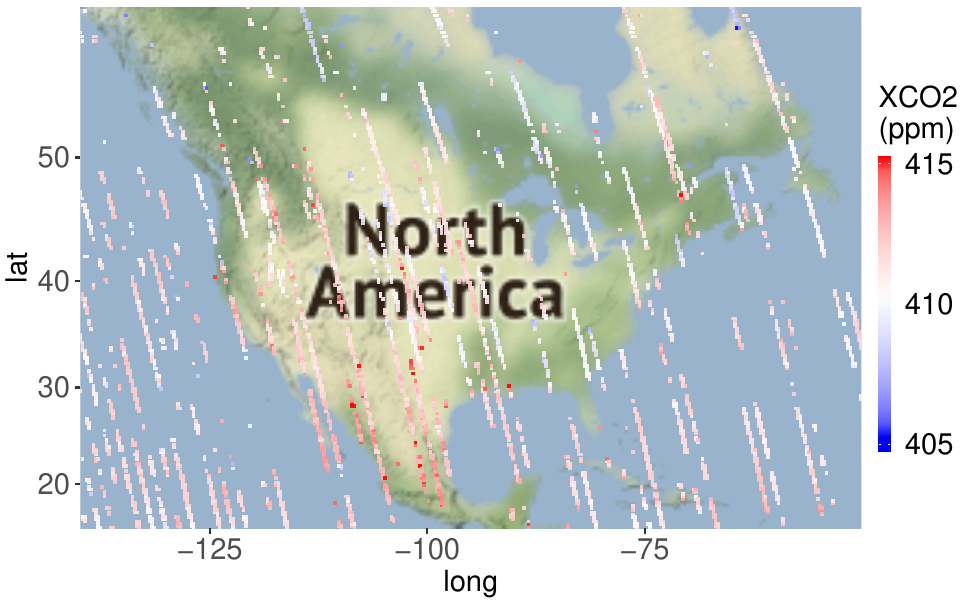}}
\caption{XCO2 data in the study region.}
\end{subfigure}%
\begin{subfigure}{.5\textwidth}
\makebox[\textwidth][c]{ \includegraphics[width=1\linewidth, height=0.15\textheight]{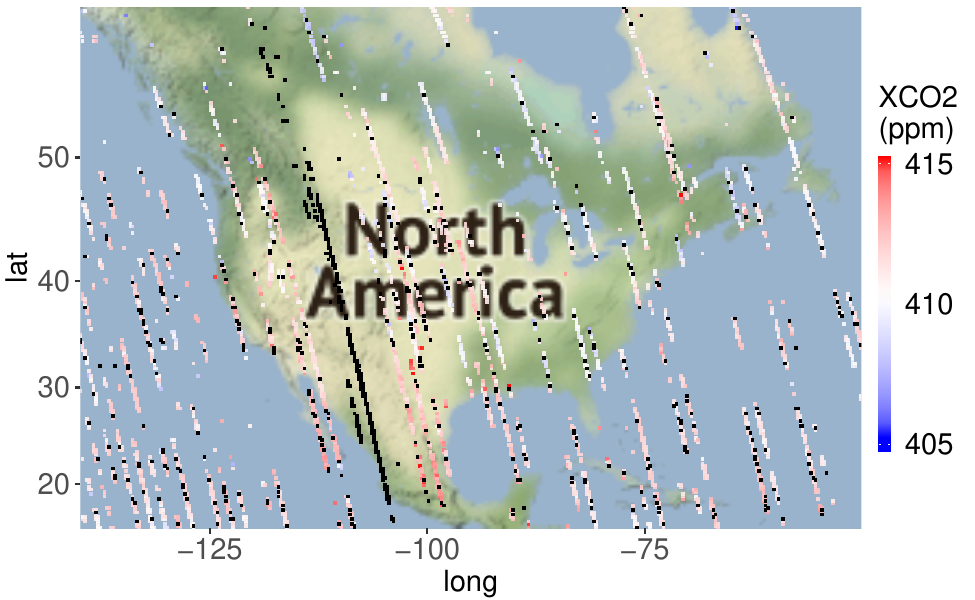}}
\caption{XCO2 testing data in black.}
\end{subfigure}

\caption{XCO2 measurements from June 1 to June 16, 2019 in the study region.}
\label{fig: OCO-2 data}
\end{figure}

Given the data $\bfZ:=(Z(\bfs_1), \ldots, Z(\bfs_n))^\top$, we assume a typical spatial process model:
\vspace{-12pt}
\begin{equation*} 
Z(\mathbf{s}) = Y(\mathbf{s}) + \epsilon(\mathbf{s}), \quad \bfs \in \mathcal{D}, \vspace{-12pt}
\end{equation*}
where $Y(\cdot)$ is assumed to be a Gaussian process with mean function $\mu(\cdot)$ and covariance function $C(\cdot, \cdot)$. The term $\epsilon(\cdot)$ is assumed to be a spatial white-noise process accounting for the nugget effect with $\text{var}(\epsilon(\mathbf{s}))=\tau^2>0$. The goal of this analysis is to predict the process $Y(\mathbf{s}_0)$ for any $\mathbf{s}_0\in \mathcal{D}$ based on the data $\bfZ$. Exploratory analysis indicates no clear trend, so we assume a constant trend for the mean function $\mu(\bfs)=b$. For this particular dataset, the assumption of an isotropic covariance function seems to be reasonable based on directional semivariograms in Figure~\ref{fig: isotropy} of the Supplementary Material.  For the covariance function $C(\cdot, \cdot)$, we assume the CH model with parameters $\{\sigma^2, \alpha, \beta, \nu \}$, where the smoothness parameter $\nu$ is fixed at $0.5$ and $1.5$, indicating the resulting process is non-differentiable or once differentiable, respectively. Here we fix $\nu$ in the Mat\'ern  and CH classes over a grid of values, {since (a) estimating $\nu$ requires intensive computations and it has often been fixed in practice \citep[e.g.,][]{Berger2001,Banerjee2014} and (b) the likelihood can be nearly flat (\citeauthor{Berger2001}, \citeyear{Berger2001}; \citeauthor{Gu2018}, \citeyear{Gu2018}; \citeauthor{Stein1999}, \citeyear{Stein1999}, p.~173; \citeauthor{Zhang2004}, \citeyear{Zhang2004}) and hence it is notoriously difficult to estimate covariance parameters including $\nu$ with either profile or integrated likelihood functions \citep{Gu2018}. However, estimating $\nu$ may improve prediction accuracy in practice, which is left for future investigation.} 

To evaluate the performance of the CH class, we perform cross-validation and make comparisons with the Mat\'ern class. The testing dataset consists of (1) a complete longitude band across the United States, which will be referred to as missing by design (MBD) and (2) randomly selected 15\% of remaining XCO2 measurements, which will be referred to as missing at random (MAR). Panel (b) of Figure~\ref{fig: OCO-2 data} highlights these testing data with black grid points. This dataset is used for evaluating out-of-sample predictive performance in interpolative and extrapolative settings. The remaining data points are used for parameter estimation under the Mat\'ern covariance and the CH covariance. The parameters are estimated based on the restricted maximum likelihood \citep{Harville1974}. Table~\ref{table: CV for OCO2 data} shows the predictive measures and estimated nugget parameters. The CH model with the smoothness parameter $\nu=0.5$ yields the smallest estimated nugget parameter among all the models. This suggests that the CH model with $\nu=0.5$ best captures the spatial dependence structure among all the models. The kriging predictions under the CH model show lots of fine-scale or micro-scale variations, which are more desired for accurate spatial prediction. In an interpolative setting, the Mat\'ern covariance yields slightly smaller (but indistinguishable) RMSPE and ALCI  over randomly selected locations than the CH covariance, which indicates that the Mat\'ern covariance has slightly better interpolative prediction skill than the CH model in this application. The empirical coverage probability is closer to the nominal value of 0.95 under the Mat\'ern covariance model. In contrast, in an extrapolative setting, the CH model yields much smaller RMSPE and ALCI than the Mat\'ern covariance model with indistinguishable empirical coverage probabilities, which indicates that the CH model has a better extrapolative prediction skill than the Mat\'ern covariance model. These prediction results are not surprising, since the Mat\'ern class can only model exponentially decaying dependence while the CH class can offer considerable benefits for extrapolative predictions while maintains the same interpolative prediction skill as the Mat\'ern class. The difference in interpolative prediction performance between the CH class and the Mat\'ern class is negligible, in part because the CH class can yield asymptotically equivalent best linear predictors as the Mat\'ern class under conditions established in Theorem~\ref{thm: PE with Matern covariance}. Notice that the empirical coverage probabilities under all the models are less than the nominal coverage probability 0.95, this is partly because uncertainties due to parameter estimation are not accounted for in the predictive distribution. A fully Bayesian analysis may remedy this issue. 

For other model parameters shown in Table~\ref{table: CV for OCO2 data parameter estimation} of the Supplementary Material, we notice that the estimates of the regression parameters under the two different covariance models are very similar. As expected, the estimated variance parameter (partial sill) is larger under the CH class than the one estimated under the Mat\'ern class. Perhaps the most interesting parameter is the tail decay parameter in the CH class, which is estimated to be around 0.38. This clearly indicates that the underlying true process has a polynomially decaying dependence structure.  As \cite{Gneiting2013} points out, the Mat\'ern class is positive definite on sphere only if $\nu\leq 0.5$ with great circle distance. To avoid this technical difficulty, we use chordal distance for modeling spatial data on sphere when $\nu>0.5$, since it was  pointed out on pages 71-77 of \cite{Yadrenko1983} that chordal distance can guarantee the positive definiteness of a covariance function on $\mathbb{S}^d \times \mathbb{S}^d$ when the original covariance function is positive definite on $\mathbb{R}^{d+1}\times \mathbb{R}^{d+1}$.

\begin{table}[htbp]
\centering
   \caption{Cross-validation results on the XCO2 data based on the Mat\'ern covariance model and the CH covariance model. The measures in the first coordinate correspond to those based on MAR locations for interpolative prediction, and the measures in the second coordinate correspond to those based on MBD locations for extrapolative prediction.}
  {\resizebox{1.0\textwidth}{!}{%
  \setlength{\tabcolsep}{2.5em}
   \begin{tabular}{@{} l  c  c  c  c c @{}} 
   \toprule \noalign{\vskip 1.5pt} 
		& \multicolumn{2}{c}{Mat\'ern class}   &\multicolumn{2}{c}{CH class}  \\  \noalign{\vskip 1.5pt}  
	  \noalign{\vskip 1.5pt} 
 & $\nu = 0.5$ &   $\nu = 1.5$ & $\nu = 0.5$ & $\nu=1.5$ \\ \noalign{\vskip 2.5pt} 
 \noalign{\vskip 2.5pt} \noalign{\vskip 3.5pt} 
 $\tau^2$ (nugget) & 0.0642 &   0.2215    &  \textbf{0.0038}&     0.1478       \\  \noalign{\vskip 1.5pt}  \noalign{\vskip 2.5pt}
{RMSPE}    &0.672, 1.478    & 0.675, 1.599   &0.676, {1.263}   & 0.735, 1.227  \\ \noalign{\vskip 1.5pt}  \noalign{\vskip 2.5pt}
{CVG(95\%)}  &0.952, 0.929   & 0.952, 0.951   &0.944, {0.921}   & 0.878, 0.937  \\ \noalign{\vskip 1.5pt}  \noalign{\vskip 2.5pt}
{ALCI(95\%)}  &2.533, 5.095   & 2.536, 5.044  & 2.543, {4.722}   & 2.098, 4.855  \\ \noalign{\vskip 1.5pt}  
\noalign{\vskip 1.5pt} \bottomrule
   \end{tabular}%
   }}
   \label{table: CV for OCO2 data}
\end{table}

Next, we predict the process $Y(\cdot)$ at $0.25^\circ \times 0.25^\circ$ grid in the study region. The parameters are estimated based on all the data points under the CH class and the Mat\'ern class with the smoothness parameter fixed at 0.5. In Figure~\ref{fig: comparison for OCO2 data} of the Supplementary Material, we observe that the optimal kriging predictors over these grid points under the CH covariance model generally yield smaller values than those under the Mat\'ern covariance function model in large missing gaps except for certain regions such as the Gulf of Mexico. More importantly, we also observe that the CH covariance model yields $10\%$ to $20\%$ smaller kriging standard errors than the Mat\'ern covariance model in the observed spatial locations and contiguous missing regions. This indicates that the CH covariance model has an advantage over the Mat\'ern covariance in terms of in-sample prediction skills and in an extrapolative setting (such as large missing gaps). Prediction in an interpolative setting (such as locations near the observed locations) shows that the CH class yields indistinguishable (no more than 2\%) kriging standard errors compared to the Mat\'ern class. It is clear to see that the CH class is able to show lots of fine-scale variations in the kriging map, which is a desirable property for prediction accuracy. This is partly because the nugget parameter under the CH covariance is estimated to be much smaller than that under the Mat\'ern covariance and partly because the polynomially decaying dependence exhibited in the CH class can better utilize information at both nearby locations and distant locations to infer such fine-scale variations. Finally, Figure~\ref{fig: prediction for OCO2 data} shows the optimal kriging predictors and associated kriging standard errors at $0.25^\circ \times 0.25^\circ$ grid in the study region. These kriging maps help create a complete NASA Level 3 data product with associated uncertainties, which can be further used for downstream applications such as CO2 flux inversion.

\begin{figure}[!t] 

\begin{subfigure}{.33\textwidth}
  \centering
\makebox[\textwidth][c]{ \includegraphics[width=1.0\linewidth, height=0.15\textheight]{OCO2_data.pdf}}
\caption{XCO2 data.}
\end{subfigure}%
\begin{subfigure}{.33\textwidth}
\makebox[\textwidth][c]{ \includegraphics[width=1\linewidth, height=0.15\textheight]{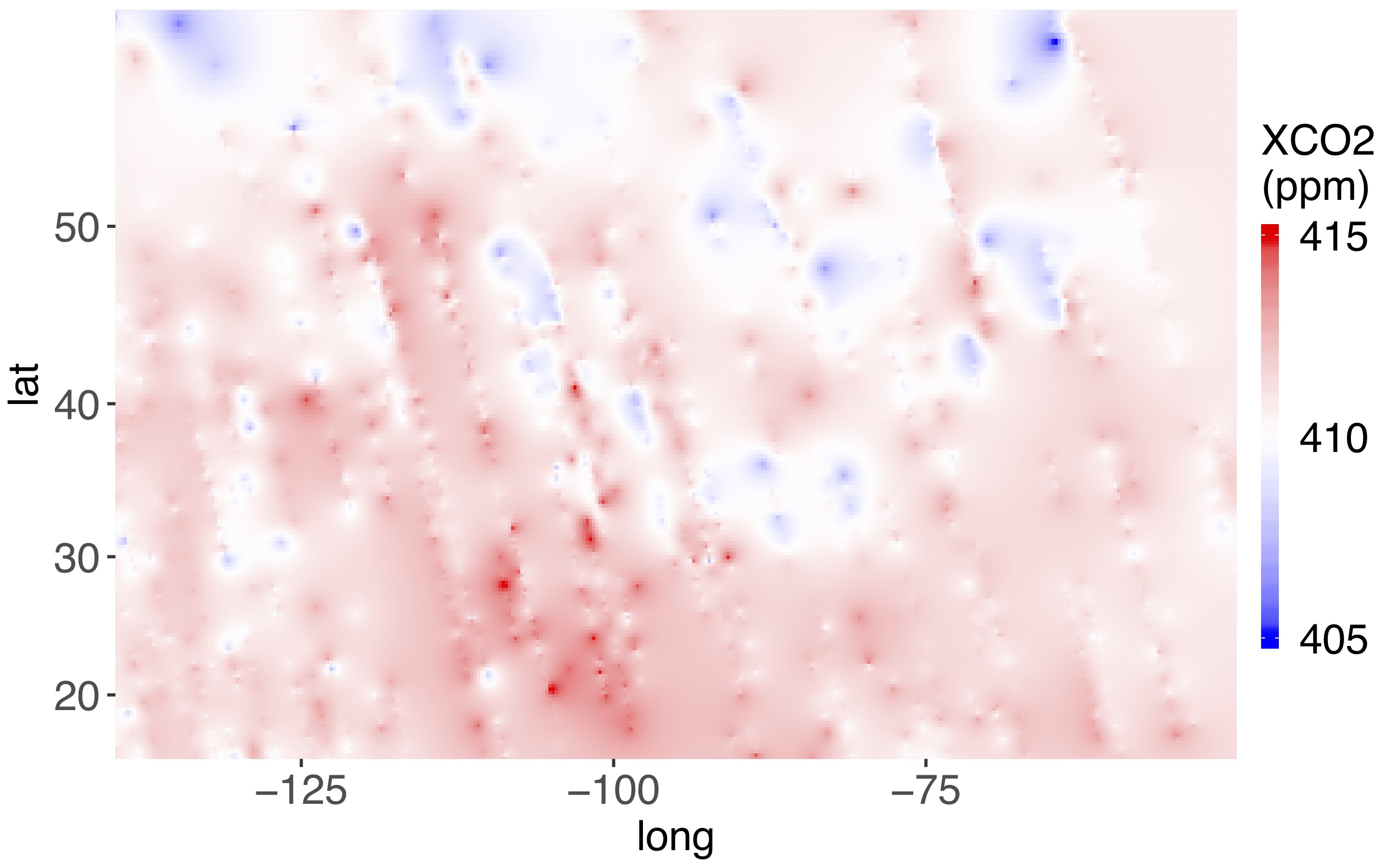}}
\caption{Kriging predictors.}
\end{subfigure}%
\begin{subfigure}{.33\textwidth}
\makebox[\textwidth][c]{ \includegraphics[width=1\linewidth, height=0.15\textheight]{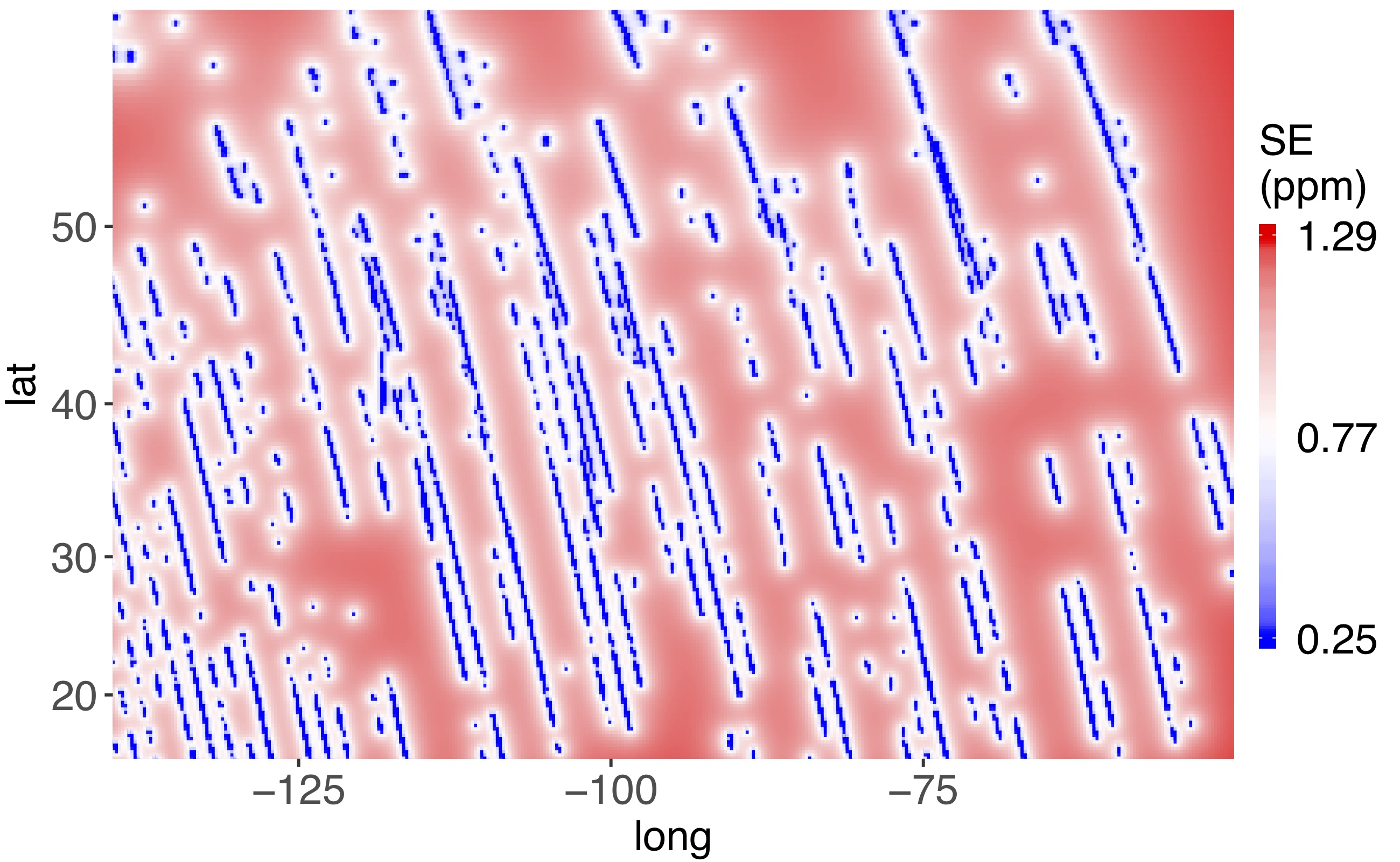}}
\caption{Kriging standard errors.}
\end{subfigure}

\caption{XCO2 data and kriging predictions based on the CH model.}
\label{fig: prediction for OCO2 data}
\end{figure}

\section{Concluding Remarks}\label{sec:conclusions}

This paper introduces a new class of interpretable covariance functions called the \emph{Confluent Hypergeometric} class that can allow precise and simultaneous control of the origin and tail behaviors with well-defined roles for each covariance parameter. Our approach in constructing the CH class is to mix over the range parameter of the Mat\'ern class. As expected,  the origin behavior of this CH class is as flexible as the Mat\'ern class. The high-frequency behavior of the CH class is also similar to that of the Mat\'ern class, since they differ by a slowly varying function up to a multiplicative constant. Unlike the Mat\'ern class, however, this CH class has a polynomially decaying tail, which allows for modeling power-law stochastic processes. 

The advantage of the CH class is examined in theory and numerical examples. Conditions for equivalence of two Gaussian measures based on the CH class are established. We derive the conditions on the asymptotic efficiency of kriging predictors based on an increasing number of observations in a bounded region when the CH covariance is misspecified. We also show that the CH class can yield an asymptotically efficient kriging predictor under the infill asymptotics framework when the true covariance belongs to the Mat\'ern class. {It is worth noting that the CH class itself is valid and can allow any degrees of decaying tail, while the asymptotic results of the MLE for the microergodic parameters are proven for $\alpha>d/2$. Investigation of the similar theoretical result on the MLE is elusive for the case $\alpha \in (0, d/2]$}. Extensive simulation results show that when the underlying true process is generated from either the Mat\'ern covariance or the GC covariance, the CH covariance can allow robust prediction property. We also noticed in simulation study that the Mat\'ern class gives worse performance than the CH class when the underlying true covariance has a polynomially decaying tail. In the real data analysis, we found significant advantages of the CH class when prediction is made in an extrapolative setting while the difference in terms of interpolative prediction is indistinguishable, which is implied by our theoretical results. This feature is practically important for spatial modeling especially with large missing patterns. Future work along the theoretical side is to establish theoretical results of the CH class under the increasing domain asymptotics.

This paper mainly focuses on theoretical contributions and practical advantage of the CH class. Common challenges in spatial statistics include modeling large spatial data and spatial nonstationarity, which are often tackled based on the Mat\'ern class in recent developments \citep[e.g.,][]{Lindgren2011, Ma2020FGP}. The proposed CH class can be used as a substantially improved starting point over the Mat\'ern class to develop more complicated covariance models to tackle these challenges. Several extensions can be pursued. It is interesting to extend the proposed CH class for modeling dependence on sphere, space-time dependence, and/or multivariate dependence \citep[e.g.,][]{Apanasovich2012,Ma2019AAGP, Ma2019DFGP}. Prior elicitation for the CH class could be challenging. It is also interesting to develop objective priors such as reference prior to facilitate default Bayesian analysis for analyzing spatial data or computer experiments \citep{Berger2001, Ma2019Obayes}.

The CH class not only plays an important role in spatial statistics, but also is of particular interest in UQ. In the UQ community, a covariance function that is of a product form \citep[e.g.,][]{Sacks1989, Santner2018} has been widely used to model dependence structures for computer model output to allow for different physical interpretations in each input dimension.  The product form of this CH covariance can not only control the smoothness of the process realizations in each direction but also allow polynomially decaying dependence in each direction. The simulation example in Section~\ref{app: UQ simulation} of the Supplementary Material shows significant improvement of the CH class over the Mat\'ern class and the GC class. Predicting real-world processes often relies on computer models whose output can have different smoothness properties and can be insensitive to certain inputs. This CH class can not only allow flexible control over the smoothness of the physical process of interest, but also allow near constant behavior along these inert inputs. Most often, predicting the real-world process involves extrapolation away from the original input space. The CH covariance should be useful in dealing with such challenging applications.

\section*{Supplementary Material}
The Supplementary Material contains seven parts: (1) illustration of timing for Bessel function and confluent hypergeometric function, (2) 1-dimensional process realizations for the Mat\'ern class and CH class, (3) two lemmas that are used to prove the main theorems, (4) technical proofs omitted in the main text, (5) simulation results that verify asymptotic normality, (6) additional simulation examples referenced in Section~\ref{sec:numerical}, and (7) parameter estimation results and figures referenced in Section~\ref{sec:real}. Computer code for the real data analysis is also available as a \textbf{.zip} archive. 

\section*{Acknowledgements}
Ma carried out this work partly as a postdoctoral fellow at the Statistical and Applied Mathematical Sciences Institute (SAMSI) and Duke University and gratefully acknowledges the postdoctoral fellowship awarded by the National Science Foundation under Grant DMS-1638521 to SAMSI. Bhadra gratefully acknowledges a visiting fellowship at SAMSI where part of this research was conducted. Ma is grateful to Professor James Berger for his insightful comments on an early draft of the manuscript that substantially improved the work. The authors would like to thank Professor Michael Stein for pointing out a reference that improved this work.

\begin{singlespace}
\bibliographystyle{biom}
\bibliography{hs-review}

\begin{thebibliography}{}

\bibitem[\protect\citeauthoryear{Abramowitz and Stegun}{Abramowitz and
  Stegun}{1965}]{Abramowitz1965}
Abramowitz, M. and Stegun, I.~A. (1965).
\newblock {\em Handbook of mathematical functions: with formulas, graphs, and
  mathematical tables}, volume~55.
\newblock Courier Corporation, North Chelmsford, MA.

\bibitem[\protect\citeauthoryear{Apanasovich, Genton, and Sun}{Apanasovich
  et~al.}{2012}]{Apanasovich2012}
Apanasovich, T.~V., Genton, M.~G., and Sun, Y. (2012).
\newblock A valid {M}at{\'e}rn class of cross-covariance functions for
  multivariate random fields with any number of components.
\newblock {\em Journal of the American Statistical Association} {\bf 107,}
  180--193.

\bibitem[\protect\citeauthoryear{Banerjee, Carlin, and Gelfand}{Banerjee
  et~al.}{2014}]{Banerjee2014}
Banerjee, S., Carlin, B.~P., and Gelfand, A.~E. (2014).
\newblock {\em {Hierarchical Modeling and Analysis for Spatial Data, Second
  Edition}}.
\newblock CRC Press, Boca Raton, FL.

\bibitem[\protect\citeauthoryear{{Barndorff-Nielsen}}{{Barndorff-Nielsen}}{1977}]{barn77}
{Barndorff-Nielsen}, O.~E. (1977).
\newblock Exponentially decreasing distributions for the logarithm of particle
  size.
\newblock {\em Royal Society of London Proceedings Series A} {\bf 353,}
  401--419.

\bibitem[\protect\citeauthoryear{Barndorff-Nielsen, Kent, and
  S{\o}rensen}{Barndorff-Nielsen et~al.}{1982}]{barndorff1982normal}
Barndorff-Nielsen, O.~E., Kent, J.~T., and S{\o}rensen, M. (1982).
\newblock Normal variance-mean mixtures and $z$ distributions.
\newblock {\em International Statistical Review} {\bf 50,} 145--159.

\bibitem[\protect\citeauthoryear{Beran}{Beran}{1992}]{Beran1992}
Beran, J. (1992).
\newblock Statistical methods for data with long-range dependence.
\newblock {\em Statistical Science} {\bf 7,} 404--416.

\bibitem[\protect\citeauthoryear{Berger, De~Oliveira, and Sanso}{Berger
  et~al.}{2001}]{Berger2001}
Berger, J.~O., De~Oliveira, V., and Sanso, B. (2001).
\newblock {Objective Bayesian analysis of spatially correlated data}.
\newblock {\em Journal of the American Statistical Association} {\bf 96,}
  1361--1374.

\bibitem[\protect\citeauthoryear{Berger and Smith}{Berger and
  Smith}{2019}]{Berger2019UQ}
Berger, J.~O. and Smith, L.~A. (2019).
\newblock On the statistical formalism of uncertainty quantification.
\newblock {\em Annual Review of Statistics and Its Application} {\bf 6,}
  433--460.

\bibitem[\protect\citeauthoryear{Bevilacqua and Faouzi}{Bevilacqua and
  Faouzi}{2019}]{Bevilacqua2019}
Bevilacqua, M. and Faouzi, T. (2019).
\newblock {Estimation and prediction of Gaussian processes using generalized
  Cauchy covariance model under fixed domain asymptotics}.
\newblock {\em Electronic Journal of Statistics} {\bf 13,} 3025--3048.

\bibitem[\protect\citeauthoryear{Bevilacqua, Faouzi, Furrer, and
  Porcu}{Bevilacqua et~al.}{2019}]{Bevilacqua2019AOS}
Bevilacqua, M., Faouzi, T., Furrer, R., and Porcu, E. (2019).
\newblock {Estimation and prediction using generalized Wendland covariance
  functions under fixed domain asymptotics}.
\newblock {\em The Annals of Statistics} {\bf 47,} 828--856.

\bibitem[\protect\citeauthoryear{Bingham, Goldie, and Teugels}{Bingham
  et~al.}{1989}]{bingham1989regular}
Bingham, N.~H., Goldie, C.~M., and Teugels, J.~L. (1989).
\newblock {\em Regular {V}ariation}, volume~27 of {\em Encyclopedia of
  Mathematics and its Applications}.
\newblock Cambridge University Press, Cambridge.

\bibitem[\protect\citeauthoryear{Cressie}{Cressie}{1990}]{Cressie1990}
Cressie, N. (1990).
\newblock {The origins of kriging}.
\newblock {\em Mathematical Geology} {\bf 22,} 239--252.

\bibitem[\protect\citeauthoryear{Cressie}{Cressie}{1993}]{Cressie1993}
Cressie, N. (1993).
\newblock {\em {Statistics for Spatial Data}}.
\newblock John Wiley {\&} Sons, New York, revised edition.

\bibitem[\protect\citeauthoryear{Cressie}{Cressie}{2017}]{Cressie2017}
Cressie, N. (2017).
\newblock Mission {CO}$_2$ntrol: A statistical scientist's role in remote
  sensing of atmospheric carbon dioxide.
\newblock {\em Journal of the American Statistical Association} {\bf 113,}
  152--168.

\bibitem[\protect\citeauthoryear{Dudley}{Dudley}{2002}]{Dudley2002}
Dudley, R.~M. (2002).
\newblock {\em Real Analysis and Probability}.
\newblock Cambridge Studies in Advanced Mathematics. Cambridge University
  Press, 2 edition.

\bibitem[\protect\citeauthoryear{Galassi, Davies, Theiler, Gough, Jungman,
  Alken, Booth, Rossi, and Ulerich}{Galassi et~al.}{2002}]{GSL}
Galassi, M., Davies, J., Theiler, J., Gough, B., Jungman, G., Alken, P., Booth,
  M., Rossi, F., and Ulerich, R. (2002).
\newblock {\em GNU scientific library}.
\newblock Network Theory Limited.

\bibitem[\protect\citeauthoryear{Gay and Heyde}{Gay and Heyde}{1990}]{Gay1990}
Gay, R. and Heyde, C.~C. (1990).
\newblock On a class of random field models which allows long range dependence.
\newblock {\em Biometrika} {\bf 77,} 401--403.

\bibitem[\protect\citeauthoryear{Gneiting}{Gneiting}{2000}]{Gneiting2000}
Gneiting, T. (2000).
\newblock {Power-law correlations, related models for long-range dependence and
  their simulation}.
\newblock {\em Journal of Applied Probability} {\bf 37,} 1104--1109.

\bibitem[\protect\citeauthoryear{Gneiting}{Gneiting}{2002}]{Gneiting2002}
Gneiting, T. (2002).
\newblock Compactly supported correlation functions.
\newblock {\em Journal of Multivariate Analysis} {\bf 83,} 493--508.

\bibitem[\protect\citeauthoryear{Gneiting}{Gneiting}{2013}]{Gneiting2013}
Gneiting, T. (2013).
\newblock {Strictly and non-strictly positive definite functions on spheres}.
\newblock {\em Bernoulli} {\bf 19,} 1327--1349.

\bibitem[\protect\citeauthoryear{Gu, Wang, and Berger}{Gu
  et~al.}{2018}]{Gu2018}
Gu, M., Wang, X., and Berger, J.~O. (2018).
\newblock {Robust Gaussian stochastic process emulation}.
\newblock {\em The Annals of Statistics} {\bf 46,} 3038--3066.

\bibitem[\protect\citeauthoryear{Guttorp and Gneiting}{Guttorp and
  Gneiting}{2006}]{Guttorp2006}
Guttorp, P. and Gneiting, T. (2006).
\newblock Studies in the history of probability and statistics {XLIX} on the
  {M}at\'ern correlation family.
\newblock {\em Biometrika} {\bf 93,} 989--995.

\bibitem[\protect\citeauthoryear{Harville}{Harville}{1974}]{Harville1974}
Harville, D.~A. (1974).
\newblock {Bayesian inference for variance components using only error
  contrasts}.
\newblock {\em Biometrika} {\bf 61,} 383--385.

\bibitem[\protect\citeauthoryear{Haslett and Raftery}{Haslett and
  Raftery}{1989}]{Haslett1989}
Haslett, J. and Raftery, A.~E. (1989).
\newblock Space-time modelling with long-memory dependence: Assessing
  {Ireland's} wind power resource.
\newblock {\em Journal of the Royal Statistical Society: Series C (Applied
  Statistics)} {\bf 38,} 1--50.

\bibitem[\protect\citeauthoryear{Ibragimov and Rozanov}{Ibragimov and
  Rozanov}{1978}]{Ibragimov1978}
Ibragimov, I.~A. and Rozanov, Y.~A. (1978).
\newblock {\em Conditions for Regularity of Stationary Random Processes}, pages
  108--143.
\newblock Springer New York, New York, NY.

\bibitem[\protect\citeauthoryear{Journel and Huijbregts}{Journel and
  Huijbregts}{1978}]{journel1978}
Journel, A.~G. and Huijbregts, C.~J. (1978).
\newblock {\em {Mining Geostatistics}}.
\newblock Academic Press, Cambridge, MA.

\bibitem[\protect\citeauthoryear{Kaufman and Shaby}{Kaufman and
  Shaby}{2013}]{Kaufman2013}
Kaufman, C.~G. and Shaby, B.~A. (2013).
\newblock {The role of the range parameter for estimation and prediction in
  geostatistics}.
\newblock {\em Biometrika} {\bf 100,} 473--484.

\bibitem[\protect\citeauthoryear{Lindgren, Rue, and Lindstr{\"o}m}{Lindgren
  et~al.}{2011}]{Lindgren2011}
Lindgren, F., Rue, H., and Lindstr{\"o}m, J. (2011).
\newblock {An explicit link between Gaussian fields and Gaussian Markov random
  fields: the stochastic partial differential equation approach}.
\newblock {\em Journal of the Royal Statistical Society: Series B (Statistical
  Methodology)} {\bf 73,} 423--498.

\bibitem[\protect\citeauthoryear{Ma}{Ma}{2020}]{Ma2019Obayes}
Ma, P. (2020).
\newblock Objective {B}ayesian analysis of a cokriging model for hierarchical
  multifidelity codes.
\newblock {\em SIAM/ASA Journal on Uncertainty Quantification} {\bf 8,}
  1358--1382.

\bibitem[\protect\citeauthoryear{Ma and Kang}{Ma and Kang}{2019}]{Ma2019DFGP}
Ma, P. and Kang, E.~L. (2019).
\newblock {Spatio-temporal data fusion for massive sea surface temperature data
  from MODIS and AMSR-E instruments}.
\newblock {\em Environmetrics} {\bf 31,} 73.

\bibitem[\protect\citeauthoryear{Ma and Kang}{Ma and Kang}{2020}]{Ma2020FGP}
Ma, P. and Kang, E.~L. (2020).
\newblock {A fused Gaussian process model for very large spatial data}.
\newblock {\em Journal of Computational and Graphical Statistics} {\bf 29,}
  479--489.

\bibitem[\protect\citeauthoryear{Ma, Konomi, and Kang}{Ma
  et~al.}{2019}]{Ma2019AAGP}
Ma, P., Konomi, B.~A., and Kang, E.~L. (2019).
\newblock {An additive approximate Gaussian process model for large
  spatio-temporal data}.
\newblock {\em Environmetrics} {\bf 30,} 1838.

\bibitem[\protect\citeauthoryear{Magnus and Neudecker}{Magnus and
  Neudecker}{1999}]{Magnus1999}
Magnus, J.~R. and Neudecker, H. (1999).
\newblock {\em {Matrix Differential Calculus with Applications in Statistics
  and Econometrics, Thrid Edition}}.
\newblock Probability and Statistics. John Wiley \& SONS, New York.

\bibitem[\protect\citeauthoryear{Mat{\'e}rn}{Mat{\'e}rn}{1960}]{Matern1960}
Mat{\'e}rn, B. (1960).
\newblock \emph{Spatial variation}, {Meddelanden fran Statens
  Skogsforskningsinstitut, 49, 5. Second ed. (1986), Lecture Notes in
  Statistics 36, New York: Springer}.

\bibitem[\protect\citeauthoryear{Matheron}{Matheron}{1963}]{Matheron1963}
Matheron, G. (1963).
\newblock {Principles of geostatistics}.
\newblock {\em Economic Geology} {\bf 58,} 1246--1266.

\bibitem[\protect\citeauthoryear{Porcu and Stein}{Porcu and
  Stein}{2012}]{Porcu2012}
Porcu, E. and Stein, M.~L. (2012).
\newblock On some local, global and regularity behaviour of some classes of
  covariance functions.
\newblock In Porcu, E., Montero, J.-M., and Schlather, M., editors, {\em
  Advances and Challenges in Space-time Modelling of Natural Events}, pages
  221--238, Berlin, Heidelberg. Springer Berlin Heidelberg.

\bibitem[\protect\citeauthoryear{Sacks, Welch, Mitchell, and Wynn}{Sacks
  et~al.}{1989}]{Sacks1989}
Sacks, J., Welch, W.~J., Mitchell, T.~J., and Wynn, H.~P. (1989).
\newblock Design and analysis of computer experiments.
\newblock {\em Statistical Science} {\bf 4,} 409--423.

\bibitem[\protect\citeauthoryear{Santner, Williams, and Notz}{Santner
  et~al.}{2018}]{Santner2018}
Santner, T.~J., Williams, B.~J., and Notz, W.~I. (2018).
\newblock {\em {The design and analysis of computer experiments; 2nd ed.}}
\newblock Springer series in statistics. Springer, New York.

\bibitem[\protect\citeauthoryear{Schoenberg}{Schoenberg}{1938}]{Schoenberg1938}
Schoenberg, I.~J. (1938).
\newblock {Metric Spaces and Completely Monotone Functions}.
\newblock {\em Annals of Mathematics} {\bf 39,} 811--841.

\bibitem[\protect\citeauthoryear{Stein and Weiss}{Stein and
  Weiss}{1971}]{Stein1971}
Stein, E.~M. and Weiss, G. (1971).
\newblock {\em Introduction to {F}ourier Analysis on {E}uclidean Spaces
  (PMS-32)}.
\newblock Princeton university press.

\bibitem[\protect\citeauthoryear{Stein}{Stein}{1987}]{Stein1987}
Stein, M.~L. (1987).
\newblock Large sample properties of simulations using {Latin} hypercube
  sampling.
\newblock {\em Technometrics} {\bf 29,} 143--151.

\bibitem[\protect\citeauthoryear{Stein}{Stein}{1988}]{Stein1988}
Stein, M.~L. (1988).
\newblock Asymptotically efficient prediction of a random field with a
  misspecified covariance function.
\newblock {\em The Annals of Statistics} {\bf 16,} 55--63.

\bibitem[\protect\citeauthoryear{Stein}{Stein}{1993}]{Stein1993}
Stein, M.~L. (1993).
\newblock A simple condition for asymptotic optimality of linear predictions of
  random fields.
\newblock {\em Statistics \& Probability Letters} {\bf 17,} 399 -- 404.

\bibitem[\protect\citeauthoryear{Stein}{Stein}{1999}]{Stein1999}
Stein, M.~L. (1999).
\newblock {\em Interpolation of Spatial Data: Some Theory for Kriging}.
\newblock Springer Science {\&} Business Media, New York, NY.

\bibitem[\protect\citeauthoryear{Stein}{Stein}{2005}]{Stein2005}
Stein, M.~L. (2005).
\newblock Nonstationary spatial covariance functions.
\newblock Unpublished Report. URL:
  \url{https://cfpub.epa.gov/ncer_abstracts/index.cfm/fuseaction/display.files/fileID/14471}.

\bibitem[\protect\citeauthoryear{Stein and Handcock}{Stein and
  Handcock}{1989}]{Stein1989}
Stein, M.~L. and Handcock, M.~S. (1989).
\newblock {Some asymptotic properties of kriging when the covariance function
  is misspecified}.
\newblock {\em Mathematical Geology} {\bf 21,} 171--190.

\bibitem[\protect\citeauthoryear{Wang}{Wang}{2010}]{Wang2010}
Wang, D. (2010).
\newblock Fixed domain asymptotics and consistent estimation for {Gaussian}
  random field models in spatial statistics and computer experiments.
\newblock PhD thesis, Department of Statistics and Applied Probability,
  National University of Singapore.

\bibitem[\protect\citeauthoryear{Wang and Loh}{Wang and Loh}{2011}]{Wang2011}
Wang, D. and Loh, W.-L. (2011).
\newblock {On fixed-domain asymptotics and covariance tapering in Gaussian
  random field models}.
\newblock {\em Electronic Journal of Statistics} {\bf 5,} 238--269.

\bibitem[\protect\citeauthoryear{Williams and Rasmussen}{Williams and
  Rasmussen}{2006}]{Williams2006}
Williams, C.~K. and Rasmussen, C.~E. (2006).
\newblock {\em Gaussian processes for machine learning}.
\newblock MIT press Cambridge, MA.

\bibitem[\protect\citeauthoryear{Wunch, Toon, Blavier, Washenfelder, Notholt,
  Connor, Griffith, Sherlock, and Wennberg}{Wunch et~al.}{2011}]{Wunch2011}
Wunch, D., Toon, G.~C., Blavier, J.-F.~L., Washenfelder, R.~A., Notholt, J.,
  Connor, B.~J., Griffith, D. W.~T., Sherlock, V., and Wennberg, P.~O. (2011).
\newblock The total carbon column observing network.
\newblock {\em Philosophical Transactions of the Royal Society A: Mathematical,
  Physical and Engineering Sciences} {\bf 369,} 2087--2112.

\bibitem[\protect\citeauthoryear{Yadrenko}{Yadrenko}{1983}]{Yadrenko1983}
Yadrenko, M.~I. (1983).
\newblock {\em {Spectral theory of random fields}}.
\newblock Translation series in mathematics and engineering. Optimization
  Software, New York, NY.
\newblock Transl. from the Russian.

\bibitem[\protect\citeauthoryear{Yaglom}{Yaglom}{1987}]{Yaglom1987}
Yaglom, A.~M. (1987).
\newblock {Correlation Theory of Stationary and Related Random Functions}.
\newblock {\em Springer Series in Statistics} .

\bibitem[\protect\citeauthoryear{Zhang}{Zhang}{2004}]{Zhang2004}
Zhang, H. (2004).
\newblock Inconsistent estimation and asymptotically equal interpolations in
  model-based geostatistics.
\newblock {\em Journal of the American Statistical Association} {\bf 99,}
  250--261.

\end{thebibliography}
\end{singlespace}

\clearpage\pagebreak\newpage
\thispagestyle{empty}
\begin{center}
{\LARGE{\bf Supplementary Material: {Beyond Mat\'ern: On A Class of Interpretable Confluent Hypergeometric Covariance Functions}} } 
\vspace{1cm}
\\ \LARGE {\bf by}
\end{center}

\vskip 1cm
\baselineskip=15pt
\begin{center}
Pulong Ma\\
  School of Mathematical and Statistical Sciences, Clemson University \\
 220 Parkway Dr., Clemson, SC 29634, USA \\
  plma@clemson.edu\\
  and\\
 Anindya Bhadra\\ 
  Department of Statistics, Purdue University\\
  {250 N. University St., West Lafayette, IN 47907, USA} \\
  bhadra@purdue.edu\\
  \hskip 5mm\\
     \end{center}

\setcounter{equation}{0}
\setcounter{page}{0}
\setcounter{table}{0}
\setcounter{section}{0}
\setcounter{figure}{0}
\renewcommand{\theequation}{S.\arabic{equation}}
\renewcommand{\thesection}{S.\arabic{section}}
\renewcommand{\thesubsection}{S.\arabic{section}.\arabic{subsection}}
\renewcommand{\thepage}{S.\arabic{page}}
\renewcommand{\thetable}{S.\arabic{table}}
\renewcommand{\thefigure}{S.\arabic{figure}}

\clearpage\pagebreak\newpage

\renewcommand{\theequation}{S.\arabic{equation}}
\renewcommand{\thesubsection}{S.\arabic{section}.\arabic{subsection}}

\spacingset{1.25} 

This supplement contains seven sections. Section~\ref{sec: timing} gives an illustration to compare the timing to evaluate Bessel function and confluent hypergeometric function. Section~\ref{app: 1D Realization} shows 1-dimensional process realizations under different parameter values for the Mat\'ern class and the CH class. Section~\ref{app: ancilary} shows some theoretical results that are used to prove main theorems in the main text. Section~\ref{app: proof} contains technical proofs that are omitted in the main text. Section~\ref{sec: mle} contains simulation results that verify the theoretical results in Section~\ref{sec:theory}. Section~\ref{app: numerical examples} contains additional simulation results referenced in Section~\ref{sec:numerical}. Section~\ref{app: numerical results} contains parameter estimation results and figures referenced in Section~\ref{sec:real}. 

\section{Illustration of Timing to Evaluate Bessel Function and Confluent Hypergeometric Function} \label{sec: timing}
When the confluent hypergeometric function is evaluated by calling the GNU scientific library GSL via \texttt{Rcpp} package and the modified Bessel function of second kind  is evaluated with the \texttt{base} package in \textsf{R},  with 10000 times repeated evaluations, on average, the confluent hypergeometric function takes about 10.7  microseconds for each evaluation, while the Bessel function takes about 7.8 microseconds for each evaluation. The timings are recorded using the \textsf{R} package \texttt{microbenchmark} with results shown in Figure~\ref{fig: timing}.\\

\begin{figure}[htbp]
\begin{center}
\makebox[\textwidth][c]{ \includegraphics[width=1.0\textwidth, height=0.6\textheight]{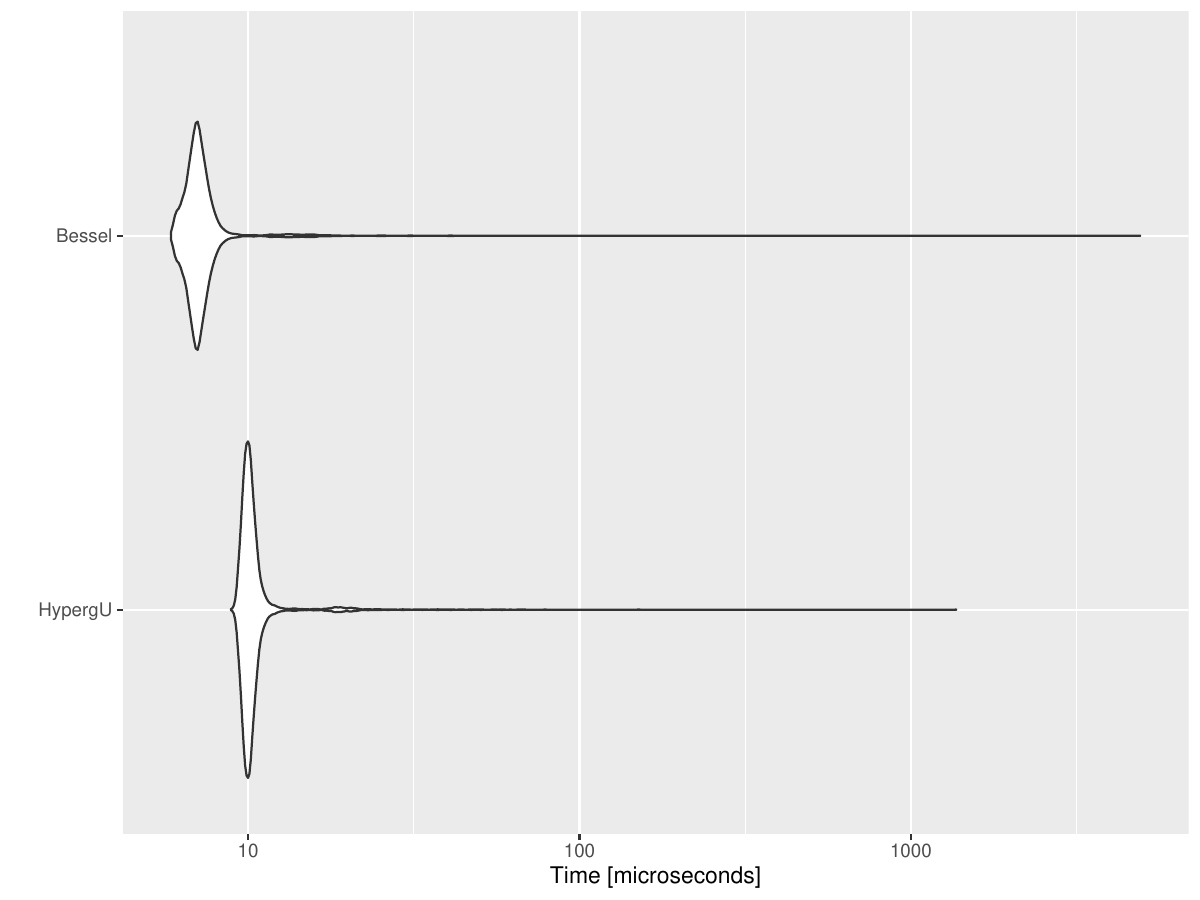}}
\caption{Benchmark of the computing time to evaluate the confluent hypergeometric function and the Bessel function in \textsf{R}. ``Bessel'' refers to the timing for evaluating the modified Bessel function of the second kind and ``HypergU'' refers to the timing for evaluating the confluent hypergeometric function of the second kind.}
\label{fig: timing}
\end{center}
\end{figure}

\section{1-D Process Realizations} \label{app: 1D Realization}
In Figure~\ref{fig: 1D realizations}, we show the realizations from zero mean Gaussian processes with the CH class and the Mat\'ern class under different parameter settings. When the distance is within the effective range, the Mat\'ern covariance function results in more large correlations than the CH covariance function. This makes the process realizations from the Mat\'ern class smoother even though the smoothness parameter is fixed at the same value for both the Mat\'ern class and the CH class. For the CH class, if $\alpha$ has a smaller value, the corresponding correlation function has more small values within the effective range. This makes the process realizations under the CH class look rougher. As we expect, when the effective range and the tail decay parameter are fixed, the process realizations under the CH class look smoother for a larger value of the smoothness parameter.

\begin{figure}[htbp] 
\captionsetup[subfigure]{aboveskip=5pt}

\begin{subfigure}{.25\textwidth}
\makebox[\textwidth][c]{ \includegraphics[width=1.0\linewidth, height=0.18\textheight]{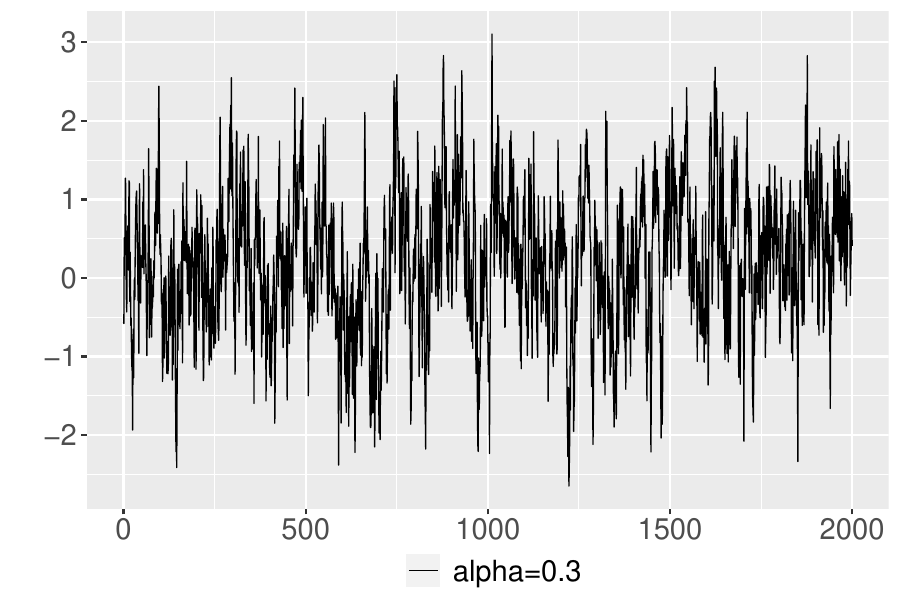}}
\end{subfigure}%
\begin{subfigure}{.25\textwidth}
\makebox[\textwidth][c]{ \includegraphics[width=1.0\linewidth, height=0.18\textheight]{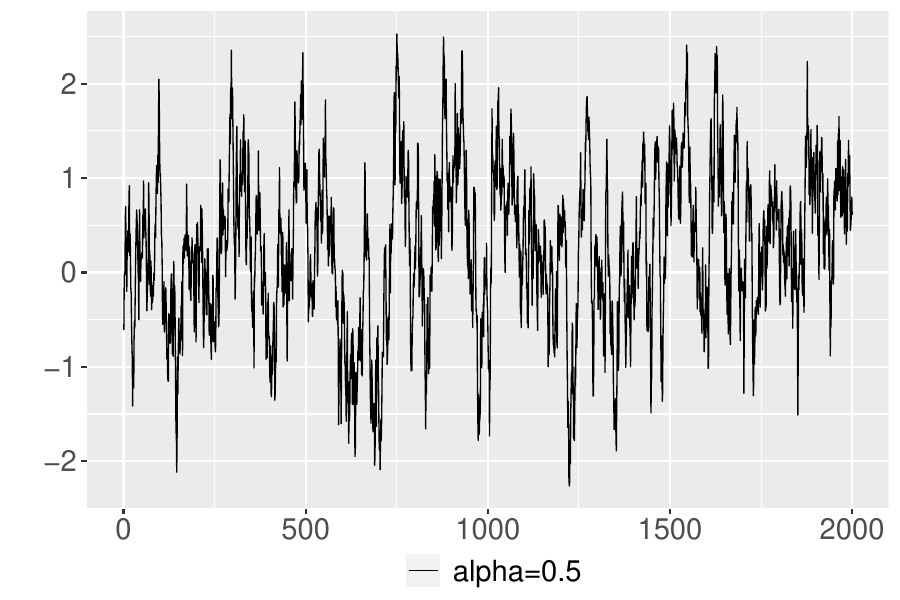}}
\end{subfigure}%
\begin{subfigure}{.25\textwidth}
\makebox[\textwidth][c]{ \includegraphics[width=1.0\linewidth, height=0.18\textheight]{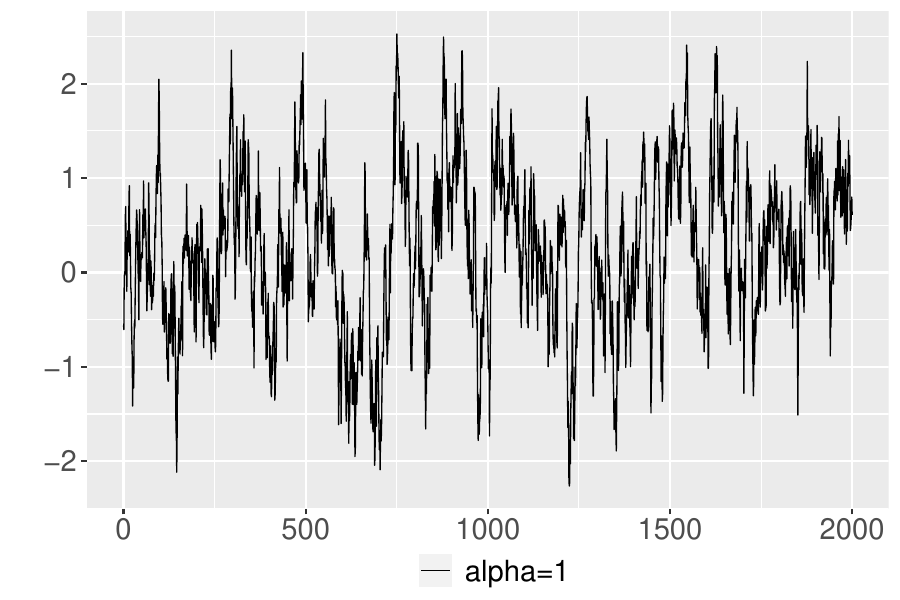}}
\end{subfigure}%
\begin{subfigure}{.25\textwidth}
\makebox[\textwidth][c]{ \includegraphics[width=1.0\linewidth, height=0.18\textheight]{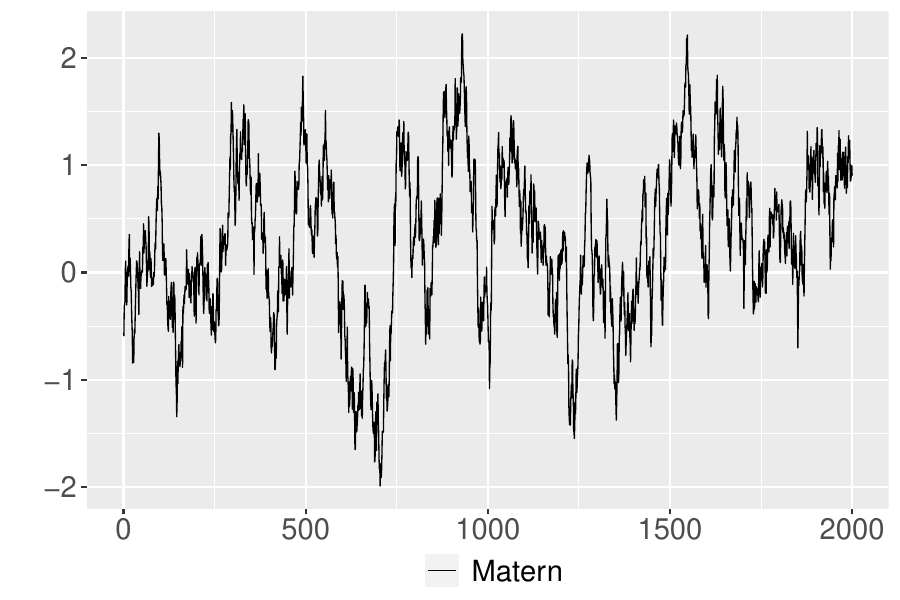}}
\end{subfigure}
{\caption*{$\nu=0.5,ER=200$}}

\begin{subfigure}{.25\textwidth}
\makebox[\textwidth][c]{ \includegraphics[width=1.0\linewidth, height=0.18\textheight]{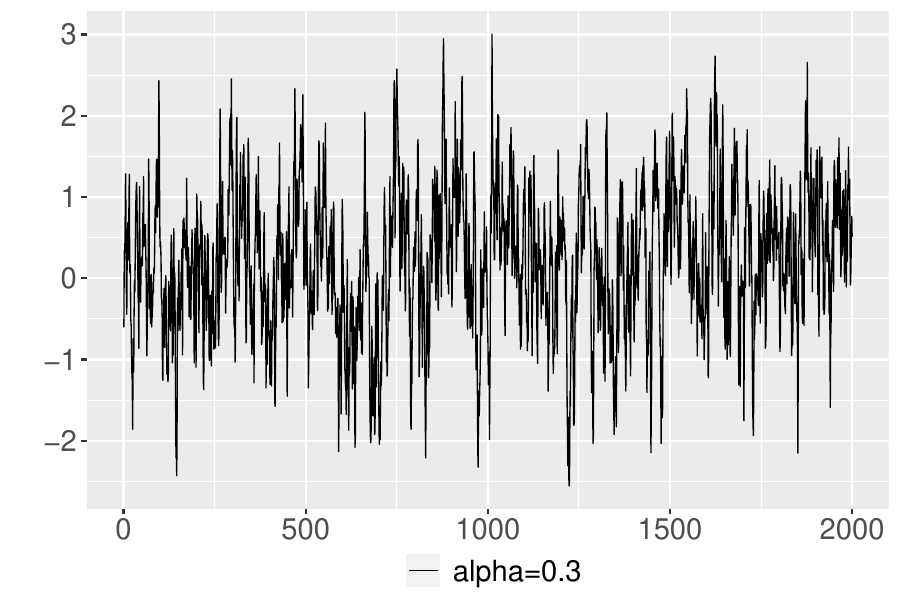}}
\end{subfigure}%
\begin{subfigure}{.25\textwidth}
\makebox[\textwidth][c]{ \includegraphics[width=1.0\linewidth, height=0.18\textheight]{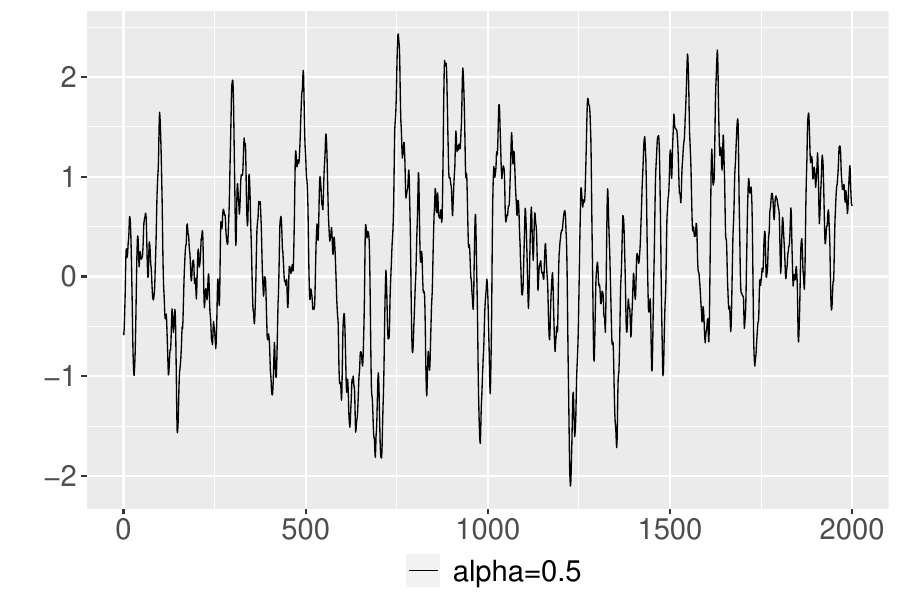}}
\end{subfigure}%
\begin{subfigure}{.25\textwidth}
\makebox[\textwidth][c]{ \includegraphics[width=1.0\linewidth, height=0.18\textheight]{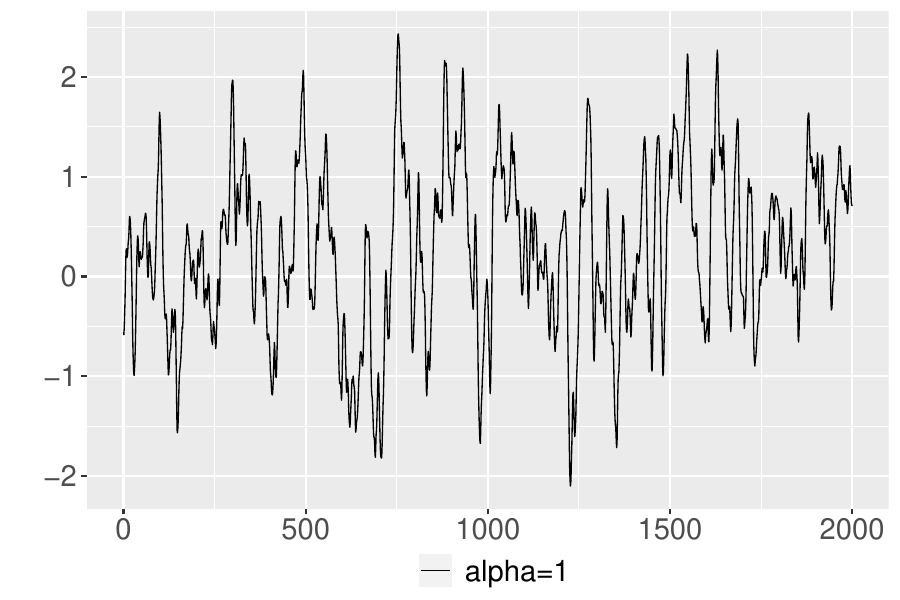}}
\end{subfigure}%
\begin{subfigure}{.25\textwidth}
\makebox[\textwidth][c]{ \includegraphics[width=1.0\linewidth, height=0.18\textheight]{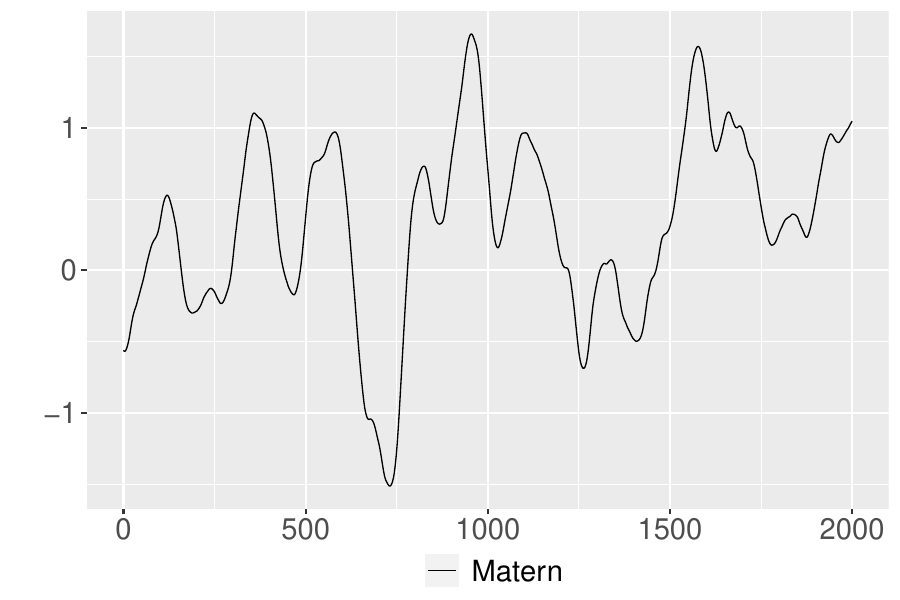}}
\end{subfigure}
\caption*{$\nu=2.5, ER=200$}

\begin{subfigure}{.25\textwidth}
\makebox[\textwidth][c]{ \includegraphics[width=1.0\linewidth, height=0.18\textheight]{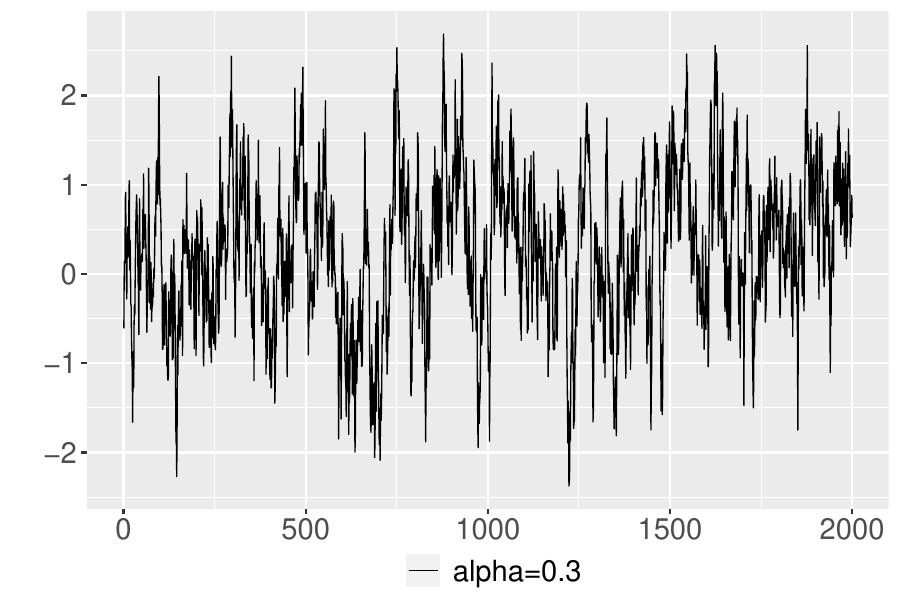}}
\end{subfigure}%
\begin{subfigure}{.25\textwidth}
\makebox[\textwidth][c]{ \includegraphics[width=1.0\linewidth, height=0.18\textheight]{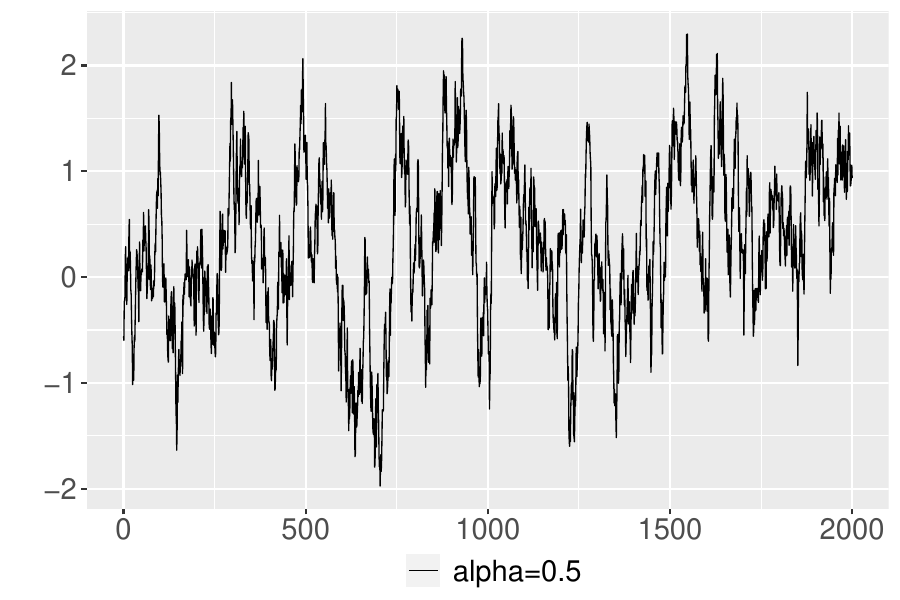}}
\end{subfigure}%
\begin{subfigure}{.25\textwidth}
\makebox[\textwidth][c]{ \includegraphics[width=1.0\linewidth, height=0.18\textheight]{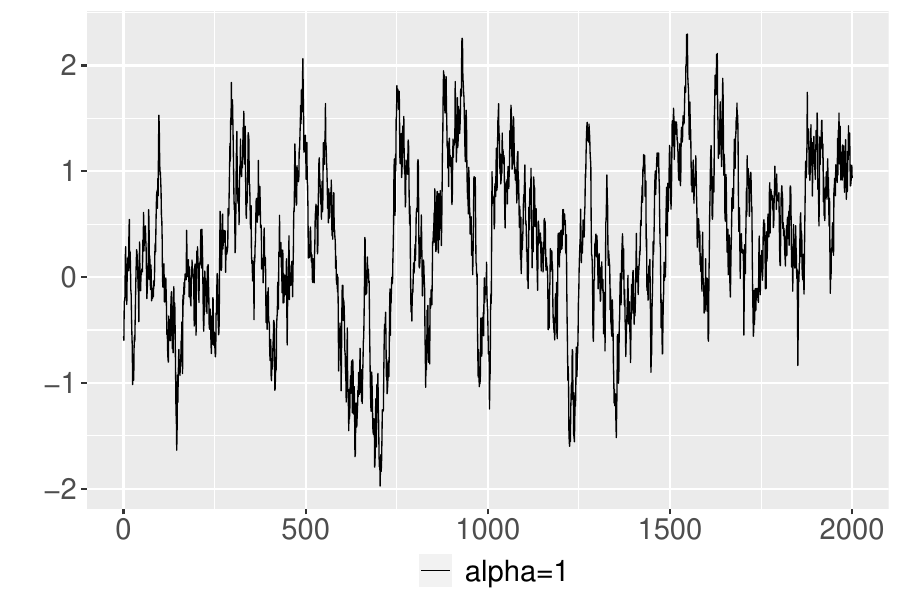}}
\end{subfigure}%
\begin{subfigure}{.25\textwidth}
\makebox[\textwidth][c]{ \includegraphics[width=1.0\linewidth, height=0.18\textheight]{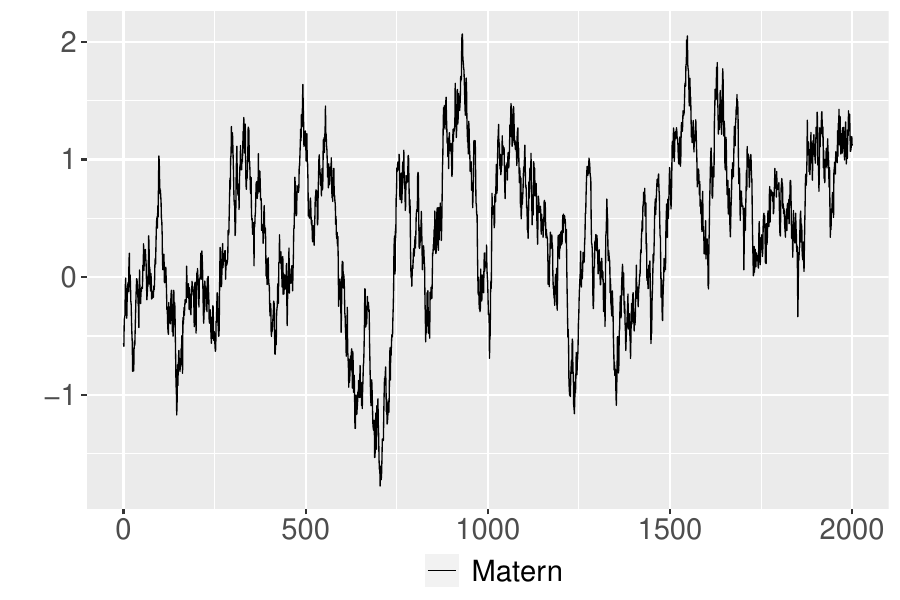}}
\end{subfigure}
\caption*{$\nu=0.5, ER=500$}

\begin{subfigure}{.25\textwidth}
\makebox[\textwidth][c]{ \includegraphics[width=1.0\linewidth, height=0.18\textheight]{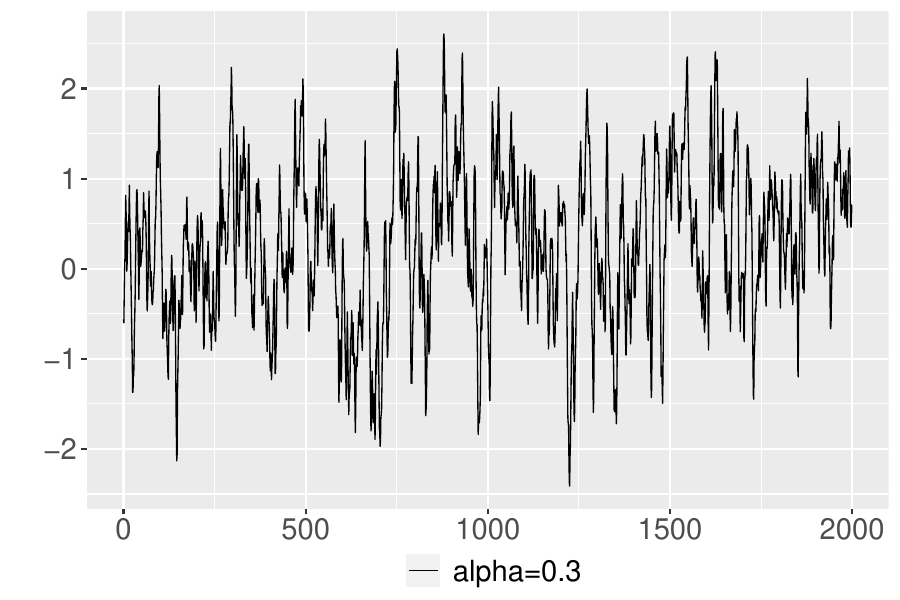}}
\end{subfigure}%
\begin{subfigure}{.25\textwidth}
\makebox[\textwidth][c]{ \includegraphics[width=1.0\linewidth, height=0.18\textheight]{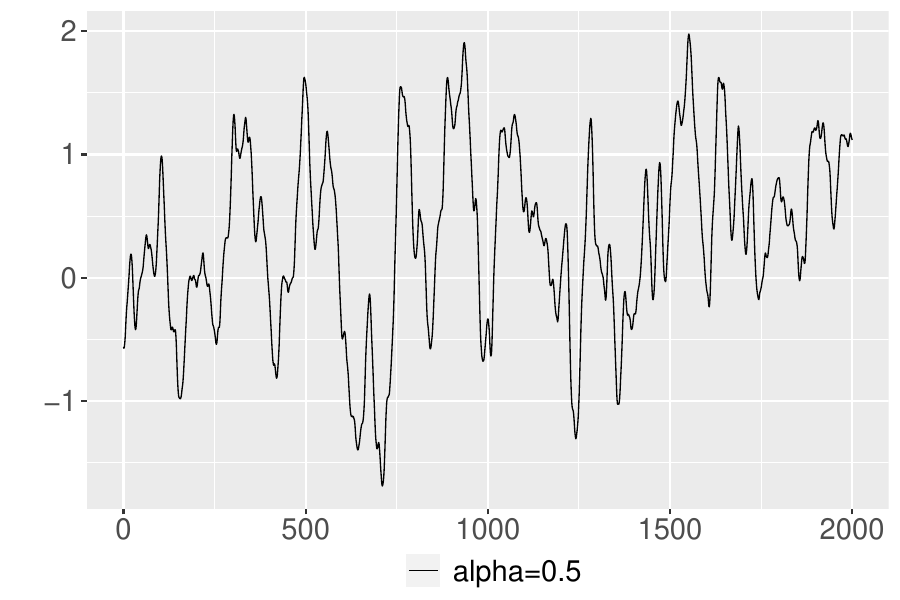}}
\end{subfigure}%
\begin{subfigure}{.25\textwidth}
\makebox[\textwidth][c]{ \includegraphics[width=1.0\linewidth, height=0.18\textheight]{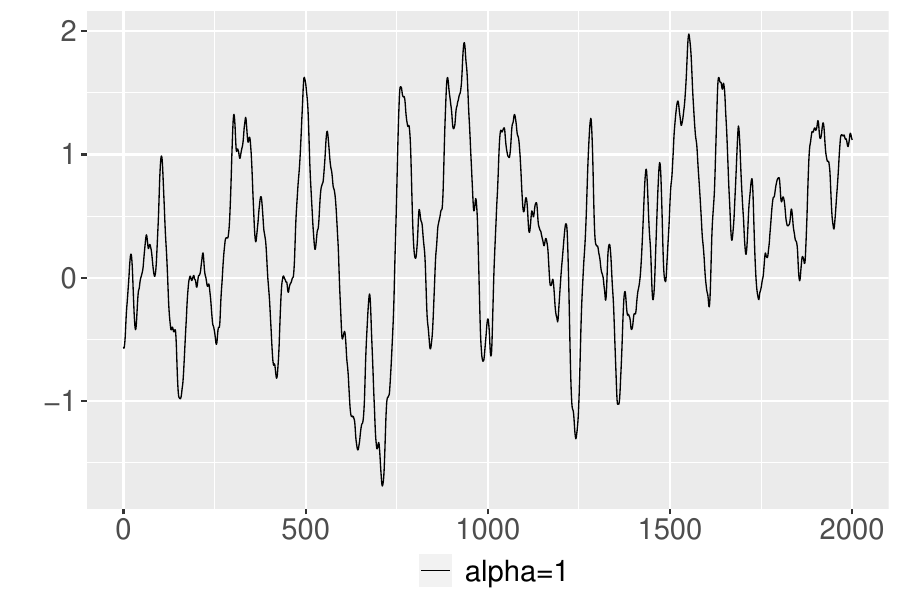}}
\end{subfigure}%
\begin{subfigure}{.25\textwidth}
\makebox[\textwidth][c]{ \includegraphics[width=1.0\linewidth, height=0.18\textheight]{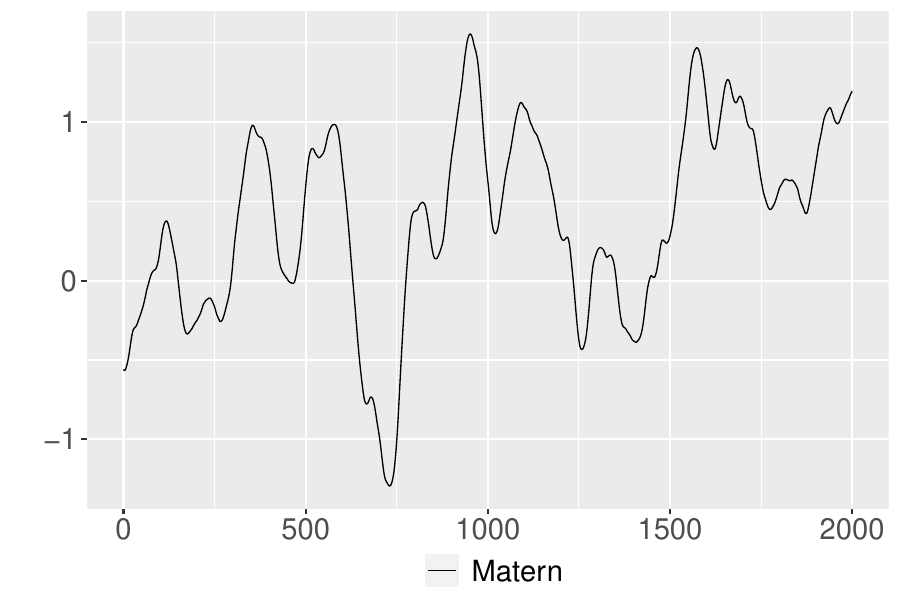}}
\end{subfigure}
\caption*{$\nu=2.5, ER=500$}

\caption{Realizations over 2000 regular grid points in the domain $[0, 2000]$ from zero mean Gaussian processes with the CH covariance model and the Mat\'ern covariance model under different parameter settings. The realizations from the CH covariance are shown in the first three columns and those from the Mat\'ern covariance are shown in the last column. For the first two rows, the effective range (ER) is fixed at 200. For the last two rows, the effective range is fixed at 500. ER is defined as the distance at which correlation is approximately $0.05$.}
\label{fig: 1D realizations}
\end{figure}

\newpage

\section{Ancillary Results} \label{app: ancilary}

To show the asymptotic behavior of the MLE of the microergodic parameter, we need some results in terms of spectral densities of covariance functions. More precisely, the tail behavior of the spectral densities can be used to check the equivalence of probability measures generated by stationary Gaussian random fields. Equivalence of Gaussian measures defined by Gaussian processes has been studied in probability and statistics with sufficient conditions given in Theorem 17 of Chapter III of \cite{Ibragimov1978} for $d=1$ and given on page 156 of \cite{Yadrenko1983} and page 120 of \cite{Stein1999} for $d>1$. In particular, the following sufficient conditions can be used to check the equivalence of Gaussian probability measures defined by covariance functions. If for some $\lambda>0$ and for some finite $c\in \mathbb{R}$, one has

\begin{align} \label{eqn: boundedness}
    0<f_1(\bfomega)|\bfomega|^{\lambda}<\infty \quad \text{ as } \quad |\bfomega| \to \infty, \quad \text{ and } \\ 
    \int_{|\bfomega|>c} \left\{ \frac{f_1(\bfomega) - f_2(\bfomega)}{f_1(\bfomega)} \right\}^2 d \bfomega < \infty, \label{eqn: equivalence for SGRF} 
\end{align}
then the two corresponding Gaussian measures $\mathcal{P}_1$ and $\mathcal{P}_2$ are equivalent. For isotropic Gaussian random fields, the condition~\eqref{eqn: equivalence for SGRF} can be expressed as 
\begin{align}\label{eqn: equivalence for IGRF}
    \int_c^{\infty} \omega^{d-1} \left\{ \frac{f_1(\omega) - f_2(\omega)}{f_1(\omega)} \right\}^2 \, d\omega <\infty,
\end{align}
where $\omega:=|\bfomega|$ with $|\cdot|$ denoting the Euclidean norm. The detailed discussion on equivalence of Gaussian measures and the condition for equivalence can be found in Chapter 4 of \cite{Stein1999} and references \citep[e.g.,][]{Stein1989, Stein1988, Stein1993}.

In what follows, we will introduce a few useful lemmas. Lemma~\ref{lem: diagonalization} is used to diagonalize two covariance matrices and it is needed in Lemma~\ref{lem: inequality}, which gives an important result on the behavior of eigenvalues of a correlation matrix constructed from the CH correlation function. 

\begin{lemma} \label{lem: diagonalization}
Let $\bfA$ and $\bfB$ be two $n\times n$ symmetric positive definite matrices. Then there exists a non-singular matrix $\bfU$ such that $\bfU^\top \bfA \bfU=\mathbf{I}_{n\times n}$ and $\bfU^\top \bfB \bfU = \bfD$, where $\bfD$ is an $n\times n$ diagonal matrix with positive diagonal entries. 
\end{lemma}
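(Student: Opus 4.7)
The plan is to execute the classical \emph{simultaneous diagonalization} argument in two stages: first use $\bfA$ to construct a congruence that turns $\bfA$ into the identity (a ``whitening'' step), and then diagonalize the transformed $\bfB$ by an orthogonal congruence, which leaves the identity invariant.

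More concretely, first I would exploit the positive definiteness of $\bfA$ to write $\bfA = \bfA^{1/2}\bfA^{1/2}$, where $\bfA^{1/2}$ is the unique symmetric positive definite square root (a Cholesky factor would work equally well). Setting $\bfV := \bfA^{-1/2}$, which is symmetric and non-singular, one checks immediately that $\bfV^\top \bfA \bfV = \bfA^{-1/2}\bfA \bfA^{-1/2} = \bfI$. This handles the first requirement for any choice of subsequent congruence.

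Next I would consider the matrix $\bfB' := \bfV^\top \bfB \bfV$. Since $\bfB$ is symmetric positive definite and $\bfV$ is non-singular, $\bfB'$ is also symmetric positive definite. By the spectral theorem there exists an orthogonal matrix $\bfQ$ and a diagonal matrix $\bfD$ with strictly positive diagonal entries such that $\bfB' = \bfQ \bfD \bfQ^\top$. I would then set $\bfU := \bfV \bfQ$. Non-singularity of $\bfU$ follows from non-singularity of $\bfV$ and orthogonality (hence invertibility) of $\bfQ$. The verification is then a one-line calculation in each case:
\begin{align*}
\bfU^\top \bfA \bfU &= \bfQ^\top (\bfV^\top \bfA \bfV) \bfQ = \bfQ^\top \bfI \bfQ = \bfI, \\
\bfU^\top \bfB \bfU &= \bfQ^\top (\bfV^\top \bfB \bfV) \bfQ = \bfQ^\top \bfQ \bfD \bfQ^\top \bfQ = \bfD.
\end{align*}

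There is no real obstacle here; the only things worth stating carefully are the existence of a symmetric positive definite square root of $\bfA$ (which is standard via the spectral theorem applied to $\bfA$) and the positivity of the diagonal entries of $\bfD$, which follows because $\bfB'$ inherits positive definiteness from $\bfB$ under a congruence by a non-singular matrix, so all its eigenvalues are strictly positive.
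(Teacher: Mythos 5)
Your proof is correct and follows essentially the same route as the paper's: whiten $\bfA$ via a square root so that the congruence sends $\bfA$ to the identity, then orthogonally diagonalize the transformed $\bfB$, the only cosmetic difference being that you use the symmetric square root $\bfA^{1/2}$ while the paper uses the non-symmetric factor $\bfLambda^{1/2}\bfS$ from the spectral decomposition. No gaps.
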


\begin{proof} 
For the symmetric matrix $\bfA$,  it follows from the Schur Decomposition Theorem \citep[e.g.,][p.~17]{Magnus1999} that there exists an orthogonal $n\times n$ matrix $\bfS$ consisting of eigenvectors of $\bfA$ and a diagonal matrix $\bfLambda:=\text{diag}\{\lambda_1, \ldots, \lambda_n\}$ such that $\bfS^\top \bfA \bfS = \bfLambda$. Since $\bfA$ is positive definite,  the diagonal entries of $\bfLambda$ are all positive. Let $\bfLambda^{1/2}: = \text{diag}\{\sqrt{\lambda_1}, \ldots, \sqrt{\lambda_n}\}$ be the ``square root'' of $\bfLambda$. Then we call $\bfA^{1/2}:=   \bfLambda^{1/2} \bfS$ a square root of $\bfA$ satisfying $\bfA = (\bfA^{\top})^{1/2} \bfA^{1/2}$. As the matrix $\bfA^{1/2}$ is invertible, the symmetric matrix $(\bfA^\top)^{-1/2} \bfB \bfA^{-1/2}$ is well-defined. Note that $(\bfA^\top)^{-1/2} \bfB \bfA^{-1/2} $ is positive definite since for all $\bfx \in \mathbb{R}^n$, $\bfx^\top (\bfA^\top)^{-1/2} \bfB \bfA^{-1/2} \bfx = \|\bfB^{1/2} \bfA^{-1/2} \bfx \|^2 \geq 0$ with the inequality becoming an equality only if $\bfx=\bfzero$. Hence $(\bfA^\top)^{-1/2} \bfB \bfA^{-1/2}$ is also a symmetric and positive definite matrix.  According to the Schur Decomposition Theorem, there exists an orthogonal matrix $\bfO$ and diagonal matrix $\bfD$ with positive diagonal entries such that $\bfO^\top (\bfA^\top)^{-1/2} \bfB \bfA^{-1/2} \bfO = \bfD$. Now we define the non-singular matrix $\bfU:=\bfA^{-1/2}\bfO$, which satisfies $\bfU^\top \bfA \bfU=\bfI_{n\times n}$, as to be established. 
\end{proof}

\begin{lemma} \label{lem: inequality}
Suppose that $\nu>0$ is fixed. Given a set of $n$ observation locations in a bounded domain $\mathcal{D}$,  let $\sigma^2_0\bfR_n(\bftheta_0)$ be the $n\times n$ covariance matrix defined by the CH covariance function $C(h, \nu, \alpha_0, \beta_0, \sigma^2_0)$ with $\bftheta_0:=\{\alpha_0, \beta_0\}$ and $\sigma^2\bfR_n(\bftheta)$ be the $n\times n$ covariance matrix defined by the CH covariance function $C(h, \nu, \alpha, \beta, \sigma^2)$ with $\bftheta:=\{\alpha, \beta\}$. Assume that $\alpha_0, \alpha>d/2$. Let $\bfLambda:=\diag\{\lambda_{1,n}, \ldots, \lambda_{n,n}\}$ be an $n\times n$ diagonal matrix with diagonal elements $\lambda_{k,n}>0$ for $k=1, \ldots, n$ such that $\mathbf{U}^\top \sigma^2_0\bfR_n(\bftheta_0) \mathbf{U}=\mathbf{I}_n$ and $\mathbf{U}^\top \sigma^2\bfR_n(\bftheta) \mathbf{U}=\bfLambda$ for some non-singular matrix $\bfU$.  Then it can be established that for any $\epsilon>0$, as $n\to \infty$,
\begin{align*}
\frac{1}{\epsilon\sqrt{n}}  \max_{1\leq i \leq n} \sum_{k=1}^n\{ \lambda_{i,n}^{-1}\} |\lambda_{k,n} - 1| \to 0.
\end{align*}

\end{lemma}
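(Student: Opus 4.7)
The plan is to combine the spectral-density bound of Proposition~\ref{thm: spectral density} with the Hilbert--Schmidt operator structure implicit in the equivalence result of Theorem~\ref{thm: equivalence}. Since $\lambda_{i,n}^{-1}$ does not depend on $k$, the quantity to control factors as
$$\max_{1\le i\le n} \lambda_{i,n}^{-1}\,\sum_{k=1}^n |\lambda_{k,n}-1| \;=\; \bigl(\min_{1\le i\le n}\lambda_{i,n}\bigr)^{-1} \sum_{k=1}^n|\lambda_{k,n}-1|,$$
so it suffices to establish (i) a uniform lower bound $\min_i \lambda_{i,n}\ge c_1>0$, equivalently a uniform upper bound on $\max_i \lambda_{i,n}^{-1}$, and (ii) a sub-$\sqrt{n}$ estimate $\sum_{k=1}^n |\lambda_{k,n}-1| = o(\sqrt{n})$ as $n\to\infty$.

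For step (i), I would use Bochner's theorem to write the quadratic forms in spectral form, $\bfx^\top \bfR_n(\bftheta)\bfx = \int_{\mathbb{R}^d} f_{\bftheta}(\bfomega)\,\bigl|\sum_j x_j\,e^{i\bfomega\cdot \bfs_j}\bigr|^2\,d\bfomega$, and then appeal to the generalized Rayleigh--Ritz characterization
$$\lambda_{k,n}\in\Bigl[\inf_{\bfx\ne\bfzero}\frac{\sigma^2\,\bfx^\top \bfR_n(\bftheta)\bfx}{\sigma_0^2\,\bfx^\top\bfR_n(\bftheta_0)\bfx},\; \sup_{\bfx\ne\bfzero}\frac{\sigma^2\,\bfx^\top \bfR_n(\bftheta)\bfx}{\sigma_0^2\,\bfx^\top \bfR_n(\bftheta_0)\bfx}\Bigr].$$
Proposition~\ref{thm: spectral density} shows that $\sigma^2 f_{\bftheta}(\bfomega)/(\sigma_0^2 f_{\bftheta_0}(\bfomega))$ tends to a positive finite constant as $|\bfomega|\to\infty$, and since both spectral densities are continuous and strictly positive, the ratio is sandwiched between two positive constants uniformly in $\bfomega$. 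This bounds the Rayleigh quotient uniformly in $n$, yielding $0<c_1\le \lambda_{k,n}\le c_2<\infty$ for every $k,n$, hence $\max_i \lambda_{i,n}^{-1}\le c_1^{-1}$.

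For step (ii), I would invoke Theorem~\ref{thm: equivalence}: after rescaling $\sigma^2$ so that the microergodic parameters coincide (which only multiplies every $\lambda_{k,n}$ by a fixed positive constant and does not affect the claim), the measures $\mathcal{P}_1$ and $\mathcal{P}_2$ are equivalent on $\mathcal{D}$. The Feldman--Hajek theorem then implies that the operator $\bfLambda_n-\bfI_n$ is Hilbert--Schmidt, so $\sum_{k=1}^n(\lambda_{k,n}-1)^2\le M$ uniformly in $n$. Cauchy--Schwarz immediately gives the preliminary estimate $\sum_{k=1}^n|\lambda_{k,n}-1|\le\sqrt{nM}$; the $o(\sqrt{n})$ refinement follows from a truncation argument, splitting the sum according to whether $|\lambda_{k,n}-1|$ exceeds a level $t_n\to 0$ chosen optimally and exploiting the $O(|\bfomega|^{-2})$ correction in the slowly varying factor from Proposition~\ref{thm: spectral density}, which forces only a vanishing fraction of indices to contribute nontrivially. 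Combining (i) and (ii) bounds the target expression by $c_1^{-1}\,o(\sqrt{n})/(\epsilon\sqrt{n})\to 0$ for each fixed $\epsilon>0$.

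The main obstacle is upgrading from $O(\sqrt{n})$ to $o(\sqrt{n})$: Hilbert--Schmidt alone does not suffice, because the sum $\sum_k(\lambda_{k,n}-1)^2$ generally converges to a positive limit rather than to zero. The extra input I would lean on is the quantitative decay in Proposition~\ref{thm: spectral density}, which shows $f_{\bftheta}(\bfomega)/f_{\bftheta_0}(\bfomega)-1$ is of order $|\bfomega|^{-2}$ at high frequencies. Transferring this decay to the eigenvalues $\lambda_{k,n}$ through an infill Riemann-sum approximation to the limit operator on $L^2(\mathcal{D})$, and controlling the discretization error with Lemma~\ref{lem: diagonalization} applied to blocks, should deliver summable-in-$k$ control on the high-index contribution, leaving only an $O(1)$ low-index remainder. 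This step is the most delicate because it requires tracking both dimension ($d=1,2,3$) and the spectral tail of the slowly varying function simultaneously.
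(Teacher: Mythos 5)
Your step (i) is sound and coincides with what the paper does: the uniform two-sided bound $\tilde c_f \le \lambda_{k,n}\le \tilde C_f$ is obtained exactly from the boundedness of the spectral-density ratio, so $\max_i\lambda_{i,n}^{-1}$ is harmless. The gap is entirely in step (ii), and it is the heart of the lemma. You correctly observe that the Feldman--Hajek/Hilbert--Schmidt bound $\sum_k(\lambda_{k,n}-1)^2\le M$ only yields $\sum_k|\lambda_{k,n}-1|=O(\sqrt n)$, but the remedy you propose does not close it. The truncation at level $t_n$ gives $\sum_{|\lambda_k-1|>t_n}|\lambda_k-1|\le M/t_n$ and $\sum_{|\lambda_k-1|\le t_n}|\lambda_k-1|\le nt_n$; optimizing over $t_n$ reproduces $\sqrt{Mn}$ and nothing better, so no choice of $t_n$ ``forces only a vanishing fraction of indices to contribute.'' The extra input has to be a quantitative transfer of the $O(|\bfomega|^{-2})$ high-frequency decay of $f_1/f_0-1$ to the eigenvalues of the \emph{finite} matrices at \emph{arbitrary} infill designs, and your proposed mechanism for this --- a ``Riemann-sum approximation to the limit operator'' with ``Lemma~\ref{lem: diagonalization} applied to blocks'' --- is not a viable plan: Lemma~\ref{lem: diagonalization} is a purely algebraic simultaneous-diagonalization statement with no approximation content, and the matrix eigenvalues are not Riemann samples of the spectral ratio for irregular designs. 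The paper instead builds a compactly supported reference kernel $c_1$ with Fourier transform $\xi_1\asymp f$, introduces a mollifier $e_n$ with tunable bandwidth $\epsilon_n$, decomposes the covariance-difference kernel $b(\bfs,\bfu)$ into a mollified part and a remainder, and applies Bessel's inequality in the basis $\{\psi_k\}$ of \cite{Wang2010} to get the two-term bound $\sum_k|\lambda_{k,n}-1|\le\sqrt{Cn\epsilon_n^{\ell_0}}+C_1\epsilon_n^{-d}+C_2E$; choosing $\epsilon_n\to0$ with $\sqrt n\,\epsilon_n^{d}\to\infty$ then gives $o(\sqrt n)$. That construction is the missing idea, and nothing in your sketch supplies an equivalent.

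A secondary point: your parenthetical that rescaling $\sigma^2$ to match the microergodic parameters ``only multiplies every $\lambda_{k,n}$ by a fixed positive constant and does not affect the claim'' is not right. If the constant is $c\neq1$, the eigenvalues concentrate near $c$ and $\sum_k|\lambda_{k,n}-1|$ grows like $n|c-1|$, so the conclusion fails. The lemma is only true (and is only ever invoked in the paper) when the two parameter sets already share the same microergodic parameter; your argument should state this as a hypothesis rather than dispose of it by rescaling.
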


\begin{proof}
Note that the existence of the matrix $\mathbf{U}$ is true according to Lemma~\ref{lem: diagonalization} and thus $\lambda_{k,n}, k=1, \ldots, n$ are well-defined. 

Let $\xi_0: \mathbb{R}^d \to \mathbb{R}$ be a function of the form $\xi_0(\bfomega) = \int_{\mathbb{R}^d} \exp\{- i\bfx^\top \bfomega \} c_0(\bfx) d\bfx$ for any $\bfomega \in \mathbb{R}^d$, where $c_0(\bfx) = |\bfx|^{\kappa - d} I(|\bfx|\leq 1)$ for any $\bfx \in \mathbb{R}^d$,  $\kappa=(\nu+d/2)/(2m)$ with $|\cdot|$ denoting the Euclidean norm, and $m=\floor{\nu+d/2} +1$ with $\floor{x}$ denoting the largest integer less than or equal to $x$. As $d \in \{1,2,3\}$, $\kappa \in (0, 1/2)$, it follows from Lemma 2.3 of \cite{Wang2010} and proof of Theorem~8 of \citet{Bevilacqua2019AOS}  that the function $\xi_0$ is a continuous, isotropic, and strictly positive function with $\xi_0(\bfomega) \asymp |\bfomega|^{-\kappa}$ when $|\bfomega| \to \infty$. 

Let $c_1=c_0 * \ldots * c_0$ denote the $2m$-fold convolution of the function $c_0$ with itself, and let $\xi_1(\bfomega) = \int_{\mathbb{R}^d} \exp\{- i\bfx^\top \bfomega \} c_1(\bfx) d\bfx$. Then $\xi_1(\bfomega) = \xi_0(\bfomega)^{2m}$ for all $\bfomega \in \mathbb{R}^d$. This implies that $\xi_1$ is also a continuous, isotropic, and strictly positive function. By Proposition 1, the spectral density $f(|\bfomega|)$ of the CH covariance function satisfies $f(|\bfomega|) \asymp |\bfomega|^{-(2\nu + d)}$, and hence we have ${f(|\bfomega|) }/{\xi_1(\bfomega) } \asymp 1$ as $|\bfomega| \to \infty$. Note that this ratio (as a function of $|\bfomega|$) is a well-defined and continuous function on arbitrary compact interval of the positive real line with $\xi_1>0$. Thus, there exist two positive constants (not depending on $\bfomega$) such that 
\begin{align} \label{eqn: f over xi} 
c_{\xi_1} \leq \frac{f(|\bfomega|) }{\xi_1(\bfomega) } \leq C_{\xi_1},\quad \text{as } |\bfomega|\to \infty.
\end{align}

For any fixed $\nu>0$, let $f_{\sigma, \alpha, \beta}(|\bfomega|)$ denote the spectral density of the CH covariance $C(h; \nu, \alpha, \beta, \sigma^2)$ and  let $f_{\sigma_0, \alpha_0, \beta_0}(|\bfomega|)$ denote the spectral density of the CH covariance $C(h; \nu,$ $\alpha_0, \beta_0, \sigma^2_0)$. Then we define 
\begin{align*}
\eta(\bfomega) := \frac{f_{\sigma, \alpha, \beta}(|\bfomega|) - f_{\sigma_0, \alpha_0, \beta_0}(|\bfomega|)}{\xi_1(\bfomega)}.
\end{align*}
It follows from direct calculation that for a constant $C_{\eta}>0$, 
\begin{align} \label{eqn: eta} 
\begin{split}
\int_{\mathbb{R}^d} \eta(\bfomega)^2 d \bfomega = \frac{2\pi^{d/2}}{\Gamma(d/2)} \left\{ \int_0^{C_{\eta}} r^{d-1} \left( \frac{f_{\sigma, \alpha, \beta}(r) - f_{\sigma_0, \alpha_0, \beta_0}(r)}{\xi_1(\mathbf{r})} \right)^2 dr \right. \\
\left. + \int_{C_{\eta}}^{\infty} r^{d-1} \left(\frac{f_{\sigma, \alpha, \beta}(r) - f_{\sigma_0, \alpha_0, \beta_0}(r)}{\xi_1(\mathbf{r})}\right)^2 dr\right\},
\end{split}
\end{align}
where $r = |\mathbf{r}|$ and $\mathbf{r} \in \mathbb{R}^d$. 

As shown in Theorem 3, for any fixed $\nu>0$, the spectral density of the CH class satisfies the conditions~\eqref{eqn: boundedness} and~\eqref{eqn: equivalence for SGRF} when $\frac{\sigma^2 \Gamma(\nu+\alpha)}{\beta^{2\nu} \Gamma(\alpha)} = \frac{\sigma^2_0 \Gamma(\nu+\alpha_0)}{\beta_0^{2\nu} \Gamma(\alpha_0)}$. This implies that there exist a constant 
$C_{\eta}^0$ (not depending on $\bfomega$) such that 
$$|\eta(\bfomega)| \leq \frac{C_{\eta}^0}{ (1+|\bfomega|^2)}, \quad \forall \bfomega \in \mathbb{R}^d.$$
It follows immediately that the two integrals in the righ-hand side of Equation~\eqref{eqn: eta} are hence finite for $d=1,2,3$. Thus, $\eta$ is square integrable, i.e., $\eta\in L^2(\mathbb{R}^d)$. From classic Fourier theory (see Chapter 1 of \cite{Stein1971}), an immediate consequence of the square integrability of $\eta$ is that there exists a square-integrable function $g: \mathbb{R}^d \to \mathbb{C}$ such that 
\begin{align*}
\int_{\mathbb{R}^d} | \eta(\bfomega) - \hat{g}_{k}(\bfomega)|^2 d \bfomega, \text{ as } k \to \infty,
\end{align*}
where $\hat{g}_{k}(\bfomega) = \int_{\mathbb{R}^d} \exp\{-i\bfx^\top \bfomega \} g(\bfx) I(|\bfx|_{\max} \leq k) d \bfx$ for all $\bfomega \in \mathbb{R}^d$ and $|\bfx|_{\max} = \max_{1\leq j \leq d} |x_j|$ for $\bfx=(x_1, \ldots, x_d) \in \mathbb{R}^d$. 

Let $a>0$, $m_a:=\floor{a+d/2}+1, a_0:=(a+d/2)/(2m_a)$. Define 
\begin{align*}
\tilde{c}_0(\bfx) &:= |\bfx|^{a_0-d} I(|\bfx|\leq 1), \quad \forall \bfx \in \mathbb{R}^d, \\
\tilde{\xi}_0(\bfomega) &:= \int_{\mathbb{R}^d} \exp\{-i\bfx^\top \bfomega\} \tilde{c}_0(\bfx) d\bfx,\quad \forall \bfomega \in \mathbb{R}^d.
\end{align*}
Let $\tilde{c}_1:=\tilde{c}_0*\cdots*\tilde{c}_0$ denote the $2m_a$-fold convolution of $\tilde{c}_0$ with itself. 
Let $\{\epsilon_n: n=1,2,\ldots\}$ be a sequence of real numbers such that $\epsilon_n\in(0, 1)$ for all $n$ and $\lim_{n\to\infty} \epsilon_n=0$. Then we define 
\begin{align*} 
e_n(\bfx) := \frac{1}{C_e \epsilon_n^d} \tilde{c}_1\left( \frac{\bfx}{\epsilon_n}\right), 
\end{align*}
where $C_e:=\int_{\mathbb{R}^d} \tilde{c}_1(\bfx) d \bfx$. Then we obtain the Fourier transform of $e_n$: 
\begin{align*}
\hat{e}_n(\bfx) = \int_{\mathbb{R}^d} \exp\{-i\bfx^\top \bfomega\}  e_n(\bfx) d\bfx = \frac{\tilde{\xi}_1(\epsilon_n \bfomega)}{C_e},
\end{align*}
where $\tilde{\xi}_1(\bfomega) := \int_{\mathbb{R}^d} \exp\{-i\bfx^\top \bfomega\} \tilde{c}_1(\bfx) d\bfx = \tilde{\xi}_0^{2m_a}(\bfomega)$. This implies that there exists a constant $C_{\hat{e}}$ (not depending on $\bfomega$ and $n$) such that 
\begin{align} \label{eqn: bound for e_n_hat} 
|\hat{e}_n(\bfomega)| \leq \frac{C_{\hat{e}} }{ (1+\epsilon_n|\bfomega|)^{a+d/2} },\quad \forall \bfomega  \in \mathbb{R}^d.
\end{align}

Note that it follows from Plancherel's theorem that 
\begin{align*}
\int_{\mathbb{R}^d}  |g(\bfx-\bfy) - g(\bfy)|^2 d\bfx &= \frac{1}{(2\pi)^d} \int_{\mathbb{R}^d} |(\exp\{-i\bfw^\top \bfy \} -1) \eta(\bfomega)|^2 d \bfomega\\
& \leq \frac{2^{2-\ell_0} |\bfy|^{\ell_0}}{(2\pi)^d} \int_{\mathbb{R}^d} |\bfomega|^{\ell_0} |\eta(\bfomega)|^2 d\bfomega,
\end{align*}
and it follows from Minkowski's equality that 
\begin{align*}
\left\{ \int_{\mathbb{R}^d} | e_n*g(\bfx) - g(\bfx) |^2 d\bfx \right\}^{1/2} &= \left\{ \int_{\mathbb{R}^d} \int_{|\bfy|\leq 2m_a\epsilon_n} | (g(\bfx-\bfy) - g(\bfx) )e_n(\bfy) d\bfy|^2 d\bfx  \right\}^{1/2} \\
&\leq \frac{2^{1-\ell_0/2} (2m_a\epsilon_n)^{\ell_0/2}}{(2\pi)^{d/2}} \left\{ \int_{\mathbb{R}^d} |\bfomega|^{\ell_0} |\eta(\bfomega)|^2 d\bfomega \right\}^{1/2} \\
&\leq \frac{2^{1-\ell_0/2} (2m_a\epsilon_n)^{\ell_0/2}}{(2\pi)^{d/2}} C_{\eta}^0 \left\{ \int_{\mathbb{R}^d} \frac{|\bfomega|^{\ell_0}}{(1+|\bfomega|^2)^2}  d \bfomega \right\}^{1/2},
\end{align*}
where the integral 
$ \int_{\mathbb{R}^d} {|\bfomega|^{\ell_0}}{(1+|\bfomega|^2)^{-2}} d\bfomega$ 
is finite for $\ell_0<\min\{2, 4-d\}$. 
Thus, there exists a constant $C_{\ell_0}$ such that 
\begin{align} \label{eqn: egg} 
\int_{\mathbb{R}^d} | e_n*g(\bfx) - g(\bfx) |^2 d\bfx \leq C_{\ell_0} \epsilon_n^{\ell_0} 
\end{align}
for $\ell_0<\min\{2, 4-d\}$.

Next, we will show some useful bounds on eigenvalues of covariance matrices based on results from spectral theory.
  
Let $b(\bfs, \bfu) :=  E_{f_{\sigma, \alpha, \beta}}[Z(\bfs)Z(\bfu)] - E_{f_{\sigma_0, \alpha_0, \beta_0}}[Z(\bfs)Z(\bfu)]$ for all $\bfs, \bfu \in \mathcal{D}=[0, L]^d$. It follows from Equation (2.24) of \cite{Wang2010} and the fact that supp($c_1$) $\subset [-2m, 2m]^d$ that for all $\bfs, \bfu \in \mathcal{D}$, 
\begin{align} \label{eqn: b fun} 
\begin{split}
b(\bfs, \bfu) &=\int_{\mathbb{R}^d} \exp\{ - i(\bfs-\bfu)^\top \bfomega \} \{ f_{\sigma, \alpha, \beta}(|\bfomega|) - f_{\sigma_0, \alpha_0, \beta_0}(|\bfomega|) \} d\bfomega \\
&= \int_{\mathbb{R}^d} \exp\{ - i(\bfs-\bfu)^\top \bfomega \} \eta(\bfomega) \xi_1(\bfomega) d\bfomega  \\
&= (2\pi)^d  \int_{\mathbb{R}^d} \int_{\mathbb{R}^d}  g(\bfx - \bfy) c_1(\bfs - \bfx) c_1(\bfu - \bfy) d\bfx d \bfy \\
&= (2\pi)^d \int_{\mathbb{R}^d}  \int_{\mathbb{R}^d} e_n * g(\bfx - \bfy) c_1(\bfs - \bfx) c_1(\bfu - \bfy) d\bfx d \bfy \\ 
& \quad + (2\pi)^d  \int_{\mathbb{R}^d} \int_{\mathbb{R}^d}  h_n^*(\bfx, \bfy) c_1(\bfs-\bfx) c_1(\bfu - \bfy) d\bfx d \bfy,
\end{split}
\end{align}
where $h_n^*(\bfx, \bfy) =\{g(\bfx-\bfy) - e_n*g(\bfx-\bfy)\} I(|\bfx+\bfy|_{\max} \leq 4m + 2L)$ for all $\bfx, \bfy \in \mathbb{R}^d$ and $h_n^*$ is square integrable. 

Define 
\begin{align*}
h_n^{**}(\bfx, \bfy) := \int_{|\bfu|_{\max}\leq 2m + 2m_a + L} e_n(\bfx - \bfu) g(\bfu- \bfy) d\bfu, \quad \forall \bfx, \bfy \in \mathbb{R}^d.
\end{align*}
The function $h_n^{**}: \mathbb{R}^{2d} \to \mathbb{C}$ is again square integrable. Direct calculation yields that the first part of Equation~\eqref{eqn: b fun} can be re-expressed as 
\begin{align} \label{eqn: b1} 
\begin{split}
& (2\pi)^d \int_{\mathbb{R}^d}  \int_{\mathbb{R}^d} e_n * g(\bfx - \bfy) c_1(\bfs - \bfx) c_2(\bfu - \bfy) d\bfx d \bfy \\
&=  (2\pi)^d \int_{\mathbb{R}^d}  \int_{\mathbb{R}^d} h_n^{**}(\bfx, \bfy) c_1(\bfs - \bfx) c_1(\bfu - \bfy) d\bfx d\bfy \\
&= (2\pi)^{-d} \int_{\mathbb{R}^d}  \int_{\mathbb{R}^d} \exp\{i(\bfomega^\top \bfs - \bfv^\top \bfu) \} \xi_1(\bfomega) \xi_1(\bfv) \\
&\quad \times \left\{ \int_{|\bft|_{\max} \leq 2m + 2m_a +L} \exp\{-i(\bfomega^\top \bft - \bfv^\top \bft) \} \hat{e}_n(\bfomega) \eta(\bfv) d \bft\right\} d\bfv d\bfomega. 
\end{split}
\end{align} 

Let $\eta_n^*: \mathbb{R}^d \to \mathbb{C}$ be the Fourier transform of $g - e_n * g$ and define 
$$\hat{g}_{n,k}(\bfomega):=\int_{\mathbb{R}^d} \exp\{-i\bfomega^\top\bfx \} [g(\bfx) - e_n*g(\bfx)] I(|\bfx|_{\max} \leq k) d\bfx.$$ This implies that 
\begin{align} \label{eqn: eta_g} 
\int_{\mathbb{R}^d} |\eta_n^*(\bfomega) - \hat{g}_{n,k}(\bfomega)  |^2 d \bfomega \to 0, \text{ as } k\to \infty.
\end{align}

Let $\theta(\bfomega) = 2^{-d} \int_{\mathbb{R}^d} \exp\{ -i\bft^\top\bfomega \} I(|\bft|_{\max} \leq 4m+2L) d\bft$. Then $\theta$ is continuous and square integrable with
\begin{align} \label{eqn: integrability of theta}
\int_{\mathbb{R}^d} \theta(\bfomega)^2 d \bfomega <\infty.
\end{align}
Direct calculation yields that the second part of Equation~\eqref{eqn: b fun} can be re-expressed as 
\begin{align} \label{eqn: b2} 
\begin{split}
&(2\pi)^d  \int_{\mathbb{R}^d} \int_{\mathbb{R}^d}  h_n^*(\bfx, \bfy) c_1(\bfs-\bfx) c_1(\bfu - \bfy) d\bfx d \bfy \\
&= (2\pi)^{-d} \int_{\mathbb{R}^d} \int_{\mathbb{R}^d} \exp\{ i (\bfomega^\top \bfs - \bfv^\top\bfu) \} \eta_n^*\left(\frac{\bfomega + \bfv}{2} \right) \theta\left(\frac{\bfomega - \bfv}{2} \right) \xi_1(\bfomega) \xi_1(\bfv) d\bfomega d\bfv.
\end{split} 
\end{align}

Combining Equations~\eqref{eqn: b1} and~\eqref{eqn: b2} allows us to write $b(\bfs,\bfu)$ as a sum of two parts: 
\begin{align*}
\begin{split}
b(\bfs, \bfu) &= (2\pi)^{-d} \int_{\mathbb{R}^d}  \int_{\mathbb{R}^d} \exp\{i(\bfomega^\top \bf\bfs - \bfv^\top \bfu) \} \xi_1(\bfomega) \xi_1(\bfv) \\
&\quad \times \left\{ \int_{|\bft|_{\max} \leq 2m + 2m_a +L} \exp\{-i(\bfomega^\top \bft - \bfv^\top \bft) \} \hat{e}_n(\bfomega) \eta(\bfv) d \bft\right\} d\bfv d\bfomega \\
&\quad + (2\pi)^{-d} \int_{\mathbb{R}^d} \int_{\mathbb{R}^d} \exp\{ i (\bfomega^\top \bfs - \bfv^\top \bfu) \} \eta_n^*\left(\frac{\bfomega + \bfv}{2} \right) \theta\left(\frac{\bfomega - \bfv}{2} \right) \xi_1(\bfomega) \xi_1(\bfv) d\bfomega d\bfv. 
\end{split} 
\end{align*}

In the rest of the proof, we will relate $b(\bfs, \bfu)$ with the eigenvalues of the CH covariance matrix and give bounds on these eigenvalues. Let $\{ \psi_1, \ldots, \psi_n\}$ be as in Equation (2.15) of \cite{Wang2010}. Then it follows from Equations~(2.16) and (2.60) of \cite{Wang2010} that 
\begin{align*}
\langle \psi_k, \psi_k\rangle_{f_{\sigma, \alpha, \beta}} - \langle \psi_k, \psi_k\rangle_{f_{\sigma_0, \alpha_0, \beta_0}} = \lambda_{k,n} - 1 =: \tilde{\nu}_{k,n}^* + \tilde{\nu}_{k,n}^{\dagger},
\end{align*} 
where 
\begin{align*}
 \tilde{\nu}_{k,n}^* &:= \frac{1}{(2\pi)^d} \int_{\mathbb{R}^d} \int_{\mathbb{R}^d}  \psi_k(\bfomega) \overline{\psi_k(\bfv)} \eta_n^*\left(\frac{\bfomega + \bfv}{2} \right) \theta\left(\frac{\bfomega - \bfv}{2} \right) \xi_1(\bfomega) \xi_1(\bfv) d\bfomega d\bfv, \\
 \tilde{\nu}_{k,n}^{\dagger} &:=  \frac{1}{(2\pi)^d} \int_{\mathbb{R}^d} \int_{\mathbb{R}^d}  \psi_k(\bfomega) \overline{\psi_k(\bfv)} \xi_1(\bfomega) \xi_1(\bfv)\\
 & \quad \times \left\{ \int_{|\bft|_{\max} \leq 2m + 2m_a +L} \exp\{-i(\bfomega^\top \bft - \bfv^\top \bft) \} \hat{e}_n(\bfomega) \eta(\bfv) d \bft\right\} d\bfv d \bfomega,
\end{align*}
with $\bar{x}$ denote the complex conjugate of $x$. 
It then follows from Bessel's inequality that 
\begin{align*}
\sum_{k=1}^n |\tilde{\nu}_{k,n}^*|^2 &\leq 2^{-d-1} \pi^{-d} \left\{ \sup_{\bfs \in \mathbb{R}^d} \frac{\xi_1(\bfs)^2}{f_{\sigma_0, \alpha_0, \beta_0}(\bfs)} \right\} \int_{\mathbb{R}^d} |\eta_n^*(\bfomega)|^2 d \bfomega \int_{\mathbb{R}^d} |\bftheta(\bfv)|^2 d\bfv, \\
\sum_{k=1}^n |\tilde{\nu}_{k,n}^\dagger| &\leq 2^{-d-1} \pi^{-d} \left\{ \sup_{\bfs \in \mathbb{R}^d} \frac{\xi_1(\bfs)^2}{f_{\sigma_0, \alpha_0, \beta_0}(\bfs)} \right\}  \int_{|\bft|_{\max} \leq 2m + 2m_a +L} d \bft \\
&\quad \times \left\{ \int_{\mathbb{R}^d} |\hat{e}_n(\bfomega) |^2 d \bfomega + \int_{\mathbb{R}^d} \eta(\bfv)^2 d\bfv \right\}. 
\end{align*} 

Combining the above inequalities with Equations~\eqref{eqn: f over xi}, \eqref{eqn: bound for e_n_hat}, \eqref{eqn: egg}, \eqref{eqn: eta_g}, and~\eqref{eqn: integrability of theta}, we observe that there exist constants $C, C_1, C_2$ (not depending on $n$) such that 
\begin{align*}
\sum_{k=1}^n |\tilde{\nu}_{k,n}^*|^2 \leq C \epsilon_n^{\ell_0}, \\
 \sum_{k=1}^n |\tilde{\nu}_{k,n}^*| \leq \left\{n\sum_{k=1}^n |\tilde{\nu}_{k,n}^*|^2\right\}^{1/2} \leq \sqrt{Cn\epsilon_n^{\ell_0}}, \\
\sum_{k=1}^n | \tilde{\nu}_{k,n}^\dagger| \leq \left(\frac{C_1}{\epsilon_n^{d}} + C_2 E\right), 
\end{align*}
where $E:=\int_{\mathbb{R}^d} \eta(\bfv)^2 d \bfv$ is finite. Thus it follows that 
\begin{align} \label{eqn: bound for lambda-1}
\sum_{k=1}^n |\lambda_{k,n}-1| \leq \sqrt{Cn\epsilon_n^{\ell_0}} + \frac{C_1}{\epsilon_n^{d}} + C_2 E. 
\end{align}

When $\alpha, \alpha_0>d/2$, the spectral density of the CH covariance function is well-defined and is finite for any frequency. Thus, the ratio 
$$ \frac{f_{\sigma, \alpha, \beta}(|\bfomega|)}{f_{\sigma_0, \alpha_0, \beta_0}(|\bfomega|)}$$
is well-defined for all $\bfomega \in \mathbb{R}^d$.   We also observe that there exist constants $\tilde{c}_f>0$ and $\tilde{C}_f>0$ (not depending on $\bfomega$) such that 
\begin{align*}
\tilde{c}_f \leq \frac{f_{\sigma, \alpha, \beta}(|\bfomega|)}{f_{\sigma_0, \alpha_0, \beta_0}(|\bfomega|)} \leq \tilde{C}_f,\quad \forall \bfomega \in \mathbb{R}^d.
\end{align*}
It follows immediately that $\tilde{c}_f \leq \lambda_{k,n} \leq \tilde{C}_f$ for all $k=1, \ldots, n$.

Finally, using \eqref{eqn: bound for lambda-1} yields that for any $\epsilon>0$, 
\begin{align} \label{eqn: bound for prob} 
\frac{1}{\epsilon\sqrt{n}}  \left\{ \max_{1\leq i \leq n}  \{ \lambda_{i,n}^{-1}\} \right\} \sum_{k=1}^n|\lambda_{k,n} - 1| \leq \frac{C^{1/2} \epsilon_n^{\ell_0/2}}{\tilde{c}_f \epsilon } + \frac{1}{\tilde{c}_f \epsilon n^{1/2} }  \left(\frac{C_1}{\epsilon_n^{d}} + C_2 E\right).
\end{align}
Therefore, choosing $\epsilon_n$ such that $\epsilon_n \to 0$ and $n^{1/2}\epsilon_n^d\to \infty$ as $n\to \infty$ yields that the right-hand side of Equation~\eqref{eqn: bound for prob} tends to 0 as $n\to \infty$, as desired. 

\end{proof}

Based on Lemma~\ref{lem: diagonalization} and Lemma~\ref{lem: inequality}, we can study the asymptotic behavior of the MLE for the microergodic parameter of the CH covariance function. 

\begin{lemma} \label{thm: ML consistency}
Let $\{\mathcal{D}_n\}_{n\geq 1}$ be an increasing sequence of subsets of a bounded domain $\mathcal{D}$ such that $\cup_{n=1}^{\infty} \mathcal{D}_n$ is bounded. Assume that $\nu$ is fixed. Let $\mathcal{P}_0$ be the Gaussian probability measure defined under $C(h; \nu, \alpha_0, \beta_0, \sigma^2_0 )$. Let $c(\bftheta_0) := {\sigma^2_0\beta_0^{-2\nu} \Gamma(\nu+\alpha_0)}/{\Gamma(\alpha_0)}$. Assume that $\alpha_0>d/2, \beta_0>0, \sigma^2_0>0$. The following results can be established.
\begin{enumerate}[itemsep=-2pt, topsep=0pt, partopsep=0pt]
\item[(a)] As $n \to \infty$, $\hat{c}_n(\bftheta) \stackrel{a.s.}{\longrightarrow} c(\bftheta_0)$ under measure $\mathcal{P}_0$ for any fixed $\alpha>d/2$ and $\beta>0$;

\item[(b)] As $n \to \infty$, $\sqrt{n} \left  \{ \hat{c}_n(\bftheta) - c(\bftheta_0) \right\} \stackrel{\mathcal{L}}{\longrightarrow} \mathcal{N}\left (0, 2[c(\bftheta_0)]^2  \right) $ for any fixed $\alpha>d/2$ and $\beta>0$.
\end{enumerate}
\end{lemma}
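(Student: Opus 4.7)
The plan is to diagonalize the quadratic form $\bfZ_n^\top \bfR_n^{-1}(\bftheta)\bfZ_n$ using Lemma~\ref{lem: diagonalization}, then exploit the eigenvalue control provided by Lemma~\ref{lem: inequality}, thereby reducing everything to the behavior of a weighted sum of iid $\chi^2_1$ variables. First I would introduce the auxiliary variance $\tilde{\sigma}^2 := c(\bftheta_0)\,\beta^{2\nu}\Gamma(\alpha)/\Gamma(\nu+\alpha)$, chosen so that the microergodic parameter of $C(h;\nu,\alpha,\beta,\tilde{\sigma}^2)$ equals $c(\bftheta_0)$; the associated Gaussian measure is then equivalent to $\mathcal{P}_0$ by Theorem~\ref{thm: equivalence}. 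Applying Lemma~\ref{lem: diagonalization} to $\sigma_0^2\bfR_n(\bftheta_0)$ and $\tilde{\sigma}^2\bfR_n(\bftheta)$ yields a nonsingular $\bfU_n$ with $\bfU_n^\top\sigma_0^2\bfR_n(\bftheta_0)\bfU_n=\bfI_n$ and $\bfU_n^\top\tilde{\sigma}^2\bfR_n(\bftheta)\bfU_n=\bfLambda_n=\diag(\lambda_{1,n},\ldots,\lambda_{n,n})$. Setting $\bfY_n:=\bfU_n^\top \bfZ_n$, the components of $\bfY_n$ are iid standard normal under $\mathcal{P}_0$, and the identity $\bfR_n^{-1}(\bftheta)=\tilde{\sigma}^2\bfU_n\bfLambda_n^{-1}\bfU_n^\top$ gives the key representation
$$
\hat{c}_n(\bftheta) \;=\; c(\bftheta_0)\cdot\frac{1}{n}\sum_{k=1}^n \lambda_{k,n}^{-1}Y_{k,n}^2.
$$

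For part (a), I would decompose
$$
\hat{c}_n(\bftheta) - c(\bftheta_0) \;=\; \frac{c(\bftheta_0)}{n}\sum_{k=1}^n \lambda_{k,n}^{-1}(Y_{k,n}^2-1) \;+\; \frac{c(\bftheta_0)}{n}\sum_{k=1}^n (\lambda_{k,n}^{-1}-1).
$$
The proof of Lemma~\ref{lem: inequality} shows that $\lambda_{k,n}$ lies in a fixed compact subinterval of $(0,\infty)$ uniformly in $k,n$, so $|\lambda_{k,n}^{-1}-1|\le C|\lambda_{k,n}-1|$ for some constant $C>0$, while Lemma~\ref{lem: inequality} itself gives $\sum_k|\lambda_{k,n}-1|=o(\sqrt{n})$; dividing by $n$ makes the deterministic term $o(n^{-1/2})$. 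For the random term, let $W_{k,n}:=\lambda_{k,n}^{-1}(Y_{k,n}^2-1)$, which is independent across $k$ for each fixed $n$ with mean zero and uniformly bounded fourth moment. A direct expansion of $E[(\sum_k W_{k,n})^4]$ gives $E[(n^{-1}\sum_k W_{k,n})^4]=O(n^{-2})$, so Markov's inequality combined with Borel--Cantelli yields almost sure convergence to zero.

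For part (b), multiplying by $\sqrt{n}$ splits the centred estimator as
$$
\sqrt{n}\{\hat{c}_n(\bftheta)-c(\bftheta_0)\} \;=\; \frac{c(\bftheta_0)}{\sqrt{n}}\sum_{k=1}^n (Y_{k,n}^2-1) \;+\; \frac{c(\bftheta_0)}{\sqrt{n}}\sum_{k=1}^n (\lambda_{k,n}^{-1}-1)Y_{k,n}^2.
$$
The first summand converges in distribution to $\mathcal{N}(0,2c(\bftheta_0)^2)$ by the classical Lindeberg--L\'evy CLT, since $\mathrm{Var}(Y_{k,n}^2)=2$. For the second summand I would show convergence to zero in $L^2$: its mean equals $c(\bftheta_0)n^{-1/2}\sum_k(\lambda_{k,n}^{-1}-1)$, which is $o(1)$ by Lemma~\ref{lem: inequality}, and its variance is bounded above (up to constants) by $n^{-1}\sum_k(\lambda_{k,n}-1)^2\le C' n^{-1}\sum_k|\lambda_{k,n}-1|=o(n^{-1/2})$, using uniform boundedness of $\lambda_{k,n}$ and Lemma~\ref{lem: inequality} once more. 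Slutsky's theorem then delivers the claimed asymptotic normality.

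The main obstacle has already been disposed of by Lemma~\ref{lem: inequality}, which supplies the nontrivial deterministic control $\sum_k|\lambda_{k,n}-1|=o(\sqrt{n})$ that hinges crucially on $\alpha,\alpha_0>d/2$ through the spectral tail in Proposition~\ref{thm: spectral density}. After that, the work reduces to the correctly-specified case $\bftheta=\bftheta_0$, in which $\hat{c}_n$ is proportional to $\chi^2_n/n$, with the perturbation from using the wrong $\bftheta$ absorbed into negligible bias and variance terms. A small subtlety is that $\{Y_{k,n}\}$ is a triangular array of jointly standard normal vectors rather than a single iid sequence, which is why almost sure convergence in (a) is obtained via explicit fourth-moment bounds and Borel--Cantelli instead of a direct appeal to the strong law of large numbers.
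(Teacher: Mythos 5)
Your proposal is correct and follows essentially the same route as the paper's proof of part (b): match the microergodic parameter via an auxiliary variance, simultaneously diagonalize the two covariance matrices with Lemma~\ref{lem: diagonalization}, invoke Lemma~\ref{lem: inequality} to make the perturbation $n^{-1/2}\sum_k(\lambda_{k,n}^{-1}-1)Y_{k,n}^2$ negligible, and conclude by the $\chi^2_n$ central limit theorem and Slutsky; the only cosmetic difference is that you control the perturbation in $L^2$ while the paper applies Markov's inequality to its absolute first moment. For part (a) you go slightly beyond the paper, which simply defers to the argument of Theorem~3 of Zhang (2004): your explicit fourth-moment bound plus Borel--Cantelli is a valid, self-contained way to get almost sure convergence for the triangular array $\{Y_{k,n}\}$, and it correctly uses the uniform bounds $\tilde{c}_f\le\lambda_{k,n}\le\tilde{C}_f$ established inside the proof of Lemma~\ref{lem: inequality}.
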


\begin{proof}
The proof of Part (a) follows from the same arguments as in the proof of Theorem 3 in \cite{Zhang2004} and is omitted. For the proof of Part (b), we follow the arguments in \cite{Wang2010, Wang2011} and \cite{Bevilacqua2019AOS}. Without loss of generality, we assume $\mathcal{D}=[0, L]^d, 0<L<\infty$ is a bounded subset of $\mathbb{R}^d$ with $d=1,2,3$. Let $\sigma^2$ be a positive constant such that $\sigma^2 \beta^{-2\nu} \Gamma(\nu+\alpha) / \Gamma(\alpha) = \sigma^2_0 \beta_0^{-2\nu} \Gamma(\nu+\alpha_0) / \Gamma(\alpha_0)$. Let $c(\bftheta)=\sigma^2 \beta^{-2\nu} \Gamma(\nu+\alpha) / \Gamma(\alpha)$ and $\hat{c}_n(\bftheta)=\hat{\sigma}^2_n \beta^{-2\nu} \Gamma(\nu+\alpha) / \Gamma(\alpha)$. Then we have 
\begin{align*}
\sqrt{n}\left\{\hat{c}_n(\bftheta) - c(\bftheta_0) \right\} &= \frac{c(\bftheta_0)}{\sqrt{n}} \left\{   \frac{1}{\sigma^2} \bfZ_n^\top \bfR_n^{-1}(\bftheta) \bfZ_n - \frac{1}{\sigma^2_0} \bfZ_n^\top \bfR_n^{-1}(\bftheta_0)\bfZ_n \right\} \\
&\quad+ \frac{c(\bftheta_0)}{\sqrt{n}} \left\{ \frac{1}{\sigma^2_0} \bfZ_n^\top \bfR_n^{-1}(\bftheta_0)\bfZ_n -n  \right\}.
\end{align*}
Under Gaussian measure $\mathcal{P}_0$ defined by the covariance function $C(h; \nu, \alpha_0, \beta_0, \sigma^2_0)$, we have  
\begin{align*}
\bfZ_n^\top \bfR_n^{-1}(\bftheta_0)\bfZ_n/\sigma^2_0 \sim \chi_n^2  \quad \text{ and} \quad
 \frac{c(\bftheta_0)}{\sqrt{n}} \left\{ \frac{1}{\sigma^2_0} \bfZ_n^\top \bfR_n^{-1}(\bftheta_0)\bfZ_n -n  \right\} \stackrel{\mathcal{L}}{\longrightarrow} \mathcal{N}(0,\, 2[c(\bftheta_0)]^2),
\end{align*}
as $n\to \infty$. To prove the result, it suffices to show that 
\begin{align} \label{eqn: converge in prob}
\frac{1}{\sqrt{n}}\left\{  \frac{1}{\sigma^2} \bfZ_n^\top \bfR_n^{-1}(\bftheta) \bfZ_n - \frac{1}{\sigma^2_0} \bfZ_n^\top \bfR_n^{-1}(\bftheta_0)\bfZ_n \right\} \stackrel{\mathcal{P}_0}{\longrightarrow} 0, \quad \text{as}\quad n\to \infty,
\end{align}
under Gaussian measure $\mathcal{P}_0$. 

According to Lemma~\ref{lem: diagonalization}, there exists an $n\times n$ non-singular matrix $\bfU$ such that 
\begin{align*}
\sigma^2_0 \bfU^\top \bfR_n(\bftheta_0) \bfU = \mathbf{I}_n,\quad \sigma^2 \bfU^\top \bfR_n(\bftheta) \bfU = \bfLambda,
\end{align*}
where $\bfLambda:=\diag\{\lambda_{1,n}, \ldots, \lambda_{n,n}\}$ is an $n\times n$ diagonal matrix with diagonal elements satisfying $\lambda_{k,n}>0$ for $k=1, \ldots, n$. Now we define the random vector $\bfY:=(Y_1, \ldots, Y_n)^\top = \bfU^\top \bfZ_n$. It is easy to check that $\bfY \sim \mathcal{N}_n(\mathbf{0}, \mathbf{I}_n)$ for $\bfZ_n$ generated under the measure $\mathcal{P}_0$. Thus, the assertion~\eqref{eqn: converge in prob}  is true if for any $\epsilon>0$,
\begin{align} \label{eqn: converge in prob 2}
\begin{split}
&\mathcal{P}_0\left( \frac{1}{\sqrt{n}} \left| \frac{1}{\sigma^2} \bfZ_n^\top \bfR_n^{-1}(\bftheta) \bfZ_n - \frac{1}{\sigma^2_0} \bfZ_n^\top \bfR_n^{-1}(\bftheta_0)\bfZ_n \right| > \epsilon \right) \\
&=  \mathcal{P}_0\left( \frac{1}{\sqrt{n}}  \left| \sum_{k=1}^n (\lambda_{k,n}^{-1} - 1) Y_k^2 \right | > \epsilon \right) \to 0, \quad \text{as} \quad n\to \infty. 
\end{split}
\end{align}
By Markov's inequality, the probability in the assertion~\eqref{eqn: converge in prob 2} can be bounded as  
\begin{align*}
\mathcal{P}_0\left( \frac{1}{\sqrt{n}}  \left| \sum_{k=1}^n (\lambda_{k,n}^{-1} - 1) Y_k^2 \right | > \epsilon \right) \leq \frac{1}{\epsilon\sqrt{n}} \sum_{k=1}^n |\lambda_{k,n}^{-1} - 1| \leq \frac{1}{\epsilon\sqrt{n}}  \left\{\max_{1\leq i \leq n} \{ \lambda_{i,n}^{-1}\} \right\} \sum_{k=1}^n |\lambda_{k,n} - 1|.
\end{align*}
The rest of the proof is to show that for any $\epsilon>0$, the term
$$\frac{1}{\epsilon\sqrt{n}}  \left\{\max_{1\leq i \leq n} \{ \lambda_{i,n}^{-1}\} \right\} \sum_{k=1}^n |\lambda_{k,n} - 1| $$ 
goes to 0 as $n\to \infty$.  This is true according to Lemma~\ref{lem: inequality}. 
\end{proof}

Lemma~\ref{thm: ML consistency} implies that the estimator  $\hat{c}_n(\bftheta)$ of the microergodic parameter  converges to the true microergodic parameter, almost surely, when the number of observations tends to infinity in a fixed and bounded domain. This result holds true for any value of $\bftheta$. As will be shown, if one replaces $\bftheta$ with its maximum likelihood estimator in $\hat{c}_n(\bftheta)$, this conclusion is true as well. The second statement of Lemma~\ref{thm: ML consistency} indicates that $\hat{c}_n(\bftheta)$ converges to a normal distribution. 

A key fact is that the above lemma holds true for arbitrarily fixed $\bftheta$. A more practical situation is to estimate $\bftheta$ and $\sigma^2$ by maximizing the log-likelihood $\eqref{eqn: loglikelihood}$. The following lemma is needed to prove the asymptotic behavior of $\hat{c}_n(\alpha, \hat{\beta}_n)$ and $\hat{c}_n(\hat{\alpha}_n, \beta)$ under infill asymptotics.

\begin{lemma} \label{lem: monotonicity}
Suppose that $d$ is the dimension of the domain $\mathcal{D}$ and $\bfZ_n$ is a vector of $n$ observations in $\mathcal{D}$. For any $\alpha_1, \alpha_2$ such that $d/2<\alpha_1<\alpha_2$ and any $\beta_1, \beta_2$ such that $0<\beta_1<\beta_2$, we have the following results: 
\begin{itemize}[itemsep=-2pt, topsep=0pt, partopsep=0pt]
\item[(a)] $\hat{c}_n(\alpha, \beta_1) \leq \hat{c}_n(\alpha, \beta_2)$ for any fixed $\alpha>d/2$.
\item[(b)] $\hat{c}_n(\alpha_1, \beta) \geq \hat{c}_n(\alpha_2, \beta)$ for any fixed $\beta>0$.   
\end{itemize} 
\end{lemma}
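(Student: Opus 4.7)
The plan is to establish each monotonicity statement by re-expressing $\hat{c}_n(\alpha,\beta)$ as a quadratic form in a single kernel matrix and analyzing how that matrix changes with the parameter of interest. Define
\[
\bfW_n(\alpha,\beta) := \frac{\beta^{2\nu}\Gamma(\alpha)}{\Gamma(\nu+\alpha)}\bfR_n(\alpha,\beta),
\]
so that $\hat{c}_n(\alpha,\beta) = n^{-1}\bfZ_n^\top \bfW_n^{-1}(\alpha,\beta)\bfZ_n$ and the microergodic normalization is absorbed into the kernel. Starting from \eqref{eqn: new kernel} and applying the change of variable $x = \beta^2 y$ inside the integral yields the scale-mixture representation
\[
[\bfW_n(\alpha,\beta)]_{ij} \;=\; \frac{\beta^{2\nu}}{\Gamma(\nu)}\int_0^\infty y^{\nu-1}(1+y)^{-(\nu+\alpha)}\exp\!\left(-\frac{\nu h_{ij}^2}{\beta^2 y}\right)dy,
\]
where $h_{ij}=|\bfs_i-\bfs_j|$. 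This exhibits $\bfW_n$ as a positive mixture of squared-exponential (Gaussian) correlation matrices, each of which is positive semidefinite, with mixing weights that depend on $\alpha$ and on $\beta$ in a transparent way.

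For Part~(a) I would differentiate $\hat{c}_n$ in $\beta$ at fixed $\alpha$ via the identity $\partial_\beta(\bfW_n^{-1}) = -\bfW_n^{-1}(\partial_\beta \bfW_n)\bfW_n^{-1}$, reducing the sign of $\partial_\beta \hat{c}_n$ to the sign of $\bfY^\top(\partial_\beta \bfW_n)\bfY$ with $\bfY = \bfW_n^{-1}\bfZ_n$. From the mixture representation, $\partial_\beta \bfW_n$ decomposes as an integral of PSD Gaussian matrices weighted by the $\beta$-derivatives of the $\beta^{2\nu}$ prefactor and of $\exp(-\nu h^2/(\beta^2 y))$; combining these two contributions, and pairing the result with the prefactor $\beta^{2\nu}$ that sits outside the integral, the claimed inequality should drop out. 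For Part~(b) the parallel argument applies in $\alpha$: here the $\alpha$-dependence enters through the scalar prefactor $\Gamma(\alpha)/\Gamma(\nu+\alpha)$ (contributing a digamma difference $\psi(\alpha)-\psi(\nu+\alpha)$) and through the mixing weight $(1+y)^{-(\nu+\alpha)}$ (contributing a $-\log(1+y)$ factor inside the integral), which combine to give a signed weighted mixture of the same Gaussian kernels.

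The main obstacle is the definite-sign analysis of these matrix derivatives. Because the diagonal of $\bfW_n$ itself depends on the parameter, neither $\partial_\beta \bfW_n$ nor $\partial_\alpha \bfW_n$ is of one sign in the Loewner order, so a naive PSD monotonicity of the kernel cannot be invoked directly. My planned route around this is the Fenchel-type variational identity
\[
\bfZ_n^\top \bfW_n^{-1}(\alpha,\beta)\bfZ_n \;=\; \sup_{\bfv \in \mathbb{R}^n}\left\{2\bfv^\top \bfZ_n - \bfv^\top \bfW_n(\alpha,\beta)\bfv\right\},
\]
which converts the monotonicity of $\hat{c}_n$ in the parameter into a pointwise monotonicity of the linear--quadratic objective; by absorbing the scalar prefactor in $\bfW_n$ into a rescaling of $\bfv$, the problem reduces to checking that a suitably rescaled kernel is Loewner-monotone along the relevant parameter ray, a property that the Gaussian-mixture structure of $\bfW_n$ is positioned to supply after identifying the correct monotone family of mixing weights.
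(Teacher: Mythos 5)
Your reduction is sound and coincides with the first step of the paper's argument: after absorbing the microergodic normalization into $\bfW_n(\alpha,\beta)=\{\beta^{2\nu}\Gamma(\alpha)/\Gamma(\nu+\alpha)\}\bfR_n(\alpha,\beta)$, the claim is equivalent to Loewner monotonicity of $\bfW_n$ in each parameter, and your Fenchel identity $\bfZ_n^\top\bfW_n^{-1}\bfZ_n=\sup_{\bfv}\{2\bfv^\top\bfZ_n-\bfv^\top\bfW_n\bfv\}$ is a clean equivalent of the paper's step ``$\bfA\succeq 0$ iff $\bfB\succeq 0$'' via antitonicity of matrix inversion. The genuine gap is that the Loewner monotonicity itself --- the entire substance of the lemma --- is never established: your proposal ends by asserting that the Gaussian-mixture structure ``is positioned to supply'' it ``after identifying the correct monotone family of mixing weights.'' The paper closes exactly this gap by passing to the spectral domain (Bochner's theorem): it computes the spectral density of the kernel defining $\bfW_n$ as $\{(4\nu)^{\nu}/\pi^{d/2}\}\,E_U\bigl\{(4\nu U/\beta^2+|\bfomega|^2)^{-(\nu+d/2)}\bigr\}$ with $U\sim\text{Gamma}(\nu+\alpha,1)$, then checks pointwise monotonicity in $\beta$ (immediate, since $4\nu U/\beta^2$ decreases) and in $\alpha$ (by writing $U_{\alpha_2}\stackrel{\mathcal{L}}{=}U_{\alpha_1}+U_0$ with an independent gamma increment and invoking Jensen-free pathwise comparison). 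This is also where the hypothesis $\alpha>d/2$ enters, to make the spectral densities integrable.

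Two further remarks. First, the obstacle you flag --- that the parameter sits inside the Gaussian kernel so no single mixture component is Loewner-monotone --- is an artifact of your substitution $x=\beta^2y$. If you instead keep the original mixing variable, $[\bfW_n]_{ij}=\Gamma(\nu)^{-1}\int_0^\infty m_{\alpha,\beta}(x)\exp(-\nu h_{ij}^2/x)\,dx$ with $m_{\alpha,\beta}(x)=\beta^{2(\nu+\alpha)}x^{\nu-1}(x+\beta^2)^{-(\nu+\alpha)}$, then the Gaussian matrices $[\exp(-\nu h_{ij}^2/x)]$ are a \emph{fixed} PSD family and all parameter dependence is in the scalar weight; one computes $\partial_\beta\log m_{\alpha,\beta}(x)=2(\nu+\alpha)x/\{\beta(x+\beta^2)\}>0$ and $\partial_\alpha\log m_{\alpha,\beta}(x)=\log\{\beta^2/(x+\beta^2)\}<0$ for every $x>0$, so the difference of the two $\bfW_n$'s is a signed mixture with weight of constant sign, and the Loewner comparison follows componentwise --- a shorter route than the paper's spectral argument and one that does not need $\alpha>d/2$. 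Second, you write that ``the claimed inequality should drop out'' without tracking its direction; the $n=1$ sanity check $\hat{c}_1(\alpha,\beta)=Z^2\Gamma(\nu+\alpha)/\{\beta^{2\nu}\Gamma(\alpha)\}$ pins down which way the monotonicity actually runs (decreasing in $\beta$, increasing in $\alpha$), and you should reconcile your final inequalities with it before declaring victory; the downstream use in Theorem~\ref{thm: ML consistency2} only needs the two-sided sandwich over a compact parameter box, but a proof of the lemma must get the sign right.
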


\begin{proof}
The difference
\begin{align*}
\hat{c}_n(\bftheta_1) - \hat{c}_n(\bftheta_2) = \bfZ_n^\top \left\{ \frac{ \Gamma(\nu+\alpha_1)}{\beta_1^{2\nu}\Gamma(\alpha_1)} \bfR_n^{-1}(\bftheta_1)  - \frac{ \Gamma(\nu+\alpha_2)}{\beta_2^{2\nu}\Gamma(\alpha_2)} \bfR_n^{-1}(\bftheta_2)  \right\} \bfZ_n  /n
\end{align*}
is nonnegative for any $\bfZ_n$ if the matrix $\bfA:= \frac{ \Gamma(\nu+\alpha_1)}{\beta_1^{2\nu}\Gamma(\alpha_1)} \bfR_n^{-1}(\bftheta_1)  - \frac{ \Gamma(\nu+\alpha_2)}{\beta_2^{2\nu}\Gamma(\alpha_2)} \bfR_n^{-1}(\bftheta_2)$ is positive semidefinite. Notice that $\bfA$ is positive semidefinite if and only if $\bfB:= \frac{\beta_2^{2\nu} \Gamma(\alpha_2)}{\Gamma(\nu+\alpha_2)} \bfR_n(\bftheta_2) - \frac{\beta_1^{2\nu}\Gamma(\alpha_1)}{ \Gamma(\nu+\alpha_1)} \bfR_n(\bftheta_1)$ is positive semidefinite. The entries of $\bfB$ can be expressed in terms of a function $K_B: \mathbb{R}^d \rightarrow \mathbb{R}$, with 
\begin{align*}
B_{ij} = K_B(\bfs_i - \bfs_j) = \frac{\beta_2^{2\nu} \Gamma(\alpha_2) }{ \Gamma(\nu+ \alpha_2)} R(|\bfs_i-\bfs_j|; \alpha_2, \beta_2, \nu) -  \frac{\beta_1^{2\nu} \Gamma(\alpha_1) }{ \Gamma(\nu+ \alpha_1)} R(|\bfs_i - \bfs_j|; \alpha_1, \beta_1, \nu), 
\end{align*}
where $|\cdot|$ denotes the Euclidean norm. 
The matrix $\bfB$ is positive semidefinite if $K_B$ is a positive definite function. Define the Fourier transform of $K_B$ by 
\begin{align*} 
\begin{split}
f_B(\bfomega) &:= \frac{1}{(2\pi)^d} \int_{\mathbb{R}^d} \exp\{-i\bfomega^\top \bfx\} K_B(\bfx) d\bfx  \\
&=   \frac{\beta_2^{2\nu} \Gamma(\alpha_2) }{ \Gamma(\nu+ \alpha_2)} \left\{ \frac{1}{(2\pi)^d} \int_{\mathbb{R}^d} \exp\{-i\bfomega^\top \bfx\}  R(|\bfx|; \alpha_2, \beta_2, \nu) d\bfx\right\} \\
&\quad- \frac{\beta_1^{2\nu} \Gamma(\alpha_1) }{ \Gamma(\nu+ \alpha_1)} \left\{\frac{1}{(2\pi)^d} \int_{\mathbb{R}^d} \exp\{-i\bfomega^\top \bfx\}   R(|\bfx|; \alpha_1, \beta_1, \nu) d\bfx \right\}.
\end{split}
\end{align*}
The integrals in $f_B(\bfomega)$ are finite for $\alpha_1, \alpha_2>d/2$. Let 
$
g(\bfomega)
$ 
be the spectral density of the CH correlation function with parameters $\alpha, \beta, \nu$: 
\begin{align*}
g(\bfomega) &:= \frac{1}{(2\pi)^d} \int_{\mathbb{R}^d} \exp\{-i\bfomega^\top \bfx\}  R(|\bfx|; \alpha, \beta, \nu) d\bfx \\
&\, = \frac{2^{2\nu} \nu^{\nu}}{\pi^{d/2} \beta^{2\nu} \Gamma(\alpha)} \int_0^{\infty} \{4\nu/(\beta^2 t) + |\bfomega|^2\}^{-(\nu+d/2)} t^{-(\nu+\alpha+1)} \exp\{-1/t  \} d t.
\end{align*}
Thus, $K_B$ is positive definite if $f_B$ is positive for all $\bfomega \in \mathbb{R}^d$.  
Notice that $f_B$ is given by 
\begin{align*}
f_B(\bfomega) &= \frac{(4\nu)^{\nu}}{\pi^{d/2} \Gamma(\nu+\alpha_2)} \int_0^{\infty} \{4\nu/(\beta_2^2 t) + |\bfomega|^2\}^{-(\nu+d/2)} t^{-(\nu+\alpha_2+1)} \exp\{-1/t  \} d t \\
&\quad - \frac{(4\nu)^{\nu}}{\pi^{d/2} \Gamma(\nu+\alpha_1)} \int_0^{\infty} \{4\nu/(\beta_1^2 t) + |\bfomega|^2\}^{-(\nu+d/2)} t^{-(\nu+\alpha_1+1)} \exp\{-1/t  \} d t.
\end{align*}
It is straightforward to check that when $\alpha:=\alpha_1=\alpha_2>d/2$, 
\begin{align*}
\beta_1 < \beta_2 \implies f_B(\bfomega) >0, &\quad  \forall\bfomega \in \mathbb{R}^d. 
\end{align*}
Thus, if $\beta_1<\beta_2$, then $\hat{c}_n(\alpha, \beta_1) \leq \hat{c}_n(\alpha, \beta_2)$, as claimed in Part (a). 

The proof of Part (b) is as follows. Note that $f_B(\bfomega)$ can be expressed as $f_B(\bfomega) = \{(4\nu)^\nu/\pi^{d/2}\} (I(\alpha_2) - I(\alpha_1))$, where
\begin{align*}
I(\alpha) &:=  \int_0^{\infty} {\frac{1}{\Gamma(\nu+\alpha)}} \{4\nu/(\beta^2 t) + |\bfomega|^2\}^{-(\nu+d/2)} t^{-(\nu+\alpha+1)} \exp\{-1/t  \} d t \\
  &= \int_0^{\infty} \frac{u^{\nu+\alpha-1}}{\Gamma(\nu+\alpha)} \exp\{-u \} \left( 4\nu u/\beta^2 + |\bfomega|^2 \right)^{-(\nu+d/2)} d u\\
 &= E_{U}\left(4\nu U/\beta^2 + |\bfomega|^2 \right)^{-(\nu+d/2)},
\end{align*}
with $U\sim \text{Gamma}(\nu+\alpha, 1)$. This expectation is finite if $\alpha>d/2$. Suppose that $\alpha_1 <\alpha_2$ and $\beta:=\beta_1=\beta_2$. To show $f_B(\bfomega)$ is negative for all $\bfomega \in \mathbb{R}^d$, it suffices to show that $I(\alpha_2) - I(\alpha_1)\leq 0$. Let $U_1\sim \text{Gamma}(\nu+ \alpha_1, 1)$ and $U_2\sim \text{Gamma}(\nu+\alpha_2, 1)$. Then $U_2 \stackrel{\mathcal{L}}= U_1 + U_0$, where $U_0\sim \text{Gamma}(\alpha_2-\alpha_1, 1)$ and $U_0$ is independent of $U_1$.  Thus, the quantify $I(\alpha_2)$ can be upper bounded by $I(\alpha_1)$, since,
\begin{align*}
I(\alpha_2) &= E_{U_1, U_0}\left\{ \frac{4\nu}{\beta^2} (U_1 + U_0)  +|\bfomega|^2  \right\} ^{-(\nu+d/2)}  \\
                 & =E_{U_1, U_0}\left\{\frac{4\nu}{\beta^2} U_1  + |\bfomega|^2 +  \frac{4\nu}{\beta^2} U_0  \right\} ^{-(\nu+d/2)}  \\
       & \leq E_{U_1} \left\{\frac{4\nu}{\beta^2} U_1  + |\bfomega|^2  \right\} ^{-(\nu+d/2)} = I(\alpha_1).
\end{align*}

\end{proof}

This lemma indicates that the MLE of the microergodic parameter is monotone when one of its parameters is fixed. This property is used to prove the asymptotics of the MLE for the microergodic parameter. Based on Lemma~\ref{thm: ML consistency} and Lemma~\ref{lem: monotonicity} , one can show that $\hat{c}_n(\hat{\bftheta}_n)$ has the same asymptotic properties as $\hat{c}_n(\bftheta)$ for any fixed $\bftheta$.

\section{Technical Proofs} \label{app: proof}
This section contains all the proofs that are not given in the main text. For notational convenience, we drop the parameters of the covariance function when there is no scope for ambiguity.

\subsection{Proof of Theorem~\ref{lem: matern positive function}} \label{app: matern positive function}

\begin{proof}

As $C(0)=\sigma^2>0$, it remains to verify the positive definiteness of the function $C(\cdot)$. For any $n$, all sequences $\{a_i\in \mathbb{R}: i=1, \ldots, n \}$ and all sequences of spatial locations $\{\bfs_i \in \mathbb{R}^d: i=1, \ldots, n\}$, it follows that 
\begin{align*}
    \sum_{i=1}^n\sum_{j=1}^n a_i a_j C(h_{ij}; \nu, \alpha, \beta, \sigma^2) &= \sum_{i=1}^n\sum_{j=1}^n a_i a_j \int_{0}^{\infty} \mathcal{M}(h_{ij}; \nu, \phi, \sigma^2) \pi(\phi^2; \alpha, \beta) d \phi^2 \\
    &= \int_{0}^{\infty}  \bfa^\top \bfA \bfa \pi(\phi^2; \alpha, \beta) d \phi^2\geq 0,
\end{align*}
where $h_{ij}=|\bfs_i - \bfs_j|$ with $|\cdot|$ denoting the Euclidean norm and $\bfa:=(a_1, \ldots, a_n)^\top$. The matrix $\bfA:=[\mathcal{M}(h_{ij})]_{i,j=1,\ldots,n}$ is a covariance matrix constructed via a Mat\'ern covariance function that is positive definite in $\mathbb{R}^d$ for all $d$, and hence $\bfA$ is a positive definite matrix, which yields that $\bfa^\top \bfA \bfa\geq 0$ for any $\bfa$. This implies that the resultant integral is nonnegative for any $\bfa$, and it is strictly positive for $\bfa\neq \bfzero$. Thus, the function $C(h)$ is positive definite in $\mathbb{R}^d$ for any all $d$.

To derive the form of Equation \eqref{eqn: new kernel}, we start with the gamma mixture representation in Equation~\eqref{eq:mm}, and substitute for $\pi(\phi^2)$ the required inverse gamma density.
\begin{align*}
C(h; \nu, \alpha, \beta, \sigma^2) &= \frac{\sigma^2}{2^{\nu}\Gamma(\nu) }  \int_{0}^{\infty} x^{(\nu-1)}\left[ \int_{0}^{\infty} \phi^{-2\nu} \exp\{-x/(2\phi^2)\} \pi(\phi^2) d\phi^2\right] \exp{(- \nu h^2/x)}  dx\\
&= \frac{\sigma^2\beta^{2\alpha}}{2^{\nu+\alpha}\Gamma(\nu) \Gamma(\alpha)} \int_{0}^{\infty} x^{(\nu-1)}\left[ \int_{0}^{\infty} \phi^{-2\nu} \exp\{-x/(2\phi^2)\} \phi^{-2(\alpha+1)} \right. \\
&\quad \left. \times \exp\{-\beta^2/(2\phi^2)\}d\phi^2\right]  \exp{(- \nu h^2/x)}  dx\\
&= \frac{\sigma^2\beta^{2\alpha}}{2^{\nu+\alpha}\Gamma(\nu) \Gamma(\alpha)}  \int_{0}^{\infty} x^{(\nu-1)}\left[ \int_{0}^{\infty} \phi^{-2(\nu+\alpha+1)} \exp\{-(\beta^2+x)/(2\phi^2)\}d\phi^2\right] \\
&\quad \times \exp\{- \nu h^2/x\}  dx\\
&=  \frac{\sigma^2\beta^{2\alpha}\Gamma(\nu+\alpha)}{\Gamma(\nu)\Gamma(\alpha)}  \int_{0}^{\infty} x^{(\nu-1)} (x+\beta^2)^{-(\nu+\alpha)} \exp{(- \nu h^2/x)}  dx.
\end{align*}

\end{proof}

\subsection{Proof of Theorem~\ref{thm: new horseshoe class}} \label{app: new horseshoe class}
\begin{proof}

\begin{itemize}

\item[(a)] Using the property of modified Bessel function \citep[see][p.~375]{Abramowitz1965}, as $|h| \to 0$, we can express the Mat\'ern covariance function as 
\begin{align*}
\mathcal{M}(h) =  \begin{cases}
a_1(h) + a_2(\phi, \nu, \sigma^2) |h|^{2\nu} \log |h| + O(|h|^{2\nu}); & \text{when }\quad  \nu =0, 1, 2, \ldots, \\
a_3(h) + a_4(\phi, \nu, \sigma^2) |h|^{2\nu} + O(|h|^{2\lceil \nu \rceil}); & \text{otherwise},
\end{cases}
\end{align*}
where $a_i(h), i=1,3$ are of the form $\sum_{k=0}^{\lfloor \nu \rfloor} c_k(\phi, \nu, \sigma^2) h^{2k}$ with $c_k(\phi,\nu, \sigma^2)$ being the coefficients that depend on parameters $\phi, \nu, \sigma^2$. The terms $a_2(\phi, \nu,\sigma^2) = \frac{(-1)^{\nu+1}\sigma^2}{2^{\nu-1}\Gamma(\nu)\Gamma(\nu+1) \phi^{2\nu}}$ and $a_4(\phi, \nu, \sigma^2)=\frac{-\pi\sigma^2}{2^{\nu} \sin(\nu\pi) \Gamma(\nu) \Gamma(\nu+1) \phi^{2\nu}}$. The terms $a_2(\phi, \nu,\sigma^2) |h|^{2\nu} \log |h|$ and $a_4(\phi, \nu,$ $\sigma^2) |h|^{2\nu}$ are called \emph{principal irregular} terms that determine the differentiability of a random field \citep[see][p.~32]{Stein1999}. This implies that the Mat\'ern covariance function is $2m$ times differentiable if and only if $\nu >m$ for an integer $m$. By mixing the parameter $\phi^2$ over an inverse gamma distribution $\mathcal{IG}(\alpha, \beta^2/2)$, when $h\to 0$, the covariance function $C(h)$ can be written as 
\begin{align*}
C(h) =  \begin{cases}
\int_{0}^{\infty} a_1(h) \pi(\phi^2) d \phi^2 +   \tilde{a}_2(\nu,\sigma^2)  |h|^{2\nu} \log |h| + O(|h|^{2\nu}); & \text{when }\quad  \nu =0, 1, 2, \ldots, \\
\int_0^{\infty} a_3(h) \pi(\phi^2) d \phi^2 +  \tilde{a}_4(\nu,\sigma^2) |h|^{2\nu} + O(|h|^{2\lceil \nu \rceil}); & \text{otherwise},
\end{cases}
\end{align*}
where 
\begin{align*}
\tilde{a}_2(\nu,\sigma^2) &:=\int_0^{\infty} a_2(\phi, \nu,\sigma^2) \pi(\phi^2) d\phi^2 \\
&= \frac{2(-1)^{\nu+1} \sigma^2}{\Gamma(\nu) \Gamma(\nu+1)}  \frac{\Gamma(\nu+\alpha)}{\beta^{2\nu} \Gamma(\alpha)},
\end{align*}
and  
\begin{align*}
\tilde{a}_4(\nu,\sigma^2) &:=\int_0^{\infty} a_4(\phi, \nu,\sigma^2) \pi(\phi^2) d\phi^2 \\
&= \frac{-\pi \sigma^2}{\sin(\nu\pi) \Gamma(\nu) \Gamma(\nu+1)} \frac{\Gamma(\nu+\alpha)}{\beta^{2\nu} \Gamma(\alpha)} \\
&=\frac{-\sigma^2 \Gamma(1-\nu)}{ \Gamma(\nu+1)}  \frac{\Gamma(\nu+\alpha)}{\beta^{2\nu} \Gamma(\alpha)}.
\end{align*}
Note that $\tilde{a}_2(\nu,\sigma^2)$ is finite for any positive integer $\nu$ and any fixed $\alpha>0, \beta>0$,  and $\tilde{a}_4(\nu,\sigma^2)$ is  finite for $\nu \in (0, \infty) \setminus \mathbb{Z}$ and $\alpha>0, \beta>0$. Thus, the covariance $C(h)$ has the same differentiability as the Mat\'ern covariance. 

\item[(b)]
It follows from Theorem~\ref{lem: matern positive function} that 
\begin{align*}
C(h; \nu, \alpha, \beta, \sigma^2) 
&= \frac{\sigma^2\beta^{2\alpha}\Gamma(\nu+\alpha)}{\Gamma(\nu)\Gamma(\alpha)}   \int_{0}^{\infty} \left(\frac{x}{x+\beta^2}\right)^{\nu+\alpha} x^{-\alpha-1} \exp{(- \nu h^2/x)}  dx\\
& \xlongequal{t=x/(2\nu)} \frac{\sigma^2\Gamma(\nu+\alpha)}{(2\nu/\beta^2)^{\alpha}\Gamma(\nu)\Gamma(\alpha)} \int_0^{\infty} t^{\nu-1} (t + \beta^2/(2\nu))^{-(\nu+\alpha)}  \\
&\quad\quad \quad \quad \times \exp\{- h^2/(2t) \}  \, dt \\
& = \frac{\sigma^2\sqrt{2\pi}\Gamma(\nu+\alpha)}{(2\nu/\beta^2)^{\alpha}\Gamma(\nu)\Gamma(\alpha)} \int_0^{\infty} \left( \frac{t}{t+\beta^2/(2\nu)} \right)^{\nu+\alpha}  t^{-\alpha-1/2} \\
&\quad \times \frac{1}{\sqrt{2\pi t}} \exp\{- h^2/(2t) \}  \, dt. 
\end{align*}
Let $L(x) = \left(\frac{x}{x+\beta^2/(2\nu)}\right)^{\nu + \alpha }$. Then $L(x)$ is a slowly varying function at $\infty$. Viewed as a function of $h$, the above integral is a Gaussian scale mixture with respect to $t$. Thus, an application of Theorem 6.1 of \cite{barndorff1982normal} yields
\begin{align*}
\begin{split}
C(h; \nu, \alpha, \beta, \sigma^2) &\sim \frac{\sigma^2\sqrt{2\pi}\Gamma(\nu+\alpha)}{(2\nu/\beta^2)^{\alpha}\Gamma(\nu)\Gamma(\alpha)}  (2\pi)^{-1/2} 2^{\alpha} \Gamma(\alpha) |h|^{-2\alpha} L(h^2),  \quad \text{ as }\quad h \to \infty, \\
       &\sim   \frac{\sigma^2 2^{\alpha}  \Gamma(\nu+\alpha) }{(2\nu/\beta^2)^{\alpha}\Gamma(\nu)}    |h|^{-2\alpha} L(h^2),   \quad \text{ as } \quad h \to \infty \\
       &\sim \frac{\sigma^2 \beta^{2\alpha}  \Gamma(\nu+\alpha) }{\nu^{\alpha}\Gamma(\nu)}    |h|^{-2\alpha} L(h^2),   \quad \text{ as } \quad h \to \infty.
\end{split}
\end{align*}
Thus, the tail decays as $|h|^{-2\alpha} L(h^2)$ when $\alpha>0$. 

\end{itemize}

\end{proof}

\subsection{Proof of Proposition~\ref{thm: spectral density}} \label{app: spectral density}

\begin{proof}
Let $\Phi_d$ denote the family of the continuous functions from $[0, \infty)$ to $\mathbb{R}$ that represent correlation functions of stationary and isotropic random processes on $\mathbb{R}^d$. Then the family $\Phi_d$ is nested satisfying $\Phi_1\supset \Phi_2 \supset \cdots \supset \Phi_{\infty}$, where $\Phi_{\infty}:=\cap_{d\geq 1} \Phi_d$ is the family of radial functions that are positive definite on any number of dimensions in Euclidean space.

The proof consists of two parts. We first show that the CH correlation function belongs to $\Phi_d$, from $[0, \infty)$ to $\mathbb{R}$. Then we use Theorem 6.1 of \cite{barndorff1982normal} to derive the tail behavior of the spectral density. 

Note that \cite{Schoenberg1938} shows that any member $\psi$ that is in the family $\Phi_d$ can be written as a scale mixture with a probability measure $F$ on $[0, \infty)$:
\begin{align*}
\psi(h) = \int_0^{\infty} h^{-(d-2)/2} \calJ_{(d-2)/2}(\omega h) dF(\omega), \quad h\geq 0,
\end{align*}
where $\calJ_{\nu}(\cdot)$ is the ordinary Bessel function \citep[see 9.1.20 of][]{Abramowitz1965}. It is well-known \citep[see Chapter 2 of][]{Matern1960} that the Mat\'ern correlation is positive definite in any number of dimensions in Euclidean space and  it  is a member of $\Phi_{\infty}$ for any positive values of $\phi$ and $\nu$. The CH correlation function as a scale mixture of the Mat\'ern correlation is also a member of $\Phi_{\infty}$ (see the proof in Theorem~\ref{lem: matern positive function}). The Fourier transform of $f\in \Phi_d$, denoted by $\mathcal{F}(f)$, is available in a convenient form \citep{Yaglom1987} with
\begin{align*}
\mathcal{F}(f)(\omega) & =(2\pi)^{-d/2} \int_0^{\infty} (u\omega)^{-(d-2)/2} \calJ_{(d-2)/2}(u\omega) u^{d-1} f(u) du, \quad \omega\geq 0. 
\end{align*}
Notice that the Mat\'ern covariance function~\eqref{eqn: Matern} has spectral density 
\begin{align*}
f_{\mathcal{M}}(\omega) &= (2\pi)^{-d/2} \int_0^{\infty} (\omega h)^{-(d-2)/2} \calJ_{(d-2)/2}(\omega h) h^{d-1} \mathcal{M}(h) \, dh, \\
                    &= \frac{\sigma^2 (\sqrt{2\nu}/\phi)^{2\nu} }{\pi^{d/2} ((\sqrt{2\nu}/\phi)^2 + \omega^2)^{\nu+d/2}}.
\end{align*}
Thus, the spectral density of the covariance function $C(h)$ is 
\begin{align*}
f(\omega) &= (2\pi)^{-d/2}  \int_0^{\infty} (\omega h)^{-(d-2)/2} \calJ_{(d-2)/2}(\omega h) h^{d-1}  \int_0^{\infty}\mathcal{M}(h; \nu, \phi, \sigma^2)  \pi(\phi^2) d \phi^2 dh \\
&= \frac{\sigma^2 2^{\nu} \nu^{\nu} (\beta^2/2)^{\alpha}}{\Gamma(\alpha)} \int_{0}^{\infty} \frac{ \phi^{-2\nu} }{\pi^{d/2}({2\nu}\phi^{-2} + \omega^2)^{\nu+d/2}}  \phi^{-2(\alpha+1)} \exp\{-\beta^2/(2\phi^2) \}d\phi^2 \\
&= \frac{\sigma^2 2^{\nu-\alpha} \nu^{\nu} \beta^{2\alpha}}{\pi^{d/2}\Gamma(\alpha)} \int_0^{\infty} (2\nu\phi^{-2} + \omega^2)^{-\nu-d/2} \phi^{-2(\nu+\alpha+1)} \exp\{ - \beta^2/(2\phi^2) \} d\phi^2.
\end{align*}
where the above spectral density is finite for $\alpha>d/2$ and is infinite for $\alpha \in (0, d/2]$. 

To derive the tail behavior, we make the change of variable $\phi^2 = \beta^2 t / \omega^2$. The spectral density above can be expressed as 
\begin{align*}
f(\omega) &= \frac{\sigma^2 2^{\nu-\alpha} \nu^{\nu} \beta^{2\alpha}}{\pi^{d/2}\Gamma(\alpha)} \omega^{2\alpha - d}  \int_0^{\infty} ((2\nu/\beta^2) t^{-1} + 1)^{-(\nu+d/2)} t^{-(\nu+\alpha+1)} \exp\{ - \omega^2/(2t) \} dt \\
&= \frac{\sigma^2 2^{\nu-\alpha} \nu^{\nu} }{\pi^{d/2}\beta^{2\nu}\Gamma(\alpha)} (2\pi)^{1/2} \omega^{2\alpha - d}   \int_0^{\infty} \left(\frac{t}{2\nu/\beta^2 + t}  \right)^{(\nu+d/2)} t^{(-\nu-\alpha+1/2)-1} \frac{1}{\sqrt{2\pi t}} \\
&\quad \times \exp\{ - \omega^2/(2t) \} dt.
\end{align*}
We define  
$$L(x): = \left\{ \frac{x}{x + \beta^2/(2\nu)}\right\}^{\nu+d/2}.$$ 
Then $L(x)$ is a slowly varying function at $\infty$. The above integral is also a Gaussian scale mixture.  Thus, an application of Theorem 6.1 of \cite{barndorff1982normal} yields that as $|\omega| \to \infty$, 
\begin{align*}
f(\omega) & \sim \frac{\sigma^2 2^{\nu-\alpha} \nu^{\nu} }{\pi^{d/2}\beta^{2\nu}\Gamma(\alpha)} (2\pi)^{1/2} \omega^{2\alpha - d}  (2\pi)^{-1/2} 2^{1/2 +(\nu+\alpha-1/2))} |\omega|^{-2(\nu+\alpha-1/2)-1} L(\omega^2) \\
 &\sim \frac{\sigma^2 2^{2\nu} \nu^{\nu}  \Gamma(\nu+\alpha)}{\pi^{d/2}\beta^{2\nu}\Gamma(\alpha)}   \omega^{-(2\nu+d)} L(\omega^2). 
\end{align*}
\end{proof}

\subsection{Proof of Theorem~\ref{thm: equivalence}} \label{app: equivalence}

\begin{proof} 

 Let $f_i(\omega), i=1,2$ be the spectral densities with parameters $\{\sigma^2_i,  \beta_i, \alpha_i, \nu\}$ for two covariance functions $C_1(\cdot), C_2(\cdot)$. The condition~\eqref{eqn: boundedness} says the spectral density $f_i(\omega)$ is bounded at zero and $\infty$ when $\omega \to \infty$. In fact, the boundness of $f_i$ near zero follows from the assumption that $\alpha_i>d/2$. Let $\lambda = 2\nu + d$. Then, one can show that 
 \begin{align*} 
\lim_{\omega\to \infty} f_1(\omega) |\omega|^{2\nu+d} = \frac{\sigma^2_1(\beta_1^2/2)^{-\nu} (2 \nu)^{\nu} \Gamma(\nu+\alpha_1)}{\pi^{d/2}\Gamma(\alpha_1)}.
\end{align*}
Thus, the condition~\eqref{eqn: boundedness} is satisfied. 

We first show the sufficiency. Assume that the condition in Equation~\eqref{eqn: consistency for new covariance} holds. To prove the equivalence of two  measures, it suffices to show that the condition~\eqref{eqn: equivalence for SGRF} is satisfied. Notice that as $\omega \to \infty$,
\begin{align*}
\left|\frac{f_1(\omega) - f_2(\omega)}{f_1(\omega)} \right| &=\left | \frac{\{\omega^2 + \beta_2^2/(2\nu)\}^{-(\nu+d/2)}}{\{\omega^2 + \beta_1^2/(2\nu)\}^{-(\nu+d/2)}} - 1\right | \\
& \leq \omega^{-(2\nu+d)} \left |\{\omega^2+\beta_2^2/(2\nu)\}^{\nu+d/2} -  \{\omega^2+\beta_1^2/(2\nu)\}^{\nu+d/2} \right| \\
& \leq \left |\{1+ (\beta_2^2/2\nu) \omega^{-2}\}^{\nu + d/2} - \{1+ (\beta_1^2/2\nu) \omega^{-2}\}^{\nu + d/2} \right| \\
& \leq \left | \{1 + (\nu+d/2) (\beta_2^2/2\nu) \omega^{-2} + O(\omega^{-4})  \}  \right. \\
& \quad - \left.  \{ 1 + (\nu+d/2) (\beta_1^2/2\nu) \omega^{-2} 
 + O(\omega^{-4}) \}   \right | \\
& \leq  |\beta_1^2-\beta_2^2|(\nu+d/2) / (2\nu) \omega^{-2} + O(\omega^{-4}).
\end{align*}
The integral in~\eqref{eqn: equivalence for SGRF} is finite for $d=1,2,3$. Therefore, the two measures are equivalent.

It remains to show the necessary condition. Suppose that 
$$\frac{\sigma^2_1\beta_1^{-2\nu} \Gamma(\nu+\alpha_1)}{\Gamma(\alpha_1)} \neq \frac{\sigma^2_2\beta_2^{-2\nu} \Gamma(\nu+\alpha_2)}{\Gamma(\alpha_2)}.$$ 
Let
\begin{align*}
\sigma^2_0 := \sigma^2_2 \frac{\beta_2^{-2\nu} \Gamma(\alpha_1) \Gamma(\nu+\alpha_2)}{\beta_1^{-2\nu} \Gamma(\alpha_2) \Gamma(\nu+\alpha_1)}. 
\end{align*}
Then 
$$\frac{\sigma^2_0 \beta_1^{-2\nu} \Gamma(\nu+\alpha_1)}{\Gamma(\alpha_1)}  = \frac{\sigma^2_2\beta_2^{-2\nu} \Gamma(\nu+\alpha_2)}{\Gamma(\alpha_2)}.$$
Thus, the two covariances $C(h; \nu, \alpha_1, \beta_1, \sigma^2_0)$ and $ C(h; \nu, \alpha_1, \beta_1, \sigma^2_1)$ define two equivalent measures. It remains to show that $ C(h; \nu, \alpha_1, \beta_1, \sigma^2_0)$ and $ C(h; \nu, \alpha_2, \beta_2, \sigma^2_2)$ define two equivalent Gaussian measures, which follows from the proof in Theorem 2 of \cite{Zhang2004}.
\end{proof}

\subsection{Proof of Theorem~\ref{thm: equivalence with matern}} \label{app: equivalence with matern}
\begin{proof}
Let $k_1 = \sigma^2_1 \frac{2^{2\nu} \nu^{\nu} \Gamma(\nu+\alpha)}{\pi^{d/2}\beta^{2\nu} \Gamma(\alpha)}  $ and 
$k_2 = \sigma^2_2 (2\nu)^{\nu} \phi^{-2\nu}/\pi^{d/2}$. Then the condition in Equation~\eqref{eqn: equivalence with matern} implies that $k_1=k_2$. 
It follows that as $|\omega| \to \infty$,
\begin{align*}
\left | \frac{f_1(\omega)  - f_2(\omega)}{f_1(\omega)}  \right| &= \left | \frac{k_2}{k_1} (\omega^2 + 2\nu/\phi^2)^{-(\nu+d/2)} (\omega^2 + \beta^2 / (2\nu))^{(\nu+d/2)} -1  \right| \\
& = (\omega^2 + 2\nu/\phi^2)^{-(\nu+d/2)} \times \left | k_2/k_1 (\omega^2 + 2\nu/\phi^2)^{\nu+d/2} \right. \\
& \quad \left. - (\omega^2 + \beta^2 / (2\nu))^{(\nu+d/2)}   \right| \\
& \leq \omega^{-(2\nu+d)} \times \left |  (\omega^2 + 2\nu/\phi^2)^{\nu+d/2} - (\omega^2 + \beta^2 / (2\nu))^{(\nu+d/2)}   \right| \\
&\leq \left |   \{1+ (2\nu/\phi^2)\omega^{-2} \}^{-(\nu+d/2)} - \{1  + \beta^2 / (2\nu) \omega^{-2}\}^{(\nu+d/2)} \right | \\
& \leq \left |  \{1 + (2\nu/\phi^2)(\nu+d/2)\omega^{-2} + O(\omega^{-4})   \}  - \{ 1+(\beta^2 / (2\nu)) (\nu+d/2)\omega^{-2} \right.\\
&\left. \quad + O(\omega^{-4})   \}  \right|. \\
& \leq | 2\nu/\phi^2 - \beta^2/ (2\nu) | (\nu+d/2) \omega^{-2} + O(\omega^{-4}). 
\end{align*}
The integral in~\eqref{eqn: equivalence for SGRF} is finite for $d=1,2,3$. Therefore, these two measures are equivalent. 
\end{proof}

\subsection{Proof of Theorem~\ref{thm: ML consistency2}} \label{app: ML consistency2}
\begin{proof} Note that the CH covariance function $C(h; \nu, \alpha, \beta, \sigma^2)$ is a continuous function of the covariance parameters $\alpha, \beta, \sigma^2$ over their natural parameter space $\{(\sigma^2, \alpha, \beta): \sigma^2>0, \alpha>0, \beta>0\}$, and hence the likelihood function is also a continuous function over this natural parameter space. 

For case (a), it follows from the continuity of the likelihood function and the assumption in case (a) that $\hat{\beta}_n \in [\beta_L, \beta_U]$ for all $n$.  Applying Lemma~\ref{lem: monotonicity} yields that $\hat{c}_n(\alpha, \beta_L) \leq \hat{c}_n(\alpha, \hat{\beta}_n) \leq \hat{c}_n(\alpha, \hat{\beta}_U)$. The result thus follows from Lemma~\ref{thm: ML consistency} immediately. 

For case (b),  it follows from the continuity of the likelihood function and the assumption in case (b) that $\hat{\alpha}_n \in [\alpha_L, \alpha_U]$ for all $n$. Applying Lemma~\ref{lem: monotonicity} yields that $\hat{c}_n({\alpha}_U, \beta) \leq \hat{c}_n(\hat{\alpha}_n, \beta)\leq \hat{c}_n(\alpha_U, \beta)$. The result thus follows from Lemma~\ref{thm: ML consistency} immediately. 

For case (c),  it follows from the continuity of the likelihood function and the assumption in case (c) that $\hat{\alpha}_n \in [\alpha_L, \alpha_U]$ and $\hat{\beta}_n \in [\beta_L, \beta_U]$ for all $n$. According to Lemma~\ref{lem: monotonicity}, $\hat{c}_n(\alpha_U, \beta_L) \leq \hat{c}_n(\alpha_U, \hat{\beta}_n) \leq \hat{c}_n(\hat{\alpha}_n, \hat{\beta}_n)$ and $\hat{c}_n(\hat{\alpha}_n, $ $\hat{\beta}_n) \leq \hat{c}_n({\alpha}_L, \hat{\beta}_n) \leq \hat{c}_n(\alpha_L, \beta_U)$. The result thus follows from Lemma~\ref{thm: ML consistency} immediately. 
\end{proof}

\subsection{Proof of Theorem~\ref{thm: prediction efficiency with new covariance}} \label{app: prediction efficiency with new covariance}
\begin{proof}
Part (a) and Part (b) can be proven by applying Theorem 1 and Theorem 2 of \cite{Stein1993}. Let $f_i(\omega)$ be the spectral density of the CH class $C(h; \nu, \alpha_i, \beta_i, \sigma^2_i)$ with $i=1,2$. Note that $\lim_{\omega \to \infty} f_i(\omega) |\omega|^{2\nu+d}$ is finite. If the condition in Equation~\eqref{eqn: consistency for new covariance} is satisfied, then,
\begin{align*}
\lim_{\omega \to \infty} \frac{f_2(\omega)}{f_1(\omega)} = \lim_{\omega \to \infty} \frac{f_2(\omega) |\omega|^{2\nu+d}}{f_1(\omega) |\omega|^{2\nu+d}} = 1.
\end{align*}


The proof of Part (c) is analogous to the proof of Theorem 4 in \cite{Kaufman2013}. Let 
\begin{align*}
\sigma^2_1 :=\sigma_0^2 (\beta_1/\beta_0)^{2\nu} \frac{\Gamma(\nu+\alpha_0) \Gamma(\alpha_1)}{\Gamma(\nu+\alpha_1) \Gamma(\alpha_0)}.
\end{align*}
Then $\mathcal{P}_0$ and $\mathcal{P}_1$ define two equivalent measures. We write 
\begin{align*}
\frac{\text{Var}_{\nu, \bftheta_1, \hat{\sigma}^2_n} \{ \hat{Z}_n(\bftheta_1) - Z(\bfs_0) \} }{\text{Var}_{\nu, \bftheta_0, \sigma^2_0} \{ \hat{Z}_n(\bftheta_1) - Z(\bfs_0) \} } = \frac{\text{Var}_{\nu, \bftheta_1, \hat{\sigma}^2_n} \{ \hat{Z}_n(\bftheta_1) - Z(\bfs_0) \} }{\text{Var}_{\nu, \bftheta_1, \sigma^2_1} \{ \hat{Z}_n(\bftheta_1) - Z(\bfs_0) \} }   
\frac{\text{Var}_{\nu, \bftheta_1, {\sigma}^2_1} \{ \hat{Z}_n(\bftheta_1) - Z(\bfs_0) \} }{\text{Var}_{\nu, \bftheta_0, \sigma^2_0} \{ \hat{Z}_n(\bftheta_1) - Z(\bfs_0) \} }. 
\end{align*}
According to Part (b) of Theorem~\ref{thm: prediction efficiency with new covariance}, it suffices to show that  almost surely under $\mathcal{P}_1$,
\begin{align*}
\frac{\text{Var}_{\nu, \bftheta_1, \hat{\sigma}^2_n} \{ \hat{Z}_n(\bftheta_1) - Z(\bfs_0) \} }{\text{Var}_{\nu, \bftheta_1, \sigma^2_1} \{ \hat{Z}_n(\bftheta_1) - Z(\bfs_0) \} }   \to 1. 
\end{align*}
By Equation~\eqref{eqn: MSE}, 
\begin{align*}
\frac{\text{Var}_{\nu, \bftheta_1, \hat{\sigma}^2_n} \{ \hat{Z}_n(\bftheta_1) - Z(\bfs_0) \} }{\text{Var}_{\nu, \bftheta_1, \sigma^2_1} \{ \hat{Z}_n(\bftheta_1) - Z(\bfs_0) \} } = \frac{\hat{\sigma}^2_n}{\sigma^2_1}.
\end{align*}
Note that under $\mathcal{P}_1$, we have $\hat{\sigma}^2_n \sim (\sigma^2_0/n) \chi_n$, and hence $\hat{\sigma}^2_n$ converges almost surely to $\sigma^2_0$ as $n\to \infty$. As $\mathcal{P}_0$ is equivalent to $\mathcal{P}_1$, It follows from Lemma~\ref{thm: ML consistency} that $\hat{\sigma}^2_n\to \sigma^2_1$, almost surely under $\mathcal{P}_0$.
\end{proof}

\subsection{Proof of Theorem~\ref{thm: PE with Matern covariance}} \label{app: PE with Matern covariance}

\begin{proof}
Let $f_0(\omega)$ be the spectral density of the Mat\'ern covariance function $\mathcal{M}(h; \nu, \phi, \sigma^2_0)$ and $f_1(\omega)$ be the spectral density of the covariance function $C(h; \nu, \alpha, \beta, \sigma^2_1)$. Notice that the spectral density of the Mat\'ern covariance satisfies the condition~\eqref{eqn: boundedness}. It suffices to show that $\lim_{\omega\to\infty} f_1(\omega)/f_0(\omega) = 1$.  Let $k_0 = \sigma^2_0 \phi^{-2\nu}$ and  $k_1 = \sigma^2_1 $ $(\beta^2/2)^{-\nu} \Gamma(\nu+\alpha) /\Gamma(\alpha)$. If $k_0=k_1$, it follows that
\begin{align*}
\lim_{\omega \to \infty} \frac{f_1(\omega)}{f_0(\omega)} &= \lim_{\omega \to \infty} \frac{f_1(\omega) |\omega|^{2\nu+d}}{f_0(\omega) |\omega|^{2\nu+d}} 
 = \lim_{\omega \to \infty} \frac{k_1}{k_0} \left( 2\nu \phi^{-2} \omega^{-2} + 1  \right)^{\nu + d/2} = k_1/k_0=1.
\end{align*}
Thus, the covariance function $C(h; \nu, \alpha, \beta, \sigma^2_1)$ yields an asymptotically equivalent BLP as the Mat\'ern covariance $\mathcal{M}(h; \nu, \phi, \sigma^2_0)$.
\end{proof}

\newpage

\section{Examples to Illustrate Asymptotic Normality} \label{sec: mle}
As shown in Section~\ref{subsec: asymptotic normality}, each individual parameter in the CH model cannot be estimated consistently, however, the microergodic parameter can be estimated consistently. 

To study the finite sample performance of the asymptotic properties of MLE for the microergodic parameter, we simulate 1000 realizations from a zero-mean Gaussian process with the CH class over 100-by-100 regular grid in the unit domain $\mathcal{D}=[0, 1]\times [0, 1]$. As there are no clear guidelines to pick the sample sizes such that the finite sample performances can appropriately reflect the asymptotic results,  we randomly select $n=4000, 5000, 6000$ locations from these 10,000 grid points. The variance parameter is fixed at 1 for all realizations. We consider two different values for the smoothness parameter $\nu$ at 0.5 and 1.5, three different values for the tail decay parameter $\alpha$ at 0.5, 2 and 5. The scale parameter $\beta$ is chosen such that the effective range is 0.6 or 0.9. {Although all the theoretical results in Section~\ref{sec:theory} are valid for $\alpha>d/2$, we also run the simulation setting with $\alpha=0.5$ to see whether there is any interesting numerical results compared to cases where $\alpha>d/2$.}

Let $C(h; \nu, \alpha_0, \beta_0, \sigma^2_0)$ be the true covariance. We use $\hat{c}_n({\bftheta})$ to denote the maximum likelihood estimator of the microergodic parameter $c(\bftheta_0)=\sigma^2_0\beta_0^{-2\nu} \Gamma(\nu+\alpha_0) / \Gamma(\alpha_0)$ for any $\bftheta$. Then the 95\% confidence interval for ${c}({\bftheta}_0)$ is given by $\hat{c}_n({\bftheta}) \pm 1.96 \sqrt{2 \hat{c}_n({\bftheta})^2 /n}$. Lemma~\ref{thm: ML consistency} and Theorem~\ref{thm: ML consistency2} show that this interval is asymptotically valid when $n$ is large and $\alpha>d/2$ for (1) arbitrarily fixed $\bftheta$, (2) $\bftheta= (\alpha, \hat{\beta}_n)$, (3) $\bftheta=(\hat{\alpha}_n, \beta)$ and (4) $\bftheta=(\hat{\alpha}_n, \hat{\beta}_n)$. In this simulation study, we primarily focus on the finite sample performance of $\hat{c}_n({\bftheta})$, where $\bftheta=(\alpha_0, \sqrt{0.5}\beta_0)$, $\bftheta=(\alpha_0, \beta_0)$, $\bftheta=(\alpha_0, \sqrt{2}\beta_0)$, $\bftheta=(\alpha_0, \hat{\beta}_n)$, and $\bftheta=(\hat{\alpha}_n, \hat{\beta}_n)$. Exhaustive simulations with all other settings of $\bftheta$ is considered future work. Let 
\begin{align*}
\xi: = \frac{ \sqrt{n}\{ \hat{c}_n({\bftheta}) - c(\bftheta_0) \}}{\sqrt{2} c(\bftheta_0)}. 
\end{align*}
Then $\xi$ should asymptotically follow the standard normal distribution. Based on these 1000 realizations, we compute the empirical coverage probability of the 95\% percentile confidence interval, bias and root-mean-square error (RMSE) for $c(\bftheta_0)$ and compare the quantiles of $\xi$ with the standard normal quantiles. 

The results are  reported in Table~\ref{table: mle}, Table~\ref{table: mle0} and Table~\ref{table: mle1} of the Supplementary Material. They can be summarized as follows. When the true parameters are used, i.e., $\bftheta=\bftheta_0$,  as expected, the sampling distribution of $\hat{c}_n(\bftheta_0)$ gives the best normal approximation and converges to the asymptotic distribution in Lemma~\ref{thm: ML consistency} when $n$ increases. The sampling distribution of $\hat{c}_n(\bftheta)$ can be highly biased and approach to the truth can be very slow with increase in $n$. Fixing $\beta$ at a larger value gives better empirical results than fixing $\beta$ at a small value. When the scale parameter is chosen to be its maximum likelihood estimator, i.e., $\beta=\hat{\beta}_n$, the sampling distribution of $\hat{c}_n(\alpha, \hat{\beta}_n)$ converges to the asymptotic distribution given in Theorem~\ref{thm: ML consistency2} as $n$ increases.  When $\alpha$ is small, e.g., $\alpha=0.5$, the sampling distributions of $\hat{c}_n(\bftheta)$, with $(\alpha_0, \sqrt{0.5}\beta_0)$, $(\alpha_0, \sqrt{2}\beta_0)$, $(\alpha_0, \hat{\beta}_n)$ and $(\hat{\alpha}_n, \hat{\beta}_n)$ substituted for $\bftheta$, has noticeable biases. As the tail decay parameter or the effective range increases, the sampling distributions of $\hat{c}_n(\bftheta)$ have smaller biases. As $\nu$ becomes smaller, the sampling distributions of $\hat{c}_n(\bftheta)$  approaches the truth better with increase in $n$. When $\nu=0.5$ and $\alpha \in \{2, 5\}$, these sampling distributions have negligible biases as $n$ increases. When both $\alpha$ and $\beta$ are substituted by their maximum likelihood estimator, the sampling distribution of $\hat{c}_n({\bftheta})$ has smaller bias and gives better approximation to the true asymptotic distribution given in Theorem~\ref{thm: ML consistency2} as $n$ increases for $\alpha>d/2=1$.

When $\alpha$ is fixed at its true value and $\beta$ is estimated by maximum likelihood method, the MLE of the microergodic parameter, $\hat{c}_n(\alpha, \hat{\beta}_n)$, gives better finite sample performance than the cases where $\beta$ is misspecified. When both $\alpha$ and $\beta$ are estimated by maximum likelihood method, the
MLE of the microergodic parameter, $\hat{c}_n(\hat{\alpha}_n, \hat{\beta}_n)$, also gives better finite sample performance than the cases where $\beta$ is misspecified and $\alpha$ is fixed at its true value. One would also expect that this is true when either $\alpha$ or $\beta$ is misspecified at incorrect values. In general, the MLE of the microergodic parameter has better finite sampler performance than those with any individual parameter fixed at an incorrect value in the microergodic parameter. {Theorem~\ref{thm: ML consistency2} requires $\alpha>d/2$ in order to derive asymptotic results for $\hat{c}_n(\hat{\alpha}_n, \hat{\beta}_n)$. However, it is interesting to observe from these simulation results that $\hat{c}_n(\bftheta)$ seems to converge to a normal distribution even when $\alpha<d/2$, i.e., when $\alpha=0.5$. It is an open problem to determine the exact distribution that the maximum likelihood estimator $\hat{c}_n(\hat{\alpha}_n, \hat{\beta}_n)$ of the microergodic parameter converges to asymptotically when $\alpha$ and $\beta$ are substituted with their maximum likelihood estimators for true $\alpha_0 \in (0, d/2]$. }

\afterpage{%
\clearpage%
\thispagestyle{empty}%
\begin{table}[htbp]
   \caption{Percentiles of $\xi$ and CVG, bias, and RMSE of $\hat{c}_n(\bftheta)$ when $\alpha_0=0.5$.}
   
   
  {\resizebox{1.0\textwidth}{!}{%
  \setlength{\tabcolsep}{2.25em}
   \begin{tabular}{ l c c c c  c c  c  c c} 
   \toprule \noalign{\vskip 1.5pt} 
	\multicolumn{2}{c}{Settings} 	& 5\%  & 25\% & 50\% & 75\%  & 95\%  & CVG & bias & RMSE  \\  \noalign{\vskip 1.5pt}  
	 \midrule \noalign{\vskip 1.5pt} 
\multicolumn{2}{c}{$\mathcal{N}(0, 1)$}  & -1.6449 & -0.6749 & 0 & 0.6749 & 1.6449 &0.95 &0 & \\ \noalign{\vskip 1.5pt} \hline \noalign{\vskip 2.5pt}

  \multicolumn{10}{c}{$ER=0.6,   \nu=0.5$}  \\  \noalign{\vskip 2.5pt}   
 \multicolumn{1}{c}{$\bftheta$}\\ \cline{1-1} \noalign{\vskip 2.5pt}   \noalign{\vskip 2.5pt}
\multirow{3}{*}{$\alpha=\alpha_0, \beta=\beta_0$} & $n=4000$ &-1.449  &-0.542  &0.009  &0.686  &1.767  &0.955  &0.020 &0.327 \\ \noalign{\vskip 2.5pt} 
 & $n=5000$ &-1.469  & -0.665 &-0.077  & 0.696 &1.573  &0.965  &-0.003 & 0.289 \\ \noalign{\vskip 2.5pt}
& $n=6000$ &-1.705  &-0.618  &0.056  &0.662 &1.847  &0.929  &0.010  &0.280 \\ \noalign{\vskip 2.5pt}   \noalign{\vskip 2.5pt}

\multirow{3}{*}{$\alpha=\alpha_0, \beta=\sqrt{0.5}\beta_0$} & $n=4000$ &2.113  &3.098  &3.730  &4.424  &5.549  &0.044  &1.259 & 1.308 \\ \noalign{\vskip 2.5pt} 
 & $n=5000$ &2.129  &2.930  &3.578  &4.439  &5.347  &0.040  & 1.097 &1.140 \\ \noalign{\vskip 2.5pt}
& $n=6000$ &1.798  &3.015  &3.705  &4.397  &5.606  &0.071  &1.005 & 1.049 \\ \noalign{\vskip 2.5pt}   \noalign{\vskip 2.5pt}

\multirow{3}{*}{$\alpha=\alpha_0, \beta=\sqrt{2}\beta_0$} & $n=4000$ &-3.471  &-2.608  & -2.073 &-1.420  &-0.394  &0.415  &-0.676 &0.746 \\ \noalign{\vskip 2.5pt} 
 & $n=5000$ &-3.480  &-2.693  &-2.114 &-1.395  & -0.519 &0.404  &-0.611 & 0.676 \\ \noalign{\vskip 2.5pt}
& $n=6000$ &-3.688  &-2.623  &-1.967  &-1.376  &-0.214  &0.462  &-0.543 &0.606 \\ \noalign{\vskip 2.5pt}   \noalign{\vskip 2.5pt}

\multirow{3}{*}{$\alpha=\alpha_0, \beta=\hat{\beta}_n$} & $n=4000$ &-1.871  & -0.711 &0.095  &1.000  &2.244  &0.889  & 0.047 & 0.428 \\ \noalign{\vskip 2.5pt} 
 & $n=5000$ &-1.912  &-0.767  &0.022  &0.881  & 2.134 &0.881 &0.013 & 0.371 \\ \noalign{\vskip 2.5pt}
& $n=6000$ &-2.016  &-0.760  &0.096  &0.862  &2.097  &0.879  &0.019 &0.343 \\ \noalign{\vskip 2.5pt}   \noalign{\vskip 2.5pt}

\multirow{3}{*}{$\alpha=\hat{\alpha}_n, \beta=\hat{\beta}_n$} & $n=4000$ & -1.778  &-0.875  &0.000  &0.925  &2.382  &0.887  & 0.030 & 0.446 \\ \noalign{\vskip 2.5pt} 
 & $n=5000$ &-2.129  &-0.816  &0.026  &0.893  &2.227  &{0.870}  & 0.019 & 0.395 \\ \noalign{\vskip 2.5pt}
& $n=6000$ &-2.268  &  -0.911&-0.015  &0.865  &2.117  &0.875  &-0.006 & 0.363 \\ \noalign{\vskip 2.5pt}  \hline \noalign{\vskip 2.5pt}

\multicolumn{10}{c}{$ER=0.6,  \nu=1.5$}  \\ \noalign{\vskip 2.5pt} \noalign{\vskip 2.5pt} 
 \multicolumn{1}{c}{$\bftheta$}\\ \cline{1-1} \noalign{\vskip 2.5pt}   \noalign{\vskip 2.5pt}

\multirow{3}{*}{$\alpha=\alpha_0, \beta=\beta_0$} & $n=4000$ &-1.654  &-0.604  &-0.014  &0.701  &1.776  &0.949  &12.650 & 370.7 \\ \noalign{\vskip 2.5pt} 
 & $n=5000$ &-1.430  &-0.687  &-0.046  &0.672  &1.576  &0.969  & 0.283 & 312.0 \\ \noalign{\vskip 2.5pt}
& $n=6000$ &-1.731  &-0.649  &0.070  & 0.710 &1.740  &0.929  &7.182 & 307.5 \\ \noalign{\vskip 2.5pt}   \noalign{\vskip 2.5pt}

\multirow{3}{*}{$\alpha=\alpha_0, \beta=\sqrt{0.5}\beta_0$} & $n=4000$ &26.20  &27.77  &28.82 &30.03  &31.76  &0.000  &10495 & 10513 \\ \noalign{\vskip 2.5pt} 
 & $n=5000$ &27.09  &28.35  &29.37  & 30.55 & 31.99 & 0.000 &9567 & 9581 \\ \noalign{\vskip 2.5pt}
& $n=6000$ &27.22  &28.79  & 29.85 & 30.89 &32.68  &0.000  &8860 & 8874 \\ \noalign{\vskip 2.5pt}   \noalign{\vskip 2.5pt}

\multirow{3}{*}{$\alpha=\alpha_0, \beta=\sqrt{2}\beta_0$} & $n=4000$ &-13.70  &-12.93  &-12.48  &-11.99 &-11.21  & 0.000 &-4526 & 4534 \\ \noalign{\vskip 2.5pt} 
 & $n=5000$ &-13.99  &-13.35  & -12.90 &-12.36  &-11.62  &0.000  &-4177 & 4184 \\ \noalign{\vskip 2.5pt}
& $n=6000$ &-14.47  &-13.61  &-13.11  &-12.58  & -11.80 &0.000  & -3886 &3893 \\ \noalign{\vskip 2.5pt}   \noalign{\vskip 2.5pt}

\multirow{3}{*}{$\alpha=\alpha_0, \beta=\hat{\beta}_n$} & $n=4000$ & -2.993 &-1.121  &0.172  &1.515  &3.505  &0.670  & 72.52 & 732.5 \\ \noalign{\vskip 2.5pt} 
 & $n=5000$ &-2.823  &-1.155  &0.146  &1.452  &3.398  &0.700  & 49.49 & 624.8 \\ \noalign{\vskip 2.5pt}
& $n=6000$ &-3.068  &-1.090  &0.235  &1.433  &3.093  &0.733  & 44.59 &543.1 \\ \noalign{\vskip 2.5pt}  \noalign{\vskip 2.5pt}

\multirow{3}{*}{$\alpha=\hat{\alpha}_n, \beta=\hat{\beta}_n$} & $n=4000$ &-3.887  &-1.656  &0.061  &1.681  &4.142  &0.565  & 9.059 & 895.5 \\ \noalign{\vskip 2.5pt} 
 & $n=5000$ &-3.607  &-1.643  & 0.055 & 1.497 &4.142   & 0.592 & 16.27 & 772.1 \\ \noalign{\vskip 2.5pt}
& $n=6000$ &-4.107  &-1.678  &-0.206  &1.488  &3.774  &0.592  &-70.59 & 832.3 \\ \noalign{\vskip 2.5pt}  \hline \noalign{\vskip 2.5pt}

\multicolumn{10}{c}{$ER=0.9,   \nu=0.5$}  \\ \noalign{\vskip 2.5pt}   \noalign{\vskip 2.5pt}
 \multicolumn{1}{c}{$\bftheta$}\\ \cline{1-1} \noalign{\vskip 2.5pt}   \noalign{\vskip 2.5pt}

\multirow{3}{*}{$\alpha=\alpha_0, \beta=\beta_0$} & $n=4000$ &-1.589  &-0.557  &-0.013  &0.669  &1.748  &0.955  & 0.007 & 0.220 \\ \noalign{\vskip 2.5pt} 
 & $n=5000$ &-1.429  &-0.654  &0.065  &0.759  &1.683  &0.978  &0.012 & 0.190 \\ \noalign{\vskip 2.5pt}
& $n=6000$ &-1.512  &-0.591  &0.004  &0.702  &1.768  &0.943  &0.009 &0.179 \\ \noalign{\vskip 2.5pt}   \noalign{\vskip 2.5pt}

\multirow{3}{*}{$\alpha=\alpha_0, \beta=\sqrt{0.5}\beta_0$} & $n=4000$ &0.628  & 1.679  &2.278  &2.967  &4.052  &0.399  & 0.513 & 0.563 \\ \noalign{\vskip 2.5pt} 
 & $n=5000$ &0.727  &1.546  &2.306  &2.994  &3.958  &0.420  & 0.454 & 0.496 \\ \noalign{\vskip 2.5pt}
& $n=6000$ & 0.548 & 1.543 &2.200  &2.888  & 4.013 &0.445 & 0.403 &0.444 \\ \noalign{\vskip 2.5pt}   \noalign{\vskip 2.5pt}

\multirow{3}{*}{$\alpha=\alpha_0, \beta=\sqrt{2}\beta_0$} & $n=4000$ & -2.812 &-1.808  & -1.259 & -0.595  & 0.440 & 0.769  & -0.272 & 0.346 \\ \noalign{\vskip 2.5pt} 
 & $n=5000$ &-2.620  &-1.846  & -1.170 & -0.478 &0.379  &0.760  & -0.229 & 0.295 \\ \noalign{\vskip 2.5pt}
& $n=6000$ &-2.635  &-1.756  &-1.187  &-0.477  &0.586  &0.799  &-0.205 &0.270 \\ \noalign{\vskip 2.5pt}   \noalign{\vskip 2.5pt}

\multirow{3}{*}{$\alpha=\alpha_0, \beta=\hat{\beta}_n$} & $n=4000$ & -1.856 & -0.688 & 0.087  & 0.911  & 1.946  &  0.905 & 0.021 & 0.262 \\ \noalign{\vskip 2.5pt} 
 & $n=5000$ &-1.587  &-0.696  &0.062   &0.822  &1.930  & 0.926 & 0.018 & 0.220 \\ \noalign{\vskip 2.5pt}
& $n=6000$ &-1.646  &-0.589  &0.045  & 0.833 & 2.008 &0.918  &0.020 &0.202 \\ \noalign{\vskip 2.5pt}  \noalign{\vskip 2.5pt}

\multirow{3}{*}{$\alpha=\hat{\alpha}_n, \beta=\hat{\beta}_n$} & $n=4000$ &-1.876  &-0.748  &0.082  &0.882  &2.157  &0.887  &0.016 &0.276 \\ \noalign{\vskip 2.5pt} 
 & $n=5000$ &-1.865  & -0.692 &0.023  &0.853  &1.994  &0.902  &0.014 & 0.233 \\ \noalign{\vskip 2.5pt}
& $n=6000$ & -1.884 &-0.744  &0.006  &0.901  & 1.978 &0.904  &0.008 &0.213 \\ \noalign{\vskip 2.5pt}  \hline \noalign{\vskip 2.5pt}

\multicolumn{10}{c}{$ER=0.9,  \nu=1.5$}  \\ \noalign{\vskip 2.5pt} \noalign{\vskip 2.5pt} 
 \multicolumn{1}{c}{$\bftheta$}\\ \cline{1-1} \noalign{\vskip 2.5pt}   \noalign{\vskip 2.5pt}

\multirow{3}{*}{$\alpha=\alpha_0, \beta=\beta_0$} & $n=4000$ &-1.598  & -0.618  &-0.015  &0.663  & 1.747  &  0.958 & 2.284 & 106.8 \\ \noalign{\vskip 2.5pt} 
 & $n=5000$ &-1.426  &-0.647  & 0.063 &0.774  &1.711  &0.977  & 6.598 & 92.45 \\ \noalign{\vskip 2.5pt}
& $n=6000$ &-1.660  &-0.623  &0.059  &0.685  &1.743  & 0.945 & 2.287 & 87.41 \\ \noalign{\vskip 2.5pt}   \noalign{\vskip 2.5pt}

\multirow{3}{*}{$\alpha=\alpha_0, \beta=\sqrt{0.5}\beta_0$} & $n=4000$ &17.15  &18.51  &19.42  &20.44  &21.94  &0.000  &2096 & 2102 \\ \noalign{\vskip 2.5pt} 
 & $n=5000$ &17.34  & 18.50  &  19.50 & 20.42  & 21.83  &0.000  & 1877 & 1881 \\ \noalign{\vskip 2.5pt}
& $n=6000$ &17.06  & 18.45 &19.36  &20.24  &21.70  &0.000  & 1699 &1703 \\ \noalign{\vskip 2.5pt}   \noalign{\vskip 2.5pt}

\multirow{3}{*}{$\alpha=\alpha_0, \beta=\sqrt{2}\beta_0$} & $n=4000$ &-10.02  &-9.207  &-8.722  &-8.151  & -7.284  & 0.000  & -934.2 & 938.3 \\ \noalign{\vskip 2.5pt} 
 & $n=5000$ &-9.964  & -9.263 & -8.691  & -8.092  & -7.331  & 0.000  & -8.835 & 839.5 \\ \noalign{\vskip 2.5pt}
& $n=6000$ &-10.13  &-9.280  &-8.717  &-8.159  &-7.26  &0.000  &-766.2 &769.8 \\ \noalign{\vskip 2.5pt}   \noalign{\vskip 2.5pt}

\multirow{3}{*}{$\alpha=\alpha_0, \beta=\hat{\beta}_n$} & $n=4000$ &-2.455  &-0.993  &0.029  &1.282    & 2.953 & 0.771 &15.62 & 180.4 \\ \noalign{\vskip 2.5pt} 
 & $n=5000$ &-2.224  &-1.038  &-0.031  &1.140  & 2.706 & 0.789  & 8.661 & 151.8 \\ \noalign{\vskip 2.5pt}
& $n=6000$ &-2.259  & -0.918 & 0.082 &1.152  &2.586  &0.803  &11.20 & 131.2 \\ \noalign{\vskip 2.5pt}  \noalign{\vskip 2.5pt}

\multirow{3}{*}{$\alpha=\hat{\alpha}_n, \beta=\hat{\beta}_n$} & $n=4000$ & -3.178 &-1.316  &-0.002  &1.215  & 3.289 &0.691  & 1.887 & 208.8 \\ \noalign{\vskip 2.5pt} 

 & $n=5000$ &-3.055  & -1.211 &-0.003  &1.229  &3.129  & 0.708 & 2.440 & 178.3 \\ \noalign{\vskip 2.5pt}

& $n=6000$ & -2.820 & -1.285 & 0.006  & 1.236 & 3.051  & 0.710  & 0.875 & 156.7 \\ 

\noalign{\vskip 1.5pt} \bottomrule
   \end{tabular}%
   }}

   \label{table: mle}
\end{table}
\clearpage%
}

\afterpage{%
\clearpage%
\thispagestyle{empty}

\begin{table}[htbp]
\centering
   \caption{Percentiles of $\xi$ and CVG, bias, and RMSE of $\hat{c}_n({\bftheta})$ when $\alpha_0=2$.}
   
   
  {\resizebox{1.0\textwidth}{!}{%
  \setlength{\tabcolsep}{2.26em}
   \begin{tabular}{ l c c c c  c c  c  c c} 
   \toprule \noalign{\vskip 1.5pt} 
	\multicolumn{2}{c}{Settings} 	& 5\%  & 25\% & 50\% & 75\%  & 95\%  & CVG & bias & RMSE  \\  \noalign{\vskip 1.5pt}  
	 \midrule \noalign{\vskip 1.5pt} 
\multicolumn{2}{c}{$\mathcal{N}(0, 1)$}  & -1.6449 & -0.6749 & 0 & 0.6749 & 1.6449 & 0.95 & 0 & \\ \noalign{\vskip 1.5pt} \hline \noalign{\vskip 2.5pt}

\multicolumn{10}{c}{$ER=0.6,  \nu=0.5$}  \\ \noalign{\vskip 2.5pt}  \noalign{\vskip 2.5pt} 
 \multicolumn{1}{c}{$\bftheta$}\\ \cline{1-1} \noalign{\vskip 2.5pt}   \noalign{\vskip 2.5pt}

\multirow{3}{*}{$\alpha=\alpha_0, \beta=\beta_0$} & $n=4000$ &-1.557 &-0.604 &-0.013 &0.691 &1.759 &0.954 &0.004 &0.099  \\ \noalign{\vskip 2.5pt} 
 & $n=5000$ &-1.442 &-0.614 &0.003 &0.723 &1.575 &0.962 & 0.002 &0.086  \\ \noalign{\vskip 2.5pt}
& $n=6000$ &-1.689 &-0.462 &0.093 &0.728 &1.970 &0.947 &0.003 & 0.084  \\ \noalign{\vskip 2.5pt}   \noalign{\vskip 2.5pt}

\multirow{3}{*}{$\alpha=\alpha_0, \beta=\sqrt{0.5}\beta_0$}& $n=4000$ &-1.072 &-0.128 &0.486 &1.179 &2.264 &0.921 &0.052 &0.113  \\ \noalign{\vskip 2.5pt} 
 & $n=5000$ &-0.999 &-0.179 &0.493 &1.183 &2.047 &0.939 & 0.043 & 0.097  \\ \noalign{\vskip 2.5pt}
& $n=6000$ &-1.315 & -0.010 & 0.556 & 1.184&2.396 & 0.929 & 0.041 & 0.094  \\ \noalign{\vskip 2.5pt}   \noalign{\vskip 2.5pt}

\multirow{3}{*}{$\alpha=\alpha_0, \beta=\sqrt{2}\beta_0$} & $n=4000$ &-1.801 &-0.860 &-0.258 &0.440 &1.505 &0.949 & -0.021 &0.101 \\ \noalign{\vskip 2.5pt} 
 & $n=5000$ &-1.680 &-0.840 & -0.232& 0.479 &1.347 & 0.946 & -0.018 & 0.088  \\ \noalign{\vskip 2.5pt}
& $n=6000$ &-1.880 &-0.681 &-0.136 &0.517 &1.756 &0.931 & -0.012 & 0.084  \\ \noalign{\vskip 2.5pt}   \noalign{\vskip 2.5pt}

\multirow{3}{*}{$\alpha=\alpha_0, \beta=\hat{\beta}_n$} & $n=4000$ &-1.616 &-0.600 &0.040 &0.758 &1.796 &0.954 & 0.008 & 0.103  \\ \noalign{\vskip 2.5pt} 
 & $n=5000$ &-1.443 &-0.583 &0.070 &0.752 &1.705 &0.962 &0.006 & 0.088  \\ \noalign{\vskip 2.5pt}
& $n=6000$ &-1.564 &-0.505 &0.171 &0.774 & 1.941 &0.938 & 0.008 & 0.087  \\ \noalign{\vskip 2.5pt}  \noalign{\vskip 2.5pt}

\multirow{3}{*}{$\alpha=\hat{\alpha}_n, \beta=\hat{\beta}_n$} & $n=4000$ &-1.576 &-0.546 &0.140 &0.798 &1.880 &0.944 &0.014 &0.104  \\ \noalign{\vskip 2.5pt} 
 & $n=5000$ &-1.426 & -0.565 &0.094 &0.785 &1.747 &0.956 &0.009 &0.089  \\ \noalign{\vskip 2.5pt}
& $n=6000$ &-1.595 &-0.614 &0.079 &0.764 &1.882 &0.953 &0.007 & 0.085  \\ \noalign{\vskip 2.5pt}  \hline \noalign{\vskip 2.5pt}

  \multicolumn{10}{c}{$ER=0.6,   \nu=1.5$}  \\ \noalign{\vskip 2.5pt} \noalign{\vskip 2.5pt}
 \multicolumn{1}{c}{$\bftheta$}\\ \cline{1-1} \noalign{\vskip 2.5pt}   \noalign{\vskip 2.5pt}

\multirow{3}{*}{$\alpha=\alpha_0, \beta=\beta_0$} & $n=4000$ &-1.567 &-0.624 &-0.010 &0.689 &1.764 &0.952 &0.103 & 2.513  \\ \noalign{\vskip 2.5pt} 
 & $n=5000$ &-1.469 &-0.633 &0.005 &0.734 &1.620 &0.958 & 0.083 & 2.200   \\ \noalign{\vskip 2.5pt}
& $n=6000$ &-1.729 &-0.614 &0.016 &0.592 &1.646 &0.953 & -0.013 &2.027  \\ \noalign{\vskip 2.5pt}   \noalign{\vskip 2.5pt}

\multirow{3}{*}{$\alpha=\alpha_0, \beta=\sqrt{0.5}\beta_0$} & $n=4000$ &1.226 &2.227 &2.885 &3.622 &4.748 &0.215 &7.351 &7.840  \\ \noalign{\vskip 2.5pt} 
 & $n=5000$ &1.056 &2.005 & 2.772 & 3.469 & 4.427 & 0.257 & 6.145 & 6.586  \\ \noalign{\vskip 2.5pt}
& $n=6000$ &0.725 &1.861 &2.602 &3.180 & 4.350 & 0.296 &5.225 & 5.657  \\ \noalign{\vskip 2.5pt}   \noalign{\vskip 2.5pt}

\multirow{3}{*}{$\alpha=\alpha_0, \beta=\sqrt{2}\beta_0$} & $n=4000$ &-2.801 &-1.876 &-1.283 &-0.593 &0.433 &0.749 &-3.103 & 3.948  \\ \noalign{\vskip 2.5pt} 
 & $n=5000$ &-2.590 & -1.829& -1.185 & -0.497 & 0.385 & 0.765 &-2.612 & 3.379  \\ \noalign{\vskip 2.5pt}
& $n=6000$ &-2.807 &-1.771 &-1.128 &-0.565 &0.493 &0.779 & -2.350 & 3.073 \\ \noalign{\vskip 2.5pt}   \noalign{\vskip 2.5pt}

\multirow{3}{*}{$\alpha=\alpha_0, \beta=\hat{\beta}_n$} & $n=4000$ &-1.613 &-0.649 &0.093 &0.790 &1.923 &0.928 &0.223 &2.701  \\ \noalign{\vskip 2.5pt} 
 & $n=5000$ &-1.417 &-0.648 &0.046 &0.818 &1.821 & 0.957 & 0.197 & 2.332  \\ \noalign{\vskip 2.5pt}
& $n=6000$ & -1.623& -0.655 &0.056 &0.691 & 1.793 & 0.954 & 0.071 & 2.111  \\ \noalign{\vskip 2.5pt}   \noalign{\vskip 2.5pt}

\multirow{3}{*}{$\alpha=\hat{\alpha}_n, \beta=\hat{\beta}_n$} & $n=4000$ &-1.558 &-0.598 &0.095 &0.811 &1.977 &0.921 &0.301 & 2.759  \\ \noalign{\vskip 2.5pt} 
 & $n=5000$  &-1.543 &-0.594 & 0.040 & 0.798 &1.827 & 0.950 & 0.186 & 2.342 \\ \noalign{\vskip 2.5pt}
& $n=6000$ &-1.604 &-0.569 &0.090 &0.763 &1.827 &0.946 & 0.198 & 2.132  \\  \noalign{\vskip 2.5pt}   \hline \noalign{\vskip 2.5pt}

\multicolumn{10}{c}{$ER=0.9,  \nu=0.5$}  \\ \noalign{\vskip 2.5pt}  \noalign{\vskip 2.5pt} 
 \multicolumn{1}{c}{$\bftheta$}\\ \cline{1-1} \noalign{\vskip 2.5pt}   \noalign{\vskip 2.5pt}

\multirow{3}{*}{$\alpha=\alpha_0, \beta=\beta_0$} & $n=4000$ &-1.574 &-0.594 &-0.026 &0.666 &1.776 &0.952 &0.002 &0.066 \\ \noalign{\vskip 2.5pt}
& $n=5000$ & -1.458&-0.664 &-0.052 &0.638 &1.547 &0.962 & -0.001 &0.057  \\ \noalign{\vskip 2.5pt} 
& $n=6000$ &-1.742 &-0.548 &0.145 &0.809 &1.784 &0.942 &0.005 & 0.055  \\ \noalign{\vskip 2.5pt}   \noalign{\vskip 2.5pt}

\multirow{3}{*}{$\alpha=\alpha_0, \beta=\sqrt{0.5}\beta_0$} & $n=4000$ &-1.319 &-0.321 &0.254 &0.930 &2.042 &0.938 &0.020 & 0.069  \\ \noalign{\vskip 2.5pt}
 & $n=5000$&-1.220 & -0.411 &0.185 &0.904 &1.817 &0.962 & 0.014 & 0.059  \\ \noalign{\vskip 2.5pt} 
& $n=6000$ &-1.567 &-0.366 & 0.385&1.022 &2.052 &0.925 & 0.017 & 0.058  \\ \noalign{\vskip 2.5pt}   \noalign{\vskip 2.5pt}

\multirow{3}{*}{$\alpha=\alpha_0, \beta=\sqrt{2}\beta_0$} & $n=4000$ &-1.704 &-0.723 &-0.158 &0.524 &1.635 &0.950 & -0.007 &0.066 \\ \noalign{\vskip 2.5pt}
 & $n=5000$ &-1.579 &-0.786 &-0.173 &0.517 &1.424 &0.963 & -0.007 & 0.057  \\ \noalign{\vskip 2.5pt} 
& $n=6000$ &-1.862 &-0.653 &0.024 &0.700 &1.646 &0.950 &-0.001 & 0.054  \\ \noalign{\vskip 2.5pt}   \noalign{\vskip 2.5pt}

\multirow{3}{*}{$\alpha=\alpha_0, \beta=\hat{\beta}_n$} & $n=4000$ &-1.586 &-0.578 &0.022 &0.695 &1.799 & 0.948&0.005 & 0.068  \\ \noalign{\vskip 2.5pt}
 & $n=5000$ & -1.440&-0.615 &-0.006 &0.696 &1.648 &0.959 & 0.001 & 0.058  \\ \noalign{\vskip 2.5pt} 
& $n=6000$ &-1.653 & -0.450&0.244 &0.880 &1.646 &0.930 &0.009 & 0.055 \\ \noalign{\vskip 2.5pt}  \noalign{\vskip 2.5pt}

\multirow{3}{*}{$\alpha=\hat{\alpha}_n, \beta=\hat{\beta}_n$} & $n=4000$ &-1.572 &-0.550 &0.123 &0.789 &1.862 &0.950 &0.008 &0.068  \\ \noalign{\vskip 2.5pt} 
 & $n=5000$ &-1.378 &-0.541 & 0.097 &0.798 & 1.782 & 0.957 & 0.007 & 0.059  \\ \noalign{\vskip 2.5pt}
& $n=6000$ &-1.659 &-0.595 &0.055 &0.727 &1.786 &0.954 & 0.003 & 0.055  \\ \noalign{\vskip 2.5pt}  \hline \noalign{\vskip 2.5pt}

\multicolumn{10}{c}{$ER=0.9,   \nu=1.5$}  \\ \noalign{\vskip 2.5pt} \noalign{\vskip 2.5pt}
 \multicolumn{1}{c}{$\bftheta$}\\ \cline{1-1} \noalign{\vskip 2.5pt}   \noalign{\vskip 2.5pt}

\multirow{3}{*}{$\alpha=\alpha_0, \beta=\beta_0$} & $n=4000$ &-1.589 &-0.595 &-0.015 &0.700 &1.748 &0.955 &0.028 &0.744  \\ \noalign{\vskip 2.5pt}
 & $n=5000$ &-1.454 &-0.668 &-0.029 &0.673 &1.531 &0.966 &-0.006 & 0.638  \\ \noalign{\vskip 2.5pt} 
& $n=6000$ &-1.701 &-0.671 &-0.098 &0.572 &1.747 &0.940 & -0.031 &0.652  \\ \noalign{\vskip 2.5pt}   \noalign{\vskip 2.5pt}

\multirow{3}{*}{$\alpha=\alpha_0, \beta=\sqrt{0.5}\beta_0$} & $n=4000$ &-0.026 &0.961 &1.579 & 2.307 &3.369 &0.675 &1.205 & 1.434  \\ \noalign{\vskip 2.5pt}
& $n=5000$ &-0.117 &0.805 & 1.443 & 2.165& 3.003 & 3.705 & 0.978 & 1.183  \\ \noalign{\vskip 2.5pt} 
& $n=6000$ &-0.371 &0.705 &1.327 &1.957 & 3.189 & 0.764 &0.813 & 1.037  \\ \noalign{\vskip 2.5pt}   \noalign{\vskip 2.5pt}

\multirow{3}{*}{$\alpha=\alpha_0, \beta=\sqrt{2}\beta_0$} & $n=4000$ &-2.288 &-1.288 &-0.722 & -0.002&1.009 &0.886 &-0.498 & 0.886  \\ \noalign{\vskip 2.5pt}
 & $n=5000$ & -2.098& -1.348 & -0.706 & -0.010 & 0.858 & 0.916 & -0.447 & 0.771 \\ \noalign{\vskip 2.5pt} 
& $n=6000$ &-2.312 &-1.309 &-0.728 &-0.050 &1.071 & 0.887 & -0.411 & 0.742  \\ \noalign{\vskip 2.5pt}   \noalign{\vskip 2.5pt}

\multirow{3}{*}{$\alpha=\alpha_0, \beta=\hat{\beta}_n$} & $n=4000$ &-1.684 &-0.599 &0.092 &0.751 &1.805 &0.934 &0.058 & 0.780  \\ \noalign{\vskip 2.5pt}
 & $n=5000$ &-1.468 & -0.698 & -0.003 & 0.696 & 1.641 & 0.966 & 0.007 & 0.658 \\ \noalign{\vskip 2.5pt} 
& $n=6000$ &-1.670 &-0.725 &-0.068 &0.660 &1.775 &0.931 & -0.023 & 0.644  \\ \noalign{\vskip 2.5pt} \noalign{\vskip 2.5pt}

\multirow{3}{*}{$\alpha=\hat{\alpha}_n, \beta=\hat{\beta}_n$} & $n=4000$ &-1.532 &-0.611 &0.100 &0.820 &1.903 &0.934 &0.080 & 0.781   \\ \noalign{\vskip 2.5pt} 
 & $n=5000$ &-1.422 & -0.584& 0.050 & 0.746 & 1.745 & 0.959 & 0.049 & 0.664  \\ \noalign{\vskip 2.5pt}
& $n=6000$ &-1.498 &-0.544 &0.068 &0.810 &1.843 &0.950 &0.042 & 0.618  \\ 

\noalign{\vskip 1.5pt} \bottomrule
   \end{tabular}%
   }}

   \label{table: mle0}
\end{table}
\clearpage%
}

\afterpage{%
\clearpage%
\thispagestyle{empty}

\begin{table}[htbp]
\centering
   \caption{Percentiles of $\xi$ and CVG, bias, and RMSE of $\hat{c}_n({\bftheta})$ when $\alpha_0=5$.}
   
   
  {\resizebox{1.0\textwidth}{!}{%
  \setlength{\tabcolsep}{2.26em}
   \begin{tabular}{ l c c c c  c c  c  c c} 
   \toprule \noalign{\vskip 1.5pt} 
	\multicolumn{2}{c}{Settings} 	& 5\%  & 25\% & 50\% & 75\%  & 95\%  & CVG & bias & RMSE  \\  \noalign{\vskip 1.5pt}  
	 \midrule \noalign{\vskip 1.5pt} 
\multicolumn{2}{c}{$\mathcal{N}(0, 1)$}  & -1.6449 & -0.6749 & 0 & 0.6749 & 1.6449 & 0.95 & 0 & \\ \noalign{\vskip 1.5pt} \hline \noalign{\vskip 2.5pt}

\multicolumn{10}{c}{$ER=0.6,  \nu=0.5$}  \\ \noalign{\vskip 2.5pt}  \noalign{\vskip 2.5pt} 
 \multicolumn{1}{c}{$\bftheta$}\\ \cline{1-1} \noalign{\vskip 2.5pt}   \noalign{\vskip 2.5pt}

\multirow{3}{*}{$\alpha=\alpha_0, \beta=\beta_0$} & $n=4000$ &-1.612  &-0.659  &-0.049  &0.655  &1.661  &0.954  & 0.000 & 0.085 \\ \noalign{\vskip 2.5pt} 
 & $n=5000$ & -1.468 &-0.636  & -0.027 & 0.685  & 1.683  & 0.961 & 0.002 & 0.074 \\ \noalign{\vskip 2.5pt}
& $n=6000$ & -1.633 &-0.560  &0.079  &0.723  &1.751  &0.940  &0.003 & 0.070 \\ \noalign{\vskip 2.5pt}   \noalign{\vskip 2.5pt}

\multirow{3}{*}{$\alpha=\alpha_0, \beta=\sqrt{0.5}\beta_0$} & $n=4000$ &-1.273  &-0.289  &0.319  &1.010  &2.038  &0.942  &0.030 &0.091 \\ \noalign{\vskip 2.5pt} 
 & $n=5000$ &-1.183  & -0.311 & 0.298  &1.033  &2.054  &0.942  &0.027 & 0.079 \\ \noalign{\vskip 2.5pt}
& $n=6000$ &-1.261  & -0.245 &0.401  &1.023  &2.070  & 0.932 & 0.025 &0.074 \\ \noalign{\vskip 2.5pt}   \noalign{\vskip 2.5pt}

\multirow{3}{*}{$\alpha=\alpha_0, \beta=\sqrt{2}\beta_0$} & $n=4000$ & -1.757 & -0.829 &-0.222  &0.479  &1.477  &0.945  &-0.015 & 0.086 \\ \noalign{\vskip 2.5pt} 
 & $n=5000$ &-1.616  &-0.793  &-0.191  &-0.500  &-1.520  & 0.958 & -0.011 & 0.074 \\ \noalign{\vskip 2.5pt}
& $n=6000$ & -1.787 &-0.709  &-0.071  & 0.556 & 1.587 &0.936  & -0.007 & 0.070 \\ \noalign{\vskip 2.5pt}   \noalign{\vskip 2.5pt}

\multirow{3}{*}{$\alpha=\alpha_0, \beta=\hat{\beta}_n$} & $n=4000$ & -1.607 &-0.633  & 0.000 &0.690  &1.705  & 0.951 &0.003 & 0.087 \\ \noalign{\vskip 2.5pt} 
 & $n=5000$ &-1.434  &-0.591  &0.030  &0.719  &1.766  &0.953  & 0.005 & 0.075 \\ \noalign{\vskip 2.5pt}
& $n=6000$ & -1.609 &-0.564  &0.094  &0.758  &1.822  & 0.930 & 0.006 & 0.071 \\ \noalign{\vskip 2.5pt}  \noalign{\vskip 2.5pt}

\multirow{3}{*}{$\alpha=\hat{\alpha}_n, \beta=\hat{\beta}_n$} & $n=4000$ &-1.556  &-0.538  & 0.142 & 0.813 &1.884  &0.948  &0.012 &0.089 \\ \noalign{\vskip 2.5pt} 
 & $n=5000$ &-1.378  &-0.514  &0.116  &0.835  &1.762  &0.958  &0.010 &0.076 \\ \noalign{\vskip 2.5pt}
& $n=6000$ &-1.530  &-0.557  &0.100  &0.758  &1.832  &0.950  & 0.008 &0.070 \\ \noalign{\vskip 2.5pt}  \hline \noalign{\vskip 2.5pt}

  \multicolumn{10}{c}{$ER=0.6,   \nu=1.5$}  \\ \noalign{\vskip 2.5pt} \noalign{\vskip 2.5pt}
 \multicolumn{1}{c}{$\bftheta$}\\ \cline{1-1} \noalign{\vskip 2.5pt}   \noalign{\vskip 2.5pt}

\multirow{3}{*}{$\alpha=\alpha_0, \beta=\beta_0$} & $n=4000$ &-1.611  &-0.651  & -0.015 &0.676  &1.748  &0.957  &0.026 & 1.179 \\ \noalign{\vskip 2.5pt} 
 & $n=5000$ &-1.409  & -0.606  &0.052  &0.727  & 1.705 & 0.958 & 0.069 &1.031  \\ \noalign{\vskip 2.5pt}
& $n=6000$ &-1.346  &-0.495  &0.146  &0.763  &1.660  &0.950  & 0.114 &0.930 \\ \noalign{\vskip 2.5pt}   \noalign{\vskip 2.5pt}

\multirow{3}{*}{$\alpha=\alpha_0, \beta=\sqrt{0.5}\beta_0$} & $n=4000$ &0.153  &1.146  &1.783  &2.469  &3.601  & 0.618 & 2.132 & 2.463 \\ \noalign{\vskip 2.5pt} 
 & $n=5000$ & 0.109 & 1.070  &1.720  & 2.432 &3.475  &0.628  & 1.818 & 2.113 \\ \noalign{\vskip 2.5pt}
& $n=6000$ & 0.053 &1.018  &1.675  &2.311  &3.351  &0.653  & 1.623 & 1.886 \\ \noalign{\vskip 2.5pt}   \noalign{\vskip 2.5pt}

\multirow{3}{*}{$\alpha=\alpha_0, \beta=\sqrt{2}\beta_0$} & $n=4000$ &-2.308  &-1.383  &-0.802  &-0.088  &0.934  & 0.875 &-0.877 & 1.456  \\ \noalign{\vskip 2.5pt} 
 & $n=5000$ &-2.093  &-1.323  & -0.675  & 0.004  &0.964  &0.909  & -0.685 & 1.224 \\ \noalign{\vskip 2.5pt}
& $n=6000$ &-1.985  &-1.186  & -0.521 & 0.083 & 0.993  & 0.929  & -0.538 & 1.059\\ \noalign{\vskip 2.5pt}   \noalign{\vskip 2.5pt}

\multirow{3}{*}{$\alpha=\alpha_0, \beta=\hat{\beta}_n$} & $n=4000$ &-1.664  & -0.641  & 0.069  &0.757  &1.802  &0.951  &0.070 & 1.239 \\ \noalign{\vskip 2.5pt} 
 & $n=5000$ &-1.362  & -0.615 & 0.080  & 0.792  & 1.788  & 0.956  &0.105 & 1.074 \\ \noalign{\vskip 2.5pt}
& $n=6000$ &-1.368  &-0.588  &0.177  &0.809  &1.812  & 0.950 & 0.129 & 0.966 \\ \noalign{\vskip 2.5pt}   \noalign{\vskip 2.5pt}

\multirow{3}{*}{$\alpha=\hat{\alpha}_n, \beta=\hat{\beta}_n$} & $n=4000$ &-1.528  &-0569  &0.138 &0.845 &1.937 &0.929 &0.181 &1.263 \\ \noalign{\vskip 2.5pt} 
 & $n=5000$  &-1.342  &-0.522  & 0.105 &0.838  &1.822  &0.956  &0.150 &1.065 \\ \noalign{\vskip 2.5pt}
& $n=6000$ & -1.468 & -0.518 &0.102  & 0.842 &1.920  &0.937  &0.142 & 1.003 \\  \noalign{\vskip 2.5pt}   \hline \noalign{\vskip 2.5pt}

\multicolumn{10}{c}{$ER=0.9,  \nu=0.5$}  \\ \noalign{\vskip 2.5pt}  \noalign{\vskip 2.5pt} 
 \multicolumn{1}{c}{$\bftheta$}\\ \cline{1-1} \noalign{\vskip 2.5pt}   \noalign{\vskip 2.5pt}

\multirow{3}{*}{$\alpha=\alpha_0, \beta=\beta_0$}  & $n=4000$ &-1.567  &-0.602  &-0.013  &0.681  &1.759  & 0.955  & 0.002 & 0.057 \\ \noalign{\vskip 2.5pt}
& $n=5000$ & -1.482  & -0.664  & -0.034  &0.643  &1.563 &0.961  & 0.000 & 0.049 \\ \noalign{\vskip 2.5pt} 
& $n=6000$ & -1.513 & -0.615 &0.077  &0.637  & 1.745 & 0.953  & 0.002 &0.045 \\ \noalign{\vskip 2.5pt}   \noalign{\vskip 2.5pt}

\multirow{3}{*}{$\alpha=\alpha_0, \beta=\sqrt{0.5}\beta_0$} & $n=4000$ &-1.377  &-0.423  &0.179  &0.871  &1.956  &0.947  &0.013 & 0.058 \\ \noalign{\vskip 2.5pt}
 & $n=5000$ & -1.333 & -0.478 & 0.153 & 0.835 & 1.743  &0.961  & 0.009 & 0.050 \\ \noalign{\vskip 2.5pt} 
& $n=6000$ &-1.357  & -0.468 &0.233  &0.800  &1.894  &0.947  &0.009 &0.046 \\ \noalign{\vskip 2.5pt}   \noalign{\vskip 2.5pt}

\multirow{3}{*}{$\alpha=\alpha_0, \beta=\sqrt{2}\beta_0$} & $n=4000$ &-1.663  &-0.708  &-0.109  &0.587  &1.646  & 0.957 &-0.004 &0.057 \\ \noalign{\vskip 2.5pt}
 & $n=5000$ &-1.554  & -0.765  & -0.121  &0.556  &1.465  &0.958  & -0.005 & 0.049 \\ \noalign{\vskip 2.5pt} 
& $n=6000$ &-1.605  &-0.692  &-0.006  &0.559  &1.655  &0.942  &-0.002 &0.045 \\ \noalign{\vskip 2.5pt}   \noalign{\vskip 2.5pt}

\multirow{3}{*}{$\alpha=\alpha_0, \beta=\hat{\beta}_n$} & $n=4000$ &-1.571  &-0.593  &0.016  &0.716  &1.744  &0.951  &0.004 & 0.058 \\ \noalign{\vskip 2.5pt}
 & $n=5000$ &-1.424  & -0.653 & -0.007  & 0.669  & 1.638  & 0.962  & 0.001 & 0.049 \\ \noalign{\vskip 2.5pt} 
& $n=6000$ &-1.462  &-0.572  &0.095  &0.704  &1.803  &0.949  &0.003 &0.045 \\ \noalign{\vskip 2.5pt}  \noalign{\vskip 2.5pt}

\multirow{3}{*}{$\alpha=\hat{\alpha}_n, \beta=\hat{\beta}_n$} & $n=4000$ &-1.574  & -0.539 &0.106  &0.803  &1.853  &0.952  & 0.007 & 0.059 \\ \noalign{\vskip 2.5pt} 
 & $n=5000$ &-1.327  & -0.537 & 0.134 & 0.810  & 1.778  & 0.960  & 0.007 & 0.049 \\ \noalign{\vskip 2.5pt}
& $n=6000$ &-1.606  &-0.601  &0.058  &0.713  & 1.756 & 0.956  & 0.003 & 0.046 \\ \noalign{\vskip 2.5pt}  \hline \noalign{\vskip 2.5pt}

\multicolumn{10}{c}{$ER=0.9,   \nu=1.5$}  \\ \noalign{\vskip 2.5pt} \noalign{\vskip 2.5pt}
 \multicolumn{1}{c}{$\bftheta$}\\ \cline{1-1} \noalign{\vskip 2.5pt}   \noalign{\vskip 2.5pt}

\multirow{3}{*}{$\alpha=\alpha_0, \beta=\beta_0$} & $n=4000$ & -1.574 &-0.616  &-0.015  &0.661  &1.764  &0.955  & 0.009 & 0.345 \\ \noalign{\vskip 2.5pt}
 & $n=5000$ &-1.414  & -0.617  & 0.015  &0.741  &1.612  &0.974  & 0.017 & 0.298 \\ \noalign{\vskip 2.5pt} 
& $n=6000$ &-1.743  & -0.591 &0.066 &0.677 &1.795  &0.933  &0.008  & 0.292 \\ \noalign{\vskip 2.5pt}   \noalign{\vskip 2.5pt}

\multirow{3}{*}{$\alpha=\alpha_0, \beta=\sqrt{0.5}\beta_0$}  & $n=4000$ &-0.614  &0.325  & 0.969 &1.631  &2.762  &0.837  &0.351 & 0.498 \\ \noalign{\vskip 2.5pt}
& $n=5000$ &-0.587  &0.274  &0.936  &1.652  & 2.593  &0.857  & 0.299 & 0.428 \\ \noalign{\vskip 2.5pt} 
& $n=6000$ & -0.978 &0.198  &0.889  &1.566  & 2.643 &0.870  &0.248 &0.389 \\ \noalign{\vskip 2.5pt}   \noalign{\vskip 2.5pt}

\multirow{3}{*}{$\alpha=\alpha_0, \beta=\sqrt{2}\beta_0$} & $n=4000$ &-1.988  &-1.042  & -0.448 & 0.234  & 1.285  & 0.933  & -0.139 & 0.369 \\ \noalign{\vskip 2.5pt}
 & $n=5000$ & -1.784 &-1.013  & -0.373 & 0.330 &1.207  &0.956  &-0.105  & 0.313\\ \noalign{\vskip 2.5pt} 
& $n=6000$ &-2.094  &-0.968  &-0.301  &0.312  &1.420 &0.925  &-0.097 &0.306 \\ \noalign{\vskip 2.5pt}   \noalign{\vskip 2.5pt}

\multirow{3}{*}{$\alpha=\alpha_0, \beta=\hat{\beta}_n$} & $n=4000$ &-1.590  & -0.609 & 0.029  & 0.720 & 1.791  &  0.943& 0.021 & 0.356 \\ \noalign{\vskip 2.5pt}
 & $n=5000$ & -1.364 &-0.605  &0.055  &0.779  &1.677  & 0.967 & 0.025 & 0.303 \\ \noalign{\vskip 2.5pt} 
& $n=6000$ &-1.648  &-0.608  &0.107  &0.709  &1.823  &0.947  &0.015 &0.296 \\ \noalign{\vskip 2.5pt} \noalign{\vskip 2.5pt}

\multirow{3}{*}{$\alpha=\hat{\alpha}_n, \beta=\hat{\beta}_n$} & $n=4000$ & -1.533 & -0.549 &0.113 &0.849  &1.836  & 0.940  & 0.047  & 0.369  \\ \noalign{\vskip 2.5pt} 
 & $n=5000$ &-1.387  & -0.526 &0.111  & 0.798  &1.733  & 0.961  & 0.039 & 0.307 \\ \noalign{\vskip 2.5pt}
& $n=6000$ &-1.352  &-0.534  &0.111  &0.765  &1.755  &0.956  &0.039 & 0.281 \\ 

\noalign{\vskip 1.5pt} \bottomrule
   \end{tabular}%
   }}

   \label{table: mle1}
\end{table}
\clearpage%
}

\newpage
\section{Additional Simulation Examples} \label{app: numerical examples}

The results in the three cases in Section~\ref{sec:numerical} of the main manuscript are based $n=2000$ observations. In Section~\ref{app: simulation with different ER} of the Supplementary Material, we provide results on exactly the same simulation settings with $n=500$ and 1000. Similar conclusions can be drawn there. In addition, we also investigate the predictive performance when the covariance of the underlying true process is a product of individual covariance functions in Section~\ref{app: UQ simulation} of the Supplementary Material. The examples there show significant improvement of the CH class over the Mat\'ern class and the GC class. In all these simulation examples, we found that the CH class is quite flexible in terms of capturing both the smoothness and the tail behavior. No matter which covariance structure (the Mat\'ern class or the GC class) the true underlying process is generated from, the CH class is able to capture the underlying true covariance structure with satisfactory performance as implied by our theoretical developments. In contrast, the Mat\'ern class is not able to capture the underlying true covariance structure with polynomially decaying dependence and the GC class is not able to capture the underlying true covariance structure with different degrees of smoothness behaviors. Below are the detailed results. 

\subsection{Predictive Performance with Different Sample Sizes} \label{app: simulation with different ER}
In this section, we use the same simulation settings as in Section \ref{sec:numerical} but with $n=500$ and $1000$ observations for parameter estimation. The simulation setup here is the same as the one considered in Section~\ref{sec:numerical}. For $n=500$ observations, the results are shown in Figure~\ref{fig: simulation setting under case 1, n=500} for Case 1, Figure~\ref{fig: simulation setting under case 2, n=500} for Case 2, and Figure~\ref{fig: simulation setting under case 3, n=500} for Case 3. For $n=1000$ observations, the results are shown in Figure~\ref{fig: simulation setting under case 1, n=1000} for Case 1, Figure~\ref{fig: simulation setting under case 2, n=1000} for Case 2, and Figure~\ref{fig: simulation setting under case 3, n=1000} for Case 3. To conclude, the CH class is very flexible since it can allow different smoothness behaviors in the same way as the Mat\'ern class and can allow different degrees of tail behaviors that can capture the one in the GC class.

\begin{figure}[htbp] 
\begin{subfigure}{.35\textwidth}
  \centering
\makebox[\textwidth][c]{ \includegraphics[width=1.0\linewidth, height=0.18\textheight]{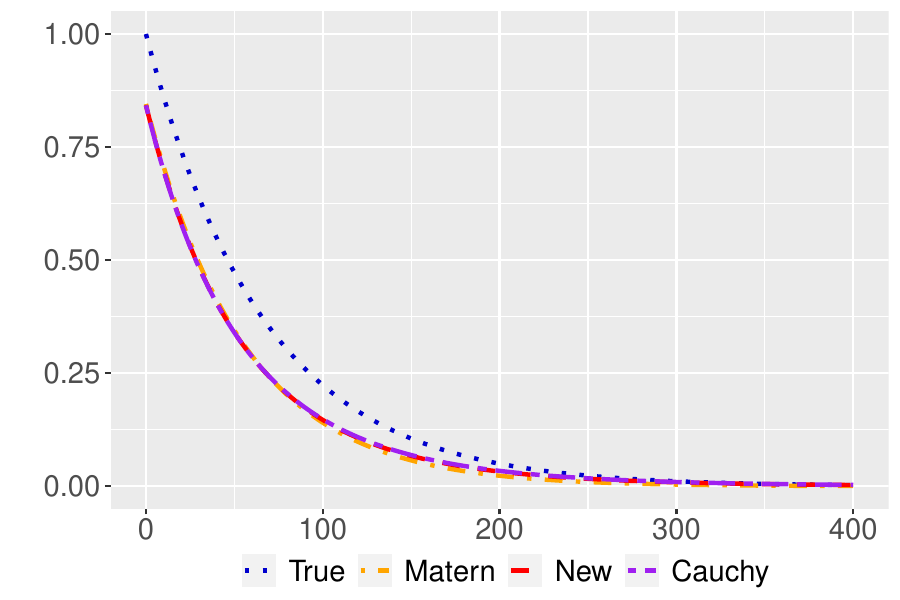}}
\end{subfigure}%
\begin{subfigure}{.65\textwidth}
  \centering
\makebox[\textwidth][c]{ \includegraphics[width=1.0\linewidth, height=0.18\textheight]{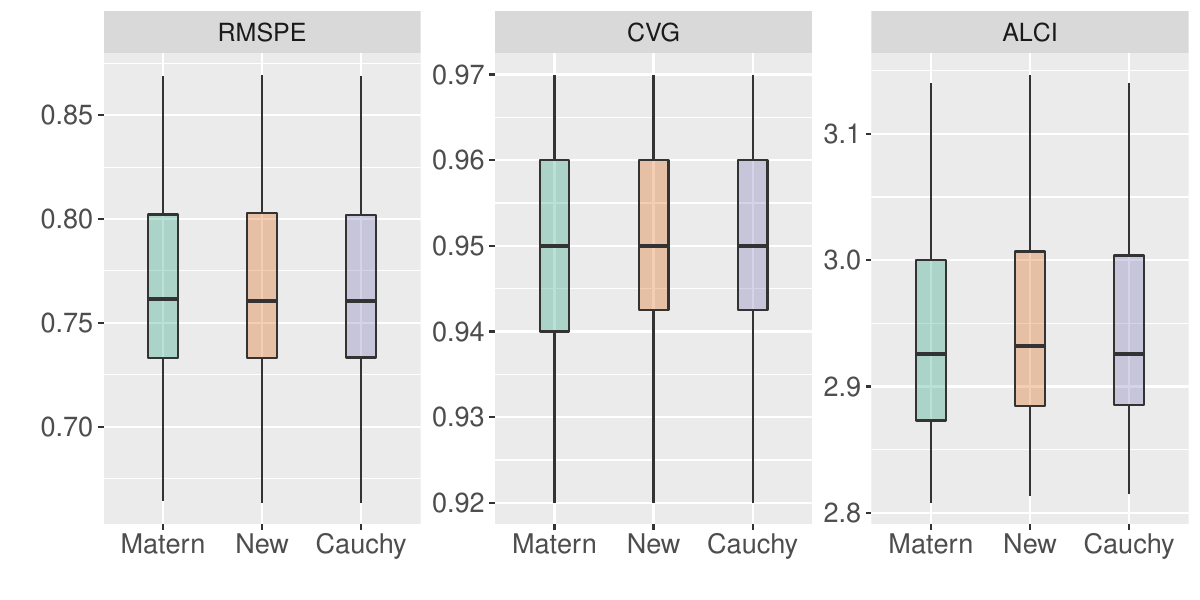}}
\end{subfigure}
\caption*{$\nu=0.5,  ER=200$}

\begin{subfigure}{.35\textwidth}
  \centering
\makebox[\textwidth][c]{ \includegraphics[width=1.0\linewidth, height=0.18\textheight]{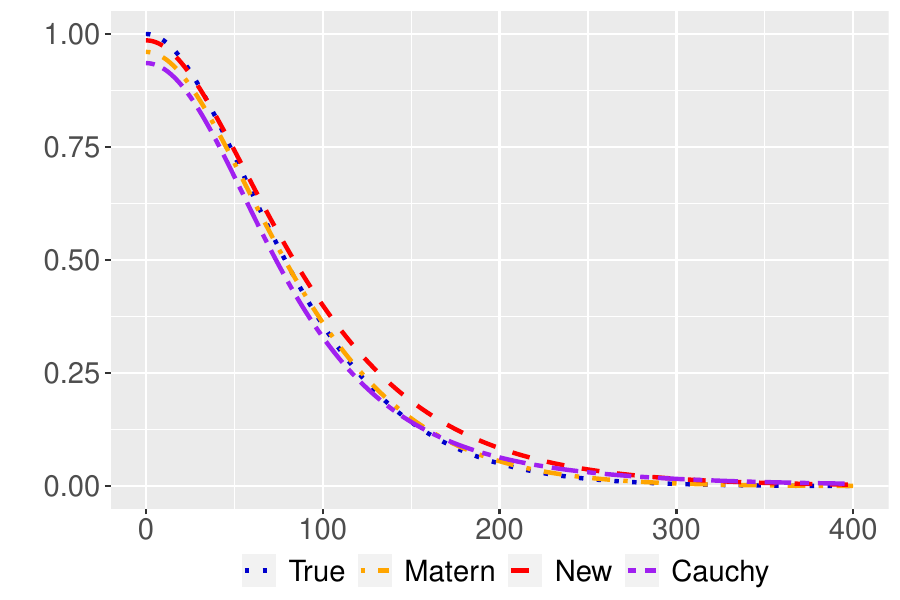}}
\end{subfigure}%
\begin{subfigure}{.65\textwidth}
  \centering
\makebox[\textwidth][c]{ \includegraphics[width=1.0\linewidth, height=0.18\textheight]{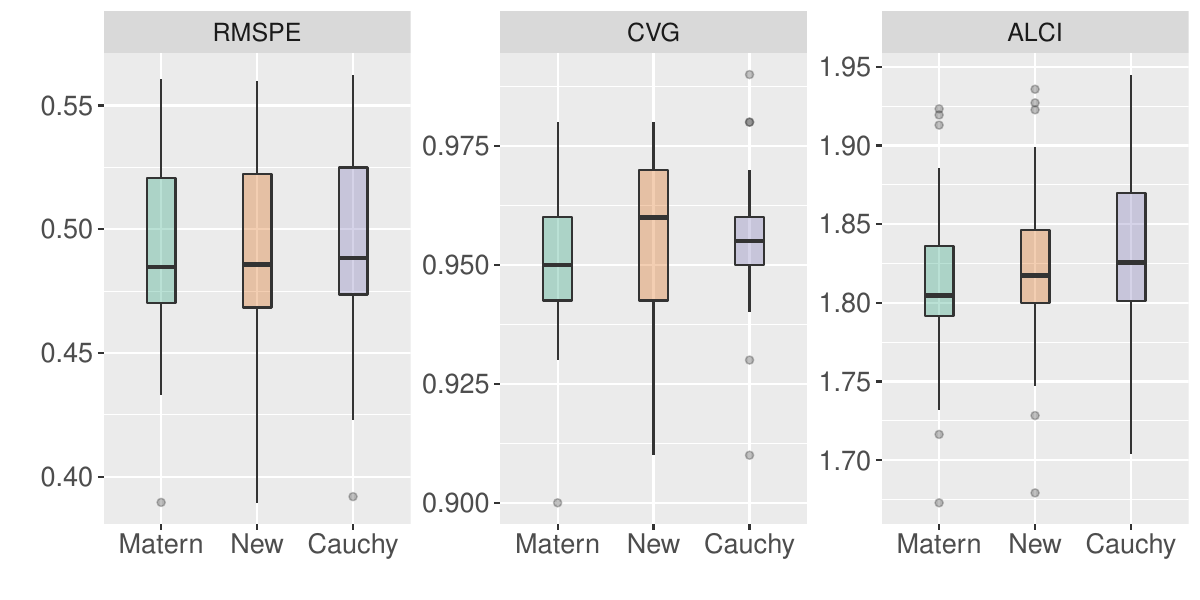}}
\end{subfigure}
\caption*{$\nu=2.5,  ER=200$}

\begin{subfigure}{.35\textwidth}
  \centering
\makebox[\textwidth][c]{ \includegraphics[width=1.0\linewidth, height=0.18\textheight]{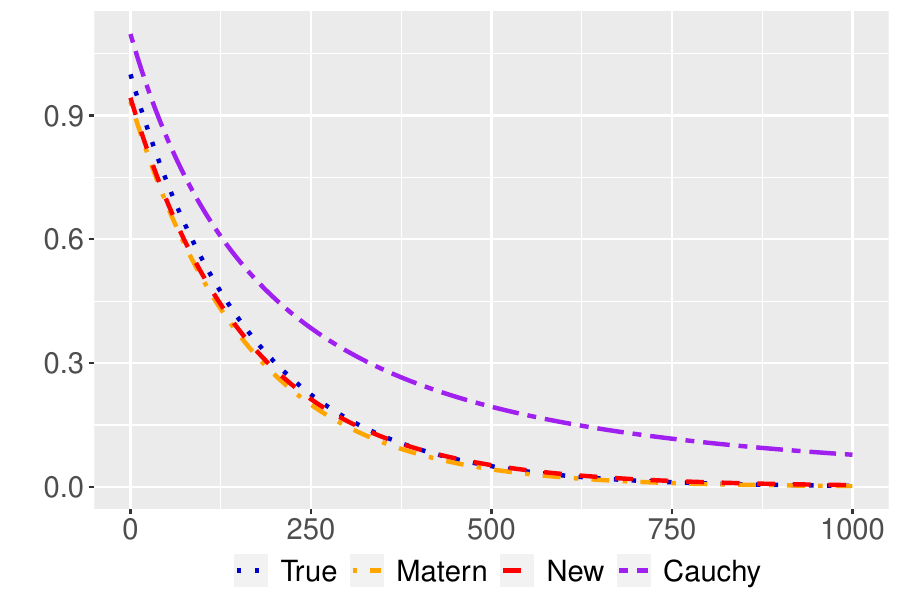}}
\end{subfigure}%
\begin{subfigure}{.65\textwidth}
  \centering
\makebox[\textwidth][c]{ \includegraphics[width=1.0\linewidth, height=0.18\textheight]{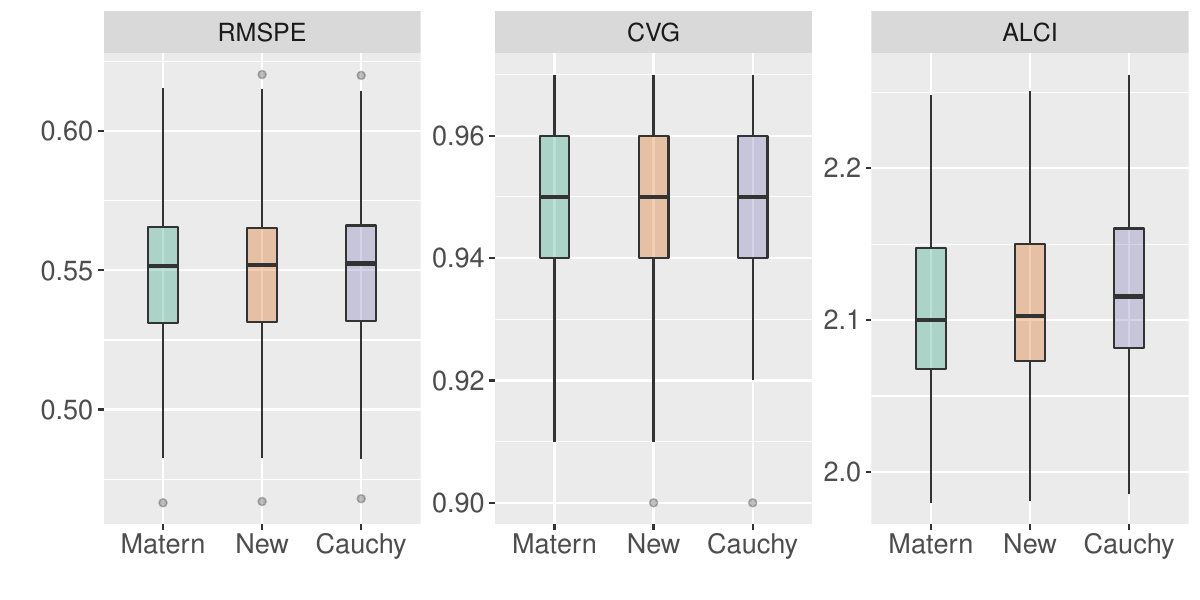}}
\end{subfigure}
\caption*{$\nu=0.5,  ER=500$}

\begin{subfigure}{.35\textwidth}
  \centering
\makebox[\textwidth][c]{ \includegraphics[width=1.0\linewidth, height=0.18\textheight]{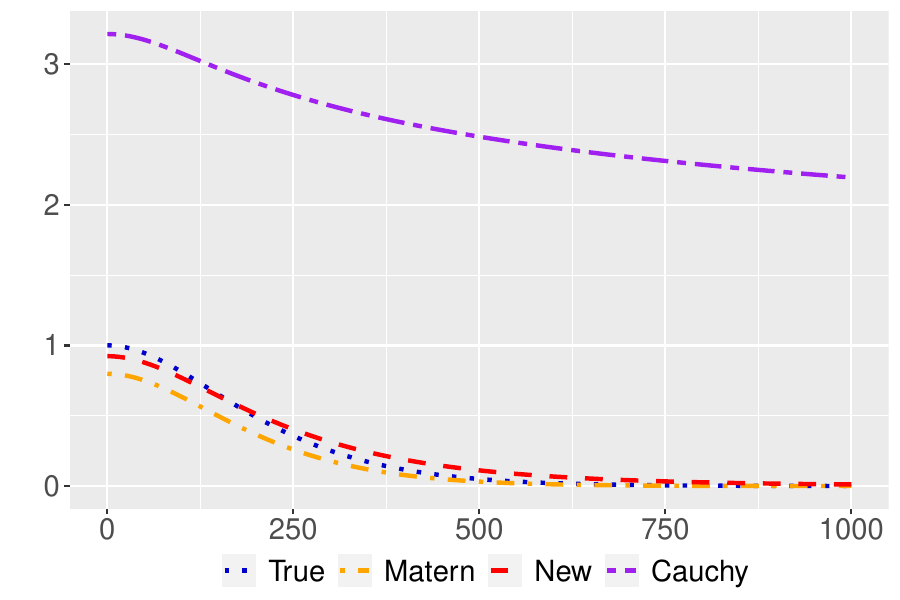}}
\end{subfigure}%
\begin{subfigure}{.65\textwidth}
  \centering
\makebox[\textwidth][c]{ \includegraphics[width=1.0\linewidth, height=0.18\textheight]{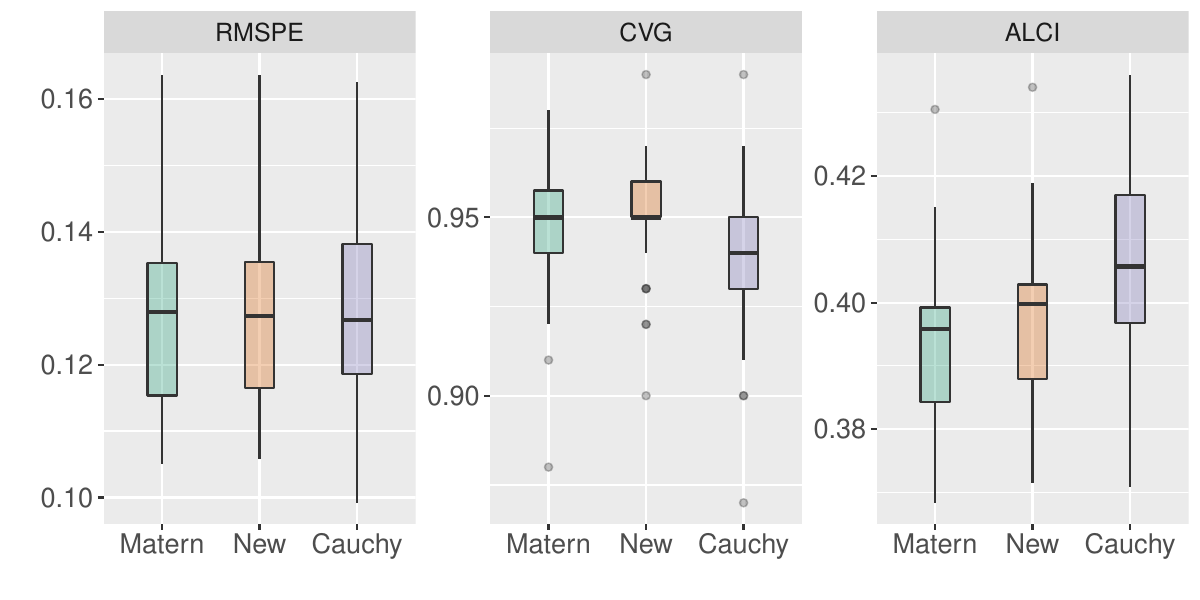}}
\end{subfigure}
\caption*{$\nu=2.5,  ER=500$}

\caption{Case 1: Comparison of predictive performance and estimated covariance structures when the true covariance is the Mat\'ern class with 500 observations. The predictive performance is evaluated at 10-by-10 regular grids in the square domain. These figures summarize the predictive measures based on RMSPE, CVG and ALCI under 30 simulated realizations.}
\label{fig: simulation setting under case 1, n=500}
\end{figure}

\begin{figure}[htbp] 
\begin{subfigure}{.35\textwidth}
  \centering
\makebox[\textwidth][c]{ \includegraphics[width=1.0\linewidth, height=0.18\textheight]{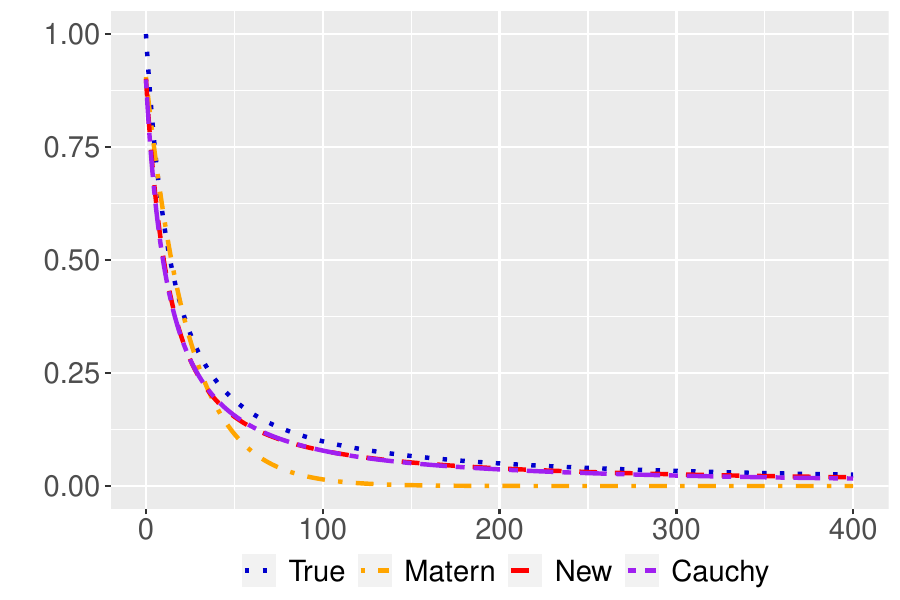}}
\end{subfigure}%
\begin{subfigure}{.65\textwidth}
  \centering
\makebox[\textwidth][c]{ \includegraphics[width=1.0\linewidth, height=0.18\textheight]{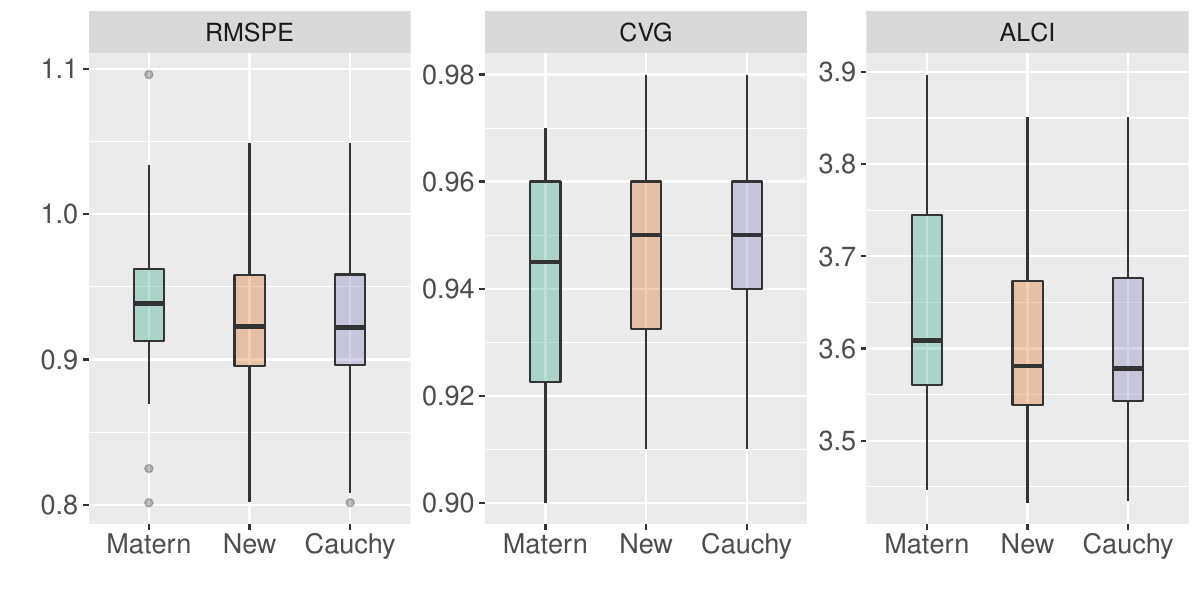}}
\end{subfigure}
\caption*{$\nu=0.5,  ER=200$}

\begin{subfigure}{.35\textwidth}
  \centering
\makebox[\textwidth][c]{ \includegraphics[width=1.0\linewidth, height=0.18\textheight]{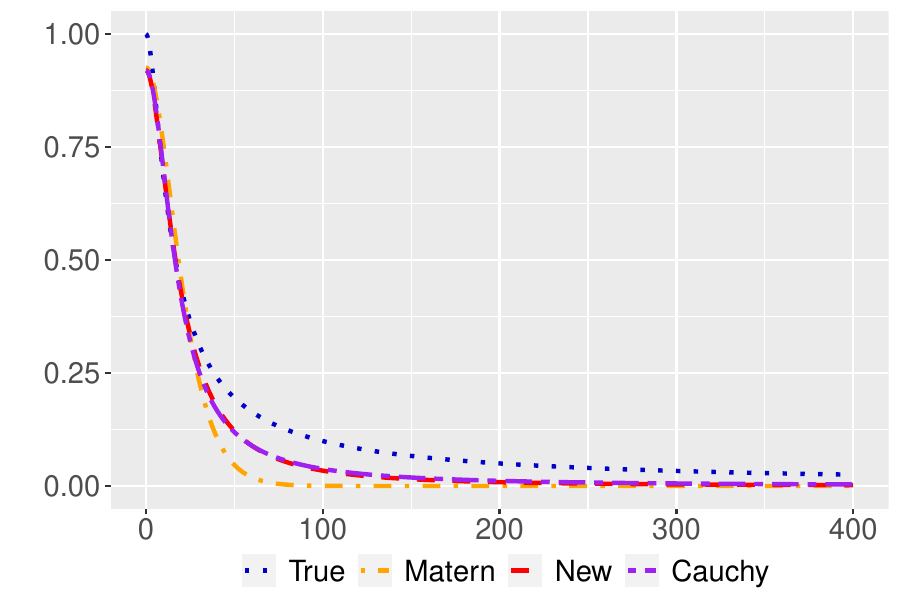}}
\end{subfigure}%
\begin{subfigure}{.65\textwidth}
  \centering
\makebox[\textwidth][c]{ \includegraphics[width=1.0\linewidth, height=0.18\textheight]{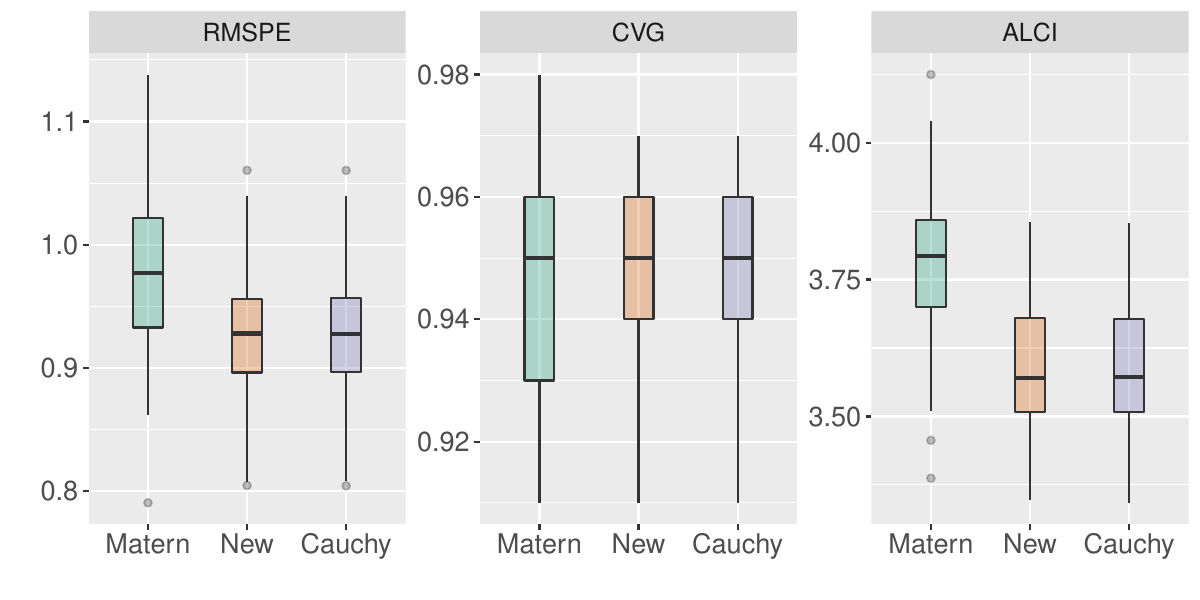}}
\end{subfigure}
\caption*{$\nu=2.5,  ER=200$}

\begin{subfigure}{.35\textwidth}
  \centering
\makebox[\textwidth][c]{ \includegraphics[width=1.0\linewidth, height=0.18\textheight]{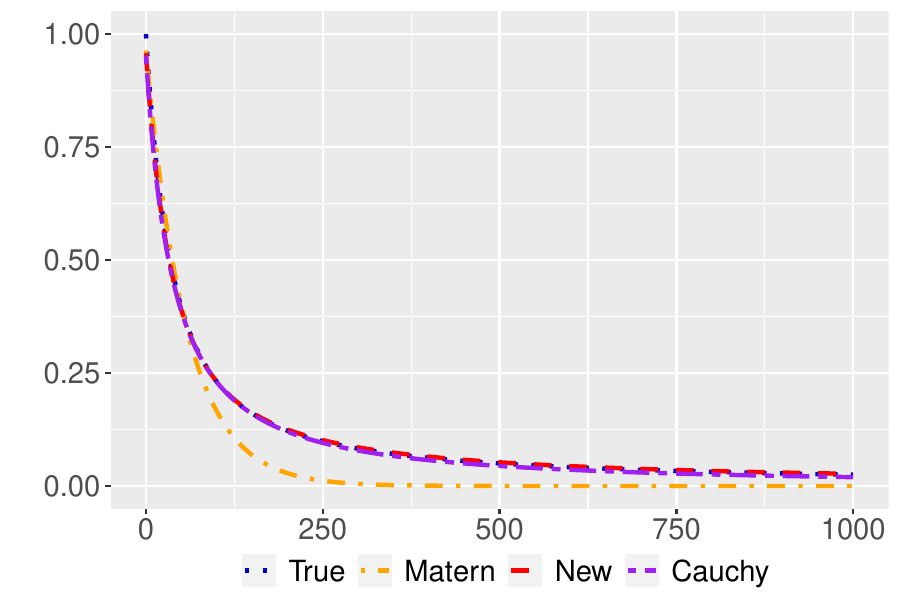}}
\end{subfigure}%
\begin{subfigure}{.65\textwidth}
  \centering
\makebox[\textwidth][c]{ \includegraphics[width=1.0\linewidth, height=0.18\textheight]{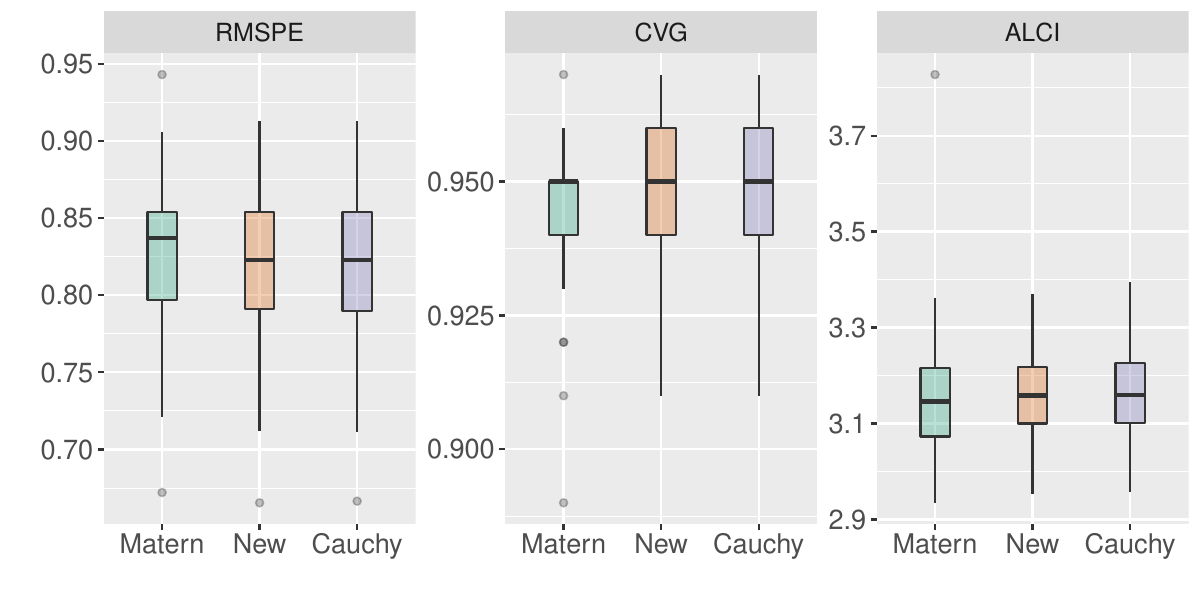}}
\end{subfigure}
\caption*{$\nu=0.5,  ER=500$}

\begin{subfigure}{.35\textwidth}
  \centering
\makebox[\textwidth][c]{ \includegraphics[width=1.0\linewidth, height=0.18\textheight]{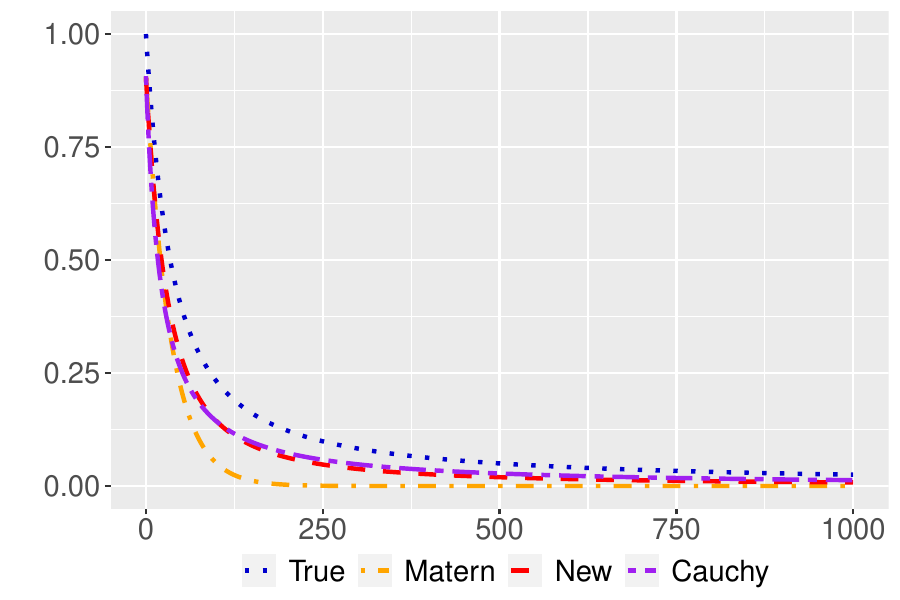}}
\end{subfigure}%
\begin{subfigure}{.65\textwidth}
  \centering
\makebox[\textwidth][c]{ \includegraphics[width=1.0\linewidth, height=0.18\textheight]{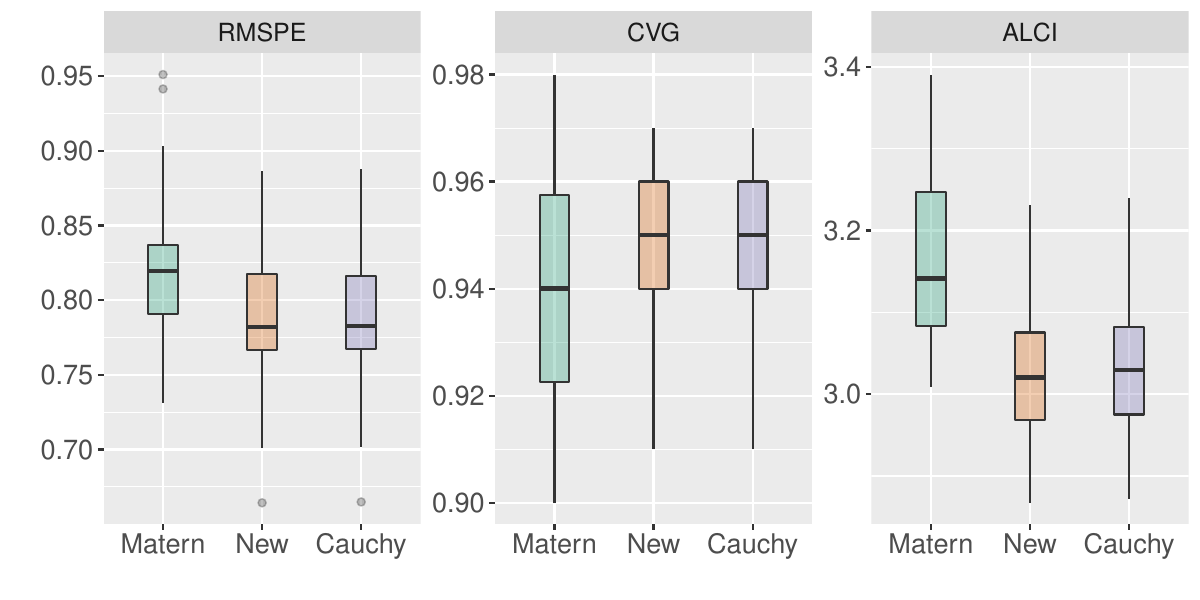}}
\end{subfigure}
\caption*{$\nu=2.5,  ER=500$}

\caption{Case 2: Comparison of predictive performance and estimated covariance structures when the true covariance is the CH class with 500 observations. The predictive performance is evaluated at 10-by-10 regular grids in the square domain. These figures summarize the predictive measures based on RMSPE, CVG and ALCI under 30 simulated realizations.}
\label{fig: simulation setting under case 2, n=500}
\end{figure}

\begin{figure}[htbp] 
\begin{subfigure}{.35\textwidth}
  \centering
\makebox[\textwidth][c]{ \includegraphics[width=1.0\linewidth, height=0.18\textheight]{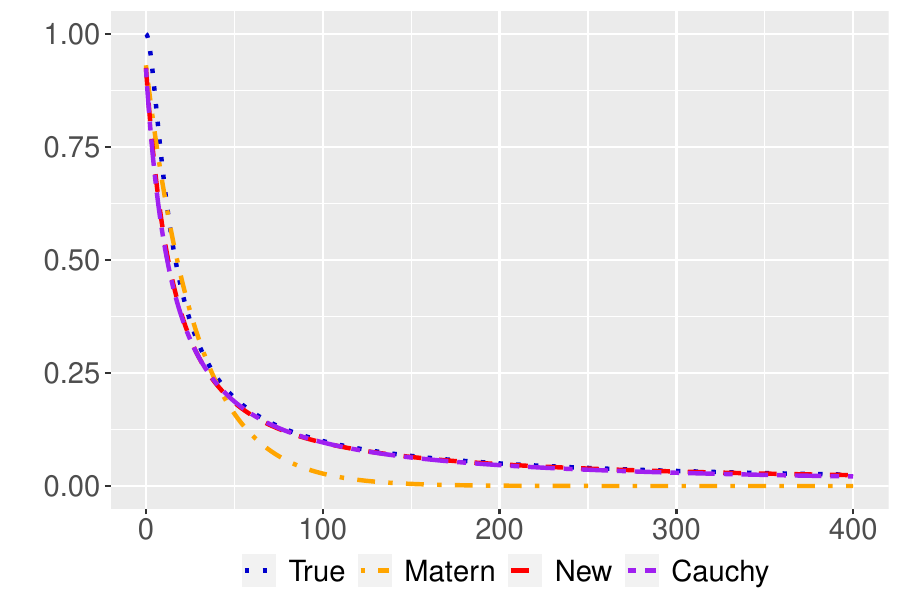}}
\end{subfigure}%
\begin{subfigure}{.65\textwidth}
  \centering
\makebox[\textwidth][c]{ \includegraphics[width=1.0\linewidth, height=0.18\textheight]{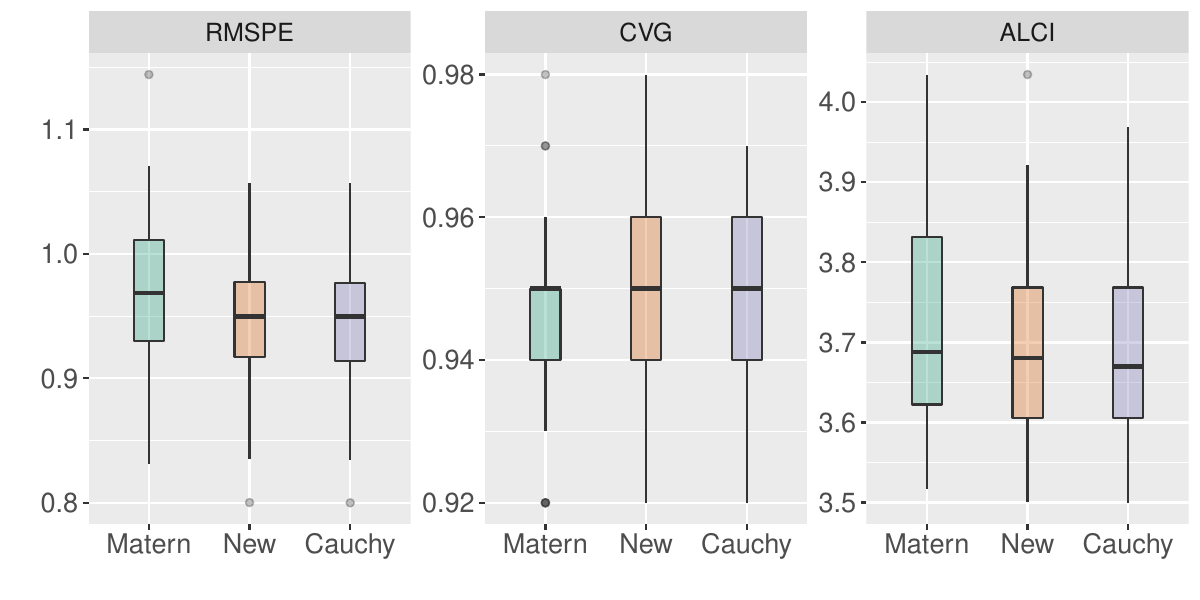}}
\end{subfigure}
\caption*{$\delta=1,  ER=200$}

\begin{subfigure}{.35\textwidth}
  \centering
\makebox[\textwidth][c]{ \includegraphics[width=1.0\linewidth, height=0.18\textheight]{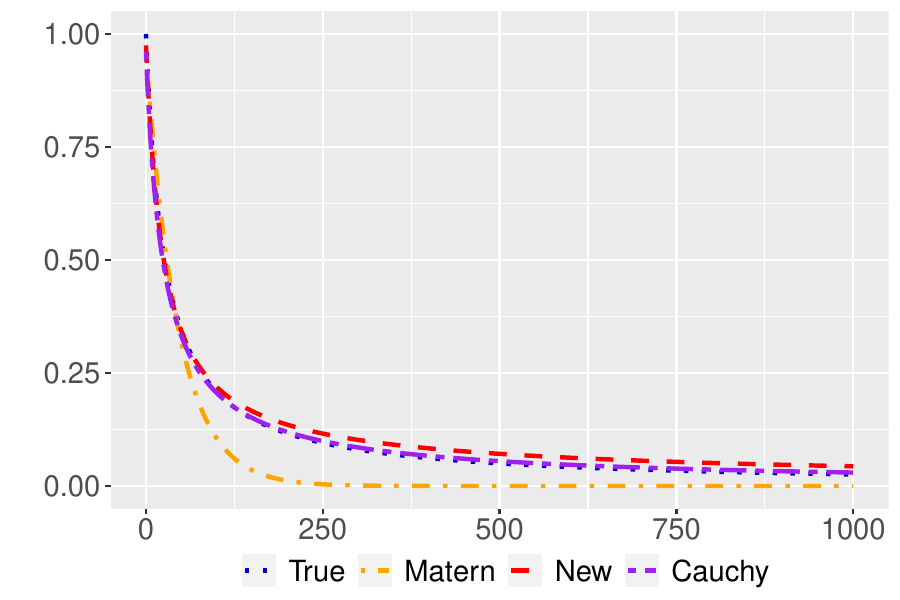}}
\end{subfigure}%
\begin{subfigure}{.65\textwidth}
  \centering
\makebox[\textwidth][c]{ \includegraphics[width=1.0\linewidth, height=0.18\textheight]{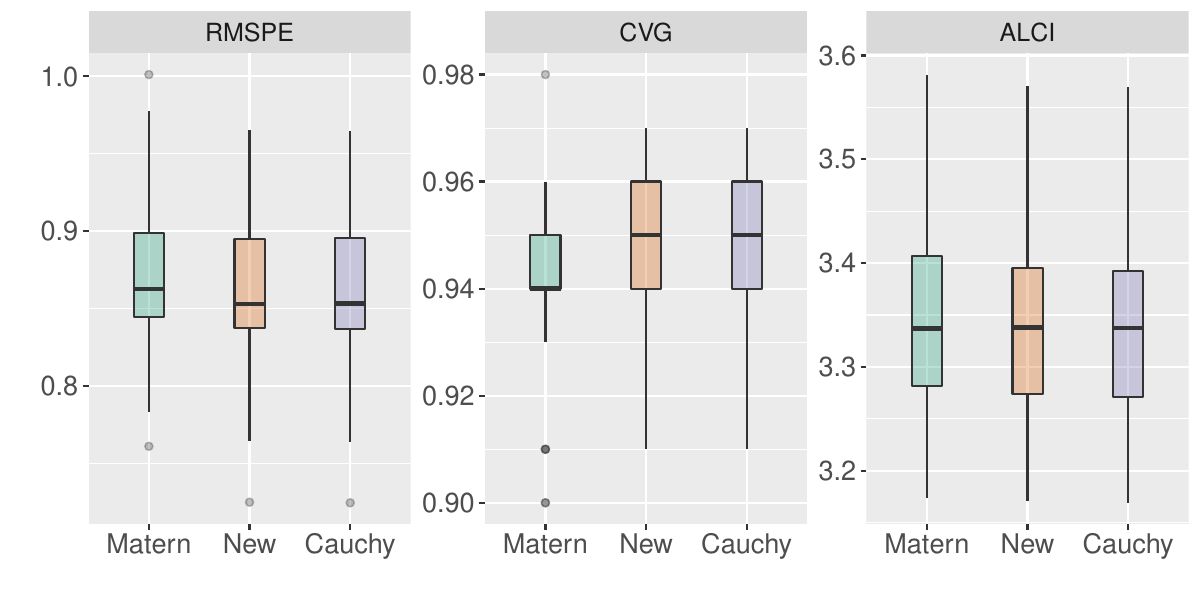}}
\end{subfigure}
\caption*{$\delta=1,  ER=500$}

\caption{Case 3: Comparison of predictive performance and estimated covariance structures when the true covariance is the GC class with 500 observations. The predictive performance is evaluated at 10-by-10 regular grids in the square domain. These figures summarize the predictive measures based on RMSPE, CVG and ALCI under 30 simulated realizations.}
\label{fig: simulation setting under case 3, n=500}
\end{figure}

\begin{figure}[htbp] 
\begin{subfigure}{.35\textwidth}
  \centering
\makebox[\textwidth][c]{ \includegraphics[width=1.0\linewidth, height=0.18\textheight]{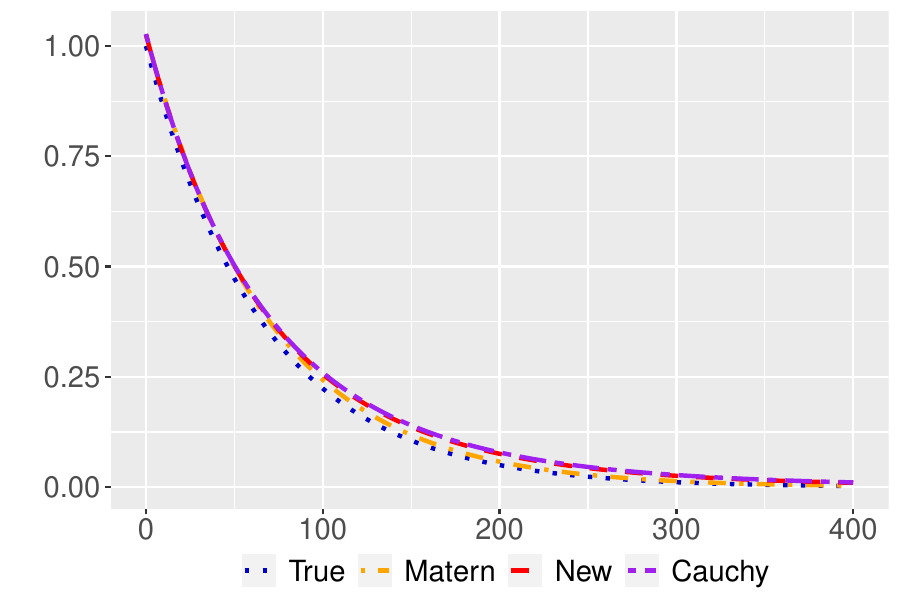}}
\end{subfigure}%
\begin{subfigure}{.65\textwidth}
  \centering
\makebox[\textwidth][c]{ \includegraphics[width=1.0\linewidth, height=0.18\textheight]{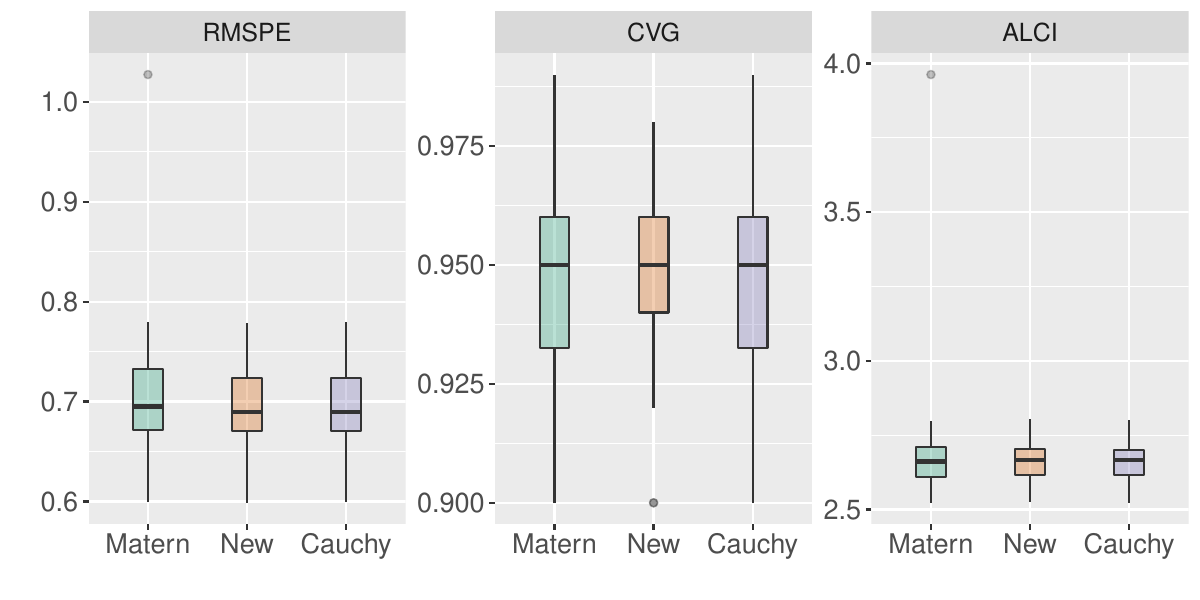}}
\end{subfigure}
\caption*{$\nu=0.5,  ER=200$}

\begin{subfigure}{.35\textwidth}
  \centering
\makebox[\textwidth][c]{ \includegraphics[width=1.0\linewidth, height=0.18\textheight]{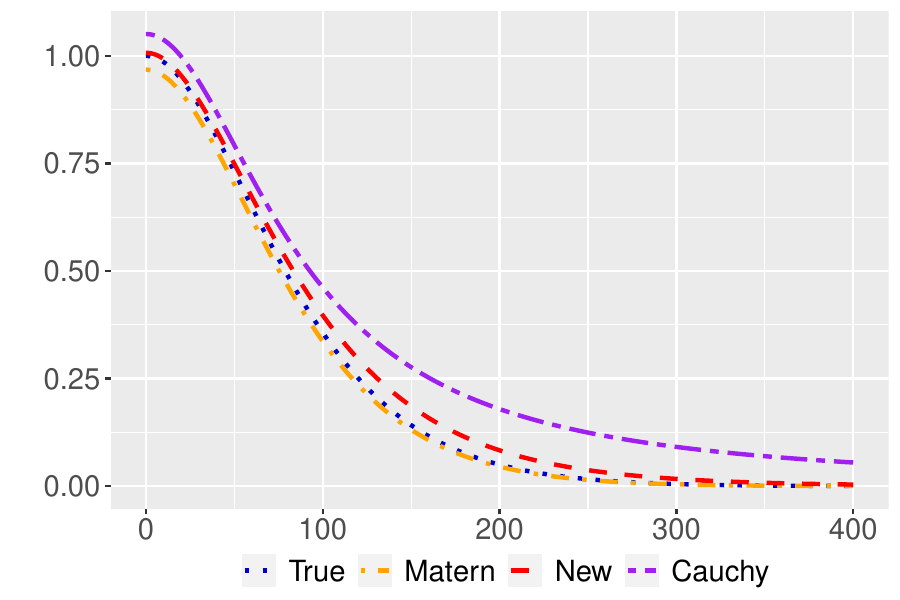}}
\end{subfigure}%
\begin{subfigure}{.65\textwidth}
  \centering
\makebox[\textwidth][c]{ \includegraphics[width=1.0\linewidth, height=0.18\textheight]{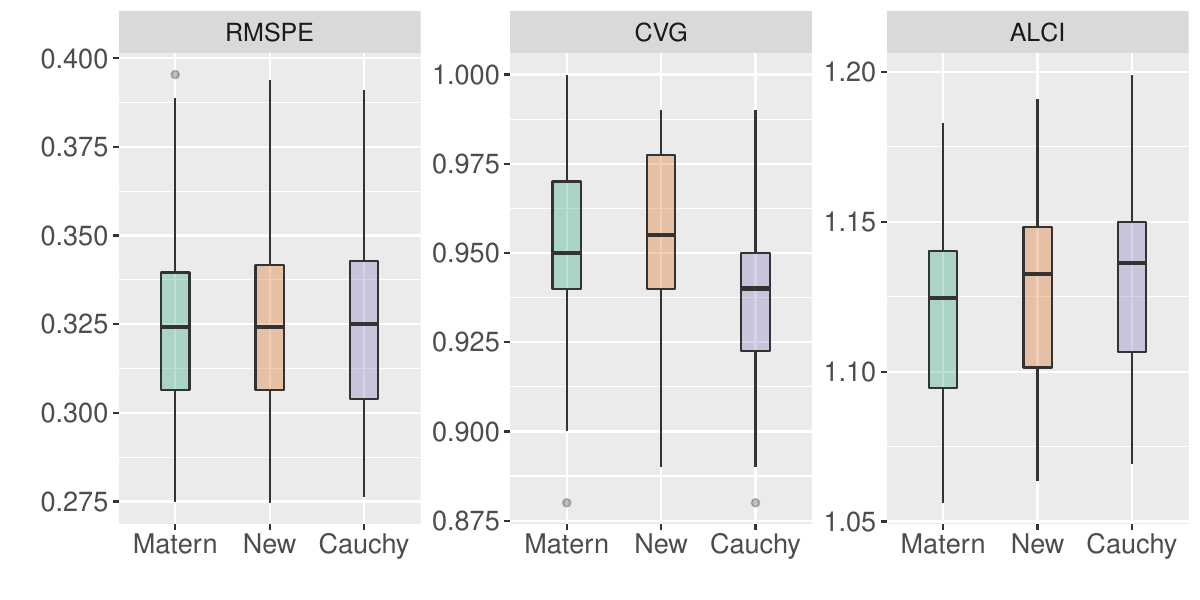}}
\end{subfigure}
\caption*{$\nu=2.5,  ER=200$}

\begin{subfigure}{.35\textwidth}
  \centering
\makebox[\textwidth][c]{ \includegraphics[width=1.0\linewidth, height=0.18\textheight]{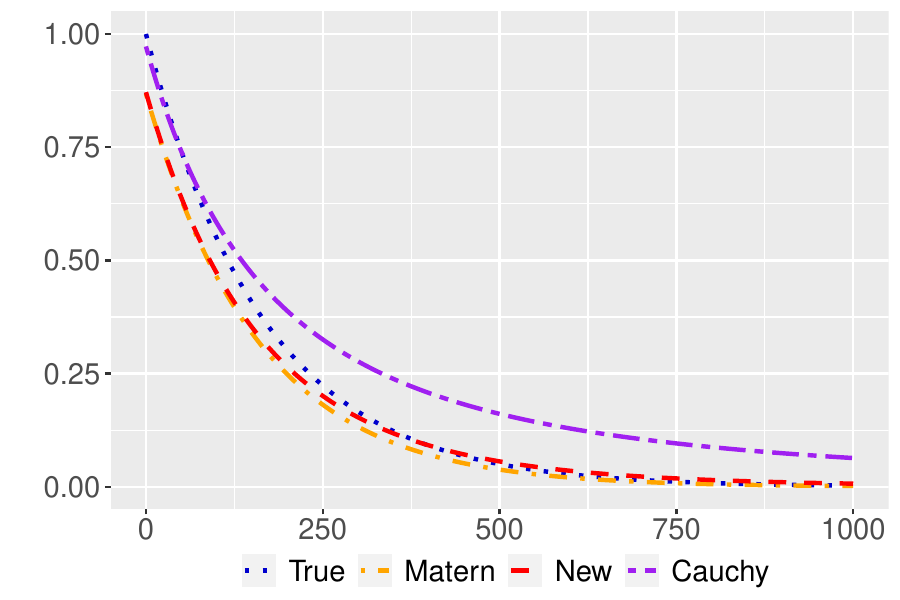}}
\end{subfigure}%
\begin{subfigure}{.65\textwidth}
  \centering
\makebox[\textwidth][c]{ \includegraphics[width=1.0\linewidth, height=0.18\textheight]{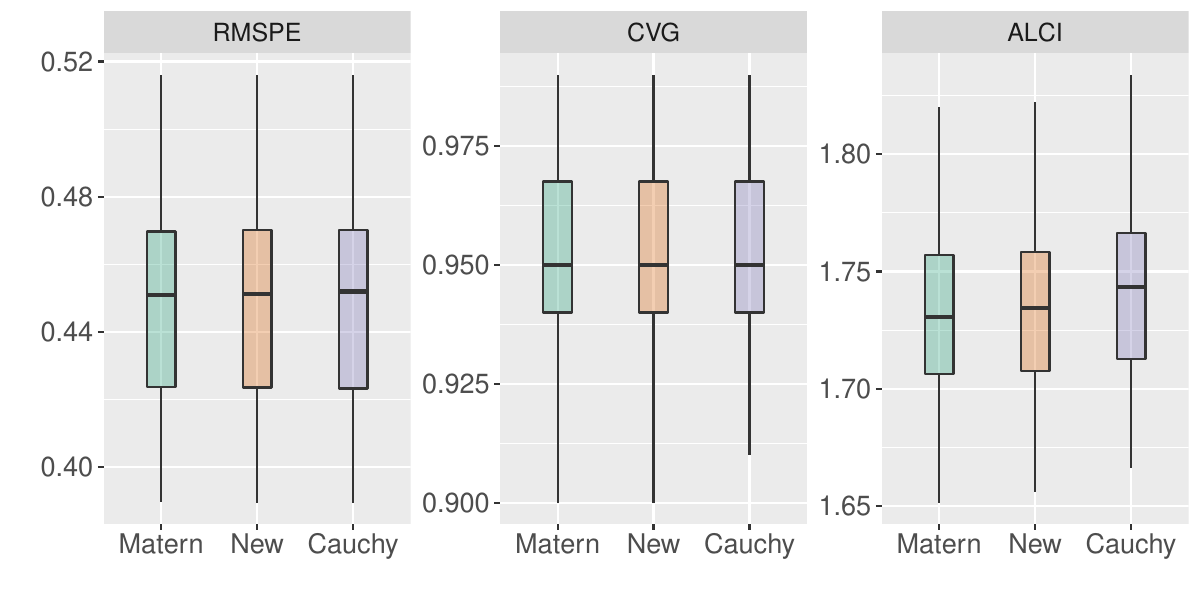}}
\end{subfigure}
\caption*{$\nu=0.5,  ER=500$}

\begin{subfigure}{.35\textwidth}
  \centering
\makebox[\textwidth][c]{ \includegraphics[width=1.0\linewidth, height=0.18\textheight]{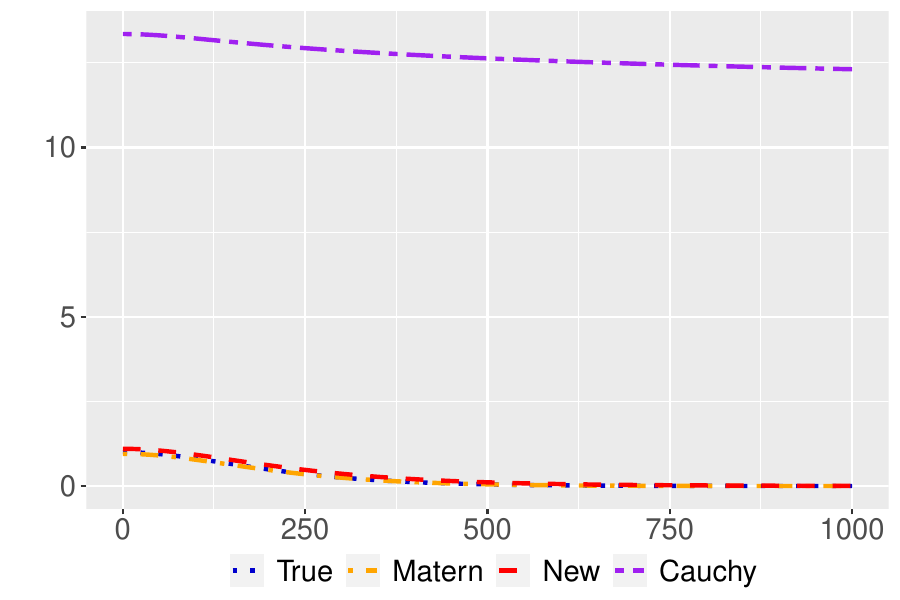}}
\end{subfigure}%
\begin{subfigure}{.65\textwidth}
  \centering
\makebox[\textwidth][c]{ \includegraphics[width=1.0\linewidth, height=0.18\textheight]{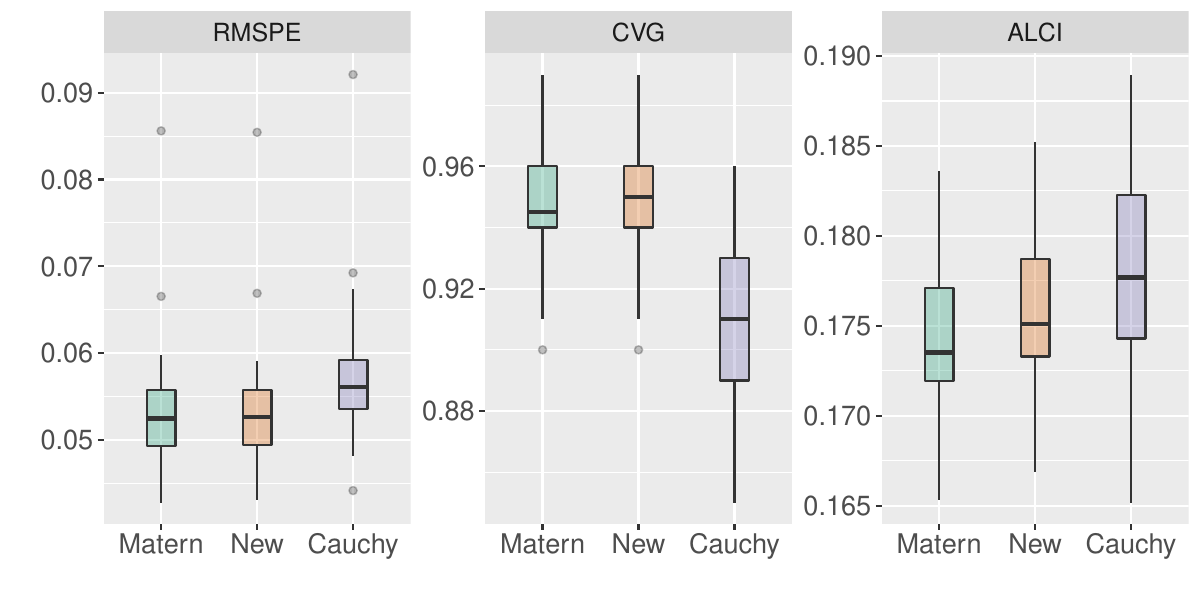}}
\end{subfigure}
\caption*{$\nu=2.5,  ER=500$}

\caption{Case 1: Comparison of predictive performance and estimated covariance structures when the true covariance is the Mat\'ern class with 1000 observations. The predictive performance is evaluated at 10-by-10 regular grids in the square domain. These figures summarize the predictive measures based on RMSPE, CVG and ALCI under 30 simulated realizations.}
\label{fig: simulation setting under case 1, n=1000}
\end{figure}

\begin{figure}[htbp] 
\begin{subfigure}{.35\textwidth}
  \centering
\makebox[\textwidth][c]{ \includegraphics[width=1.0\linewidth, height=0.18\textheight]{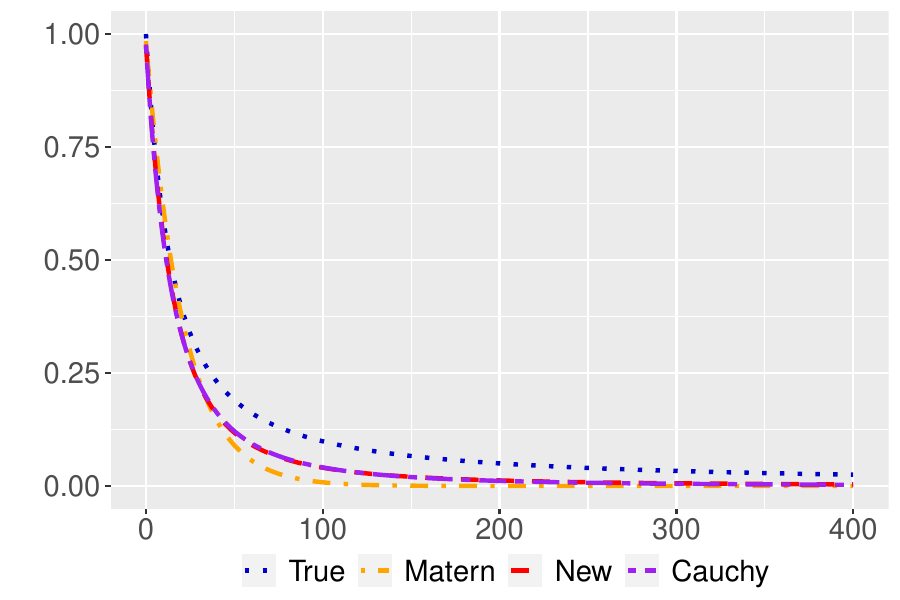}}
\end{subfigure}%
\begin{subfigure}{.65\textwidth}
  \centering
\makebox[\textwidth][c]{ \includegraphics[width=1.0\linewidth, height=0.18\textheight]{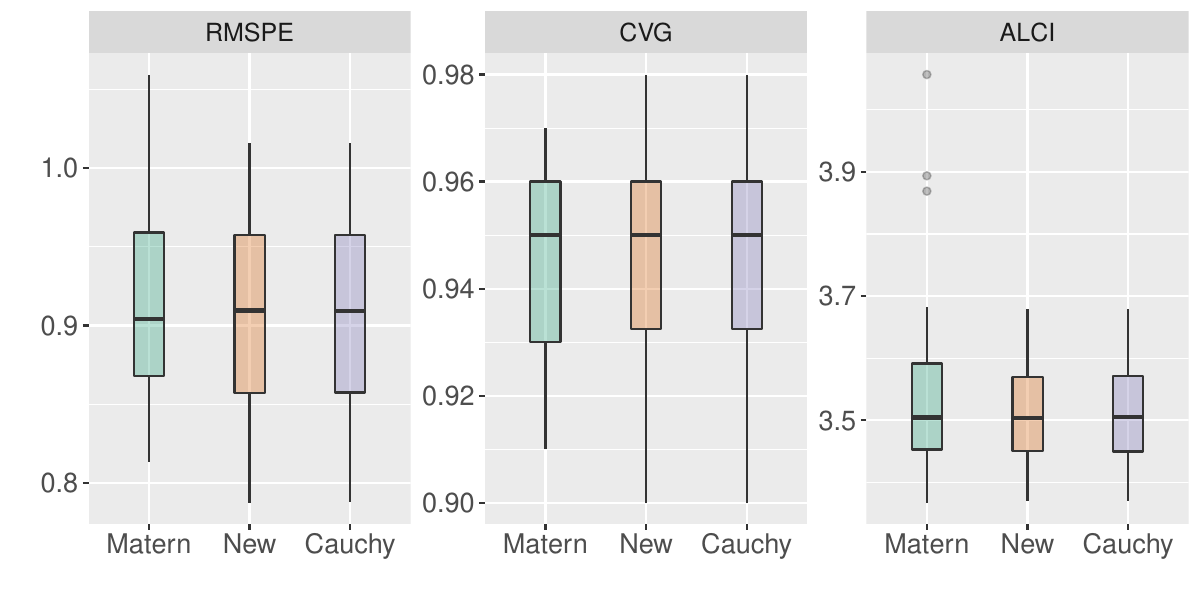}}
\end{subfigure}
\caption*{$\nu=0.5,  ER=200$}

\begin{subfigure}{.35\textwidth}
  \centering
\makebox[\textwidth][c]{ \includegraphics[width=1.0\linewidth, height=0.18\textheight]{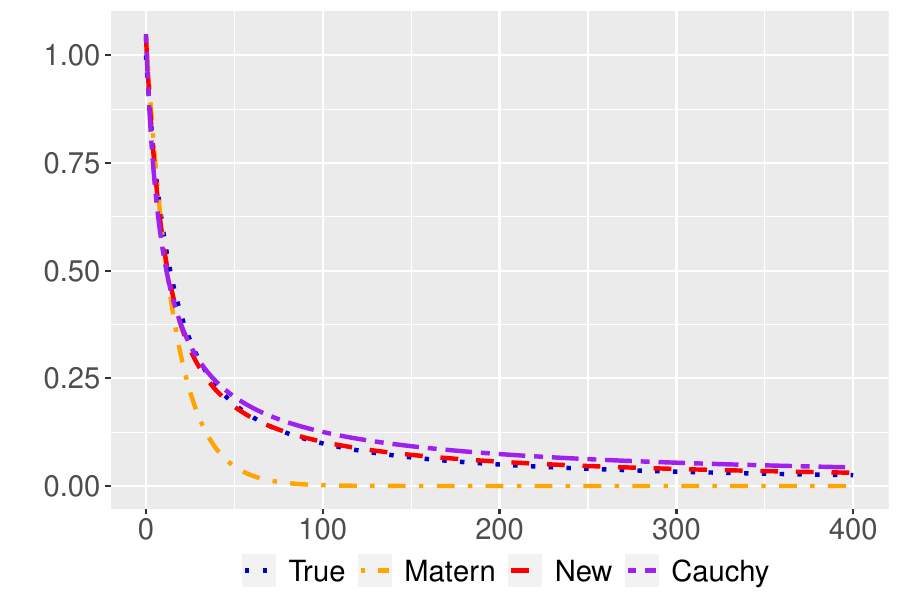}}
\end{subfigure}%
\begin{subfigure}{.65\textwidth}
  \centering
\makebox[\textwidth][c]{ \includegraphics[width=1.0\linewidth, height=0.18\textheight]{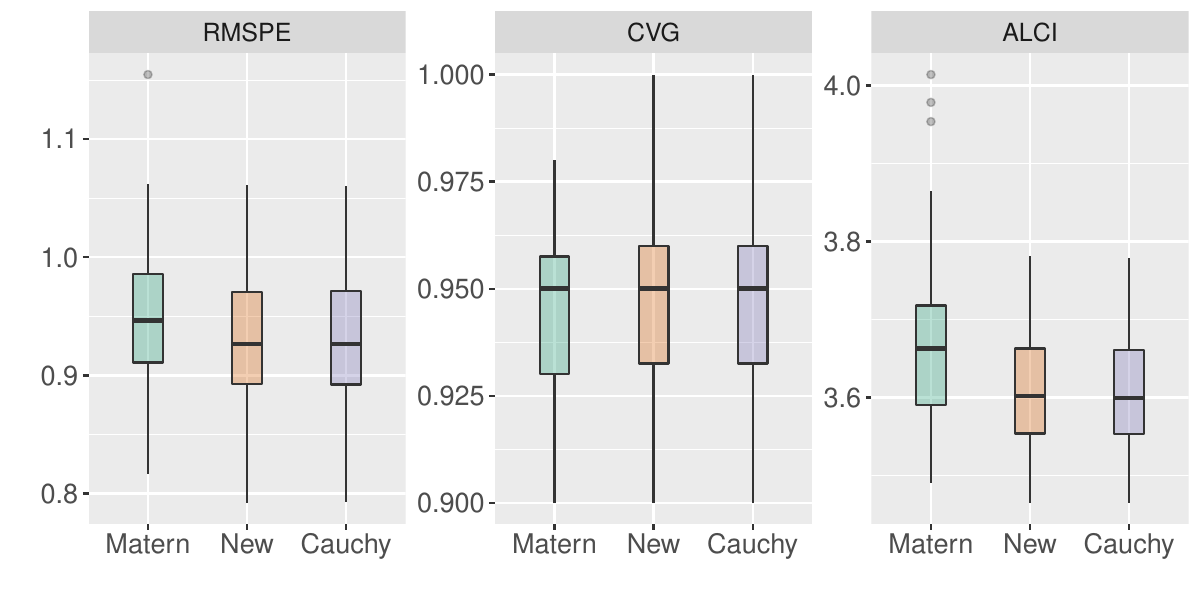}}
\end{subfigure}
\caption*{$\nu=2.5,  ER=200$}

\begin{subfigure}{.35\textwidth}
  \centering
\makebox[\textwidth][c]{ \includegraphics[width=1.0\linewidth, height=0.18\textheight]{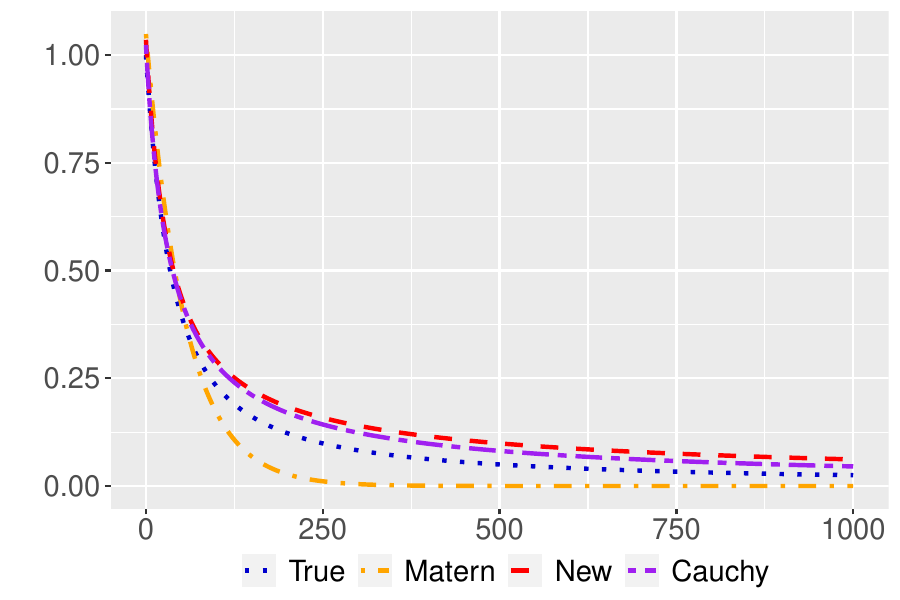}}
\end{subfigure}%
\begin{subfigure}{.65\textwidth}
  \centering
\makebox[\textwidth][c]{ \includegraphics[width=1.0\linewidth, height=0.18\textheight]{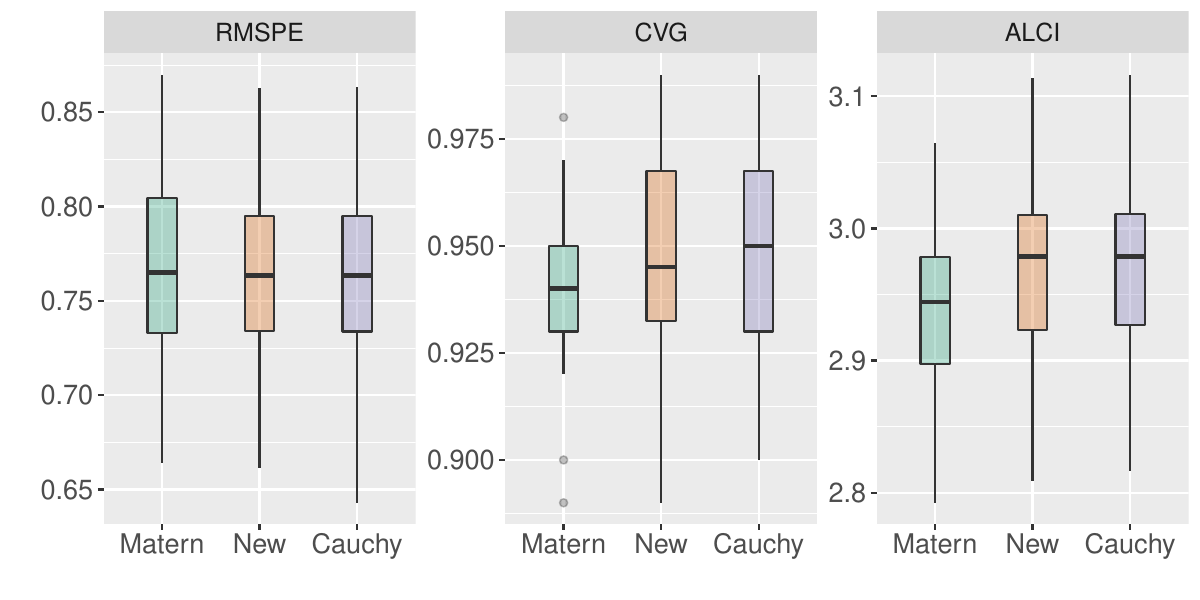}}
\end{subfigure}
\caption*{$\nu=0.5,  ER=500$}

\begin{subfigure}{.35\textwidth}
  \centering
\makebox[\textwidth][c]{ \includegraphics[width=1.0\linewidth, height=0.18\textheight]{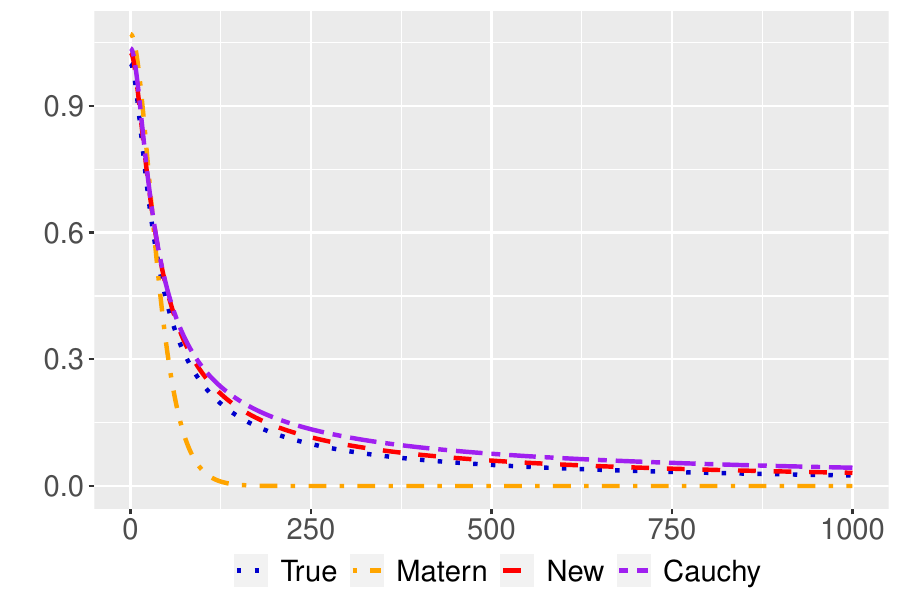}}
\end{subfigure}%
\begin{subfigure}{.65\textwidth}
  \centering
\makebox[\textwidth][c]{ \includegraphics[width=1.0\linewidth, height=0.18\textheight]{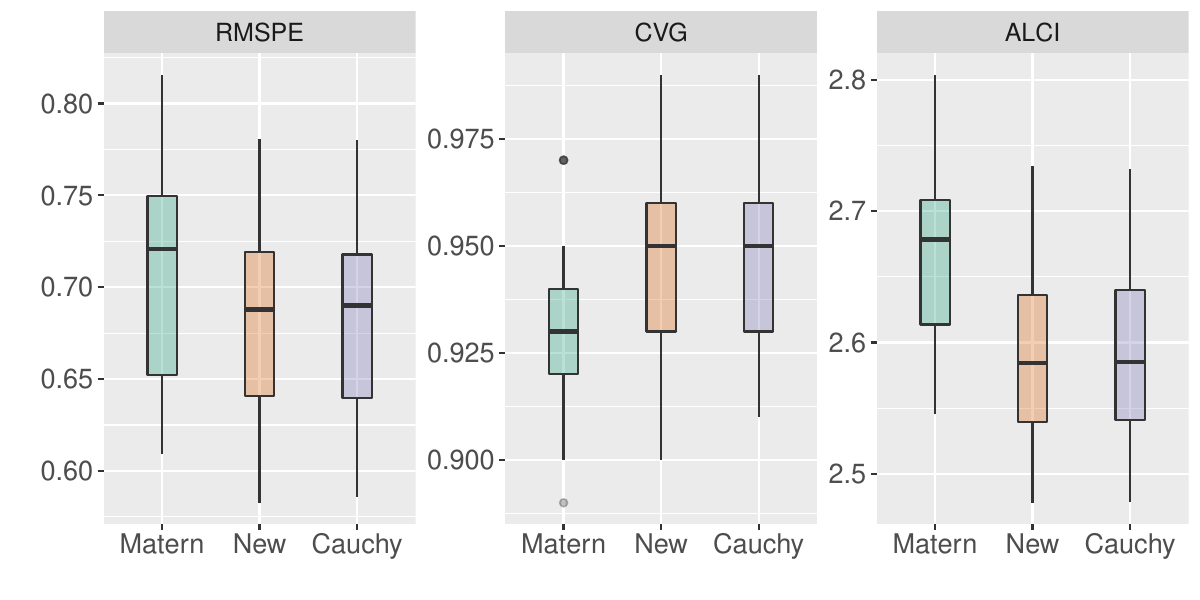}}
\end{subfigure}
\caption*{$\nu=2.5,  ER=500$}

\caption{Case 2: Comparison of predictive performance and estimated covariance structures when the true covariance is the CH class with 1000 observations. The predictive performance is evaluated at 10-by-10 regular grids in the square domain. These figures summarize the predictive measures based on RMSPE, CVG and ALCI under 30 simulated realizations.}
\label{fig: simulation setting under case 2, n=1000}
\end{figure}

\begin{figure}[htbp] 
\begin{subfigure}{.35\textwidth}
  \centering
\makebox[\textwidth][c]{ \includegraphics[width=1.0\linewidth, height=0.18\textheight]{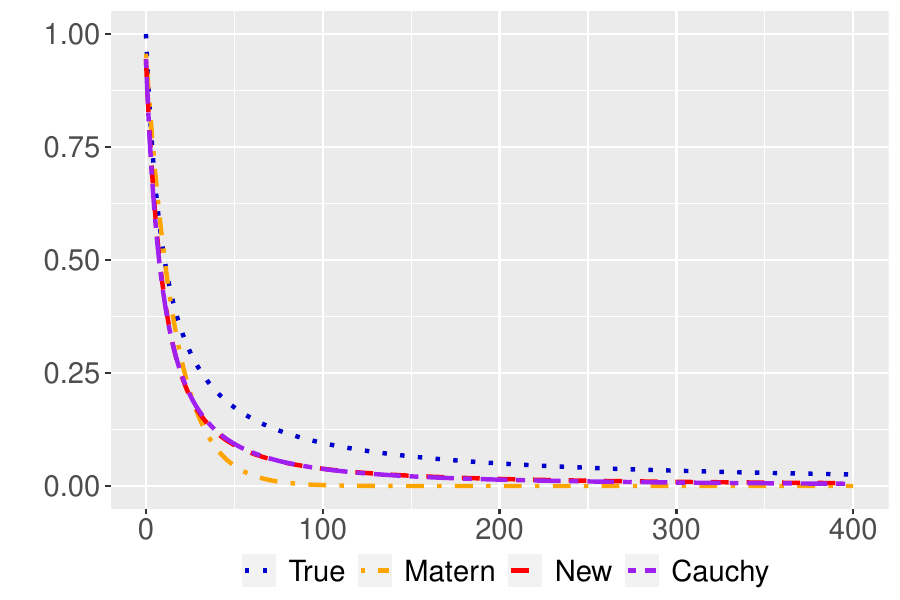}}
\end{subfigure}%
\begin{subfigure}{.65\textwidth}
  \centering
\makebox[\textwidth][c]{ \includegraphics[width=1.0\linewidth, height=0.18\textheight]{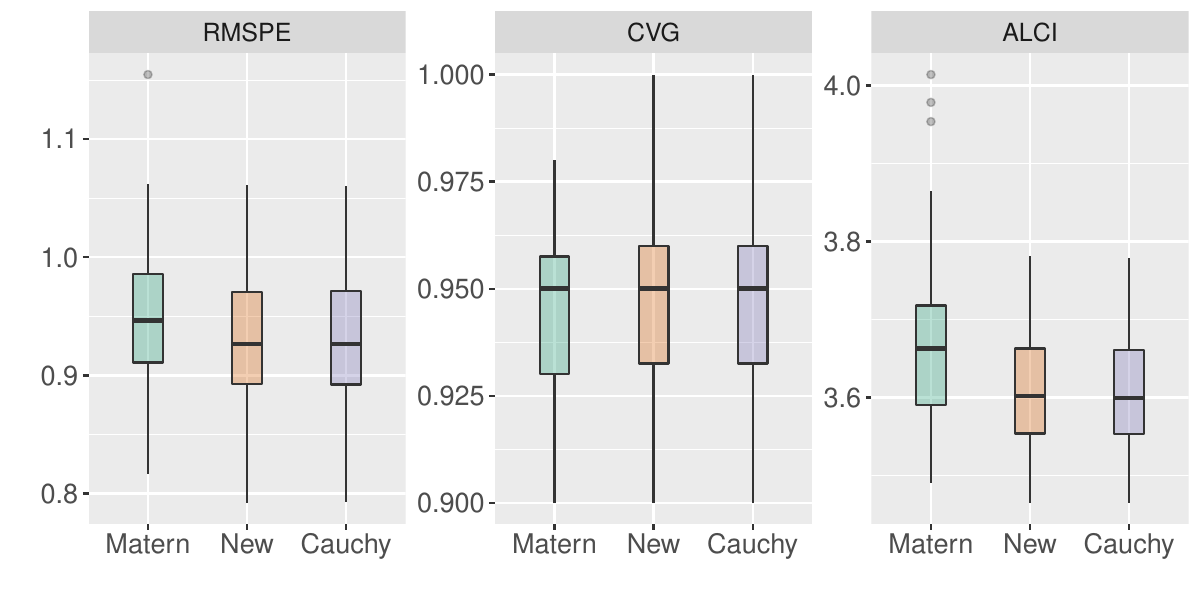}}
\end{subfigure}
\caption*{$\delta=1,  ER=200$}

\begin{subfigure}{.35\textwidth}
  \centering
\makebox[\textwidth][c]{ \includegraphics[width=1.0\linewidth, height=0.18\textheight]{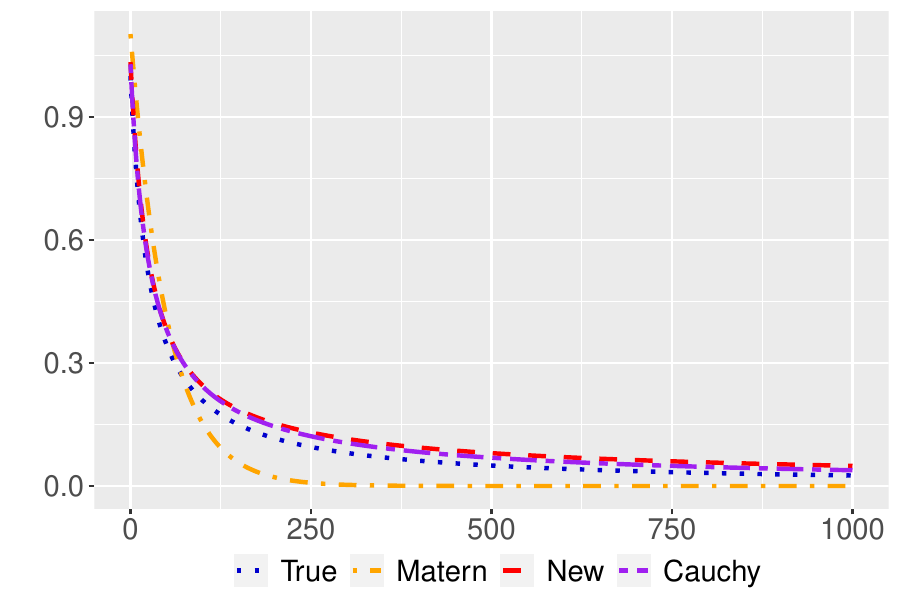}}
\end{subfigure}%
\begin{subfigure}{.65\textwidth}
  \centering
\makebox[\textwidth][c]{ \includegraphics[width=1.0\linewidth, height=0.18\textheight]{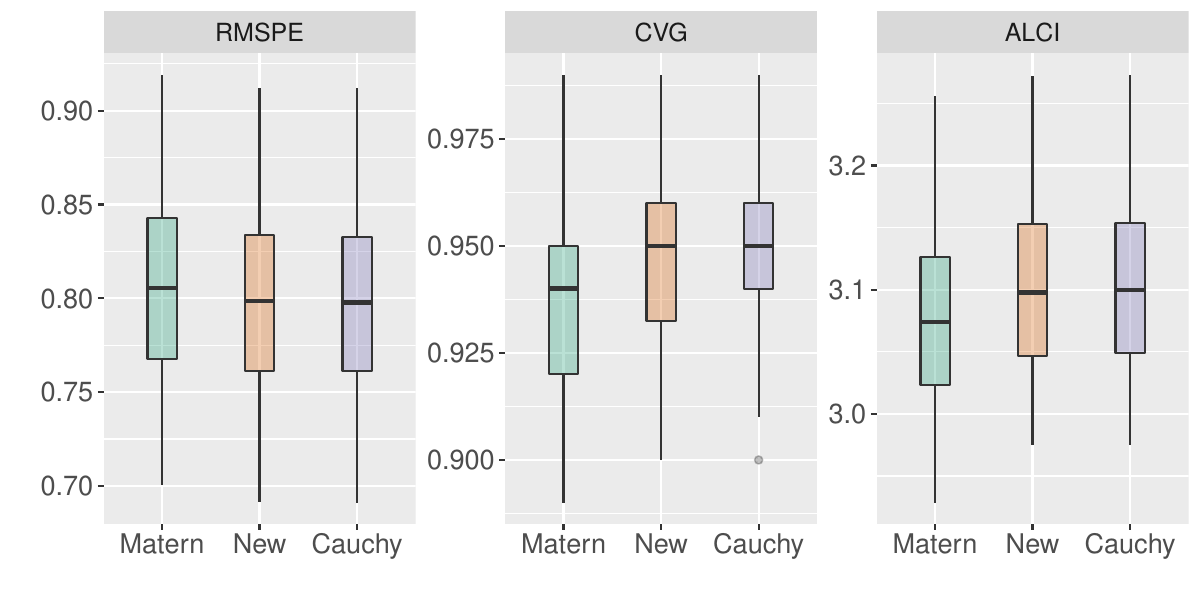}}
\end{subfigure}
\caption*{$\delta=1,  ER=500$}

\caption{Case 3: Comparison of predictive performance and estimated covariance structures when the true covariance is the GC class with 1000 observations. The predictive performance is evaluated at 10-by-10 regular grids in the square domain. These figures summarize the predictive measures based on RMSPE, CVG and ALCI under 30 simulated realizations.}
\label{fig: simulation setting under case 3, n=1000}
\end{figure}

\clearpage

\subsection{Simulation with a Tensor Product of Covariance Functions} \label{app: UQ simulation}
In this section, we study the predictive performance of the CH class with a product form, i.e., $r(\|\bfs-\bfu\|) = \prod_{i=1}^dR(|s_i - u_i|; \bftheta_i)$, where $R(\cdot; \bftheta_i)$ is an isotropic covariance function with parameter $\bftheta_i$. This product form of covariance functions allows different properties  along different coordinate directions (or input space) and has been widely used in uncertainty quantification and machine learning. 

We simulate the true processes under the Mat\'ern class and the CH class with effective range fixed at 200 and 500. For the smoothness parameter, we consider $\nu=0.5, 2.5$. The tail decay parameter in the CH class is chosen to be 0.5.  As each dimension has a different range parameter or scale parameter, we choose these parameters in each dimension such that their correlation will be $0.5^{1/2}$ at distance 200 and 500. This will guarantee the overall effective range will be 200 and 500, respectively.  For each simulation setting, the true process is simulated at $n=100, 500, 1000$ locations. The GC class has a smoothness parameter that is specified as in Section~\ref{sec:numerical}. The prediction locations are the same as those in Section~\ref{sec:numerical}.

In the first case where the true process has a product of Mat\'ern covariance functions, the prediction results under the Mat\'ern class, the CH class and the GC class are shown in panels from (a) to (f) of Figure~\ref{fig: UQ simulation}. As expected, the Mat\'ern class and the CH class yield indistinguishable predictive performance in terms of RMSPE, CVG, and ALCI. However, the GC class has much worse performance than the other two covariance classes. In the second case where the true process has a product of CH functions, the prediction results under these three covariance classes are shown in panels from (g) to (l) of Figure~\ref{fig: UQ simulation}. As expected, the CH class yields much better prediction results than the Mat\'ern class, since the Mat\'ern class has an exponentially decaying tail that is not able to capture the tail behavior in the CH class. It is worth noting that the GC class yields much worse predictive performance than the other two covariance classes. This is quite different from the situation when the true process does not have a product covariance form.

\newpage
\begin{figure}[htbp] 
\begin{subfigure}{.333\textwidth}
  \centering
\makebox[\textwidth][c]{ \includegraphics[width=1.0\linewidth, height=0.18\textheight]{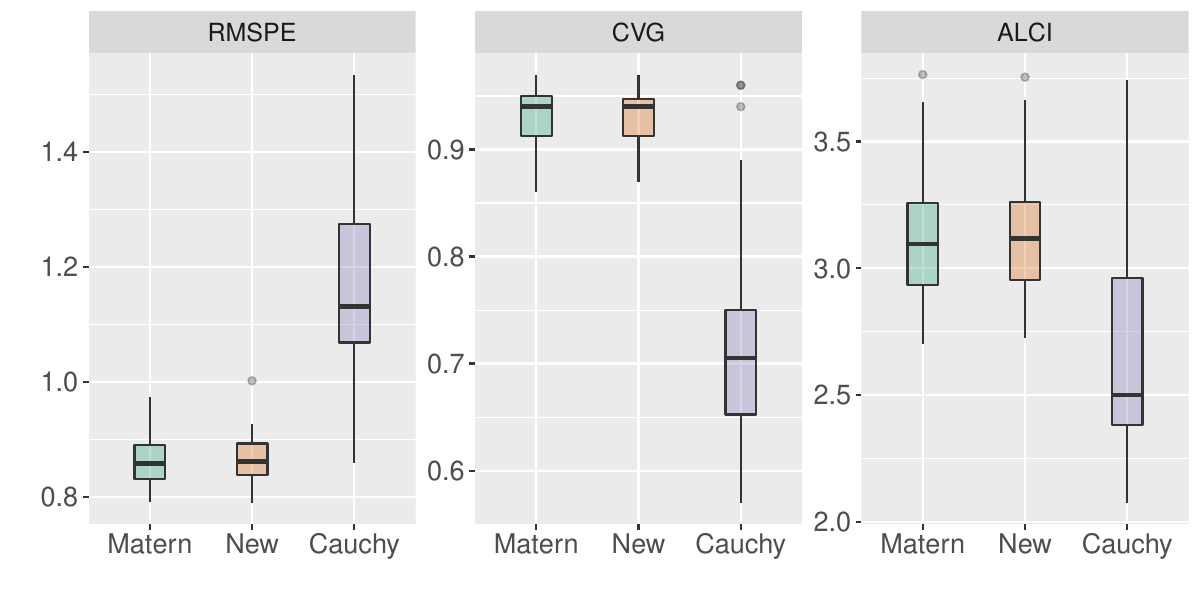}}
\caption{Mat\'ern: $\nu=0.5, n=100$}
\end{subfigure}%
\begin{subfigure}{.333\textwidth}
  \centering
\makebox[\textwidth][c]{ \includegraphics[width=1.0\linewidth, height=0.18\textheight]{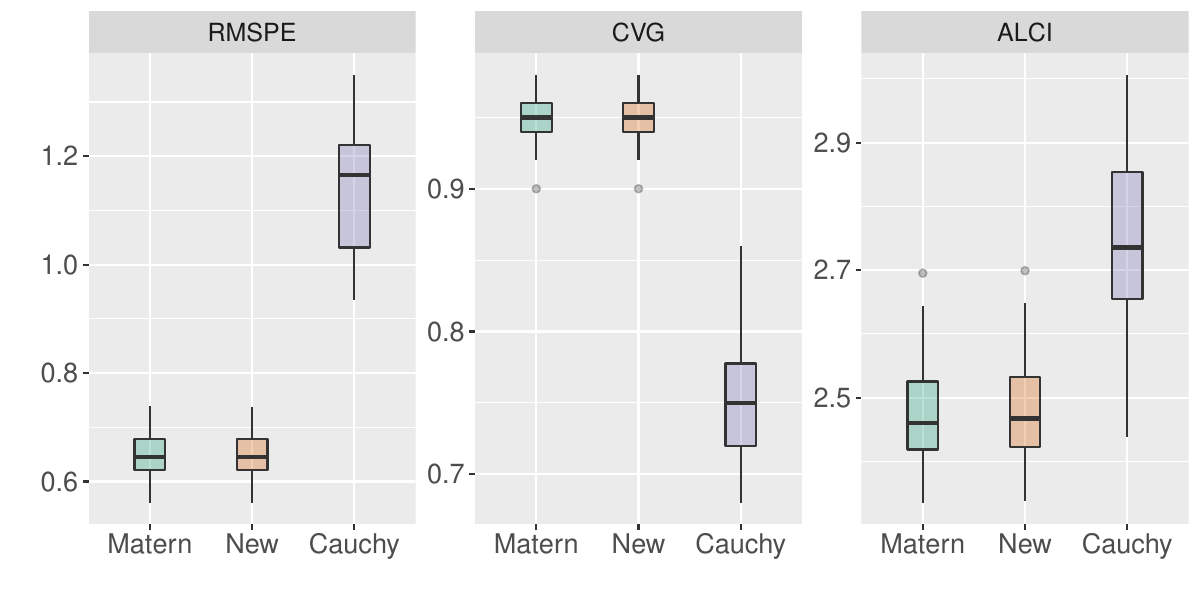}}
\caption{Mat\'ern: $\nu=0.5, n=500$}
\end{subfigure}%
\begin{subfigure}{.333\textwidth}
  \centering
\makebox[\textwidth][c]{ \includegraphics[width=1.0\linewidth, height=0.18\textheight]{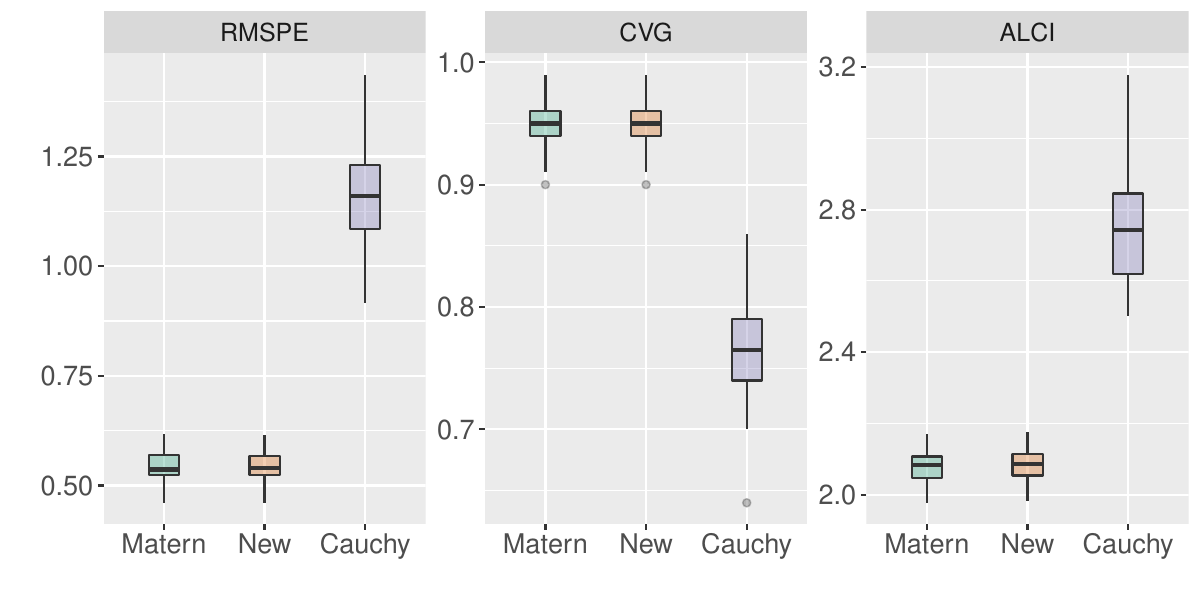}}
\caption{Mat\'ern: $\nu=0.5,  n=1000$}
\end{subfigure}

\begin{subfigure}{.333\textwidth}
  \centering
\makebox[\textwidth][c]{ \includegraphics[width=1.0\linewidth, height=0.18\textheight]{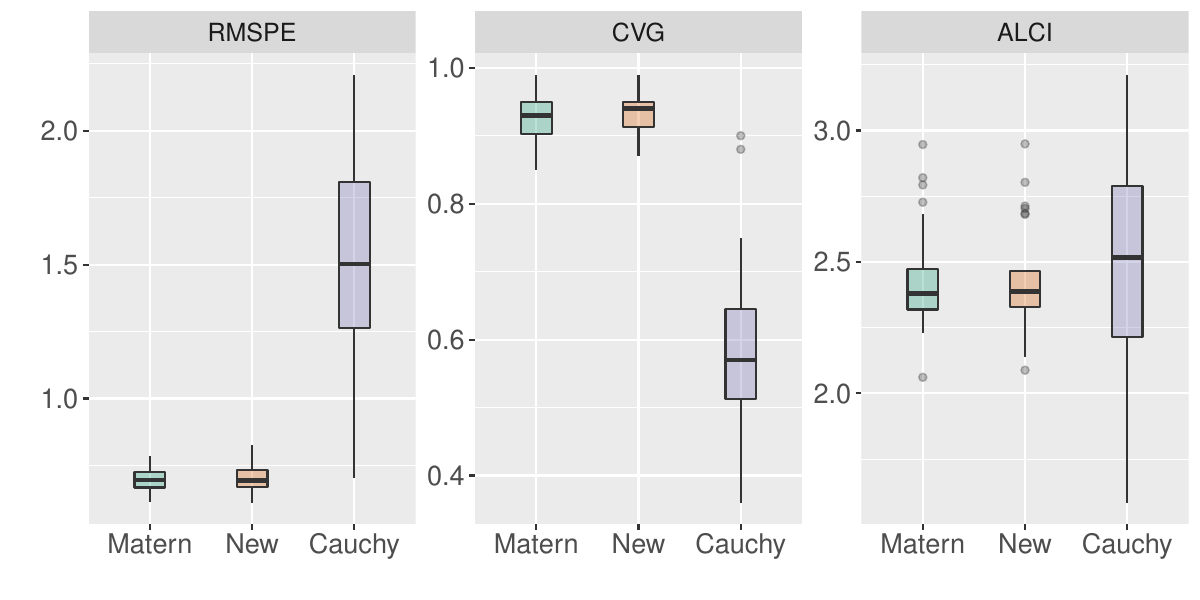}}
\caption{Mat\'ern: $\nu=2.5, n=100$}
\end{subfigure}%
\begin{subfigure}{.333\textwidth}
  \centering
\makebox[\textwidth][c]{ \includegraphics[width=1.0\linewidth, height=0.18\textheight]{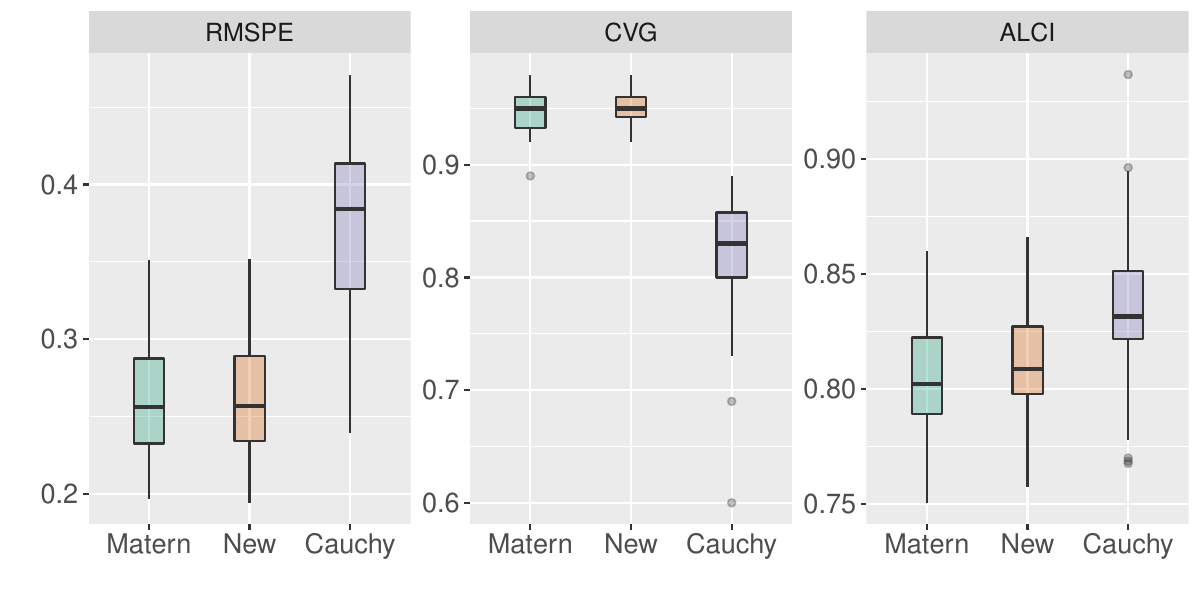}}
\caption{Mat\'ern: $\nu=2.5, n=500$}
\end{subfigure}%
\begin{subfigure}{.333\textwidth}
  \centering
\makebox[\textwidth][c]{ \includegraphics[width=1.0\linewidth, height=0.18\textheight]{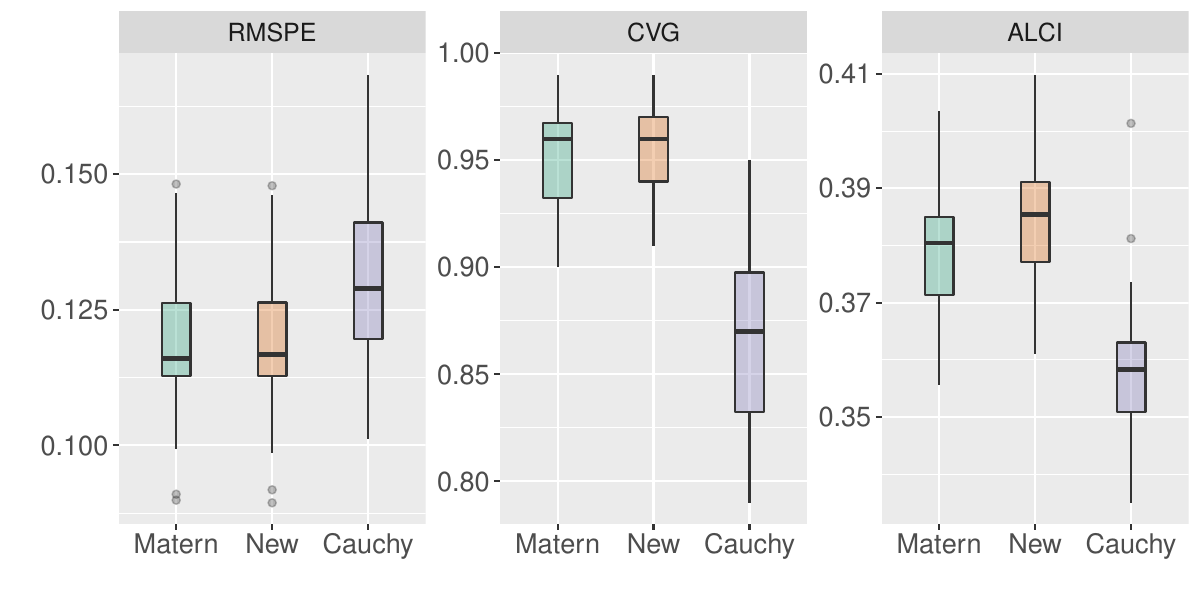}}
\caption{Mat\'ern: $\nu=2.5,  n=1000$}
\end{subfigure}

\begin{subfigure}{.333\textwidth}
  \centering
\makebox[\textwidth][c]{ \includegraphics[width=1.0\linewidth, height=0.18\textheight]{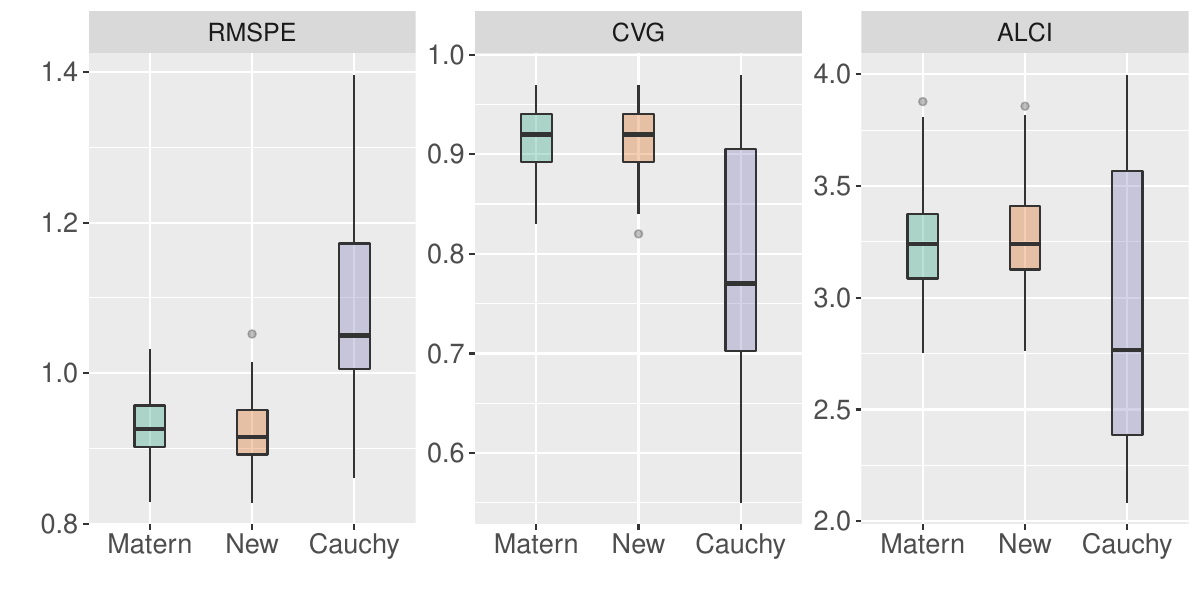}}
\caption{New: $\nu=0.5,  n=100$}
\end{subfigure}%
\begin{subfigure}{.33\textwidth}
  \centering
\makebox[\textwidth][c]{ \includegraphics[width=1.0\linewidth, height=0.18\textheight]{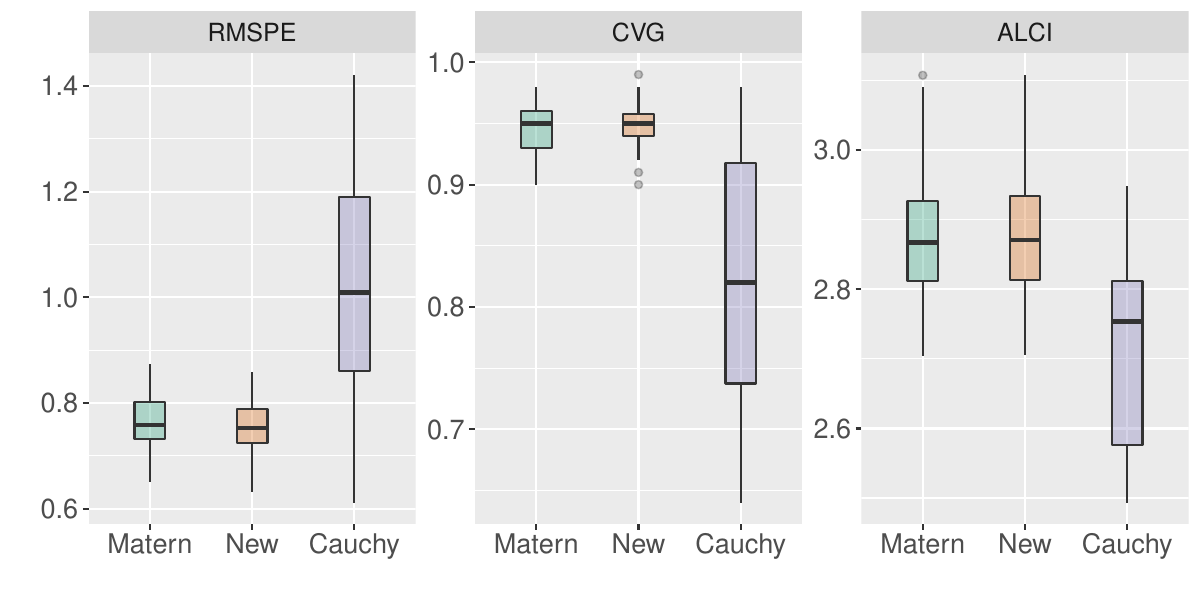}}
\caption{New: $\nu=0.5,  n=500$}
\end{subfigure}%
\begin{subfigure}{.333\textwidth}
  \centering
\makebox[\textwidth][c]{ \includegraphics[width=1.0\linewidth, height=0.18\textheight]{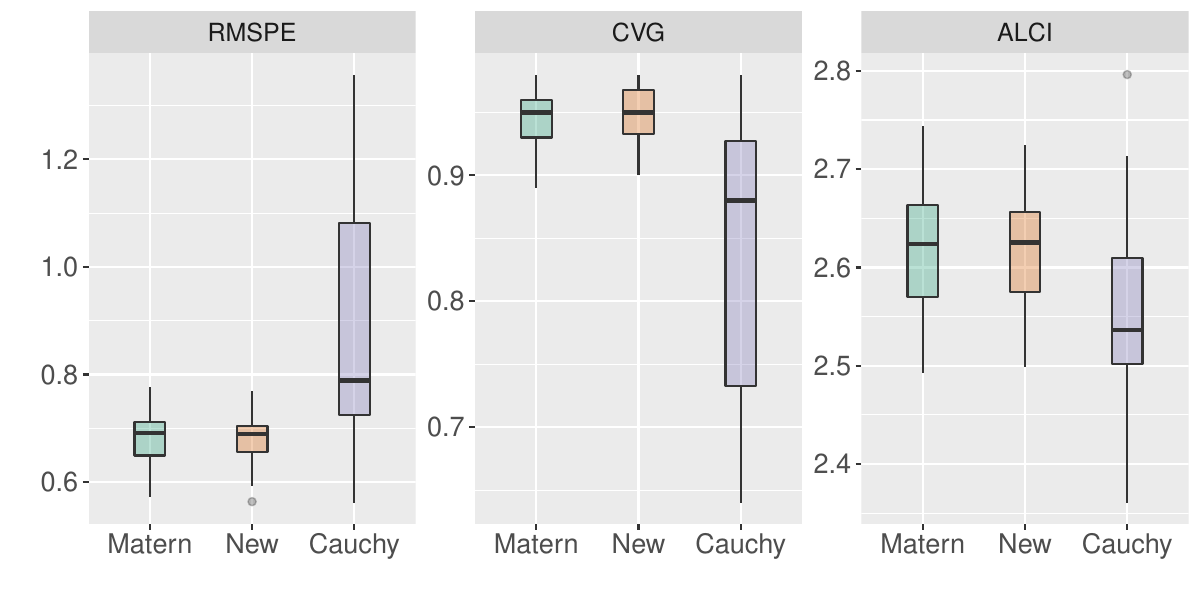}}
\caption{New: $\nu=0.5,  n=1000$}
\end{subfigure}

\begin{subfigure}{.333\textwidth}
  \centering
\makebox[\textwidth][c]{ \includegraphics[width=1.0\linewidth, height=0.18\textheight]{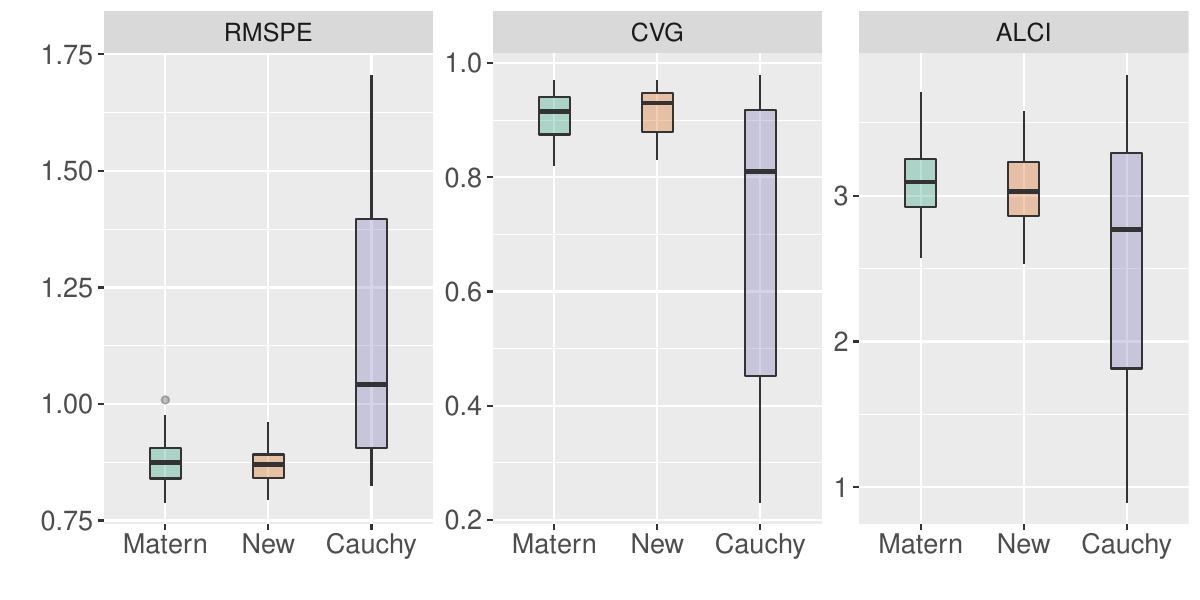}}
\caption{New: $\nu=2.5,  n=100$}
\end{subfigure}%
\begin{subfigure}{.33\textwidth}
  \centering
\makebox[\textwidth][c]{ \includegraphics[width=1.0\linewidth, height=0.18\textheight]{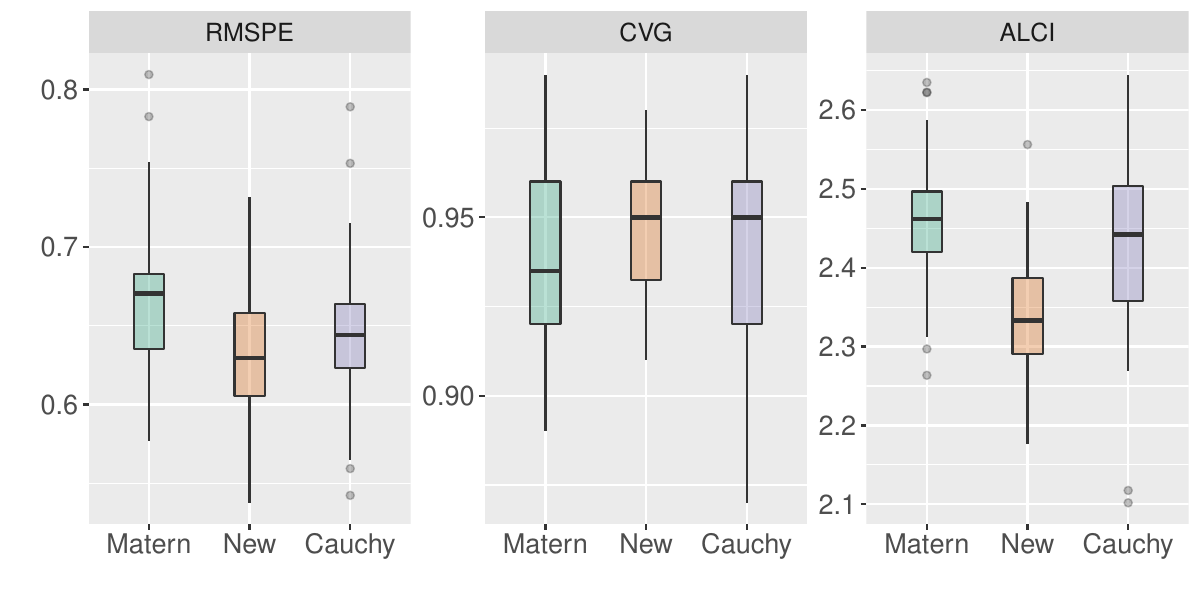}}
\caption{New: $\nu=2.5,  n=500$}
\end{subfigure}%
\begin{subfigure}{.333\textwidth}
  \centering
\makebox[\textwidth][c]{ \includegraphics[width=1.0\linewidth, height=0.18\textheight]{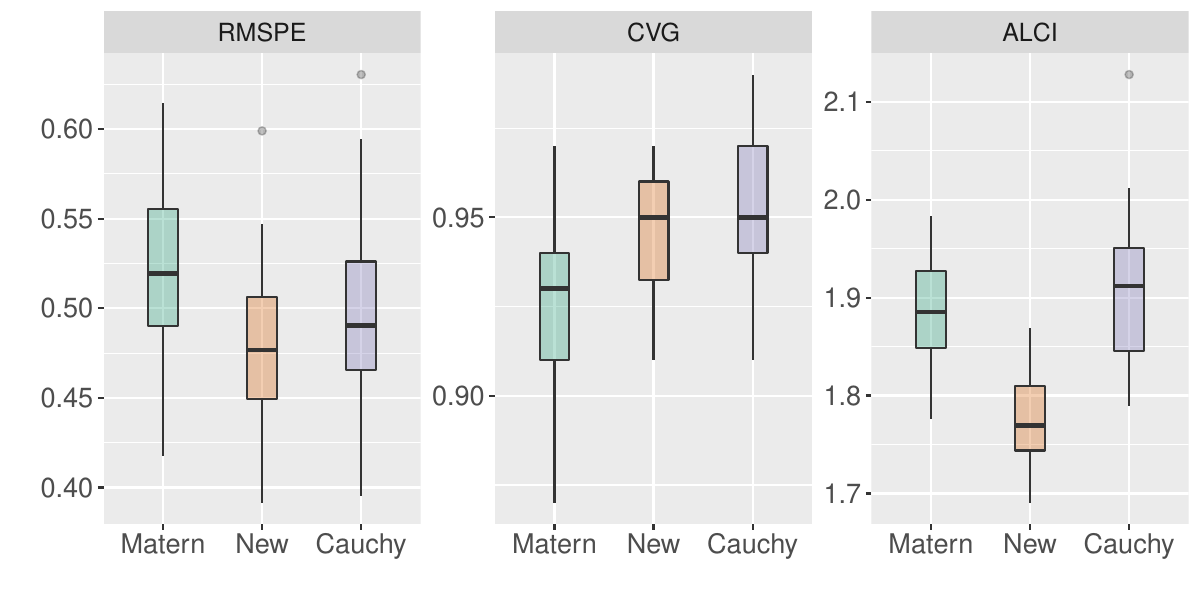}}
\caption{New: $\nu=2.5,  n=1000$}
\end{subfigure}
\caption{Predictive performance over 10-by-10 regular grids under three covariance classes when the true process has a product form of covariance structures. The predictive performance is studied under different smoothness parameters, effective ranges and number of observation locations.}
\label{fig: UQ simulation}
\end{figure}

\clearpage

\newpage
\section{Additional Numerical Results} \label{app: numerical results}
This section contains parameter estimation results and figures referenced in Section~\ref{sec:real}. Figure~\ref{fig: isotropy} shows the directional semivariograms for the OCO2 data. Table~\ref{table: CV for OCO2 data parameter estimation} shows the estimated parameters under the Mat\'ern covariance model and the CH covariance in the cross-validation study of Section~\ref{sec:real}. Figure~\ref{fig: comparison for OCO2 data} compares kriging predictors and associated standard errors under the CH class and Mat\'ern class in the cross-validation study of Section~\ref{sec:real}.


\begin{figure}[htbp]
\makebox[\textwidth][c]{\includegraphics[width=1.0\textwidth, height=0.5\textheight]{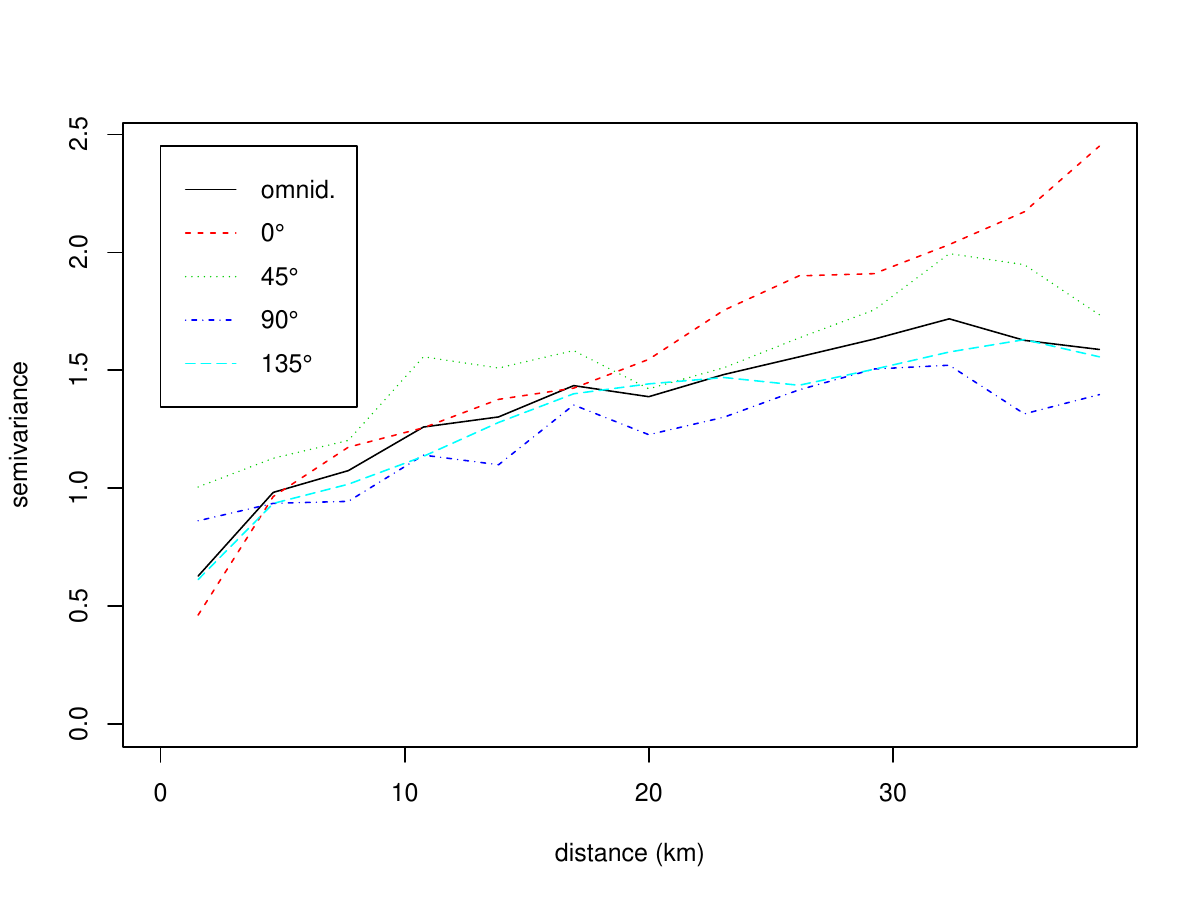}}
\caption{Graphical assessments of isotropy in the OCO2 data. The directional semivariograms do not appear to exhibit differences, indicating that the assumption of an isotropic covariance function is likely to be true.}
\label{fig: isotropy}
\end{figure}


\begin{table}[htbp]
\centering
   \caption{Cross-validation results based on the Mat\'ern covariance model and the CH covariance model. }
  {\resizebox{1.0\textwidth}{!}{%
  \setlength{\tabcolsep}{3.8em}
   \begin{tabular}{ l c c c  c c} 
   \toprule \noalign{\vskip 1.5pt} 
		& \multicolumn{2}{c}{Mat\'ern class}   &\multicolumn{2}{c}{CH class}  \\  \noalign{\vskip 1.5pt}  
	 \noalign{\vskip 1.5pt} 
 & $\nu = 0.5$ &   $\nu = 1.5$ & $\nu = 0.5$ & $\nu=1.5$ \\ \noalign{\vskip 1.5pt}  \noalign{\vskip 3.5pt}
$b$             &411.1  &411.1   &411.0   & 411.0  \\ \noalign{\vskip 1.5pt}  \noalign{\vskip 2.5pt}
$\sigma^2$ & 1.679  &1.439     &1.750   &1.585  \\ \noalign{\vskip 1.5pt}  \noalign{\vskip 2.5pt}
$\phi$ & 160.5        &104.1    & ---   & ---  \\ \noalign{\vskip 1.5pt}  \noalign{\vskip 2.5pt}
$\alpha$ & ---           & ---          &0.381   &0.353  \\ \noalign{\vskip 1.5pt}  \noalign{\vskip 2.5pt}
$\beta$   & ---           & ---         &80.17   & 58.65 \\ 
\noalign{\vskip 1.5pt} \bottomrule
   \end{tabular}%
   }}
   \label{table: CV for OCO2 data parameter estimation}
\end{table}

\begin{figure}[htbp] 

\begin{subfigure}{.5\textwidth}
  \centering
\makebox[\textwidth][c]{ \includegraphics[width=1.0\linewidth, height=0.20\textheight]{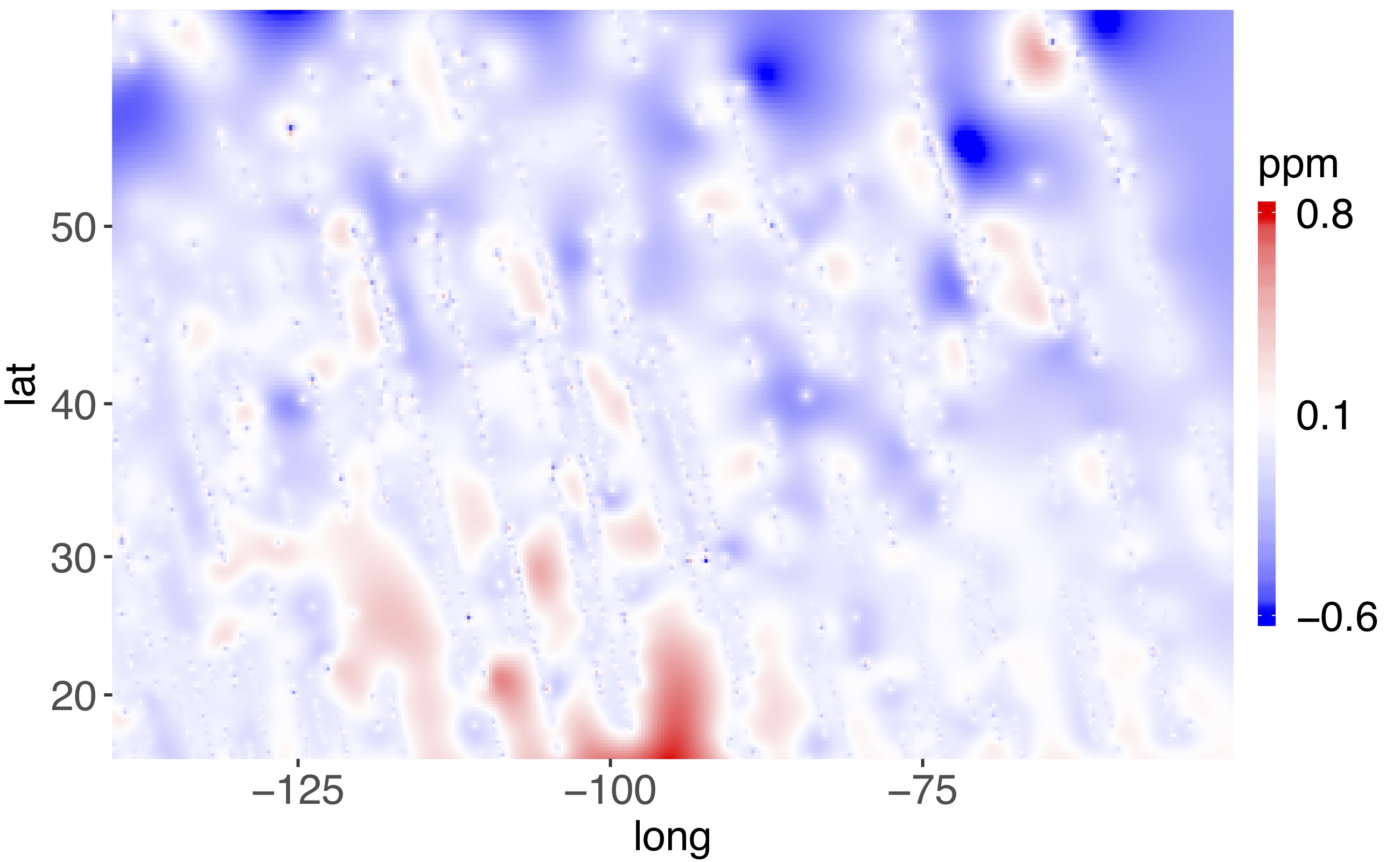}}
\caption{Difference of kriging predictors}
\end{subfigure}%
\begin{subfigure}{.5\textwidth}
\makebox[\textwidth][c]{ \includegraphics[width=1\linewidth, height=0.20\textheight]{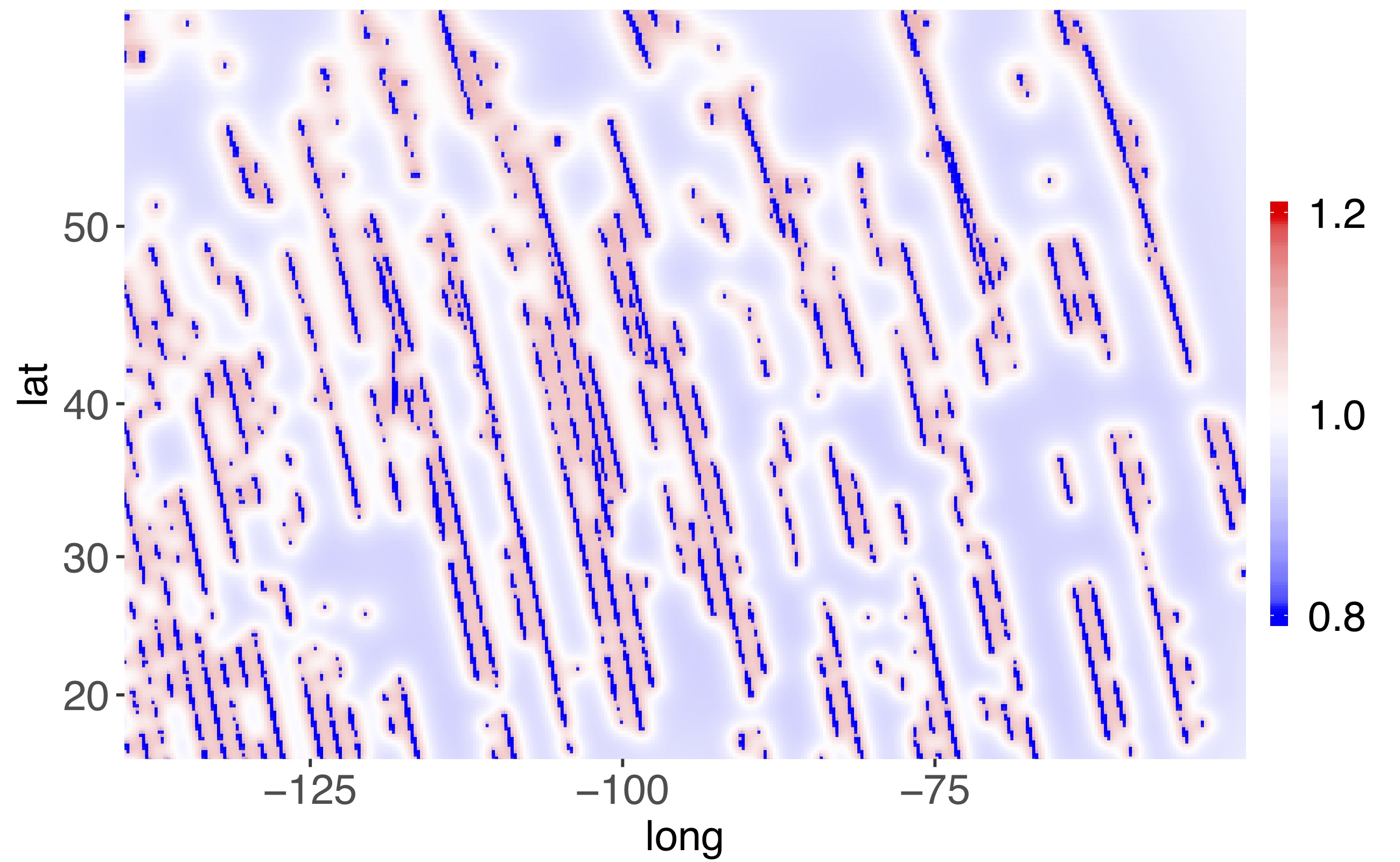}}
\caption{Ratio of kriging standard errors.}
\end{subfigure}

\caption{Comparison of kriging predictions under the Mat\'ern class and the CH class. The left panel shows the difference between kriging predictors under the CH class and those under the Mat\'ern class. The right panel shows the ratio of kriging standard errors under the CH class to those under the Mat\'ern class.}
\label{fig: comparison for OCO2 data}
\end{figure}

\end{document}